\documentclass[reqno,english,11pt]{amsart}

\usepackage{float}
\usepackage{times}

\usepackage{amsmath,amsfonts,amssymb,graphicx,amsthm,enumerate,url}
\usepackage[noadjust]{cite}
\usepackage{stmaryrd}
\usepackage{mathrsfs,booktabs}
\usepackage{xifthen,xcolor,tikz,setspace}
\usetikzlibrary{decorations.pathmorphing,patterns,shapes,calc,decorations}
\usetikzlibrary{decorations.pathreplacing}
\usepackage{mathtools,upgreek}
\usepackage[final]{showkeys} 

\definecolor{refkey}{gray}{.75}
\definecolor{labelkey}{gray}{.5}

\usepackage[colorlinks=true]{hyperref}
\usepackage{verbatim}

\usepackage[letterpaper]{geometry}
\geometry{verbose,tmargin=1in,bmargin=1in,lmargin=1in,rmargin=1in}

\makeatletter
\numberwithin{equation}{section}
\numberwithin{figure}{section}

\newtheorem{theorem}{Theorem}[section]
\newtheorem*{theorem*}{Theorem}
\newtheorem{lemma}[theorem]{Lemma}

\newtheorem{claim}[theorem]{Claim}
\newtheorem{proposition}[theorem]{Proposition}

\newtheorem{observation}[theorem]{Observation}
\newtheorem{fact}[theorem]{Fact}
\newtheorem{corollary}[theorem]{Corollary}
\newtheorem{cor}[theorem]{Corollary}

\theoremstyle{definition}{

\newtheorem{definition}[theorem]{Definition}

\newtheorem*{definition*}{Definition}

\newtheorem{remark}[theorem]{Remark}

}

\renewcommand{\P}{\mathbb P}

\newcommand{\cA}{\ensuremath{\mathcal A}}

\newcommand{\cC}{\ensuremath{\mathcal C}}

\newcommand{\cE}{\ensuremath{\mathcal E}}
\newcommand{\cF}{\ensuremath{\mathcal F}}
\newcommand{\cG}{\ensuremath{\mathcal G}}

\newcommand{\cN}{\ensuremath{\mathcal N}}
\newcommand{\cO}{\ensuremath{\mathcal O}}
\newcommand{\cP}{\ensuremath{\mathcal P}}

\newcommand{\cT}{\ensuremath{\mathcal T}}

\newcommand{\cV}{\ensuremath{\mathcal V}}

\newcommand{\sT}{{\ensuremath{\mathscr T}}}

\newcommand{\ps}{{\hat{p}}}
\newcommand{\bbP}{{\mathbb{P}}}
\newcommand{\df}{{\mathcal E}}

\newcommand{\T}{{\mathcal{T}}}

\newcommand{\treelike}{{\mathsf{Treelike}}}
\newcommand{\sparse}{{\mathsf{Sparse}}}

\newcommand{\Prrg}{{\mathbf{P}}_{\textsc{rrg}}}
\newcommand{\Pcm}{{\mathbf{P}}_{\textsc{cm}}}

\newcommand{\Ecm}{{\mathbf{E}}_{\textsc{cm}}}

\newcommand{\Tburn}{T_{\textsc{burn}}}

\renewcommand{\epsilon}{{\varepsilon}}

\DeclareMathOperator{\ber}{\mathrm{Ber}}
\DeclareMathOperator{\bin}{\mathrm{Bin}}

\newcommand{\tv}{{\textsc{tv}}}
\newcommand{\tmix}{{t_{\textsc{mix}}}}

\makeatother

\begin{document}
\title[Random-cluster dynamics on random regular graphs]{Random-cluster dynamics on random regular graphs in \\ tree uniqueness}
\author{Antonio Blanca}
\address{A.\ Blanca \hfill\break
Department of CSE \\ Pennsylvania State University}
\email{ablanca@cse.psu.edu}

\author{Reza Gheissari}
\address{R.\ Gheissari\hfill\break
Departments of Statistics and EECS \\ University of California, Berkeley}
\email{gheissari@berkeley.edu}

\maketitle
\date{\today}

\vspace{-1cm}
\begin{abstract}
        We establish rapid mixing of the random-cluster Glauber dynamics on random $\Delta$-regular graphs for all $q\ge 1$ and $p<p_u(q,\Delta)$, where the threshold $p_u(q,\Delta)$ corresponds to a uniqueness/non-uniqueness phase transition for the random-cluster model on the (infinite) $\Delta$-regular tree. It is expected that this threshold is sharp, and for $q>2$ the Glauber dynamics on random $\Delta$-regular graphs undergoes an exponential slowdown at $p_u(q,\Delta)$. 
        
        More precisely, we show that for every $q\ge 1$, $\Delta\ge 3$, and $p<p_u(q,\Delta)$, with probability $1-o(1)$ over the choice of a random $\Delta$-regular graph on $n$ vertices, the Glauber dynamics for the random-cluster model has $\Theta(n \log n)$ mixing time.
        As a corollary, we deduce fast mixing of the Swendsen--Wang  dynamics for the Potts model on random $\Delta$-regular graphs for every $q\ge 2$, 
    in the tree uniqueness region. Our proof relies on a sharp bound on the ``shattering time'', i.e., the number of steps required to break up any configuration into $O(\log n)$ sized clusters. This is established 
    by analyzing a delicate and novel
    iterative scheme to simultaneously reveal the underlying random graph with clusters of the Glauber dynamics configuration on it, at a given time. 
    \end{abstract}

\vspace{-.25cm}
\section{Introduction}
The random-cluster model is a random graph model, unifying the study of electrical networks, independent bond percolation, and the ferromagnetic Ising/Potts model from statistical physics~\cite{FK,Grimmett}. 
It is defined on a graph~$G=(V,E)$ and parametrized
by an edge probability
$p\in(0,1)$ and cluster weight $q>0$.
Each configuration consists of a subset of edges $\omega \subseteq E$ 
(equivalently $\omega \in \{0,1\}^E$)
 and is assigned probability
\begin{equation}\label{eq:rcmeasure}
\pi_{G,p,q}(\omega) = \frac{1}{Z_{G,p,q}}{p^{|\omega|}(1-p)^{|E|-|\omega|} q^{c(\omega)}}\,, 
\end{equation}
where $c(\omega)$ is the number of connected components in~$(V,\omega)$ and $Z_{G,p,q}$ is a normalizing constant.

Aside from its inherent interest as a model of random networks, the random-cluster model provides an elegant class of Markov Chain Monte Carlo (MCMC) algorithms for sampling from the Ising/Potts model.
For integer $q \ge 2$, a sample $\omega$ from~\eqref{eq:rcmeasure} can be transformed into one for the $q$-state ferromagnetic Potts model by independently assigning a random spin from $\{1,\dots,q\}$ to each connected component of $(V,\omega)$; see, e.g.,~\cite{ES,Grimmett}.
Random-cluster based sampling algorithms, which include the popular Swendsen--Wang algorithm~\cite{SW}, are a widely-used  alternative to the standard Ising/Potts Markov chains since 
the former are often efficient at ``low-temperatures'' (large $p$)
where the latter suffer exponential slowdowns (see~\cite{BS,GuoJer}).

Our focus here is on the Glauber dynamics of the random-cluster model.
Specifically, we consider
the following discrete-time Glauber dynamics chain, which we refer to as the {\em FK-dynamics}.
From a configuration $\omega_t\subseteq E$, one step of the FK-dynamics transitions to
a new configuration $\omega_{t+1}\subseteq E$ as follows:
\begin{enumerate}
	\item Choose an edge $e_t\in E$ uniformly at random;
	\item Set $\omega_{t+1} = \omega_t \cup \{e_t\}$ with probability
	$\left\{\begin{array}{ll}
	\ps := \frac{p}{q(1-p)+p} & \mbox{if $e_t$ is a ``cut-edge'' in $(V,\omega_t)$;} \\
	p & \mbox{otherwise;}
	\end{array}\right.$
	\item Otherwise set $\omega_{t+1} = \omega_t \setminus \{e_t\}$.
\end{enumerate}
We say $e$ is a {\it cut-edge} in $(V,\omega_t)$ 
if changing the state of $e_t$ changes the number of connected components $c(\omega_t)$ in $(V,\omega_t)$. This chain is, by design, reversible with respect to $\pi_{G,p,q}$.

A central question in the study of Markov chains is how the \textit{mixing time}---defined as the number of steps until the Markov chain is close to stationarity starting from the worst possible initial configuration---grows as the size of the graph $G$ increases. Of particular interest in the context of random-cluster and Ising/Potts dynamics is the relation of mixing times to the rich equilibrium phase transitions of the model.

We consider this question when $G$ is a random $\Delta$-regular graph on $n$ vertices.
The study of spin systems and their dynamics on random graphs is quite active~\cite{MS09,DeMo10,MS,DMS,DMSS,GSVY,Eft1,Eft2,BGP}.
Random $\Delta$-regular graphs are a canonical example of graphs having exponential volume growth,
with a non-trivial geometry,
making them an attractive alternative to lattices or trees. 
More generally, the study of spin systems on random graphs
yields insight into hard instances of the classical computational problems
of sampling, counting, learning and testing~\cite{Sly,SlySun,GSVuniq,BBCSV} and features in the study of random constraint satisfaction problems~\cite{GMcol,ZK}. 

The phase transition of the random-cluster model on random $\Delta$-regular graphs is expected to involve three critical points~\cite{BGJ,luczak2006phase,Haggstrom,Jonasson}. 
Most relevant to us would be the critical threshold $p_u(q,\Delta)$ corresponding to a \emph{uniqueness/non-uniqueness} phase transition for the random-cluster model on the infinite $\Delta$-regular wired tree (in which the leaves are externally wired to be in the same connected component). Roughly speaking, the uniqueness/non-uniqueness phase transition captures whether the wired boundary has an effect or not on the configuration near the root of the tree (in the limit as the height of the tree grows). It is believed that the mixing time slows down at $p_u(q,\Delta)$, either polynomially  or exponentially depending on $q\le 2$ or $q>2$.

In this paper, we establish optimal mixing 
for the FK-dynamics on random $\Delta$-regular graphs throughout the uniqueness region $p < p_u(q,\Delta)$ for all real $q\ge 1$ and all $\Delta\ge 3$.

\begin{theorem}\label{thm:main-fk}
	Fix any $q\ge 1,\Delta \ge 3$, and $p < p_u(q,\Delta)$.
	Consider the FK-dynamics on a uniformly random $\Delta$-regular graph on $n$ vertices. With probability $1-o(1)$ over the choice of the random graph $\cG$, the mixing time of the FK-dynamics on $\cG$ is $\Theta\big( n \log n\big)$.
\end{theorem}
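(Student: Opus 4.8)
The plan is to prove matching upper and lower bounds on the mixing time. The lower bound $\Omega(n\log n)$ is the routine direction: the FK-dynamics changes only one edge per step, so a coupon-collector estimate shows that after $t=cn\log n$ steps (for small $c>0$) a polynomially large set of edges has still never been selected; started from the all-ones configuration, all such edges are still present, so the statistic ``number of present edges'' is $\omega(\sqrt n)$ above its stationary typical value (which, since $p<p_u(q,\Delta)$ forces $O(\log n)$-sized clusters in equilibrium, is a constant fraction below $|E|$). A second-moment/Chebyshev bound on this statistic then lower-bounds the total-variation distance, and the expansion of $\cG$ is all that is needed to make the coupon-collector step rigorous.

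For the upper bound I would decompose the trajectory into a \emph{shattering} phase and a \emph{local-equilibration} phase and bound $t_{\textsc{mix}}$ by the sum of their lengths. The shattering claim is that there is $\Tburn=O(n)$ so that, with probability $1-o(1)$ over $\cG$ and probability $1-n^{-\Omega(1)}$ over the dynamics, \emph{every} connected component of $\omega_{\Tburn}$ has size $O(\log n)$, uniformly over the (worst-case) start $\omega_0$ --- in particular from the all-ones configuration, which is the genuinely hard case since one must dismantle a giant cluster. The mechanism is that when $p<p_u(q,\Delta)$ the effective edge-retention recursion on the $\Delta$-regular tree is subcritical, so in a locally tree-like environment the dynamics should force the cluster of a fixed vertex to be stochastically dominated by a subcritical branching-type process and hence to contract. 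Making this rigorous is delicate because the cut-edge rule ties the update probabilities to the global component structure, which is in turn correlated with the a priori unrevealed graph. I would use the announced \emph{simultaneous-revelation} scheme: expose edges of $\cG$ only when the dynamics queries them (deciding whether $e_t$ is a cut-edge requires only the part of the current component containing its endpoints), carefully bookkeep which vertices and edges have been exposed, and show that over $\Tburn=O(n)$ steps the exposed subgraph stays tree-like and of controlled size, so the subcriticality can be invoked against a freshly sampled, hence tree-like, environment. The sharp $O(n)$ (rather than $O(n\log n)$) bound comes from tracking cluster sizes directly rather than through the coarser edge-count / $\ell^1$ potential.

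In the local-equilibration phase I would start from two already-shattered configurations and couple the two copies of the dynamics so that they coalesce within $O(n\log n)$ further steps while staying shattered throughout. Here tree uniqueness on the $\Delta$-regular tree yields exponential decay of connectivities (strong spatial mixing) for $\pi_{\cG,p,q}$ in locally tree-like $\cG$, so toggling a single edge has an influence on the rest of the configuration decaying exponentially in graph distance; and when all clusters are $O(\log n)$-sized the cut-edge status of an edge is determined by an $O(\log n)$-radius neighborhood, so the update is semi-local. One then runs a disagreement-percolation / information-percolation style coupling: the set of edges on which the two chains differ can spread only to within distance $O(\log n)$ of a current disagreement, and because clusters are small and influences decay, each $\Theta(n)$-step sweep contracts the disagreement set by a constant factor in expectation, giving $O(\log n)$ sweeps, i.e.\ $O(n\log n)$ steps, to coalescence; a union bound over the $\mathrm{poly}(n)$ relevant local events uses the shattering tail bound to stay in the good regime. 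Summing the two phases gives $t_{\textsc{mix}}(\cG)=O(n\log n)$ with probability $1-o(1)$, which with the lower bound yields $\Theta(n\log n)$.

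The step I expect to be the main obstacle is the shattering estimate with its optimal $O(n)$ bound: one must simultaneously (i) control the cluster-size evolution through a subcritical comparison valid only in a tree-like environment, (ii) reveal just enough of $\cG$ on the fly to evaluate the cut-edge rule without spoiling that tree-likeness, and (iii) do (i)--(ii) uniformly over all adversarial initial configurations and all $O(n)$ time steps --- which is precisely what forces the iterative exposure scheme in place of a one-shot reveal of $\cG$.
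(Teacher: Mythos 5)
Your two-phase decomposition (shattering, then local coupling), the simultaneous-revelation idea for the shattering phase, and the coupon-collector lower bound all match the paper's high-level architecture. But there are two substantive gaps.

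First, you assert $\Tburn=O(n)$ for the shattering phase; the paper proves only $O(n\log n)$, and for a reason built into the very revealing scheme you describe. To dominate the revealed component of a vertex by a subcritical branching process, one must ensure that each freshly exposed radius-$r$ local chain has essentially equilibrated to $\pi_{\cT_r}^{(1,\circlearrowleft)}$ (this enters explicitly in Lemma~\ref{lem:branching-process-domination}, where the conditional law of $X_{A_m,t}^1$ is required to be within $O(n^{-1/2})$ of stationarity); by submultiplicativity of total-variation this requires $\Theta(\log n)$ multiples of the local mixing time, hence $\Theta(\log n)$ updates per edge, hence $\Theta(n\log n)$ total steps. ``Tracking cluster sizes directly'' does not provide a mechanism to bypass this: if the local chain has not equilibrated you cannot invoke Lemma~\ref{lem:exp-decay-wired-tree} to get the subcritical progeny law. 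This gap is harmless for the final $\Theta(n\log n)$ bound, but the $O(n)$ shattering claim should be dropped or proved separately.

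Second, and more importantly, your local-equilibration phase is a disagreement-percolation argument, which is precisely the route the paper identifies as breaking down on this geometry (``the non-amenability of the random $\Delta$-regular graph prevents us from bounding the speed of disagreement percolation''). On an expander the ball of radius $\ell$ contains $\Theta(d^\ell)$ sites, so an influence decaying like $\hat p^\ell$ yields $\Theta((\hat p d)^\ell)$ expected new disagreements at distance $\ell$; summed over $\ell$ this is an $O(1)$ quantity which you assert, but do not show, is strictly below $1$ per sweep. Formalizing ``each disagreement is resolved while spawning $<1$ new disagreement in expectation'' requires controlling how the nonlocal cut-edge rule transports disagreements across components, and this is exactly the missing lemma. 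The paper avoids the issue entirely: after shattering it censors updates outside balls $B_R(v)$ with $R=(\tfrac12-\delta)\log_d n$, proves an $\Omega(d^{-R})$ log-Sobolev constant for the localized chain, and then shows the influence of the sparse boundary on $\cN_v$ decays at rate $\hat p^{2R}$ rather than $\hat p^{R}$ (Proposition~\ref{prop:influence-probability-new}), because influence requires \emph{two} vertex-disjoint open paths to the nontrivial boundary wirings. That factor-two exponent is what makes the per-vertex error $o(n^{-1})$ so the union bound over vertices closes. Your proposal contains no analogue of this sharpening and, as written, the ``contracts by a constant factor per sweep'' step is an unproved claim rather than a routine verification.
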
 
The FK-dynamics are known to be resistant to sharp analysis with the known techniques for Markov chains for spin systems. 
This is due, in part, to the fact that the random-cluster model presents highly non-local interactions:
an update on an edge $e_t$ depends on the entire configuration $\omega_t(E\setminus \{e_t\})$. 
Indeed, the only other setting where the speed of convergence of FK-dynamics
is well-understood via direct analysis is in square subsets of $\mathbb Z^2$~\cite{BS,GL1,GL2,GL3,BGVfull,PS}.
Other bounds to date have been obtained either indirectly, via comparison with global Markov chains using the results of~\cite{Ullrich1,Ullrich2}
(and as a result, these bounds are off by polynomial factors),
or by taking either $p$ very small (e.g., under a Dobrushin-type condition) or very large, or $q$ large.
This is the state of affairs even on the (geometrically trivial) complete graph~\cite{Huber,BS,GLP}.

Our results are tight in the sense that the FK-dynamics is expected to
undergo a slowdown at $p_u(q,\Delta)$, as we describe next. 
The equilibrium phase transition of the random-cluster model on random $\Delta$-regular graphs should qualitatively resemble those on the $\Delta$-regular tree 
and the complete graph. 
Based on this relation, and understandings of those phase diagrams~\cite{BGJ,luczak2006phase,Haggstrom,Jonasson}, it is expected to involve three critical points 
$p_u(q,\Delta) \le p_c(q,\Delta) \le p_u^*(q,\Delta)$. 
The tree uniqueness/non-uniqueness
phase transition at $p_u(q,\Delta)$ manifests on 
the finite $\Delta$-regular tree in the form of existence/non-existence of root-to-leaf paths  under wired boundary conditions. 
%
%
The threshold $p_u^*(q,\Delta)$ corresponds to a (conjectured) second non-uniqueness/uniqueness transition; above this point even the $\Delta$-regular tree under free boundary conditions has root-to-leaf connections
(see~\cite{Haggstrom,Jonasson,GSVY,HeJePe20} for more details). The threshold $p_c(q,\Delta)$, on the other hand, corresponds to
an order-disorder transition
captured by the emergence of a ``giant component'' of linear size
on the random graph 
(which, roughly, imposes ``typical" boundary conditions on its treelike balls). 

When $q\in (1,2]$ the phase transition is of second-order and these three thresholds coincide; namely $p_u(q,\Delta) = p_c(q,\Delta) = p_u^*(q,\Delta)$.
On the other hand when $q>2$, 
the phase transition on random $\Delta$-regular graphs is conjectured to be of \emph{first-order} and $p_u(q,\Delta)<p_c(q,\Delta)<p_u^*(q,\Delta)$.
Here, the uniqueness threshold $p_u(q,\Delta)$ should mark the onset of the \emph{metastability}
phenomenon, and that should persist up to $p_u^*(q,\Delta)$. 
Metastability has
been linked to an exponential slowdown for both random-cluster and Potts Glauber dynamics on the complete graph~\cite{CDLLPS,BS-MF,GSV,GLP},
and the same slowdown is expected to occur on random $\Delta$-regular graphs.
Namely, in the window $(p_u(q,\Delta), p_u^*(q,\Delta))$, the ordered and disordered phases should each be ``metastable" behaving locally (on treelike balls) like the configurations on wired and free trees, respectively. 
The coexistence of these metastable phases 
with exponentially small boundaries, facilitates states from which reversible Markov chains cannot easily escape (i.e., these sets have bad conductance). 
It is thus expected that on random $\Delta$-regular graphs, for every $q>2$, the FK-dynamics mixes exponentially slowly throughout  $(p_u(q,\Delta),p_u^*(q,\Delta))$.
For $q$ sufficiently large, such slowdown was established in~\cite{GSVY} at 
$p = p_c(q,\Delta) \in (p_u(q,\Delta), p_u^*(q,\Delta))$. 

From Theorem~\ref{thm:main-fk} we obtain an efficient MCMC sampling algorithm, for both the random-cluster model and the ferromagnetic Ising/Potts model on 
random $\Delta$-regular graphs in the uniqueness regime. 

\begin{cor}
	\label{cor:sampling}
	Fix any $q\ge 1,\Delta \ge 3$, $p < p_u(q,\Delta)$ and any accuracy parameter $\delta \in (0,1)$. Then, with probability $1-o(1)$ over the choice of the random $\Delta$-regular $n$-vertex graph $\cG$,
    there is a sampling algorithm which, given the graph $\cG$, 
     outputs a random-cluster configuration $\omega$
    whose distribution is within total variation distance $\delta$ 
    of $\pi_{\cG,p,q}$.
    The running time of the algorithm is $O(n (\log n)^3 \log (1/\delta))$.
\end{cor}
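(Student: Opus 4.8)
The plan is to produce the sample simply by running the FK-dynamics from a fixed initial configuration for a prescribed number of steps, and then to bound the computational cost of a single step; the correctness of the number of steps is immediate from Theorem~\ref{thm:main-fk}, and the per-step cost is handled by a dynamic connectivity data structure.

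First I would start the chain from the empty configuration $\omega_0=\emptyset$ and invoke the standard submultiplicativity of the total variation distance to stationarity: writing $\tmix=\tmix(1/4)$ for the mixing time of the FK-dynamics on $\cG$, the law of $\omega_{\ell\tmix}$ is within total variation distance $2^{-\ell}$ of $\pi_{\cG,p,q}$ for every $\ell\ge 1$, uniformly in the starting state. Let $C=C(q,\Delta,p)$ be the constant implicit in the $O(n\log n)$ upper bound of Theorem~\ref{thm:main-fk}, so that on an event of probability $1-o(1)$ over the choice of $\cG$ one has $\tmix\le C\,n\log n$. Running the chain for $T:=\lceil C\,n\log n\rceil\cdot\lceil\log_2(1/\delta)\rceil=O\big(n\log n\,\log(1/\delta)\big)$ steps then yields, on that event, an output within total variation distance $\delta$ of $\pi_{\cG,p,q}$. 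Note that the algorithm performs the same fixed number $T$ of steps for every input graph; it is merely guaranteed to be accurate for the $(1-o(1))$-fraction of $\Delta$-regular graphs to which Theorem~\ref{thm:main-fk} applies.

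The remaining point is to implement one FK step in polylogarithmic time. The only nontrivial operation is deciding whether the sampled edge $e_t=\{u,v\}$ is a cut-edge in $(V,\omega_t)$ and then modifying the configuration accordingly. The key observation is that $e_t$ is a cut-edge if and only if $u$ and $v$ lie in different connected components of $(V,\omega_t\setminus\{e_t\})$: when $e_t\in\omega_t$ this says $e_t$ is a bridge, and when $e_t\notin\omega_t$ it says inserting $e_t$ merges two components. Therefore, maintaining $(V,\omega_t)$ inside a dynamic graph connectivity data structure supporting edge insertions, edge deletions, and pairwise connectivity queries --- for instance that of Holm, de Lichtenberg and Thorup, with amortized cost $O(\log^2 n)$ per update and $O(\log n)$ per query --- allows each step to be carried out with $O(1)$ queries and $O(1)$ updates, i.e.\ in amortized time $O(\log^2 n)$; sampling the uniform random edge and the auxiliary coin costs $O(\log n)$ more. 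Multiplying by $T$ and adding the $O(n)$ cost of initializing from $\omega_0=\emptyset$ gives the claimed running time $O\big(n(\log n)^3\log(1/\delta)\big)$.

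For the Ising/Potts consequence (integer $q\ge 2$), I would post-process $\omega$ by the fixed randomized map that assigns to each connected component of $(V,\omega)$ an independent uniform spin in $\{1,\dots,q\}$: since total variation distance is non-increasing under a fixed stochastic kernel, the resulting spin configuration is within total variation distance $\delta$ of the ferromagnetic Potts measure, and the component structure needed for this is read directly off the data structure (or recomputed by a single traversal in $O(\Delta n)$ time), which does not change the order of the running time. I do not expect a real obstacle here: the only step requiring genuine care is the cut-edge/connectivity equivalence above together with its efficient realization via dynamic connectivity, and everything else is bookkeeping layered on top of Theorem~\ref{thm:main-fk}.
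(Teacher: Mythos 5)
Your proposal is correct and is essentially the argument the paper has in mind (the paper only sketches it in the surrounding text, pointing to $\tmix(\delta)\le\tmix\log(2\delta^{-1})$ and to the dynamic-connectivity literature for the amortized $O(\log^2 n)$ per-step cost). The cut-edge characterization as "$u\not\leftrightarrow v$ in $(V,\omega_t\setminus\{e_t\})$" and the use of Holm--de Lichtenberg--Thorup are exactly what the paper's citation to Thorup et al.\ is for.
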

The extra $O((\log n)^2)$ factor in the running time of the algorithm comes 
from the (amortized) computational cost of checking
whether the chosen edge is a cut-edge in each step of the FK-dynamics.
This is equivalent to the fully dynamic connectivity problem which has been
thoroughly studied (see, e.g.,~\cite{Thorup,Michigan}).

For integer $q$, the algorithm in Corollary~\ref{cor:sampling} combined with the $O(n)$ cost of translating between the random-cluster and Potts configurations mentioned earlier
yields a sampling algorithm for the ferromagnetic $q$-state Potts
model on random regular graphs up to the Potts uniqueness threshold (the uniqueness thresholds of both these models coincide).
This improves on the best previously known sampling algorithm for both these models in~\cite{BGGSVY}, which runs in 
$\tilde O(n^{6/5})$
time, and it is a ``weak sampler''
in the sense that it outputs samples that are close in total variation distance
to the target distribution but with a \emph{fixed} accuracy. (See also the recent work of~\cite{HeJePe20} for a $\mathrm{poly}(n)$ sampler for \emph{all} $p \in (0,1)$ but provided $q$ is sufficiently large.)

 As another important corollary of Theorem~\ref{thm:main-fk}, we deduce fast mixing of the standard Swendsen-Wang (SW) algorithm for the ferromagnetic $q$-state Potts model~\cite{SW}.
This is an extensively-used global-update Markov chain.
The dynamics starts from a Potts configuration $\sigma_t\in \{1,\ldots,q\}^V$, moves to a ``joint" spin/random-cluster 
configuration $(\sigma_t,\omega_t)$ by including each monochromatic edge independently with probability $p$
and then assigns to each connected component of $(V,\omega_{t})$ a uniform at random spin from $\{1,...,q\}$ to obtain a new Potts configuration $\sigma_{t+1}$ (see~\cite{SW,ES}). 

\begin{corollary}\label{cor:main-SW}
		Fix any integer $q\ge 2$ and $\Delta\ge 3$, and let $p < p_u(q,\Delta)$. Consider the Swendsen-Wang dynamics on a uniformly random $\Delta$-regular graph on $n$ vertices. With probability $1-o(1)$ over the choice of the random graph $\cG$, the mixing time of the Swendsen--Wang dynamics  on $\cG$ is $O\big( n^2 \log n\big)$.
\end{corollary}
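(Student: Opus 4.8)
The plan is to deduce Corollary~\ref{cor:main-SW} from Theorem~\ref{thm:main-fk} through the comparison machinery relating the Swendsen--Wang dynamics to single-edge random-cluster dynamics, due to Ullrich~\cite{Ullrich1,Ullrich2}. Since those comparison inequalities hold \emph{deterministically} for every fixed graph $G$, it suffices to argue on the $(1-o(1))$-probability event that $\cG$ is connected and the FK-dynamics on $\cG$ mixes in $O(n\log n)$ steps, which is precisely the content of Theorem~\ref{thm:main-fk}.

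First I would chain two comparisons of spectral gaps. Let $\mathrm{HB}$ denote the discrete-time single-edge heat-bath chain for $\pi_{\cG,p,q}$, which picks a uniformly random edge and resamples its state from the conditional distribution given the rest. Ullrich's comparison~\cite{Ullrich1,Ullrich2} gives $\gap(\mathrm{SW}) \ge c_1(p,q)\,\gap(\mathrm{HB})$ with $c_1(p,q)>0$ depending only on $p$ and $q$. Next, a single step of $\mathrm{HB}$ and a single step of the FK-dynamics on the same edge differ only in whether the edge is resampled from, versus Metropolized against, a $\ber(p)$ or $\ber(\ps)$ law; since $p$ and $\ps=\tfrac{p}{q(1-p)+p}$ are bounded away from $0$ and $1$ by constants depending only on $p,q$, the standard single-site comparison of heat-bath and Metropolis chains yields $\gap(\mathrm{HB}) \ge c_2(p,q)\,\gap(\text{FK-dynamics})$. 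Composing, $\gap(\mathrm{SW}) \ge c_3(p,q)\,\gap(\text{FK-dynamics})$ on $\cG$. One small point to check is the time-normalization convention in~\cite{Ullrich1,Ullrich2} (discrete single-edge updates versus a unit-rate continuous-time chain); in the discrete convention the inequality holds as stated, and in the other it only improves, so the bound below is valid either way.

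Then I would pass between spectral gaps and mixing times. Theorem~\ref{thm:main-fk} gives $\tmix(\text{FK-dynamics}) = O(n\log n)$ on $\cG$, and hence, by the standard inequality $t_{\mathrm{rel}} \le 1 + \tmix(1/4)/\log 2$ for reversible chains, $\gap(\text{FK-dynamics})^{-1} = O(n\log n)$; combined with the previous step, $\gap(\mathrm{SW})^{-1} = O(n\log n)$. In the reverse direction, $\tmix(\mathrm{SW}) \le \gap(\mathrm{SW})^{-1}\log(1/\pi^{\mathrm{Potts}}_{\min})$, and the ferromagnetic $q$-state Potts measure on $\cG$ (with $e^{-\beta}=1-p$) satisfies $\pi^{\mathrm{Potts}}_{\min} \ge q^{-n}(1-p)^{|E|}$, so $\log(1/\pi^{\mathrm{Potts}}_{\min}) = O(n)$. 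This gives $\tmix(\mathrm{SW}) = O(n\log n)\cdot O(n) = O(n^2\log n)$, as claimed. Throughout, one works with the lazy versions of these chains (equivalently, their absolute spectral gaps) to rule out near-periodicity; this affects only the constants $c_i(p,q)$.

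I do not expect any genuine obstacle: the proof is bookkeeping on top of Theorem~\ref{thm:main-fk} and known comparison tools, the only care being (i) the normalization convention when quoting~\cite{Ullrich1,Ullrich2}, and (ii) the $\log(1/\pi_{\min}) = \Theta(n)$ factor in the last step. That factor is presumably why $O(n^2\log n)$ is a factor of $n$ larger than the $\Theta(n\log n)$ one would conjecture for the Swendsen--Wang dynamics itself; removing it would require a direct analysis of the SW chain rather than a comparison argument.
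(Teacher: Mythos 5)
Your proposal is correct and is exactly the argument the paper intends; the paper's own proof is simply the one-sentence statement that the corollary ``follows immediately from Theorem~\ref{thm:main-fk} and the comparison results of Ullrich,'' and you have supplied the standard bookkeeping (spectral-gap comparison, $t_{\mathrm{rel}} = O(\tmix)$, and the $\log(1/\pi_{\min}) = O(n)$ conversion back to mixing time). One small correction: the intermediate comparison you insert between a heat-bath chain $\mathrm{HB}$ and the FK-dynamics is vacuous, because the FK-dynamics as defined in this paper already \emph{is} the single-edge heat-bath chain --- resampling $\omega(e_t)$ from its conditional law $\ber(\ps)$ or $\ber(p)$ irrespective of the current state of $e_t$ --- not a Metropolis chain, so Ullrich's inequality applies directly with $c_2 = 1$.
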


Corollary~\ref{cor:main-SW} follows immediately from Theorem~\ref{thm:main-fk} and the comparison results of Ullrich~\cite{Ullrich1,Ullrich2}. Previously, our understanding of the speed of convergence of the SW dynamics on random $\Delta$-regular graphs was very limited.
For the special case of $q=2$, which corresponds to the Ising model, it was established in~\cite{BCV20} that the spectral gap of the SW dynamics is $\Omega(1)$ for all $p<p_u(2,\Delta)$; this implies an $O(n)$ mixing time bound.
In addition, Guo and Jerrum~\cite{GuoJer} established an $O(n^{10})$
mixing time bound for the SW dynamics that applies
to any graph and any $p \in (0,1)$. 
The methods in both of these works are specific to the Ising model ($q=2$) and do not generalize to other values of $q$. Beyond the special case of $q=2$, no sub-exponential bound was previously known for either the FK-dynamics or the SW dynamics throughout the uniqueness regime $p<p_u(q,\Delta)$.  

\subsection*{Proof ideas}
We comment briefly on the techniques and main innovations in our analysis next: for more details and an extended proof sketch, we refer the reader to Section~\ref{sec:proof-strategy}. 
The main ingredient in our proof is an $O(n\log n)$ bound on the ``shattering time'' of the FK-dynamics (Theorem~\ref{thm:pi-exponential-decay}); 
this is the number of steps the chain requires to break up any configuration into connected components of size at most  $O(\log n)$. The bound on the shattering time uses a novel and delicate  
iterative scheme to simultaneously reveal the underlying random graph and the connected components of the FK-dynamics configuration on it at a given time: see Definition~\ref{def:revealing-FK-on-graph} and Figures~\ref{fig:revealing-fig1}--\ref{fig:revealing-fig2}.
While revealing procedures are a standard tool in the study of both random graphs 
and of the random-cluster model, their combined analysis is highly non-trivial, as the law of the random-cluster configuration at an edge depends on the global geometry of the graph. 
To our knowledge, this the first direct upper bound for the shattering time of the FK-dynamics in any setting. In fact, understanding the shattering time is usually the main obstacle for proving rapid mixing of the FK-dynamics on other graphs: e.g., on the complete graph, the shattering time is not known and 
only loose mixing time bounds (off by $\Theta(n^2)$ factors) can be derived~\cite{BS-MF}. 

Once the dynamics has shattered, 
we use standard methods (i.e., censoring~\cite{PWcensoring})
to reduce the analysis of the FK-dynamics to localized dynamics in balls of radius $o(\sqrt n)$ centered at each vertex, but with random boundary conditions induced by the current state outside the ball. In random $\Delta$-regular graphs, these balls are ``treelike" 
and, after shattering, their boundary conditions are ``almost free'', in  that only $O(1)$ vertices in their boundaries are connected through the external configuration.
This implies that the FK-dynamics mix quickly and satisfy a log-Sobolev inequality akin to a product measure in each of these balls. 
The last ingredient in our proof is an exponential decay of correlation property (sometimes called spatial mixing) between the root and boundary of such balls. A delicate point is that since these balls have radius $\Theta(\log n)$, we need exact control on the rate of this exponential decay  to sustain the union bound over the $n$ balls. 

\begin{remark}
We expect our methods for the analysis of the shattering phase
to have applications to other locally tree-like graphs, e.g., wired trees and Erd\H{o}s--R\'{e}nyi random graphs. 
In the latter case, however, the possibility of having a small number of vertices of large degree poses technical obstructions to direct extension of our methods. Whereas this should not affect the equilibrium phase diagram of the model, interestingly, in the case of the Glauber dynamics for the Ising model on an Erd\H{o}s--R\'{e}nyi random graph, the high maximum degree is known to slow down the high-temperature mixing time to $n^{1+\Omega(\frac{1}{\log \log n})}$~\cite{MS09}. 
\end{remark}



\subsection*{Organization of paper}
The rest of the paper is organized as follows. In Section~\ref{sec:prelim}, we provide a number of preliminary definitions and notations we will use. In Section~\ref{sec:proof-strategy}, we give a detailed proof overview highlighting some of the key novelties in our arguments. Our revealing procedures to bound the shattering time are the focus of Section~\ref{sec:shattering}. In Section~\ref{sec:correlation-decay-treelike} we establish the sharp rate of spatial mixing on treelike graphs with sparse boundary conditions. We combine these to conclude the proof of the upper bound of Theorem~\ref{thm:main-fk} in 
 Section~\ref{sec:proof-main-theorem}. We prove the matching lower bound on the mixing time in Section~\ref{sec:proof-lower-bound}.

\section{Preliminaries}\label{sec:prelim}
In this section, we collect some standard definitions and properties that are necessary to present our proofs, and to which the reader can refer throughout. See the standard texts~\cite{BollobasBook}, \cite{Grimmett}, and~\cite{LP} for more details on random graphs, the random-cluster model, and Markov chain mixing times, respectively. 

\subsection{Random $\Delta$-regular graphs}
We begin by considering the underlying geometry we work on. Fix $\Delta\ge 3$ and consider the uniform distribution $\Prrg$ over $\Delta$-regular graphs on $n$ vertices.
(Let us always assume $n$ is such that $\Delta n$ is even, so that such a graph exists.)
We identify the vertices $V(\cG)$ with the set $\{1,...,n\}$, and the randomness of $\Prrg$ will be over the edge-subset of $\{ij = ji: 1\le i,j\le n \}$. 
Throughout this paper, we set $d:=\Delta-1$ for convenience.

\subsubsection*{Random graphs are treelike}A key ingredient in our proof is the fact that random $\Delta$-regular graphs are locally treelike. While this can be formalized in various ways, we use a notion that is most relevant to this paper, and applies uniformly to all vertices (as opposed to a vertex chosen uniformly at random).  

For a graph $\cG = (V(\cG),E(\cG))$ and a vertex $v \in V(\cG)$, 
we define the ball of radius $R$ around $v$ as: 
$$B_R(v):= \{w \in V(\cG): d(w,v) \le R\}\,,$$
where $d(w,v)$ is the graph distance. 
For a vertex set, $B\subset V(\cG)$, define $E(B) = \{v,w\in B: vw\in E(\cG)\}$. 

\begin{definition}\label{def:k-treelike}
We say that a graph $G = (V,E)$ is $L$-$\treelike$ if there is a set $H\subset E$ with $|H|\le L$ such that the graph $(V,E\setminus H)$ is a tree. 
\end{definition}

\begin{definition}\label{def:(L,R)-treelike}
	We say that a $\Delta$-regular graph $\cG = (V(\cG),E(\cG))$ is $(L,R)$-$\treelike$ 
	if for every $v \in V(\cG)$ the subgraph $(B_R(v), E(B_R(v))$ is $L$-$\treelike$.
\end{definition}

\begin{fact}\label{fact:random-graph-treelike}
Fix any $\Delta\ge 3$. For every $\delta>0$, there exists  $L(\delta,\Delta)$ such that if $R = (\frac 12 - \delta)\log_d n$,  we have 
\begin{align*}
    \Prrg \big(\cG \mbox{ is } (L,R)\mbox{-}\treelike\big) = 1-o(n^{-1})\,.
\end{align*}
\end{fact}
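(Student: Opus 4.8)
The plan is to use the standard first-moment (union bound) argument over potential ``witnesses'' to non-treelikeness, adapted to the configuration model so that the estimate holds simultaneously for all $n$ vertices. Recall that a connected subgraph on $k$ vertices is $L$-treelike precisely when it has at most $k-1+L$ edges, i.e.\ at most $L$ edges beyond a spanning tree; equivalently, its cycle rank (first Betti number) is at most $L$. So the event that $\cG$ fails to be $(L,R)$-treelike is contained in the event that there exists a vertex $v$ such that $(B_R(v), E(B_R(v)))$ has cycle rank $>L$, which in turn implies the existence of a connected subgraph $S\subseteq B_R(v)$ with $|E(S)| \ge |V(S)| + L$. Since $B_R(v)$ has at most $\Delta d^{R-1}\le \Delta d^{R} = \Delta \cdot n^{1/2-\delta}$ vertices when $R = (\tfrac12-\delta)\log_d n$, it suffices to bound the probability that \emph{some} connected vertex set $S$ with $|V(S)| =: s \le \Delta n^{1/2-\delta}$ carries at least $s + L$ edges of $\cG$.

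First I would set up the union bound. For a fixed vertex set $S$ of size $s$ and a fixed set of $s+L$ vertex-pairs inside it, the probability that all of these pairs are edges of $\cG$ is, in the configuration model, at most $(C\Delta / n)^{s+L}$ for an absolute constant $C$ (each successive pairing of a given half-edge lands on a target among $\asymp \Delta n$ remaining half-edges, and there are $\le (\Delta s)^{2(s+L)}$ ways to realize $s+L$ prescribed adjacencies by half-edge pairings — or one argues directly via the known formula for edge probabilities in $\Prrg$, which is $\Theta(1/n)$ per edge with the appropriate independence up to $o(1)$ corrections for $s = o(\sqrt n)$). The number of ways to choose $S$ is $\binom{n}{s}\le n^s$, and the number of ways to choose $s+L$ pairs within $S$ is $\binom{\binom{s}{2}}{s+L}\le s^{2(s+L)}$. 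Multiplying, the expected number of such ``overdense'' sets of size $s$ is at most
\begin{equation*}
n^{s}\cdot s^{2(s+L)}\cdot \Big(\frac{C\Delta}{n}\Big)^{s+L} \;\le\; (C\Delta)^{s+L}\, s^{2s+2L}\, n^{-L}\,.
\end{equation*}
Using $s \le \Delta n^{1/2-\delta}$, we have $s^{2s} \le n^{(1-2\delta)s}$ up to a $(\log n)^{O(s)}$ factor absorbed into the constant, so the displayed quantity is at most $n^{-L}\cdot (C')^{s}\, n^{(1-2\delta)s}\, s^{2L}$. This is $\le n^{-L + o(1)}$ provided the exponent of $n$ in the $s$-dependent part is negative — but $(1-2\delta)s$ grows with $s$, so a naive bound fails; the fix is to note that for $L$-treelikeness to fail we may assume $S$ is a \emph{minimal} witness, namely $S$ is connected and $|E(S)| = |V(S)| + L$ exactly with $S$ a union of a spanning tree plus $L+1$ extra edges each closing a short cycle — equivalently $S$ is contained in the union of $B_R(v)$ and at most $L+1$ geodesic-type cycles, so in fact $s = O(L \log_d n)$, not $\Theta(\sqrt n)$. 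With $s = O(L\log n)$, the bound $s^{2s} = n^{o(1)}$, and summing over $s$ from $L+2$ to $O(L\log n)$ gives total expected count $\le n^{-L+o(1)}\sum_s (C'')^{s} = n^{-L + o(1)}$. Choosing $L = L(\delta,\Delta)$ large enough (in fact $L\ge 2$ suffices once the $\log$-factors are tracked carefully, but any fixed $L\ge 1$ large relative to the constants works) makes this $o(n^{-1})$, and then a union bound over the $\le n$ choices of $v$ — already incorporated since we summed over all $S$ — completes the argument.

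\textbf{Main obstacle.} The delicate point, and the one I would be most careful about, is controlling the edge probabilities in $\Prrg$ (the \emph{uniform} random regular graph, not the configuration model) uniformly over subgraphs up to size $n^{1/2-\delta}$, and in particular justifying the ``$\Theta(1/n)$ per edge, quasi-independent'' estimate used above. The clean route is to prove the statement in the configuration model $\Pcm$ — where half-edge pairings make these probabilities transparent — and then transfer to $\Prrg$ using the standard fact that for $\Delta = O(1)$ the configuration model produces a simple graph with probability $\Omega(1)$, so any event of probability $o(n^{-1})$ under $\Pcm$ has probability $o(n^{-1})$ under $\Prrg$ as well. The other point requiring care is the claim that a minimal witness to failure of $L$-treelikeness has only $O(L\log_d n)$ vertices rather than up to $|B_R(v)|$ vertices; this follows because one can always delete a degree-$1$ vertex from $S$ without changing the cycle rank, iterating until $S$ has minimum degree $\ge 2$, at which point $S$ is a union of $\le L+1$ cycles joined by paths, all within distance $R$ of $v$, hence of size $O(L R) = O(L \log_d n)$. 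Once these two reductions are in place, the first-moment computation above closes the argument.
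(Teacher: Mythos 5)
Your approach diverges from the paper's: the paper proves the fact by a sequential breadth-first revealing of $B_R(v)$ in the configuration model, observing that $B_R(v)$ fails to be $L$-treelike iff more than $L$ of the revealed half-edges are ``back-edges'' (matched into already-discovered vertices), and bounding the number of back-edges by a $\bin(|E(\cT_R)|,\,d^R/(n-|E(\cT_R)|))$ tail, which gives a per-vertex failure probability $O((Cn^{-\delta})^L)=o(n^{-2})$ for $L>2/\delta$; a union bound over the $n$ vertices and the standard $\Pcm\to\Prrg$ transfer then finish. You instead propose a first-moment union bound over witness subgraphs $S$. That route has two genuine gaps. First, your reduction to witnesses of size $O(L\log_d n)$ is not justified by the argument you give: iteratively deleting degree-$1$ vertices produces the $2$-core of $B_R(v)$, and a connected minimum-degree-$2$ graph with cycle rank $L+1$ contained in $B_R(v)$ is not \emph{a priori} of size $O(LR)$ --- its degree-$2$ threads can be as long as $|V(B_R(v))|=\Theta(n^{1/2-\delta})$, since a vertex being within distance $R$ of $v$ does not bound the length of a non-geodesic path through it. The correct small witness comes instead from $L+1$ fundamental cycles of a BFS spanning tree rooted at $v$ (each of length $\le 2R+1$), possibly joined to $v$ by tree paths to ensure connectivity. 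Second, and more seriously, even granting small witnesses the union bound does not close: the claim ``$s^{2s}=n^{o(1)}$ for $s=O(L\log n)$'' is false (it is $n^{\Theta(\log\log n)}$, super-polynomial), and after replacing $\binom{\binom{s}{2}}{s+L}$ with the correct count of \emph{connected} labelled graphs with $s$ vertices and $s+L$ edges, the expected number of witnesses of size $s\le O(LR)$ is still $\gtrsim (e\Delta)^{O(LR)}\,n^{-L}= n^{O(L\log_d(e\Delta))-L}$, which diverges since $\log_d(e\Delta)>1$ for every $\Delta\ge 3$.

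The underlying reason the first moment over subgraphs fails is overcounting: a single bad ball $B_R(v)$ with cycle rank $\ge L+1$ gives rise to an enormous number of connected subgraphs $S\subseteq B_R(v)$ with $|E(S)|\ge|V(S)|+L$, so $\mathbb E[\#\text{witnesses}]$ wildly exceeds $\mathbb E[\#\{v: B_R(v)\text{ bad}\}]$, and the latter is the quantity that actually needs to be $o(n^{-1})$. The paper's revealing scheme sidesteps this entirely by bounding the per-$v$ probability directly --- effectively, for each $v$ it selects a single canonical witness (the collection of back-edges in a BFS) and its probability, rather than unioning over all possible witnesses. Your ``main obstacle'' paragraph focuses on transferring edge probabilities from $\Pcm$ to $\Prrg$, which you handle correctly via simplicity of the configuration model, but that was not the real obstacle; the real obstacle is the overcounting above, which requires restructuring the argument.
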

We include a short proof of Fact~\ref{fact:random-graph-treelike} after introducing the configuration model in Section~\ref{subsec:proof-sketch-revealing}. 
It is known that when ${R} > \frac 12 {\log_d n}$, the number of cycles in every ball $B_R(v)$ goes to $\infty$ with $n$.

\subsection{The random-cluster model}
For a graph $G  = (V,E)$, recall the definition of the random-cluster model from~\eqref{eq:rcmeasure}. We say an edge $e\in E$ is \emph{open} or \emph{wired} if $\omega(e)=1$ and \emph{closed} or \emph{free} if $\omega(e)=0$. We say two vertices are \emph{connected in $\omega$} if they are in the same connected component of the sub-graph $(V,\{e\in E: \omega(e)= 1\})$. For a vertex set $\cV\subset V$, denote by $\cC_\cV(\omega)$ the union of connected components (clusters) containing $v\in \cV$ in this sub-graph. For a configuration $\omega$ and edge set $A\subset E$, we use $\omega(A)$ for the restriction of $\omega$ to $A$.

\subsubsection*{Boundary conditions}To help study the random-cluster measure, we introduce boundary conditions. 

\begin{definition}
     A random-cluster \emph{boundary condition} $\xi$ on $G=(V,E)$ is a partition of $V$, such that the vertices in each element of the partition are identified with one another. The random-cluster measure with boundary conditions $\xi$, denoted $\pi^{\xi}_{G,p,q}$, is the same as in~\eqref{eq:rcmeasure} except the number of connected components $c(\omega)= c(\omega;\xi)$ would be counted with this vertex identification, i.e., if $v,w$ are in the same element of $\xi$, they are always counted as being in the same connected component of $\omega$ in~\eqref{eq:rcmeasure}. 
     In this manner, the boundary condition can alternatively be seen as ghost ``wirings" of the vertices in the same element of $\xi$.
\end{definition} 

\noindent
     The \emph{free} boundary condition, $\xi = 0$, is the one whose partition consists only of singletons.
     For a subset $\partial V \subset V$, the \emph{wired} boundary condition on $\partial V$, denoted $\xi = 1$, is the one whose partition has all vertices of $\partial V$ in the same element and all vertices of $V\setminus \partial V$ as singletons; i.e., $\xi = \{\partial V\} \cup \bigcup \{v: v\in V\setminus \partial V\}$. For boundary conditions $\xi,\xi'$ we say that $\xi \le \xi'$ if $\xi$ is a finer partition than $\xi'$. 
     We have the following important monotonicity in boundary conditions: for any two boundary conditions $\xi$ and $\xi'$ with $\xi\ge \xi'$, 
 we have $\pi_{G,p,q}^{\xi} \succcurlyeq \pi_{G,p,q}^{\xi'}$ where $\succcurlyeq$ denotes stochastic domination.

 \subsubsection*{Uniqueness/non-uniqueness transition on the $\Delta$-regular tree.}
 As the geometry of the random graph is locally treelike, its dynamical transition point should be inherited from a  transition on the $\Delta$-regular tree. Throughout this paper, we denote by $\T_h := \T_{h,\Delta}= (V(\cT_h),E(\cT_h))$ the rooted (at $\rho$) $\Delta$-regular complete tree of depth $h$ (the root has $\Delta$ children, and all other vertices have $\Delta- 1$ children and one parent). Since the tree has depth $h<\infty$, evidently it is not actually $\Delta$-regular, and has leaves $\partial \cT_h= \{w\in V(\T_h): d(\rho,w) =h\}$ (where $d(\cdot,\cdot)$ denotes graph distance); observe that
\begin{align}\label{eq:T-h-vertices-edges}
    |V(\cT_h)| & = 1+  \Delta \sum_{i=1}^{h} d^{i-1} \le 2\Delta d^{h}\,,\qquad \mbox{and} \qquad|E(\cT_h)| = \Delta \sum_{i=1}^{h} d^{i-1} \le 2\Delta d^{h}\,,
\end{align}
and $|\partial \cT_h|= \Delta d^{h-1}$. 
The wired boundary condition ``1" is  the one that wires all vertices of $\partial \cT_h$ together.

For every $\Delta \ge 3$ and $q\ge 1$, the random-cluster measure $\pi_{\cT_h,p,q}^{1}$ undergoes a transition at $p_u(q,\Delta)$: when $p<p_u(q,\Delta)$ the probability that $\rho$ is connected to $\partial \T_h$ in $\omega$ goes to $0$ as $h\to\infty$, whereas when $p>p_u(q,\Delta)$ it stays bounded away from zero~\cite{Haggstrom}. (While in general $p_u(q,\Delta)$ does not have a closed form, it can be expressed as the root of an explicit formula: see~\cite{Haggstrom,BGGSVY}.) 
A key fact (see~\cite[Theorem 1.5]{Haggstrom}) we will use is that whenever  $p<p_u(q,\Delta)$ we have that $\hat p$ (the probability of a cut-edge being open) satisfies
\begin{align}\label{eq:hat-p-inequality}
    \hat p:= \frac{p}{q(1-p)+p} < \frac 1d\,, \qquad \text{where}\qquad d:=\Delta-1\,.
\end{align}



\subsection{Markov chain mixing times.}
Consider a (discrete-time) Markov chain with transition matrix $P$ on a finite state space $\Omega$, reversible with respect to an invariant distribution $\pi$; denote the chain initialized from $x_0$ by $(X_t^{x_0})_{t\ge 0}$. Its \emph{mixing time} is given by 
\begin{align*}
    \tmix = \tmix(1/4)\,,\qquad \mbox{where}\qquad \tmix(\epsilon) = \min \{t: \max_{x_0 \in \Omega} \| P(X_t^{x_0} \in \cdot ) - \pi\|_\tv \le \epsilon\}\,,
\end{align*}
where the total-variation distance between $\mu$ and $\nu$ is given by 
$$\|\mu - \nu\|_\tv = \frac 12 \|\mu - \nu\|_{1} = \inf_{(U,V)\sim \mathbb P:  U\sim \mu, V\sim \nu} \mathbb P(U\ne V)\,.$$
Here the infimum runs over all couplings of $\mu,\nu$. 
By this definition, to bound the mixing time, it suffices to bound the \emph{coupling time} of the dynamics; i.e., if we construct a coupling $\mathbb P$ of the steps of the chain such that for each $x_0,y_0 \in \Omega$, we have 
$\mathbb P(X_T^{x_0} \neq X_T^{y_0}) \le 1/4$, then $\tmix \le T$. It is a standard fact that $\tmix(\delta) \le \tmix \log (2\delta^{-1})$. See chapters 4--5 of~\cite{LP} for more details. 

\subsubsection*{A coupling for the FK-dynamics.}
Recall the definition of the FK-dynamics from the introduction. Note that in the presence of boundary conditions $\xi$, the only change is that in step (2) of the FK-dynamics transitions, the status of $e$ being a cut-edge is dictated by whether its presence changes $c(\omega_t; \xi)$. 

For the FK-dynamics, there is a canonical choice of coupling known as the \emph{identity coupling}. This is the coupling that couples the evolution of two copies of the FK-dynamics, $(X_t^{x_0})$ and $(X_t^{y_0})$, by using the same random edge $e_t$ and the same uniform random number $U_{e_t,t}$ to decide whether to add or remove $e_t$. When $q \ge  1$, the identity coupling is a \emph{monotone coupling}, in the sense that if $X_t^{x_0} \le X_t^{y_0}$ then $X_{t+1}^{x_0} \le X_{t+1}^{y_0}$ with probability $1$. The identity coupling can also be extended to a simultaneous coupling of all the Markov chains $(X_t^{x_0})$ indexed by their initial configuration $x_0\in \{0,1\}^E$ (i.e., a \emph{a grand coupling}), so that if $x_0\le y_0$ we have $X_t^{x_0}\le X_t^{y_0}$ for all $t\ge 0$. As a consequence, the coupling time starting from any pair of configurations is bounded by the coupling time starting from the free $x_0 = 0$ and wired $y_0 = 1$ configurations.


\section{Extended proof sketch}\label{sec:proof-strategy}
In this section, we provide a detailed sketch of our proof of Theorem~\ref{thm:main-fk}, outlining the structure of the argument and highlighting some of the key technical difficulties we encountered.
Most of the paper is dedicated to upper bounding the mixing time of the FK-dynamics by $O(n \log n)$, so the sequel is dedicating to sketching that proof. The matching lower bound follows from coupling a certain projection of the FK-dynamics to a product chain and is derived in Section~\ref{sec:proof-lower-bound}.

\subsection{Proof outline}

Let $\cG = (V(\cG),E(\cG))$ be an $n$-vertex graph. 
Let $(X_{t}^1)_{t \ge 0}$ and $(X_{t}^0)_{t \ge 0}$ be two realizations of the FK-dynamics started from the all-wired 
and all-free configurations, respectively, and coupled via the identity coupling as defined in Section~\ref{sec:prelim}.

Our goal is to show that there exists $T = O(n \log n)$ such that for every vertex $v \in V(\cG)$, with probability $1-o(n^{-1})$, the configurations $X_{T}^1$ and $X_{T}^0$ agree on the $\Delta$ edges incident $v$, denoted
\begin{align}\label{eq:N-v}    \cN_v := \{e\in E(\cG): v\in e\}\,.
\end{align}
A union bound over the $n$ vertices would then imply that under the identity coupling $X_{T}^1 = X_{T}^0$ 
with probability $1-o(1)$. By the monotonicity of the FK-dynamics under the identity coupling, this would show that the mixing time of the FK-dynamics is at most $T = O(n \log n)$. 

There are two key stages to establishing this coupling, each of which we describe next.

\bigskip
\noindent\textbf{Stage I.}
    In the first stage of the coupling, we show that after
    an initial \emph{burn-in period} of $T= O(n\log n)$ steps, 
    the configuration $X_{T}^1$ 
    is \emph{shattered}.
    That is, its connected components have 
    constant size in expectation, and every component is of size $O(\log n)$ with high probability;
    more precisely, we show that the size of the connected components have 
    exponential tails. Since $X_{T}^1 \ge X_{T}^0$, the same holds for $X_{T}^0$.
    
    The intuition behind our proof of shattering after $T$ steps goes as follows.     
    Consider the balls $B_r(v)$ for $v \in V(\cG)$ where $r=O(1)$ is a sufficiently large constant.
    For each $v \in V(\cG)$,
    let $(Z_t^v)$ denote the chain on $B_r(v)$ with a fixed \emph{wired} boundary condition outside of $B_r(v)$. 
    The evolution of $(Z_t^v)$ and $(X_t^1)$ can be coupled with the same identity coupling
    by ignoring the updates outside of $B_r(v)$ for $(Z_t^v)$; by monotonicity, for every $v \in V(\cG)$ and $t \ge 0$, we have
    $Z_t^v \ge X_t^1(B_r(v))$. Consequently, we can even take the \emph{minimum} (intersection) of the chains $Z_t^v$ over $v\in V(\cG)$, to obtain a configuration $\omega_t$ by setting $\omega_t(e) = 1$ if $Z_t^v(e) = 1$ for \emph{all} $v\in V(\cG)$. Since all these chains are coupled using the same randomness, we maintain the domination $\omega_t \ge X_t^1$ for all $t\ge 0$.

    
%

    We thus focus on showing the shattering property for $\omega_T$. Notice that we can bound the connected component of a vertex $v$ in $\omega_T$ via an iterative exploration process.
    We initialize a set $A$ as the connected component $\mathcal C_v$ of $v$ in $Z_T^v(B_r(v))$ and initialize $\partial A$ to $\mathcal C_v \cap \partial B_r(v)$. For each $u\in \partial A$ (while $\partial A \neq \emptyset$), we
        \begin{enumerate}
            \item Add to $A$ the connected component $\mathcal C_u$ of $u$ in $Z_T^u(B_r(u))$
           \item   Add to $\partial A$ all vertices in $\mathcal C_u \cap \partial B_r(u)$. Remove $u$ from $\partial A$.
    \end{enumerate}
    The procedure ends when $\partial A$ is empty and outputs an edge set $A$ necessarily containing the component of $v$ in $\omega_T$. See the depiction in Figure~\ref{fig:overview}. 
    The nature of this exploration process lends itself naturally to comparison with a branching process in which the  ``children" of $u$ are the vertices connected to $u$ through $Z_T^u$. (It turns out that the revealing of these configurations can be done in such a way that although they are all coupled, the dependencies between configurations $Z_T^u(B_r(u))$ are negligible: we comment on this later.) We will show that with high probability over $\cG$, the resulting branching process is \emph{sub-critical}. 
    
    
    To see this, first note that since the mixing time on $B_r(v)$ is $O(1)$ ($r$ is constant), 
    after $T = \Theta(n\log n)$ steps, 
    enough updates have occurred in each ball $B_r(v)$ so that 
    the chains $(Z_t^v)$ have all mixed with high probability.
    Hence, up to a small error, 
    we can consider instead the branching process 
    where the number of children of $v$ is given by the number of connections
    of $v$ to the boundary of $B_r(v)$ in a sample from $\pi_{B_r(v)}^1$.
    
    \begin{figure}
        \centering
        \begin{tikzpicture}
        \node at (0,0) {
        \includegraphics[width = .8\textwidth]{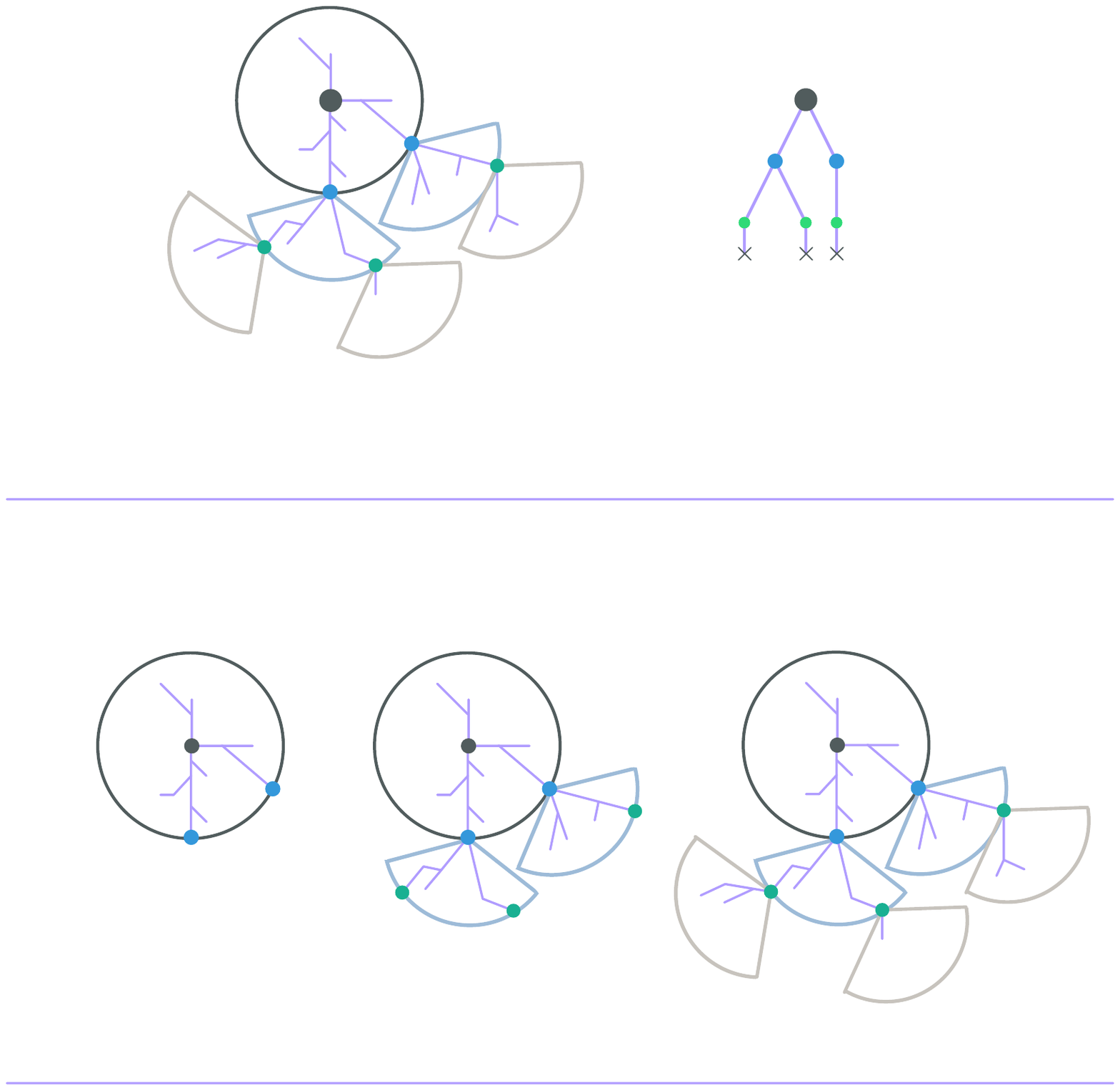}};
        \draw[<->, thick] (-6.45,1.05)--(-5.37,1.05);
        \node[font = \small] at (-5.925, 1.25) {$r$};
        \node[font = \small] at (-5.05, 1.25) {$v$};

        \end{tikzpicture}
        \vspace{-.35cm}
        \caption{Left: Upon exposing the localized FK-dynamics $Z_T^v$ on $B_r(v)$, the connected component of $v$ (purple) reaches two boundary points of $B_r(v)$ (blue). Middle: The revealing procedure then exposes their localized configurations, in their balls of radius $r$. Right: The procedure continues in that manner until these connected components die out.}
        \label{fig:overview}
    \end{figure}
    
   Now, most $O(1)$-sized balls in a random $\Delta$-regular graph are trees.
   A key characteristic of the uniqueness regime $p<p_u(q,\Delta)$ is that in the wired $\Delta$-regular tree $\cT_r$, the expected number of leaves connected to $v$ under $\pi_{\cT_r}^1$ is less than $1$ as long as $r$ is large. 
    As long as the role played by non-tree balls in $\cG$ is bounded, this would imply the desired sub-criticality of the dominating branching process. 
   We in fact need concentration bounds on the number of explored vertices in this branching process; towards this we show that $\hat p$ is the actual exponential decay rate of root-to-leaf connectivities on $\Delta$-regular (wired) trees.
    
    \begin{lemma}\label{lem:exp-decay-wired-tree-new}
    	Let $\T_h$ denote the rooted $\Delta$-regular complete tree of depth $h$ and let
    	$p<p_u(q,\Delta)$. 
    	Let $(1,\circlearrowleft)$ be the wired boundary condition on $\partial \cT_h$ that additionally wires the root of $\T_h$ to $\partial \cT_h$. 
    	There exists a constant $C = C(p,q,\Delta)$ such that for every $h$ and every leaf $u\in \partial \cT_h$, 
    	\begin{align*}
    	\pi_{\cT_h}^{(1,\circlearrowleft)} (u~\textrm{is connected to the root of }~\T_h) \le C \hat p^h\,.
    	\end{align*}
    \end{lemma}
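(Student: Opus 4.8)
The plan is to reduce the connectivity probability to a product of conditional single‑edge probabilities along the unique root‑to‑$u$ path, and then to control each factor through the standard random‑cluster tree recursion, whose linearization at $0$ has slope exactly $d\hat p<1$ in the uniqueness regime. Write the root‑to‑leaf path as $\rho=w_0,w_1,\dots,w_h=u$, with path edges $f_k=\{w_{k-1},w_k\}$. Since connectivity in $\omega$ is defined through the subgraph $(V(\cT_h),\{e:\omega(e)=1\})$ and the tree path is unique, the event in the lemma is exactly $\{f_1=\cdots=f_h=1\}$, so by the chain rule $\pi_{\cT_h}^{(1,\circlearrowleft)}(u\leftrightarrow\rho)=\prod_{k=1}^{h}\pi_{\cT_h}^{(1,\circlearrowleft)}(f_k=1\mid f_1=\cdots=f_{k-1}=1)$. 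Let $S_k$ be the subtree of $\cT_h$ rooted at $w_k$ (of depth $h-k$, with its leaves in $\partial\cT_h$ wired to the ghost vertex $\mathfrak g$) and $R_k=\{f_k\}\cup E(S_k)$. The point is that conditioning on $f_1=\cdots=f_{k-1}=1$ forces $w_{k-1}$ to be connected, through $E(\cT_h)\setminus R_k$ together with the $(1,\circlearrowleft)$ wiring, to every leaf of $S_k$; hence the boundary condition that $\omega|_{R_k^c}$ induces on $\partial R_k=\{w_{k-1}\}\cup\partial S_k$ is \emph{deterministically} the fully wired one, and the domain Markov property gives $\pi_{\cT_h}^{(1,\circlearrowleft)}(f_k=1\mid f_1=\cdots=f_{k-1}=1)=\pi^{1}_{R_k}(f_k=1)$. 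Splitting over whether $f_k$ is open and whether $w_k$ is connected to $\mathfrak g$ inside $S_k$, and accounting for the component merge (a $q^{-1}$ factor), a direct computation yields
\[
\pi^{1}_{R_k}(f_k=1)=g(r_{h-k}),\qquad g(r):=\frac{p r+p/q}{\,r+1-p+p/q\,},
\]
where $r_j:=A_j/B_j$ is the ratio of the partition functions of the depth‑$j$ wired subtree over configurations in which its root is, respectively, connected / not connected to $\mathfrak g$. Thus $\pi_{\cT_h}^{(1,\circlearrowleft)}(u\leftrightarrow\rho)=\prod_{j=0}^{h-1}g(r_j)$, with $r_0=+\infty$ so that the $j=0$ factor is $g(+\infty)=p$; note also $g(0)=\hat p$, since $1-p+p/q=(q(1-p)+p)/q$.

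Next I would record the recursion for the ratios, obtained by the usual transfer argument over the $d$ children of an internal vertex (again tracking the $q^{-1}$ merge factors): $r_{j+1}=\big((1+x_j)^d-1\big)/q$ with $x_j=p r_j/\big((1-p)(r_j+1)+p/q\big)$. Two elementary facts follow. First, $x_j<p/(1-p)$ for every $j$, so $r_j\le R_{\max}:=((1-p)^{-d}-1)/q$ for all $j\ge1$. Second, linearizing at $r=0$ gives $x_j=q\hat p\,r_j+O(r_j^2)$ and $r_{j+1}=(d/q)x_j+O(x_j^2)$, hence $r_{j+1}=d\hat p\,r_j+O(r_j^2)$. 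I would then invoke the defining property of $p_u(q,\Delta)$: since $p<p_u$, the quantity $\pi^{1}_{\cT_h}(\rho\leftrightarrow\partial\cT_h)$ tends to $0$ as $h\to\infty$, and expressing this through the same recursion applied at the (degree‑$\Delta$) root forces $x_{h-1}\to0$ and therefore $r_j\to0$. Combining $r_j\to0$ with the linear rate and with $d\hat p<1$ (which is precisely \eqref{eq:hat-p-inequality}), for $\theta:=(1+d\hat p)/2\in(d\hat p,1)$ there is $j_0$ with $r_{j+1}\le\theta r_j$ for all $j\ge j_0$; together with $r_j\le R_{\max}$ for $j\ge1$ this yields $r_j\le C_1\theta^{\,j}$ for all $j\ge1$, and in particular $\sum_{j\ge1}r_j<\infty$.

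To finish, observe that $g$ is nondecreasing on $[0,\infty)$ (its derivative has the sign of $q-1\ge0$), that $g(0)=\hat p$, and that $g\in C^1([0,R_{\max}])$; hence there is $L<\infty$ with $g(r)\le\hat p(1+Lr)\le\hat p\,e^{Lr}$ for all $r\in[0,R_{\max}]$. Peeling off the $j=0$ factor $g(+\infty)=p=(q(1-p)+p)\hat p$ and using $r_j\le R_{\max}$ for $j\ge1$,
\[
\pi_{\cT_h}^{(1,\circlearrowleft)}(u\leftrightarrow\rho)=\prod_{j=0}^{h-1}g(r_j)\ \le\ \big(q(1-p)+p\big)\,\hat p^{\,h}\,\exp\!\Big(L\!\sum_{j\ge1}r_j\Big)\ =:\ C\,\hat p^{\,h},
\]
with $C=(q(1-p)+p)\exp\!\big(LC_1/(1-\theta)\big)$ depending only on $p,q,\Delta$, as claimed.

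The main obstacle I anticipate is the first step: setting up the path‑revealing and verifying, via the domain Markov property, that conditioning on a prefix of path edges being open induces \emph{exactly} the fully wired boundary condition on the pendant region $R_k$ (so that the conditional edge probabilities are governed by the genuinely wired subtree ratios $r_j$, rather than by measures with a subtler boundary condition), together with getting all the component‑counting bookkeeping right in the definitions of $A_j,B_j$ and in the identity $\pi^{1}_{R_k}(f_k=1)=g(r_{h-k})$. Once that identity and the recursion are in place, the remainder is one‑variable calculus: reading off the linear rate $d\hat p$ near $0$, invoking $p<p_u$ for convergence of the iteration, and summing a geometric series.
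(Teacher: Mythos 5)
Your proposal is correct, but it follows a genuinely different route from the paper. The paper first proves (its Lemma~\ref{lem:exp-decay}, via the recursion of~\cite{BGGSVY} for the root-to-boundary probability $\varphi_h$ and L'H\^opital) that $\varphi_h \le (\hat p d)^{h+o(h)}$, and then bounds the root-to-single-leaf probability by a one-step recursion $\vartheta_h \le \vartheta_{h-1}\left[pq^2\,\pi^1_{\cT_h}(I\ge 1)+\hat p\right]$, where the $q^2$ factors come from the bounded boundary-condition distortion of Lemma~\ref{lemma:simple-rc-bound}; the convergent product then gives $C\hat p^h$, and the $(1,\circlearrowleft)$ case is obtained from the plain wired case by another $q^2$ comparison. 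You instead telescope the probability of the unique open root-to-$u$ path into conditional single-edge probabilities, and the crucial observation — which I checked and which does hold — is that the $\circlearrowleft$ wiring of the root to $\partial\cT_h$ makes the boundary condition induced on the pendant region $R_k$ \emph{deterministically} fully wired once the prefix edges are conditioned open, so each factor is exactly $g(r_{h-k})$ for the wired-subtree partition-function ratio $r_j$ (your formula for $g$ and the recursion $r_{j+1}=((1+x_j)^d-1)/q$ both check out, with $g(0)=\hat p$, $g(\infty)=p$ and linearization $r_{j+1}\approx d\hat p\, r_j$). The two approaches use the same input ($\varphi_h\to 0$ for $p<p_u$ and $\hat p d<1$), but yours is exact along the path: it avoids the repeated $q^2$ comparisons, automatically yields the matching lower bound $\hat p^{\,h}\le \pi_{\cT_h}^{(1,\circlearrowleft)}(u\leftrightarrow\rho)$ since each factor is at least $\hat p$, and is tailored precisely to the $(1,\circlearrowleft)$ boundary condition (indeed it would not work verbatim for plain wired boundary, where the induced condition on $R_k$ is configuration-dependent). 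The price is that you must derive the ratio recursion from scratch and supply the short extra step converting the definition of $p_u$ (stated for the degree-$\Delta$ root, $\varphi_h\to0$) into $r_j\to 0$ for the $d$-ary subtree ratios; as you note, writing $\varphi_h$ at the degree-$\Delta$ root in terms of $x_{h-1}$ does this, so there is no real gap, only bookkeeping that your sketch correctly anticipates.
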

    Since there are $O(d^r)$ leaves in $\cT_r$, 
    the lemma implies that the expected number of connections 
    from $v$ to the boundary is $O ((\hat p d)^r)$, which is less than one for $r$ large
    (as $\hat p d < 1$ when $p < p_u(q,\Delta)$).
    The reason we establish this decay for the boundary condition $(1,\circlearrowleft)$, instead of simply the wired one, is to eliminate the potential dependencies between the chains $(Z_t^u)$ through their roots. 
    
    To conclude our sketch of the  ideas in Stage I, 
    we mention two fundamental challenges to implementing the above approach. 
    First, since all the chains $(Z_t^v)$ are coupled via the identity coupling, revealing their configurations while maintaining some independence is delicate (see Lemma~\ref{lem:revealing-process-law}). We perform this revealing by additionally wiring the root to the boundary as hinted by the $(1,\circlearrowleft)$, and for each $u$, only revealing the new randomness needed to run the resulting chain on $B_R(u)$ up to time $T$.   
    Roughly speaking, the wired boundary conditions allow us to evolve the un-revealed configuration in $B_r(v)$ in isolation.
    
    Secondly, not every ball $B_r(v)$ in $\cG$ will be a tree, and there are strong correlations between the short cycles of the underlying graph and the places where the random-cluster configuration is more wired. A key contribution of our work is to construct a simultaneous revealing procedure for the random graph $\cG$ with the overlayed random-cluster configuration of $\omega_T$ in a manner that handles these dependencies and can be approximated by the above sub-critical branching process; see Definition~\ref{def:revealing-FK-on-graph}.  
    
    Putting all these ideas together, we establish the following exponential tail bound (shattering estimate) on cluster sizes of $X_{T}^1$ after a burn-in period of $O(n\log n)$ time.



\begin{theorem}\label{thm:pi-exponential-decay}
	Let $p<p_u(q,\Delta)$ and suppose that $\cG$ is sampled from $\Prrg$, the uniform distribution over $\Delta$-regular graphs on $n$ vertices.
	Then,
	 for every $v \in V(\cG)$, 
	 $k \ge 1$  and $T \ge C n\log n$, where $C > 0$ is a sufficiently large constant, 
	 with probability $1- \exp ( - \Omega(k)) - O(n^{-5})$, the random graph $\cG$ is such that 
	\[
	P(|\cC_v(X_{T}^1)|\ge k) \le \exp ( - \Omega(k)) + O(n^{-5})\,.
	\]
\end{theorem}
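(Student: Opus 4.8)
The plan is to dominate $X_T^1$ from above by a configuration built from \emph{localized} FK-dynamics, reduce the size of $\cC_v$ in that configuration to the number of $O(1)$-balls visited by an exploration, and dominate that count by the total progeny of a subcritical Galton--Watson process whose offspring has mean strictly below one; its exponential tail is precisely the assertion. Fix a constant $r=r(p,q,\Delta)$, to be taken large. For each $u\in V(\cG)$ let $(Z_t^u)_{t\ge 0}$ be the FK-dynamics on $B_r(u)$ with the all-wired boundary condition on $V(\cG)\setminus B_r(u)$, coupled to $(X_t^1)$ via the identity coupling by discarding updates on edges outside $B_r(u)$. Monotonicity of the identity coupling together with monotonicity of $\pi$ in the boundary condition (valid as $q\ge1$) gives $Z_t^u\ge X_t^1(B_r(u))$ for all $t$, so the configuration $\omega_t$ that opens $e$ iff $e$ is open in $Z_t^u$ for \emph{every} $u$ with $e\in E(B_r(u))$ satisfies $\omega_t\ge X_t^1$; hence $|\cC_v(X_T^1)|\le|\cC_v(\omega_T)|$ and it suffices to bound the latter.

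To bound $|\cC_v(\omega_T)|$, run the Stage~I exploration: start from $\cC_v$, the connected component of $v$ in $Z_T^v(B_r(v))$, with active set $\cC_v\cap\partial B_r(v)$; repeatedly pop an active vertex $u$, adjoin the component $\cC_u$ of $u$ in $Z_T^u(B_r(u))$, and push $\cC_u\cap\partial B_r(u)$. Since $|B_r(u)|=O(1)$ uniformly (as $r,\Delta=O(1)$) and the output edge set contains $\cC_v(\omega_T)$, we get $|\cC_v(\omega_T)|=O(N)$, where $N$ is the number of balls visited, so it remains to show $P(N\ge k)\le \exp(-\Omega(k))+O(n^{-5})$, where $P$ now denotes the joint law of $\cG$ and the dynamics. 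The offspring $|\cC_u\cap\partial B_r(u)|$ of a popped vertex $u$ is deterministically at most $|\partial B_r(u)|=O(1)$. The crucial point is to perform the exploration in lockstep with the graph revealing of Definition~\ref{def:revealing-FK-on-graph}, exposing at step $u$ only the part of $\cG$ needed to determine $B_r(u)$ and only the fresh identity-coupling randomness needed to run $(Z_t^u)$ --- with $u$ additionally wired to $\partial B_r(u)$, the $(1,\circlearrowleft)$ condition --- up to time $T$. With this bookkeeping, Lemma~\ref{lem:revealing-process-law} gives that, conditionally on the exploration so far, $|\cC_u\cap\partial B_r(u)|$ is stochastically dominated by $|\cC_\rho\cap\partial\cT_r|$ under $\pi_{\cT_r}^{(1,\circlearrowleft)}$, up to an $O(n^{-6})$ total-variation correction accounting for (a) the event that $B_r(u)$ fails to be $O(1)$-$\treelike$ or collides with previously revealed structure (controlled via Fact~\ref{fact:random-graph-treelike}) and (b) the $O(1)$-mixing-time chain $(Z_t^u)$ failing to equilibrate after the $\Theta(\log n)$ updates it receives in $T\ge Cn\log n$ steps (with $C$ chosen so this is $\le n^{-6}$).

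By Lemma~\ref{lem:exp-decay-wired-tree-new}, $\mathbb E\,|\cC_\rho\cap\partial\cT_r|\le|\partial\cT_r|\cdot C\hat p^{\,r}=C\Delta d^{-1}(\hat p d)^{r}$, which is $<1$ once $r$ is large, since $\hat p d<1$ by~\eqref{eq:hat-p-inequality}; and this offspring is bounded by $|\partial\cT_r|=O(1)$. A subcritical Galton--Watson process with bounded offspring has total progeny with exponential tails, so the dominating process yields $P(N\ge k)\le \exp(-\Omega(k))$, with the $O(n^{-6})$ corrections accruing to $O(n^{-5})$ over the $\le n$ popped vertices (taking $C,r$ large leaves room in the exponents). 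The ``$\cG$ is such that'' formulation then follows by applying Markov's inequality to the dynamics-conditional probability $g(\cG):=P(|\cC_v(X_T^1)|\ge k\mid\cG)$ as a function of $\cG$: since the joint bound $\mathbb E_\cG[g(\cG)]\le\exp(-\Omega(k))+O(n^{-5})$ holds with room to spare, Markov gives that on a graph event of probability $1-\exp(-\Omega(k))-O(n^{-5})$ one has $g(\cG)\le\exp(-\Omega(k))+O(n^{-5})$.

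The main obstacle is the second step, and specifically the legitimacy of Definition~\ref{def:revealing-FK-on-graph} and Lemma~\ref{lem:revealing-process-law}. The chains $(Z_t^u)$ are all driven by a single copy of the identity-coupling randomness and are thus highly correlated; worse, the equilibrium law of $Z_T^u$ genuinely depends on the global geometry of $\cG$ near $u$ --- precisely the short cycles, which are themselves correlated with where the random-cluster configuration is ``more wired.'' The fix is to reveal the graph and the dynamics together so that each exploration step exposes only fresh randomness, turning the offspring counts into a genuine branching process, while the extra wiring of each visited root to its boundary lets the still-hidden configuration inside $B_r(u)$ be evolved in isolation, short cycles being exposed as they arise and their bounded effect absorbed into the offspring bound. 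Arranging that the conditional offspring law is genuinely dominated by the i.i.d.\ subcritical law of Lemma~\ref{lem:exp-decay-wired-tree-new}, with all error terms controlled uniformly over the $\le n$ exploration steps, is the delicate heart of the argument.
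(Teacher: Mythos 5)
Your high-level plan matches the paper's: dominate $X_T^1$ by a configuration built from localized wired chains, explore its cluster via $r$-balls using the joint revealing of Definition~\ref{def:revealing-FK-on-graph}, dominate by a branching process whose progeny is governed by $\pi_{\cT_r}^{(1,\circlearrowleft)}$-connectivities (Lemma~\ref{lem:exp-decay-wired-tree-new}), use subcriticality from $\hat p d<1$, and transfer the joint bound to a quenched one by Markov. You also correctly identify the key obstruction (correlated chains, cycle/wiring correlations) and name Lemma~\ref{lem:revealing-process-law} as the vehicle for the fresh-randomness bookkeeping.

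The gap is in the error accounting for the ``bad'' steps. You claim each exploration step contributes an $O(n^{-6})$ total-variation correction, ``taking $C,r$ large.'' But only the insufficient-mixing part of that correction can be tuned by $C$; the graph-geometric parts (the ball $B_r^{out}(u)$ failing to be a tree, or colliding with $\cA_{m-1}$) have probability $\Theta\big(d^{2r}(|V(\cA_{m-1})|+d^r)/n\big)$ by Lemma~\ref{lem:ball-intersection-probability} --- already $\Omega(1/n)$ at the first step and growing as the exploration proceeds --- and $r$ does not help since $d^{2r}/n$ remains $\Omega(1/n)$ for constant $r$. So your per-step correction cannot be made $o(1/n)$, and the claimed $O(n^{-5})$ accrual does not follow. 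Moreover, the revealing scheme of Definition~\ref{def:revealing-FK-on-graph} that you invoke does not have deterministically $O(1)$ progeny at a bad step: in step~4 the new offspring consist of all vertices of $\partial(\cA_m\setminus\cA_0)$ newly connected to $\cV_0$, and a collision can bridge $\cV_0$ to a number of previously unconnected boundary vertices bounded only by $|V(\cA_m\setminus\cA_0)|$, not by $|\partial\cT_r|$. The paper therefore models a bad step as occurring with probability $n^{-1/2}$ and with progeny $|V(\cT_r)|\sum_{\ell\le k}Z_\ell$ (Definition~\ref{def:branching-process}), and then establishes the tail bound via Lemma~\ref{lem:modified-branching-process-tail-bound}: with probability $1-O(n^{-\delta M})$ at most $M$ $\mathsf{Bad}$ steps occur, and the total population is dominated by a sum of $M$ independent subcritical GW excursions initialized from the population at the preceding bad step. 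Your plain GW-with-bounded-offspring argument does not account for the size-dependence of the process or the polynomially-rare bad steps, and handling those is the technical heart of this theorem.
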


\noindent
(Recall that $\cC_v(X_{T}^1)$ denotes the component of vertex $v$ in $X_{T}^1$.)
By a union bound, Theorem~\ref{thm:pi-exponential-decay} implies that all components of $X_{T}^1$ are of size at most $O(\log n)$ with high probability. Theorem~\ref{thm:pi-exponential-decay} is proved in \S\ref{sec:shattering}.

    Using the above arguments, we can further show that
    for each $v \in  V(\cG)$
    the boundary condition induced on the ball $B_R(v)$ of radius $R= (\frac 12 - \delta) \log_d n$
    by the configuration of
    $X_{T}^1$ on the edges outside of $B_R(v)$ is typically $K$-\emph{sparse}, i.e., the boundary condition induces only $K = O(1)$ many connections on $\partial B_R(v)$. Theorem~\ref{thm:k-R-sparse-whp} establishes that this property holds for all $v\in V(\cG)$ simultaneously with high probability.

    \bigskip\noindent\textbf{Stage II.} \ After the initial $T = O(n \log n)$ steps of the burn-in phase, the configurations $X_{T}^1$ and $X_{T}^0$ shatter 
    and induce sparse boundary conditions (with up to $O(1)$ vertices wired through the boundary) on every ball $B_R(v)$ of radius $R= (\frac 12 - \delta) \log_d n$ with high probability.
    It remains to show that the copies of the FK-dynamics 
    will couple on $\cN_v$ except with probability $1-o(n^{-1})$ in
    an additional $O(n \log n)$ steps.
    
    Starting at time $T$, we
    consider localized copies of the FK-dynamics in each ball $B_R(v)$ with $v\in V(\cG)$.
    This is done by ignoring (or \emph{censoring}) the moves of the dynamics outside of $B_R(v)$
    which has the effect of ``freezing'' the two distinct boundary conditions induced by 
     $X_{T}^1(E(\cG)\setminus B_R(v))$ and $X_{T}^0(E(\cG)\setminus B_R(v))$ on the boundary of $B_R(v)$. With the sparse boundaries conditions frozen on $\partial B_R(v)$, the two coupled chains continue to run inside $B_R(v)$, and we can more easily analyze their configurations near $v$.
     The censoring technology of~\cite{PWcensoring} implies that if these censored chains are coupled on $\cN_v$, then so are the original chains.  
      
    In Lemma~\ref{lemma:main-tree-mixing}, we show that if $X_{T}^1(E(\cG)\setminus B_R(v))$  and $X_{T}^0(E(\cG)\setminus B_R(v))$ induce sparse boundary conditions on $B_R(v)$, and $\cG$ is $(L,R)$-$\treelike$ with $L=O(1)$ (see Definition~\ref{def:(L,R)-treelike}), 
	the mixing time of the FK-dynamics on $B_R(v)$ is $O( d^R \log (d^R))$. In fact, we can establish a tight bound on the log-Sobolev constant
					of the FK-dynamics on $B_r(v)$
					under sparse boundary conditions, showing that 
					$\tmix(\epsilon) = O(d^R\log (d^R/\varepsilon))$.
					This slightly stronger fact turns out to be crucial for deducing the tight $O(n \log n)$
					bound on the mixing time of the FK-dynamics on $\cG$, i.e., 
					without an additional $\textrm{polylog}(n)$ factor.
 
 With this optimal bound on the local mixing on treelike balls, 
 we know that the localized chains have all mixed after $O(n \log n)$ steps of the FK-dynamics. Therefore, the probability that two instances of the FK-dynamics on $B_R(v)$ with distinct sparse boundary conditions $\xi$ and $\xi'$ are not coupled on $\cN_v$ is given by the total variation distance between $\pi_{B_R(v)}^\xi$ and $\pi_{B_R(v)}^{\xi'}$ on $\cN_v$.
 We show that this distance is $O(\hat p^{2R})$. 
 
 \begin{proposition}\label{prop:influence-probability-new}
 	Consider a vertex $v$ in a $\Delta$-regular graph $\cG$. 
 	If $B_R(v)$ is $L$-$\treelike$ and $\xi,\xi'$ are any two $K$-\emph{sparse} boundary conditions on $\partial B_R(v)$, there exists a constant $C = C(p,q,\Delta,L,K)$ such that
 	\begin{align*}
 	\|\pi_{B_R(v)}^{\xi}(\omega(\cN_v)\in \cdot)- \pi_{B_R(v)}^{\xi'}(\omega(\cN_v)\in \cdot ) \|_\tv \le C \ps^{2R}\,.
 	\end{align*}
 \end{proposition}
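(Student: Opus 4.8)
The plan is to compare the two measures through the connectivity pattern that $\omega$ on $E(B_R(v)) \setminus \cN_v$ induces among the $\Delta$ neighbors of $v$, and to show that this pattern is insensitive to the boundary condition unless \emph{two} of $v$'s neighbors are simultaneously connected, through vertex-disjoint open paths, to the $O(K)$ boundary vertices that $\xi$ or $\xi'$ actually wires together. The whole point is that a single such excursion (of probability $O(\ps^{R})$) does not change the pattern near $v$; two are needed, which yields the square.

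\textbf{Reduction to the neighbor partition.} Write $N(v) = \{w_1, \dots, w_\Delta\}$ and $E' := E(B_R(v)) \setminus \cN_v$. Conditioned on $\omega(E')$, the law of $\omega(\cN_v)$ is a random-cluster measure on the star with center $v$ and leaves $N(v)$, whose only relevant boundary data is the partition $\Pi$ of $N(v)$ recording which neighbors are connected to one another in $\omega(E')$ \emph{together with} the wiring $\xi$ (note $v$ itself is isolated in $E'$). Since this conditional law is a deterministic function of $\Pi$,
\[
\big\| \pi_{B_R(v)}^{\xi}(\omega(\cN_v) \in \cdot) - \pi_{B_R(v)}^{\xi'}(\omega(\cN_v) \in \cdot) \big\|_\tv \le \big\| \mathrm{law}_{\pi_{B_R(v)}^{\xi}}(\Pi) - \mathrm{law}_{\pi_{B_R(v)}^{\xi'}}(\Pi) \big\|_\tv\,.
\]

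\textbf{Sandwiching and disagreement localization.} Let $\zeta := \xi \vee \xi'$ be the coarsest common coarsening of $\xi$ and $\xi'$; it is $2K$-sparse, wires a set $W \subseteq \partial B_R(v)$ with $|W| \le 2K$, and is free off $W$. Since $q \ge 1$, the grand (identity) coupling realizes on a common space stationary configurations $\omega^{0} \le \omega^{\xi}, \omega^{\xi'} \le \omega^{\zeta}$ with $\omega^0 \sim \pi^{0}_{B_R(v)}$ (free) and $\omega^\zeta \sim \pi^\zeta_{B_R(v)}$; moreover, by the domain Markov property (reveal the $\omega^\zeta$-cluster of $W$ and resample identically outside it, where both measures restrict to free-boundary random-cluster measures) we may take the coupling so that $\omega^0$ and $\omega^\zeta$ agree off the $\omega^\zeta$-cluster of $W$. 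Since $\Pi(\cdot\,;\cdot)$ is monotone in both the configuration and the boundary condition, on the event $\Pi(\omega^0; 0) = \Pi(\omega^\zeta; \zeta)$ all four partitions coincide, whence
\[
\big\| \mathrm{law}_{\pi_{B_R(v)}^{\xi}}(\Pi) - \mathrm{law}_{\pi_{B_R(v)}^{\xi'}}(\Pi) \big\|_\tv \le \bbP\big(\Pi(\omega^0; 0) \ne \Pi(\omega^\zeta; \zeta)\big)\,.
\]

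\textbf{Two disjoint arms and the quantitative bound.} If $\Pi(\omega^0;0) \ne \Pi(\omega^\zeta;\zeta)$, some pair $w_i,w_j$ is connected in $(\omega^\zeta(E'),\zeta)$ but not in $\omega^0(E')$; by the disagreement localization, the connecting route meets the $\omega^\zeta$-cluster of $W$, so both $w_i$ and $w_j$ lie in the $\omega^\zeta$-cluster of $W$ inside $E'$. A short combinatorial (Menger-type) argument, using that $w_i\not\leftrightarrow w_j$ in $\omega^0(E')$ and that deleting $\cN_v$ places $w_i,w_j$ in distinct branches of the spanning tree of $B_R(v)$, shows that in $\omega^\zeta(E')$ there are two \emph{vertex-disjoint} open paths, one from $w_i$ and one from $w_j$, each terminating at $W$. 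We then dominate $\pi^\zeta_{B_R(v)}$ by the fully wired measure, condition on the configuration of the at most $L$ non-tree edges $H$ (Definition~\ref{def:k-treelike}) so that $B_R(v)\setminus H$ is a tree on which the random-cluster measure factorizes over connected components, and for each of the $\le 2^L$ choices and each of the $\le \binom{\Delta}{2}\,|W|^2$ choices of $(w_i,w_j)$ and their target vertices in $W$, the two arms live in disjoint subtrees, so their probabilities are conditionally independent, each at most $C'\ps^{R-O(L)}$ by Lemma~\ref{lem:exp-decay-wired-tree-new} (applied to the relevant wired subtree, with $O(L)$ extra contractions affecting only the constant). Multiplying and summing gives $C\ps^{2R}$ with $C=C(p,q,\Delta,L,K)$.

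\textbf{Main obstacle.} The delicate step is the one that produces $\ps^{2R}$ rather than $\ps^{R}$: one must establish that the boundary condition can perturb the neighbor partition $\Pi$ only via \emph{two} disjoint excursions from $N(v)$ to the wired part $W$, and make this rigorous despite the non-locality of the random-cluster model (an edge's conditional law depends on the whole of $B_R(v)$) and the $O(L)$ non-tree edges of $B_R(v)$, some of which may sit near $v$. The domain Markov property and the deletion of $\cN_v$ are the two levers that force disjointness; the remaining ingredients are then the sharp single-arm estimate of Lemma~\ref{lem:exp-decay-wired-tree-new} and the factorization of the random-cluster measure over the components of the (conditioned) treelike ball.
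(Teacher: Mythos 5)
Your route is genuinely different from the paper's, and it has a gap at exactly the step that is supposed to produce the doubled exponent. The paper first proves Lemma~\ref{lem:influence-event}, which builds a \emph{level-by-level} revealing coupling of $\pi^{\xi}_{B_R(v)}$ and $\pi^{\tau}_{B_R(v)}$: it exposes the cluster of $\mathfrak{V}_{B,\xi}$ from $\partial B_R(v)$ inward and \emph{halts} at the first depth where that cluster meets at most one vertex. This bounds the TV distance by $\pi^{\xi}(\Upsilon_{B,\xi})$, where $\Upsilon_{B,\xi}$ demands two cluster vertices \emph{at every depth} $1\le\ell\le R$. The paper then bounds $\pi^\xi(\Upsilon_{B,\xi})$ by splitting $B_R(v)$ into the $H$-free forests $\cF_i$ lying strictly between the depths that $H$ touches, and requiring two monotone open paths in distinct subtrees of \emph{every} $\cF_i$, whose heights sum to $R-O(L)$. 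Your proof instead reveals the whole $W$-cluster of $\omega^{\zeta}$ at once and argues that disagreement forces two of $v$'s neighbors into this cluster, from which you try to extract two \emph{vertex-disjoint} arms to $W$, each of length about $R$.

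The ``Menger-type'' step is where this breaks. When $L\ge 1$ the two arms need not be vertex-disjoint, and need not each have length $\approx R$. Take an $H$-edge $e$ at depth $\ell$ joining two branches $T_i$ and $T_j$ of the spanning tree, and condition on $\omega(e)=1$ (which identifies its endpoints $u_i\in T_i$, $u_j\in T_j$ in the conditioned tree). Then both $w_i$ and $w_j$ can lie in the $W$-cluster by opening the path $w_i\to u_i$ in $T_i$ ($\ell$ edges), the path $w_j\to u_j$ in $T_j$ ($\ell$ edges), and a \emph{single shared} arm $u_j\to W$ in $T_j$ ($R-\ell$ edges). That is $R+\ell$ distinct edges, so under $\pi^{\zeta}$ the event occurs with probability $\Theta(\ps^{R+\ell})$, and since there is a single cut vertex on the shared arm there are \emph{no} two vertex-disjoint paths from $\{w_i,w_j\}$ to $W$. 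Conditioning on $\omega(H)$ does not repair this; it is exactly the case $\omega(e)=1$ that routes both arms into the same subtree of $T_j$. So the disagreement probability of your coupling is at least $\Theta(\ps^{R+\ell})$, which for $\ell \ll R$ falls well short of $\ps^{2R}$, and the subtree-factorization you propose (each arm of length $R-O(L)$) is simply not available.

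Your reduction to the neighbor-partition $\Pi$ is a valid inequality, and the sandwich with $\zeta=\xi\vee\xi'$ and disagreement localization to the $W$-cluster are fine as far as they go, but revealing the full $W$-cluster is not efficient enough: two of $v$'s neighbors lying in the $W$-cluster is a much larger event than $\Upsilon_{B,\xi}$. In the configuration above, $\Upsilon_{B,\xi}$ fails (the cluster is a single arm below depth $\ell$), the paper's coupling halts and succeeds there, and the $\ps^{2R}$ rate survives. To get the sharp exponent one genuinely needs the two-at-\emph{every}-depth requirement of $\Upsilon_{B,\xi}$, which is what forces two monotone paths through each forest $\cF_i$; the ``two arms to $W$'' condition is strictly weaker and loses a factor that can be as large as $\ps^{-(R-\ell)}$.
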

\noindent
 (Recall that we say a boundary condition is $K$-sparse when there are only $K$ boundary wirings.)  We stress the importance of obtaining the sharp $\hat p^{2}$ decay rate here for the spatial mixing to support a union bound over $n$ vertices.
  Since
  $\hat p <1/d$ and $R =  (\frac 12 - \delta) \log_d n$, we have $\hat p^{2R} = o(n^{ - 1})$, but any 
  weaker bound on the decay rate would force us to choose a 
  larger $R$, which would cross the threshold at which point balls of $\cG$ are no longer $(L,R)$-$\treelike$ for $L = O(1)$, and we would lose control over the mixing time on $B_R(v)$. 
  
  The proof of this spatial mixing property is based on the fact that in order for information to travel from the boundary of $B_R(v)$ to $\cN_v$ there must be $\emph{two}$ disjoint open paths from $\cN_v$ to non-singleton elements of $\xi$ in $\partial B_R(v)$. 
  We contrast this to the more traditional bound on influence by the existence of a \emph{single} connection from the center of a ball to its boundary, which in our setting would only yield a bound of $\hat p^R$. (Such a bound by a single connectivity event is the one traditionally used on amenable graphs like $\mathbb Z^2$ to go from spatial mixing with \emph{any} positive rate of exponential decay to fast mixing: see~\cite{MOS,Alexander,BS}.)

\section{The FK-dynamics shatters quickly on random graphs}\label{sec:shattering}

Our first goal in this section
is to prove 
Theorem~\ref{thm:pi-exponential-decay} establishing existence of $\Tburn = O(n\log n)$ such that for $t\ge \Tburn$, the configuration $X_{\cG,t}^1$ is shattered. We will then use this to conclude that the boundary conditions $X_{\cG,t}^1$ induces on any ball of volume $o(\sqrt n)$ are $O(1)$-sparse. Let us now be more precise. 

	\begin{definition}\label{def:k-sparse-bc}
		A random-cluster boundary condition $\xi$ on an edge-subset $H\subset E(\cG)$ is said to be $K$-$\sparse$ if the number of vertices in non-trivial (non-singleton) boundary components of $\xi$ is at most $K$. 
	\end{definition}
	
	\begin{definition}\label{def:(k,r)-sparse}
		A random-cluster configuration $\omega$ on $\cG = (V(\cG),E(\cG))$ is \emph{$(K,R)$-$\sparse$} 
		if, for every $v \in V(\cG)$,
		the boundary conditions induced on $B_R(v)$ by $\omega(E(\cG)\setminus E(B_r(v)))$ are $K$-$\sparse$. 
	\end{definition}

	The following key result asserts that the boundary of every ball about a vertex is $O(1)$-$\sparse$ with high probability after an $O(n\log n)$ burn-in time: this is proven in Section~\ref{subsec:main-revealing-proof}.
	
	\begin{theorem}\label{thm:k-R-sparse-whp}
		Fix $p<p_u(q,\Delta)$. There exists $C(p,q,\Delta)$ such that for every $t\ge C n \log n$, 
		the following holds. For every $\delta>0$, if $R:= (\frac 12 -\delta)\log_d n$, there exists $K(p,q,\Delta,\delta)$ such that with $\Prrg$-probability $1-o(1)$, $\cG$ is such that 
		\begin{align}
		P\big(X_{\cG,t}^1 \mbox{ is $(K,R)$-$\sparse$}\big)\ge 1-O(n^{-2})\,.
		\end{align}
\end{theorem}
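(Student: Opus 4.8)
The plan is to deduce the $(K,R)$-sparsity from the shattering estimate of Theorem~\ref{thm:pi-exponential-decay} together with the treelike structure of $\cG$ from Fact~\ref{fact:random-graph-treelike}. First I would fix $\delta>0$, set $R=(\frac12-\delta)\log_d n$, and recall that the boundary condition $\xi$ induced on $B_R(v)$ by $X_{\cG,t}^1(E(\cG)\setminus E(B_R(v)))$ has a non-singleton block exactly when two vertices $w,w'\in\partial B_R(v)$ are connected to each other through an open path of $X_{\cG,t}^1$ that lies entirely outside $E(B_R(v))$ (more precisely, outside $E(B_{R-1}(v))$, matching Definition~\ref{def:(k,r)-sparse}). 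In particular, every vertex lying in a non-trivial block of $\xi$ belongs to the component $\cC_w(X_{\cG,t}^1)$ of some $w\in\partial B_R(v)$, so the number of such vertices is at most $\sum_{w\in\partial B_R(v)}|\cC_w(X_{\cG,t}^1)|$. Thus it suffices to control, simultaneously over all $v$, the total size of the components hanging off the spheres $\partial B_R(v)$.

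The main step is a union bound fueled by Theorem~\ref{thm:pi-exponential-decay}. On the event that $\cG$ is $(L,R)$-treelike with $L=O(1)$ (which by Fact~\ref{fact:random-graph-treelike} has $\Prrg$-probability $1-o(n^{-1})$), each sphere $\partial B_R(v)$ has at most $2\Delta d^{R}\le 2\Delta n^{1/2-\delta}$ vertices. For $t\ge Cn\log n$, Theorem~\ref{thm:pi-exponential-decay} gives that, with $\Prrg$-probability $1-o(1)$, the graph $\cG$ is such that $P(|\cC_w(X_{\cG,t}^1)|\ge k)\le \exp(-\Omega(k))+O(n^{-5})$ for every vertex $w$. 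I would choose the threshold $K=K(p,q,\Delta,\delta)$ large enough (linear in $1/\delta$) that $\exp(-\Omega(K))\le n^{-10}$; then for a fixed $w$,
\begin{align*}
P\big(|\cC_w(X_{\cG,t}^1)|\ge K\big)\le \exp(-\Omega(K))+O(n^{-5}) = O(n^{-5})\,.
\end{align*}
Taking a union bound over the at most $n$ choices of $w$ (it suffices to bound $|\cC_w|$ for every vertex $w$, not just those on a given sphere), with probability $1-O(n^{-4})$ every component of $X_{\cG,t}^1$ has size at most $K$. On this event, for each $v$ the number of vertices in non-trivial blocks of the induced boundary condition on $B_R(v)$ is at most $|\partial B_R(v)|\cdot K \le 2\Delta K n^{1/2-\delta}$, which is $O(1)$ only if — and here is the subtlety — we do \emph{not} try to bound the non-trivial block sizes by the sum over the whole sphere.

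Here is the correction that makes the argument actually yield a constant $K$: rather than summing cluster sizes over all of $\partial B_R(v)$, I would observe that a non-trivial boundary block of $\xi$ corresponds to a single component of $X^1_{\cG,t}$ that exits $B_R(v)$, touches $\partial B_R(v)$ in two places, and does so \emph{without passing through $E(B_{R-1}(v))$}; the number of vertices it contributes to the boundary partition is at most its size. Since all components have size $\le K$ on the good event, and since each such component can be charged to a distinct open edge of $X^1_{\cG,t}$ in $\cN_w$ for some $w\in\partial B_R(v)$, the total number of boundary-block vertices is at most $K$ times the number of components touching the sphere; but on the treelike event the number of \emph{distinct} such components that can create non-trivial identifications is $O(L)=O(1)$, because in a tree no component outside a ball can reconnect two boundary vertices, and each of the $\le L$ extra edges can be responsible for at most $O(1)$ such reconnections. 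Combining: the induced boundary condition is $(K')$-sparse with $K'=K'(p,q,\Delta,\delta,L)=O(1)$, uniformly over $v$, on an event of $\Prrg$-probability $1-o(1)$ and $P$-probability $1-O(n^{-2})$ (the $n^{-2}$ coming from absorbing the $n\cdot O(n^{-5})$ cluster-size failures together with an $O(n^{-2})$ slack). The hard part is precisely this last bookkeeping step — disentangling how many distinct exterior components can create non-trivial boundary identifications on a treelike ball, so as to get a bound independent of $|\partial B_R(v)|$ rather than one that grows like $n^{1/2-\delta}$; everything else is a routine union bound over Theorem~\ref{thm:pi-exponential-decay} and Fact~\ref{fact:random-graph-treelike}.
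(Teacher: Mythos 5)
The reduction in your first paragraph (bounding $|\mathfrak V_{B_R(v)}|$ by $\sum_{w\in\partial B_R(v)}|\cC_w|$) is too crude to give $O(1)$, and you correctly identify this. The problem is that your proposed fix is wrong: you claim that the number of exterior components creating non-trivial boundary identifications is $O(L)$ because ``in a tree no component outside a ball can reconnect two boundary vertices, and each of the $\le L$ extra edges can be responsible for at most $O(1)$ such reconnections.'' But the $L$-$\treelike$ property of $B_R(v)$ concerns only the $L$ excess edges \emph{inside} the ball. It says nothing about $E(\cG)\setminus E(B_R(v))$, which is where the reconnections happen. The graph $\cG$ is $\Delta$-regular with girth $\Theta(\log n)$ and has exponentially many paths of length $O(K)$ leaving and re-entering $\partial B_R(v)$ through the exterior; the FK clusters being small does not, by itself, rule out the scenario where $\Theta(n^{1/2-\delta})$ of these short exterior excursions are open, each creating a fresh non-trivial boundary identification.

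What actually controls the number of reconnections is a random-graph estimate, not a consequence of shattering. The paper's proof of Proposition~\ref{prop:nontrivial-bdy-component-bound} does not deduce Theorem~\ref{thm:k-R-sparse-whp} from Theorem~\ref{thm:pi-exponential-decay}; both are siblings derived from the joint revealing procedure (Definition~\ref{def:revealing-FK-on-graph}) initialized with $\cA_0 = E(B_R(v))$, $\cV_0 = \partial B_R(v)$. The key observation is that $|\mathfrak V_{B_R(v)}|$ is bounded by the number of steps of the configuration-model revealing where a freshly drawn half-edge matches to an already-discovered vertex; conditionally on the explored region, each such event has probability $O(|V(\cA)|/n)$. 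Lemma~\ref{lem:revealing-procedure-tail-bounds} (the same branching-process domination behind Theorem~\ref{thm:pi-exponential-decay}) gives $|V(\cA_{\mathfrak m_{k_\emptyset}})| = O(n^{1/2-\delta/2})$ with high probability, so the reconnection count is stochastically below $\bin(O(n^{1/2-\delta/2}),O(n^{-1/2-\delta/2}))$, whose mean is $O(n^{-\delta})$; a Chernoff bound then gives $O(1)$ reconnections with probability $1-O(n^{-4})$. This probabilistic control of exterior cycles is the missing ingredient, and it cannot be extracted from the interior treelike property plus the cluster-size tail alone.
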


\begin{remark}
By monotonicity of the FK-dynamics, for every $\cG$, we have that $X_{\cG,t}^1\succeq \pi_{\cG}$, from which it follows that both Theorem~\ref{thm:k-R-sparse-whp}, and the exponential tails of Theorem~\ref{thm:pi-exponential-decay}, hold under $\pi_{\cG}$, i.e., if one replaces $X_{\cG,T}^1$ by an equilibrium configuration $\omega\sim\pi_{\cG}$. 
\end{remark}

In Section~\ref{subsec:proof-sketch-revealing}, we construct the relevant revealing procedures for FK-dynamics clusters on random graphs, and define the branching process we dominate it by. In Sections~\ref{subsec:properties-couplings-revealings}--\ref{subsec:analysis-of-branching-process}, we analyze these processes, and in Section~\ref{subsec:main-revealing-proof}, we complete the proofs of Theorems~\ref{thm:pi-exponential-decay} and~\ref{thm:k-R-sparse-whp}.


\subsection{Couplings and revealing schemes for the FK-dynamics on random graphs}\label{subsec:proof-sketch-revealing}
In this section, we summarize the key couplings and revealing schemes for the connected components of $X_{\cG,t}^1$. These are fundamental to the proof of shattering for $X_{\cG,t}^1$ in the uniqueness region after an $O(n\log n)$ burn-in time.  

\subsubsection{The configuration model}
%
%
The configuration model $\Pcm$ is a distribution over multigraphs on $n$ vertices and fixed degree distribution, which we take to be $\Delta$ for every vertex, defined as follows~\cite{BollobasConfig}. Give every vertex $v\in \{1,...,n\}$ $\Delta$-half-edges and select a matching on the $\Delta n$ many half-edges uniformly at random to form the $\Delta n/2$ edges of the graph. Let $\mathfrak M_n$ be the set of possible edges (the set of pairs of half-edges). 

The configuration model is a useful tool for studying the random $\Delta$-regular graph, as the distribution $\Prrg$ is equal to the distribution $\Pcm(\cdot \mid \cG \in \Gamma_{\textsc{rrg}})$ where $\Gamma_{\textsc{rrg}}$ is the event that the graph $\cG$ is simple (i.e., has no self-loops or multi-edges). In particular, it is standard (see e.g.,~\cite{BollobasConfig}) that $\Pcm(\Gamma_{\textsc{rrg}})>c$ for some $c(\Delta)>0$, and therefore for any event $\Gamma$,
\begin{align}\label{eq:CM-to-RRG}
    \Prrg(\Gamma) = \Pcm(\Gamma, \Gamma_{\textsc{rrg}})\big(\Pcm(\Gamma_{\textsc{rrg}})\big)^{-1} \le c^{-1}\Pcm(\Gamma)\,.
\end{align}
Refer to  the book~\cite{FriezeBook} for more on the configuration model. We will use~\eqref{eq:CM-to-RRG}, with an iterative revealing scheme of a matching of the $\Delta n$ half-edges, to analyze the random $\Delta$-regular graph.

The configuration model lends itself to revealing procedures. Towards introducing the joint revealing procedure for the random graph $\cG \sim \Pcm$ and the configuration $X_{\cG,t}^1$, let us first recall a standard revealing procedure for random $\Delta$-regular graphs according to $\Pcm$ on its own. This procedure is useful to proving random graph estimates for the configuration model and $\Delta$-regular random graph. It also serves as a building block for the revealing procedure of the random graph together with the FK-dynamics configuration.  

The following iterative algorithm is a way to sample from the configuration model for a given degree sequence. The fact that this gives a valid sample from $\Pcm$ is straightforward after naturally identifying samples from $\Pcm$ with samples from the uniform distribution over matchings on $\Delta n$.

			\begin{definition}\label{def:revealing-graph}
Assign every $v\in \{1,...,n\}$, $\Delta$ half-edges. Suppose $f$ is a (possibly random) function from edge-sets $\mathcal A\subset \mathfrak M_n$ (a set of matched pairs of half-edges) to a half-edge not matched in $\cA$.  
\begin{enumerate}
\item Initialize the set of exposed edges as $\mathcal A_0 = \emptyset$.
\item For every $1\le m\le \frac{\Delta n}{2}$, match $f(\mathcal A_{m-1})$ to a half-edge selected uniformly
at random from the remaining un-matched ones to form the edge $e_m$. Let $\mathcal A_m := \mathcal A_{m-1} \cup \{e_m\}$\,.
\end{enumerate}
\end{definition}

Observe, importantly, that the choice of next half-edge to match
(given by the function $f$) can be \emph{adaptive}, specifically, adapted to the filtration generated by $(\mathcal A_0,...,\mathcal A_{m-1})$.  

Definition~\ref{def:revealing-graph} provides an adaptive sampling method from the configuration model distribution~$\Pcm$ (see e.g.,~\cite{MW}) and can be used to prove myriad properties of random $\Delta$-regular graphs. In particular, it yields a simple proof of Fact~\ref{fact:random-graph-treelike} that $\cG\sim \Prrg$ is $(L,R)$-$\treelike$ for $R \le n^{\frac 12 - \delta}$ and $L=O(1)$; see Section~\ref{subsec:properties-couplings-revealings}.

 \begin{remark}
  The definitions of the random-cluster model~\eqref{eq:rcmeasure}, and the FK-dynamics extend naturally to multigraphs, where $G = (V,E)$ is such that $V$ is identified with $\{1,...,n\}$ and $E\subset \mathfrak M_n$ is a multiset. The random-cluster model and FK-dynamics then live over subsets of $E$, identified with~$\omega: E\to \{0,1\}$, and connectivity components of a configuration $\omega$ are understood naturally.
  \end{remark}

\subsubsection{A coupling of localized FK-dynamics chains}
Our goal is to simultaneously expose edges of $\cG\sim \Pcm$ while revealing the FK-dynamics configuration $X_{\cG,t}^1$ at time $t$ on $\cG$. We show that under their joint distribution 
the size of the connected components of $X_{\cG,t}^1$ have exponential tails; this in turn implies that the boundary condition on $B_r(v)$ 
is $O(1)$-$\sparse$ (see Definition~\ref{def:k-sparse-bc}).

Note that a ball of radius $O(\log n)$ about a vertex $v$ may have many cycles---indeed it may encompass the entire graph $\cG$---but a typical FK  cluster of \emph{size} $O(\log n)$ does not use most of these cycles. Thus, we expose the edges of $\Pcm$ guided by the revealing of the random-cluster component of a vertex $v$ in $X_{\cG,t}^1$; in this way, to expose the $\cC_{v}(X_{\cG,t}^1)$ we will not have to reveal much of the random graph.

There are two key difficulties to consider when constructing a joint revealing process for $(\cG, X_{\cG,t}^1)$: 
\begin{enumerate}
    \item Under either of $X_{\cG,t}^1$ or e.g., the random-cluster measure $\pi_{\cG}$, the value $\omega(e)$ on an edge $e$ shown to belong to $E(\cG)$, affects the distribution of the remainder of the underlying random graph. 
    
    \item Unlike the random-cluster measure $\pi_{\cG}$, the law of $X_{\cG,t}^1$ does not satisfy any domain Markov property. Indeed, the distribution of $X_{\cG,t}^{1}(e)$ conditionally on some $X_{\cG,t}^1(A)$ is quite difficult to analyze.
\end{enumerate}

The key to overcoming these obstructions will be to reveal the configurations of a family of FK-dynamics chains that are localized (in the sense that their distribution only depends on a small $O(1)$ sized subset of edges of the graph) and whose concatenation stochastically dominates the distribution of $X_{\cG,t}^1$. 
Let us be more precise next and explicitly construct a coupling of a family of localized FK-dynamics
chains.  

\begin{definition}\label{def:censored-Glauber-chains}
For a graph $\cG$ and edge subset $A\subset E(\cG)$, let $\partial A$ be the set of vertices in $V(A)$ that are adjacent to vertices of $V(\cG)\setminus V(A)$, and let $\pi_A^1$ be the random-cluster measure on $A$ with wired boundary conditions on $\partial A$. Let $(X_{A,t}^{1})_{t\ge 0}$ be the FK-dynamics chain that starts from all wired on $E(\cG)$, censors (ignores) all updates in $E(\cG)\setminus A$, and makes FK-dynamics updates w.r.t.\ $\pi_{A}^1$ when it updates edges in $A$. 
\end{definition}

Importantly, the wiring on $\partial A$ ensures that the law of $X_{A,t}^1$ does not depend on $E(\cG)\setminus A$.

\begin{definition}\label{def:coupling-Glauber-chains}
For any graph $\mathcal{G}$,
we can construct $(\mathcal X_{t}^1)_{t\ge 0}= \big\{(X_{A,t}^1)_{t \ge 0}\big\}_{A\subset E(\mathcal G)}$, a grand monotone coupling  of the ensemble of FK-dynamics
chains $(X_{A,t}^1)_{t\ge 0}$ for $A\subset E({\mathcal{G}})$ as follows: 
\begin{enumerate}
\item Initialize $X_{A,0}^1 \equiv 1$ for all $A\subset E({\mathcal G})$; i.e., the all wired configuration on $A$. 
\item Let $(e_{t})_{t\ge1}=(e_{1},e_{2},...)$ be drawn i.i.d. from $E({\mathcal G})$.
\item Let $(U_{e,t})_{e\in E({\mathcal G}),t\ge1}$ be a sequence of i.i.d. uniform
random variables on $[0,1]$. 
\end{enumerate}
For $A\subset E(\cG)$, construct $(X_{A,t}^{1})_{t\ge 1}$ as follows: for each $t\ge1$, set
$X_{A,t}^{1}(e)=X_{A,t-1}^{1}(e)$ for $e\ne e_{t}$ and 
\[X_{A,t}^{1}(e_{t})= 
\begin{cases}
1 & \mbox{if }e_{t}\notin A;\\
1 & \mbox{if }e_{t}\in A~\mbox{and } U_{e_t,t} \le \varrho; \\
0 & \mbox{if }e_{t}\in A~\mbox{and }U_{e_t,t} > \varrho;
\end{cases}
\]
for $\varrho = \pi_{A}^{1}\big(\omega(e_{t})=1\mid\omega(A\setminus \{e_t\})= X_{A,t-1}^1(A\setminus \{e_t\})\big)$; 
i.e., if $e_{t}\in A$, we resample $e_{t}$ given the
remainder of the configuration on $A$, together with the wired
boundary condition on $\partial A$, using the same uniform random variable $U_{e_t,t}$ for every $X_{A,t}^{1}$ such that $e_t\in A$. 
\end{definition}

As in the grand coupling for different initializations, this is a monotone coupling. In particular, we have $X_{\cG,t}^1\le X_{A,t}^1$ for all $A\subset E(\cG)$ and thus $X_{\cG,t}^1 \le \bigcap_{A\subset E(\cG)} X_{A,t}^1$. 

A key observation for our revealing process is that for every $A$, the configuration $X_{A,t}^{1}$ depends only on:
\begin{enumerate}
    \item the number of updates amongst $(e_{s})_{s\le t}$ that belong
to $A$, which we denote by $\kappa_{A,t}$;
    \item the choice of edges to be updated on $A$ on those $\kappa_{A,t}$ updates; we denote such set by $\mathcal O_{A,\kappa_{A,t}}$; and
    \item the family of uniform random variables on those
edges, $(U_{e,s})_{e\in A,s\le t}$.
\end{enumerate}
With this observation in hand, we can extend this to a coupling of $(X_{A,t}^1)$ averaged over $\cG\sim \Pcm$. 

\begin{definition}\label{def:coupling-Glauber-chains-average-graph}
Let $\mathbb{P}_{t}^{1}$ be the distribution over pairs $(\mathcal{G},\omega_t)$
where $\omega_{t}$ is a random-cluster configuration on $\mathcal{G}$ that results by first
drawing $\mathcal{G}\sim\Pcm$, then drawing $\omega_{t}\sim P(X_{\cG,t}^{1}\in\cdot)$.
Likewise, for every set $A\subset \mathfrak M_n$, let $\mathbb{P}_{A,t}^{1}$
be the distribution over pairs $(\mathcal{G},\omega_{A\cap E({\mathcal{G}}),t})$
where $\omega_{A,t}\sim P(X_{A\cap E(\mathcal{G}),t}^{1}\in\cdot)$.
Couple, under the distribution $\bbP$, the family of distributions $(\mathbb{P}_{A,t}^{1})_{A\subset \mathfrak M_n,t\ge 1}$
by selecting the same random graph $\mathcal{G}\sim\Pcm$
for all of them, then using the coupling of Definition~\ref{def:coupling-Glauber-chains} of the family  $(X_{A,t}^{1})_{A,t}$. 
\end{definition}

In this manner, we have constructed a monotone coupling of the family
$(\mathcal{G},(X_{A,t}^{1})_{t\ge1})_{A\subset \mathfrak M_n}$. Note that we use this coupling for sets $A$ which we know have $E(\cG)\cap A = A$, so that the averaging is only over the edges of $E(\cG)\setminus A$, which we earlier noted $X_{A,t}^1$ is independent of; thus the role of this coupling is only to put the random graphs with their random-cluster configurations on the same probability space. We defer detailed discussion of the properties of the coupling to Section~\ref{subsec:properties-couplings-revealings} (after constructing the revealing procedure in the sequel) but emphasize that by construction, if $A \cap B = \emptyset$, the only dependency of $X_{A,t}^1$ and $X_{B,t}^1$ is through the distributions of the binomial random variables  $\kappa_{A,t}$ and $\kappa_{B,t}$.

\subsubsection{The joint revealing procedure}
We now construct a revealing procedure for $\cG$ and a configuration $\tilde \omega_t$ on $\cG$
that stochastically dominates $X_{\cG,t}^{1}$. 
Fix $r$ to be chosen as a large constant (depending on $p,q,\Delta$) later. 

\begin{definition}\label{def:B-r-out}
Given an exposed set of edges $\mathcal A$ of the random graph $\cG\sim \Pcm$, we define $B_r^{out}(v)= B_r^{out}(v;\cA)$ as the ball of radius $r$ in $E(\mathcal G)\setminus \mathcal N_v(\mathcal A)$ where $\mathcal N_v(\mathcal A)$ is the set of edges in $\cA$ incident to $v$. 
We drop the $\cA$ from the notation when understood contextually.
\end{definition}

For an edge-set $\cA_0\subset \mathfrak M_n$ revealed to be part of $E(\cG)$ and a vertex set $\cV_0\subset V(\cG)$, we construct a joint iterative procedure to expose (a set containing) the connected components $\cC_{\cV_0}(X_{\cG,t}^1(E(\cG)\setminus \cA_0))$. The examples to have in mind are (1) $\cA_0 = \emptyset$ and $\cV_0 = \{v\}$ and (2) $\cA_0= E(B_R(v))$ and $\cV_0 = \partial B_R(v)$. 

Through this process we will keep track of the following variables at each step:
\begin{itemize}
    \item $\mathcal{A}_{m}$: the set of edges of the random graph
          that have been revealed by step $m$;
    \item $\mathcal{V}_k$: the set of vertices in the $k$-th generation we want to explore out of;
    \item $\tilde \omega_{m}$: the random-cluster configuration revealed up to step $m$;
    \item $\mathcal{F}_{m}$: elements of the filtration with respect to which the configuration $\tilde \omega_m$ on $\mathcal{A}_m$ is measurable.
\end{itemize}
The process is defined as follows (see Figures~\ref{fig:revealing-fig1} and~\ref{fig:revealing-fig2} for a depiction of several steps of this process):
\begin{center}
	\fbox{
		\parbox{0.98\textwidth}{
			\begin{definition}\label{def:revealing-FK-on-graph} 
			\textbf{Initialize:} \qquad $k = 0$;
            \qquad $m = 1$;
            \qquad $\mathcal{F}_{0}=\emptyset$;
            \qquad $\mathcal{A}_{0} \subset E(\cG)$;
            \qquad $\mathcal{V}_{0} \subset V(\cG)$;
            
            \medskip
            \textbf{for each}~$k \ge 0$~\textbf{while}~$\mathcal{V}_k \neq \emptyset$
            
           \quad \textbf{for each}~$v \in \mathcal{V}_k$
\begin{enumerate}[\quad\quad\quad~1.]\setlength{\itemsep}{3mm}
\item Set $v_m = v$. Conditionally on $\mathcal{A}_{m-1}$, reveal the edges of the random graph in $B_{r}^{out}(v_m)$ and set 
$$\mathcal{A}_{m}:=\mathcal{A}_{m-1}\cup E(B_{r}^{out}(v_m));$$ 
Let $A_m := \mathcal A_m \setminus \mathcal A_{m-1}$ be the set of new edges revealed in $\cG$.

\item Reveal the triplet $\mathcal H_{A_m,t} := \{\kappa_{A_m,t},\mathcal O_{A_m, \kappa_{A_m,t}},(U_{e,s})_{e\in A_m,s\le t}\}$ conditionally on $\mathcal{F}_{m-1}$. Recall that
$\kappa_{A_m,t}$ is the number of updates of the FK-dynamics in $A_m$,
$\mathcal O_{A_m, \kappa_{A_m,t}}$ is the sequence of edges to be updated in $A_m$,
and $(U_{e,s})_{e\in A_m,s\le t}$ is the family of uniform random variables used for the edges updates in $A_m$.

\item Construct the random-cluster configuration $X_{A_m,t}^{1}(A_m)$
from the triplet $\mathcal H_{A_m,t}$ by simulating the steps of the FK-dynamics on $A_m$ with wired boundary conditions.
Concatenate $X_{A_m,t}^{1}(A_m)$ with $\tilde \omega_{m-1}$ to obtain a new configuration $\tilde \omega_{m}$ on $\cA_m \setminus \cA_0$. 

\item Add to $\mathcal V_{k+1}$ all vertices of $\partial (\cA_{m}\setminus \cA_{0})$ that are in the component of $\cV_0$ in $\tilde \omega_{m}(\cA_m \setminus \cA_0)$ and are not in $\bigcup_{\ell \le k} \cV_\ell$. 

\item Let $\mathcal{F}_{m} = (\mathcal F_{m-1}, \mathcal H_{A_m,t})$ and increase $m$ by $1$.
\end{enumerate}
\end{definition} 
	}}
\end{center}

\bigskip
Let $\kappa_{\emptyset}$
be the first $k$ such that $\mathcal{V}_{k}=\emptyset$ and
let $\mathfrak m_{k_{\emptyset}} = \sum_{k=0}^{k_{\emptyset}} |\mathcal V_k|$. Let $\tilde \omega_t = \tilde \omega_t(\cA_{\mathfrak m_{k_\emptyset}} \setminus\cA_0)$ be the random-cluster configuration revealed when the process terminates. 
The key observation about the above process is that we can control the cluster of $\cV_0$ in $X_t^1(E(\cG)\setminus \cA_0)$ by the set $\cA_{\mathfrak m_{k_\emptyset}}$; the size of this set will then be approximately controlled by comparison to a sub-critical  branching process in the following subsection.
 
\begin{observation}\label{obs:key-observation}
The connected components of $\cV_0$ in $X_t^1(E(\cG)\setminus\cA_0)$ 
are a subset of the connected components of $\cV_0$ in $\tilde \omega_t (E(\cG)\setminus\cA_0)$. In particular, the number of vertices in  non-trivial (i.e., non-singleton) components 
of the boundary condition $X_t^1(E(\cG)\setminus \cA_0)$ induces on $\cA_0$ is less than the number of vertices in non-trivial components of the boundary condition $\tilde \omega_t(E(\cG)\setminus \cA_0)$ induces on $\cA_0$. The edges in both of these sets of connected components are subsets of the edge-set $\cA_{\mathfrak m_{k_\emptyset}}\setminus \cA_0$. 
\end{observation}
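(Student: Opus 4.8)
The plan is to read off all three assertions directly from Definition~\ref{def:revealing-FK-on-graph}, using two ingredients. The first is a pointwise domination on revealed edges: by step 3 of Definition~\ref{def:revealing-FK-on-graph} the restriction of $\tilde\omega_t$ to the block $A_m=\cA_m\setminus\cA_{m-1}$ equals the localized chain $X_{A_m,t}^1(A_m)$, and all of these localized chains are realized jointly with $X_{\cG,t}^1$ through the grand monotone coupling of Definitions~\ref{def:coupling-Glauber-chains}--\ref{def:coupling-Glauber-chains-average-graph}. Since $A_m\subseteq E(\cG)$ and $q\ge 1$, monotonicity of that coupling gives $X_{\cG,t}^1\le X_{A_m,t}^1$, hence $X_t^1(e)\le\tilde\omega_t(e)$ for every revealed edge $e\in\cA_{\mathfrak m_{k_\emptyset}}\setminus\cA_0=\bigcup_m A_m$. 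In particular, any edge of $E(\cG)\setminus\cA_0$ that is open in $X_t^1$ and has been revealed is also open in $\tilde\omega_t$.

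The second ingredient is that the exploration never misses an edge of the cluster of $\cV_0$. Writing $\cK$ for the connected component of $\cV_0$ in $X_t^1(E(\cG)\setminus\cA_0)$, I would prove by induction on graph distance from $\cV_0$ inside the open subgraph $(V(\cG),\{e\in E(\cG)\setminus\cA_0:X_t^1(e)=1\})$ that every vertex of $\cK$ is eventually processed by the exploration. The base case is that $\cV_0$ is generation $0$. For the step, a vertex $w$ at distance $j+1$ has an open neighbor $w'$ at distance $j$ via some edge $e=w'w\notin\cA_0$; by induction $w'$ is processed, so $B_r^{out}(w')$ is revealed at that step and, as $r\ge 1$, contains $e$; by the first ingredient $e$ is then open in $\tilde\omega_t$, so from the step on which $w'$ was enlisted, $w$ lies in $V(\cA_m\setminus\cA_0)$ and is connected to $\cV_0$ in $\tilde\omega_m(\cA_m\setminus\cA_0)$ (revealed configurations only grow). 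At the first such step $w$ is on the frontier $\partial(\cA_m\setminus\cA_0)$, hence is enlisted into the next generation and later processed. Since processing a vertex reveals all of its incident $E(\cG)$-edges, the edge set of $\cK$ lies in $\cA_{\mathfrak m_{k_\emptyset}}\setminus\cA_0$ and, combined with the first ingredient, $\cK$ is an open subgraph of $\tilde\omega_t$; connectedness then puts $V(\cK)$ inside the $\tilde\omega_t$-component of $\cV_0$, which is the first assertion, and the third is automatic since the domain of $\tilde\omega_t$ is $\cA_{\mathfrak m_{k_\emptyset}}\setminus\cA_0$. For the ``in particular'': the procedure is run with $\cV_0\supseteq\partial\cA_0$, so a vertex sits in a non-singleton block of the boundary condition that $X_t^1(E(\cG)\setminus\cA_0)$ induces on $\cA_0$ exactly when it is joined, in $X_t^1(E(\cG)\setminus\cA_0)$, to another vertex of $\partial\cA_0$; both are then in $V(\cK)$ and stay joined in $\tilde\omega_t(E(\cG)\setminus\cA_0)$, so this set of vertices can only grow, giving the cardinality comparison.

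The step I expect to demand the most care is the claim in the inductive argument that a not-yet-processed vertex of $\cK$ really is on the frontier $\partial(\cA_m\setminus\cA_0)$ at the moment it first becomes connected to $\cV_0$ in $\tilde\omega_m$ --- that is, that the exploration cannot silently skip an edge of the component. I would prove this by tracking, for such a vertex $w$, a $\cG$-neighbor of $w$ lying on the still-unrevealed part of $\cK$, together with a short case analysis on which revealed ball $B_r^{out}(\cdot)$ first touched $w$ --- using that the procedure exposes \emph{full} radius-$r$ balls, that it enlists \emph{every} frontier vertex connected to $\cV_0$, and the local sparsity of $\cG$ --- to check that this neighbor has not yet entered the revealed vertex set when $w$ does. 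Everything else reduces to bookkeeping about the monotone coupling of Definition~\ref{def:coupling-Glauber-chains}.
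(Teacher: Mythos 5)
The paper states this as an Observation without proof, treating it as immediate from Definitions~\ref{def:coupling-Glauber-chains} and~\ref{def:revealing-FK-on-graph}, so there is no official proof to compare against. Your two ingredients are the right ones: pointwise domination $X_{\cG,t}^1(e)\le\tilde\omega_t(e)$ on every revealed edge (from $X_{\cG,t}^1\le X_{A_m,t}^1$ under the grand monotone coupling and $\tilde\omega_t|_{A_m}=X_{A_m,t}^1(A_m)$), plus an argument that the exploration reveals every edge of the cluster $\cK$ of $\cV_0$ in $X_t^1(E(\cG)\setminus\cA_0)$. The first ingredient is correct as you state it.

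The induction, however, is set up with too strong a hypothesis, and the crucial step is wrong as written. You claim that ``at the first such step $w$ is on the frontier $\partial(\cA_m\setminus\cA_0)$, hence is enlisted into the next generation and later processed.'' But the first step at which $w$ is simultaneously discovered and connected to $\cV_0$ in $\tilde\omega_m$ occurs precisely when $w'$ is \emph{processed}; at that moment $e=w'w$ lies in $B_r^{out}(w')$ and $w$ is at distance $1<r$ from the centre $w'$ of the newly revealed ball, i.e.\ $w$ sits in the \emph{interior} of $B_r^{out}(w')$ and therefore has all of its incident $\cG$-edges revealed and is \emph{not} placed on the frontier. So $w$ is generically not enlisted, and ``every vertex of $\cK$ is eventually processed'' fails. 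The invariant you actually want, and which the construction does deliver, is the dichotomy ``every vertex of $\cK$ is either processed or has all of its incident $E(\cG)$-edges revealed.'' The base case ($\cV_0$ is processed) and the inductive step (once $w'$ satisfies the dichotomy, the edge $w'w$ is revealed, so either $w$ lands at depth $r$ in some revealed ball and is enlisted when first discovered-and-connected, or $w$ lands in a ball's interior and has all edges revealed) both work under this weaker statement, and the conclusion ``every edge of $\cK$ lies in $\cA_{\mathfrak m_{k_\emptyset}}\setminus\cA_0$'' follows because each edge of $\cK$ is incident to a vertex satisfying the second alternative. You flag that the frontier-membership step ``demands the most care'' and defer it to a case analysis; the case analysis is genuinely needed, and as you have phrased the inductive claim it is not just incomplete but false, so the hypothesis must be weakened before the proof can close. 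Once it is, your ``in particular'' paragraph and the containment-in-$\cA_{\mathfrak m_{k_\emptyset}}\setminus\cA_0$ assertion go through as you sketch.
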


With Observation~\ref{obs:key-observation} in hand, we focus on obtaining the exponential tail bound of Theorem~\ref{thm:pi-exponential-decay} for $\cC_v(\tilde \omega_t)$ (the component of $v$ in $\tilde \omega_t$) and likewise, the sparsity bound of Proposition~\ref{prop:nontrivial-bdy-component-bound} for $\mathfrak V_{B_R(v)}(\tilde \omega_t)$.

\subsubsection{Constructing a dominating branching process}
Towards proving Proposition~\ref{prop:nontrivial-bdy-component-bound}, we construct a (non-Markovian, size-dependent) branching process which we will show stochastically dominates the sequence $(\cV_k)_{k\ge 0}$ of our joint reveleaing process.
This process $(Z_k)_{k\ge 0}$ will then be shown to be sub-critical and satisfy exponential tail bounds on its total population, implying the same for the cluster of $\cV_0$ in $\tilde \omega_t$. 

\begin{definition}\label{def:branching-process}
Initialize $Z_{0}=|\mathcal{V}_{0}|$, and let $(Z_{k})_{k\ge1}$ be the (size-dependent) branching process, which for each $k$, has progeny $(\chi_{i,k})_{i\le Z_k}$ drawn i.i.d.\ from the following distribution:
\begin{enumerate}
\item With probability $n^{-1/2}$, let  $\chi_{i,k} = |V(\cT_r)| \sum_{\ell \le k} Z_\ell$ and say the progeny number  $\chi_{i,k}$ is $\mathsf{Bad}$.
\item Otherwise, sample $\chi_{i,k}$ from the distribution of the
number of leaves in the connected component of the root under $\pi_{\mathcal{T}_{r}}^{(1,\circlearrowleft)}$ (the random-cluster measure on the $\Delta$-regular tree of depth $r$ with a wired boundary
condition and with the root also wired to $\partial \cT_r$). 
\end{enumerate}
Let $Z_{k+1} = \sum_{i\le Z_k} \chi_{i,k}$; that is, the $i$-th member of the $k$'th generation gets $\chi_{i,k}$ many children. 
\end{definition}

\begin{figure}
    \centering
    \begin{tikzpicture}
    \node at (-3.5,0) {
    \includegraphics[width = .33\textwidth] {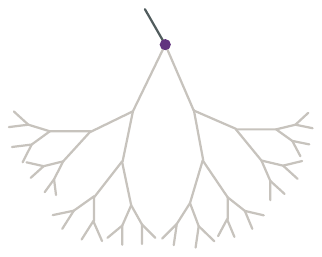}};
    \node at (-3.15,1.7) {$v_1$};

    \node at (3.5,0) {
    \includegraphics[width = .33\textwidth] {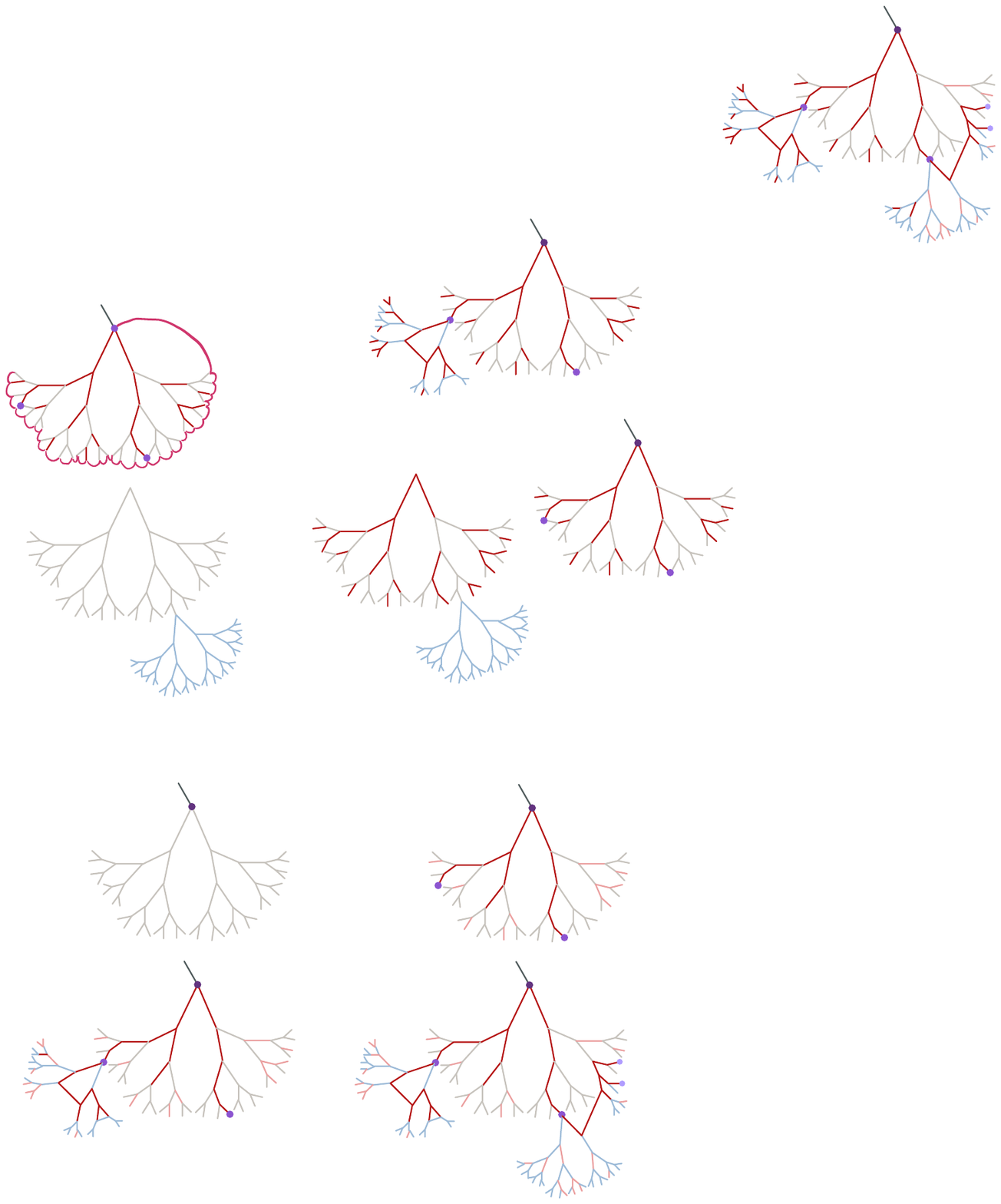}};
        \node at (3.85,1.7) {$v_1$};
    \end{tikzpicture}
    \caption{Left: We initialize the process with $r=5$ from $\cV_0= \{v_1\}$ (dark purple) and incident edge $\cA_0$ (black). The process begins by revealing $A_1 = B_r^{out}(v_1)$, depicted in gray. Right: The process then reveals the configuration $X_{A_1,t}^1$, given by $\kappa_{A_1,t}$ steps of FK-dynamics for $\pi_{A_1}^1$, to form $\tilde \omega_1$ (open edges in red/pink). Vertices in $\partial A_1$ shown to be connected to $v_1$ in $X_{A_1,t}^1(A_1)$ are added to $\cV_1$ (purple).}
    \label{fig:revealing-fig1}
    \centering
    \begin{tikzpicture}
    \node at (-4,0) {
    \includegraphics[width = .35\textwidth] {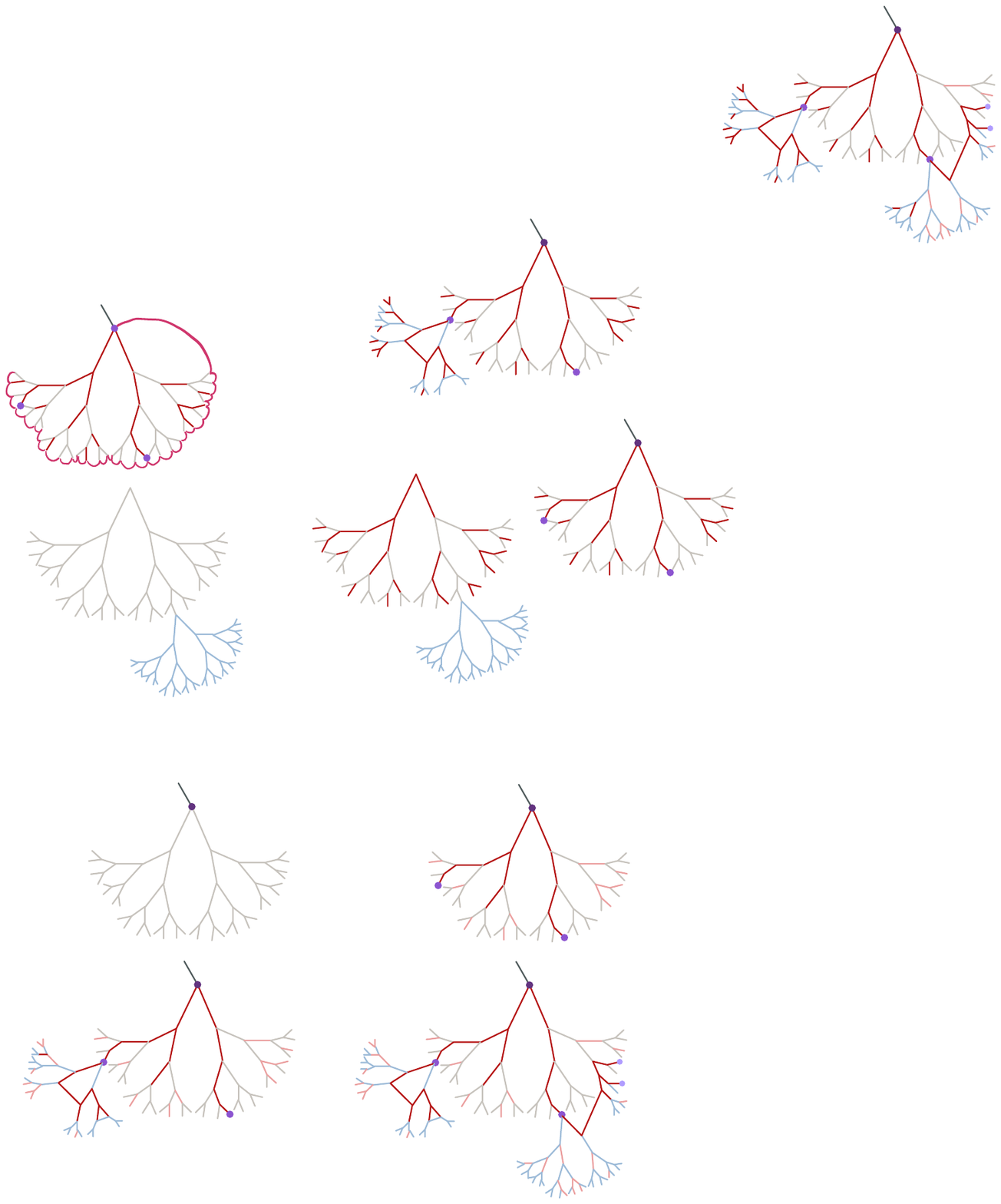}};
        \node[font = \tiny] at (-5.5,-.21) {$v_2$};
    \node at (3.5,-.65) {
    \includegraphics[width = .35\textwidth] {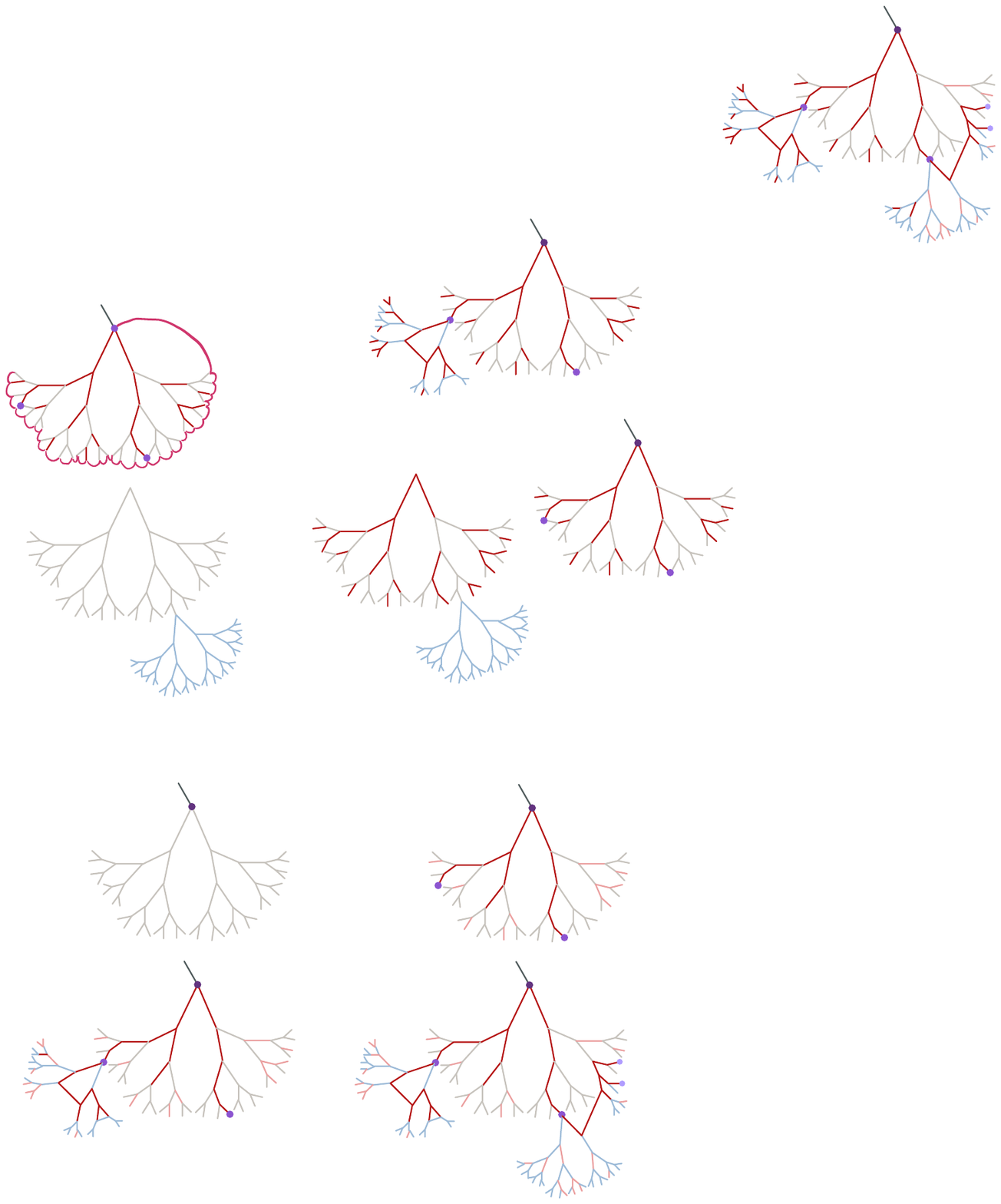}};
            \node[font = \tiny] at (4.75,-1.6) {$v_3$};
    \end{tikzpicture}
    \caption{Left: Proceeding from above, in the next generation, starting from  $v_2\in \cV_1$, reveal the edges of $B_t^{out}(v_2)$ in $\cG$; in this case, this is not a tree, but is disjoint from $\cA_1$, so that $A_2= B_r^{out}(v_2)$. The configuration $X_{A_2,t}^1$ is generated and concatenated with $\tilde \omega_1$ to form $\tilde \omega_2$. Right: For $v_3\in \cV_1$, $B_r^{out}(v_3)$ is a tree, but it intersects $\cA_2$. As such, $A_3 = B_r^{out}(v_3)\setminus \cA_2$. Running the FK-dynamics on $A_3$ with all-wired boundary conditions ensures that $X_{A_3,t}^1$ is nonetheless independent of the configuration we had revealed in $\tilde \omega_2$. The light purple vertices are connected to $\cV_0$ in $\tilde \omega_3$ and are added to $\cV_2$ to form the next generation.}
    \label{fig:revealing-fig2}
\end{figure}

Note that this is not a branching process in the traditional sense, since the progeny distribution is not i.i.d.\ and depends on the population up to that generation. Nonetheless, we will show good tail bounds on $(Z_k)_{k\ge 0}$ by dominating it by sub-critical branching processes between the $\mathsf{Bad}$ steps.

To justify the above construction, let us formalize the relation between $(Z_k)_k$ and the revealed vertices of the process in Definition~\ref{def:revealing-FK-on-graph}, $(\cV_k)$.  
Intuitively, we want to identify vertices $v_m\in \cV_k$ with those of generation $k$ in $(Z_k)$; the \emph{progeny} of $v_m$ will then be those vertices added to $\cV_{k+1}$ in step (4) of Definition~\ref{def:revealing-FK-on-graph}.  
Item (1) from the progeny distribution of Definition~\ref{def:branching-process} corresponds to situations where: 
\begin{enumerate}
    \item $B_{r}^{out}(v_m)$ intersects $\cA_{m-1}$;
    \item $B_r^{out}(v_m)$ is not a tree; or
    \item There are an insufficient number of updates on $B_r^{out}(v_m)$ for $X_{A_m,t}^1$ to mix.
\end{enumerate}
Examples of situations (1)--(2) were depicted in Figure~\ref{fig:revealing-fig2}. The $n^{-1/2}$ probability assigned to these bad situations by the dominating branching process comes from the fact that Theorem~\ref{thm:k-R-sparse-whp} requires us to consider $|\cA_0|$ of size $n^{-1/2 + \delta}$, and thus any edge has at least probability $n^{-\frac 12-\delta}$ of intersecting $\cA_0$. 
On the complement of situations (1)--(3) above, $X_{A_m,t}^1$ is mixed, and is comparable to the $(1,\circlearrowleft)$-tree of depth $r$.

\subsubsection{Comparing the revealing procedure and branching process} 
We conclude the section by stating the main two lemmas comparing the revealing procedure to the branching process defined above. Towards stating these, denote by $\tmix(\cT_r,(1,\circlearrowleft)$ the mixing time of FK-dynamics on $\cT_r$ with $(1,\circlearrowleft)$ boundary conditions, and define the burn-in time
\begin{align}\label{eq:T-burn}
\Tburn = \Tburn(C_0, r) & : =   C_0 n \log n \cdot   \frac{t_{\textsc{mix}}(\mathcal{T}_{r},(1,\circlearrowleft)) }{|E(\cT_{r})|}\,.
\end{align}
Recall, the definition of the update numbers $(\kappa_{A_m,t})_m$ and define, for every $t\ge \Tburn$, the event 
\begin{align}\label{eq:mathfrak-E}
\mathfrak E_t = \mathfrak E_t^\infty \qquad\mbox{where}\qquad \mathfrak E_t^m : = \bigg\{(e_s)_{s\le t}: \bigcap_{l= 1}^{m} \Big\{\kappa_{A_l,t} \le \frac{4|E(\cT_r)|}{\Delta n} t \Big\} \bigg\}\,.
\end{align}
Standard tail estimates for binomial random variables will imply that $\mathfrak E_t$ holds with high probability. 

Let $\mathfrak m_{0} = 0$ and for each $k\ge 0$, let $\mathfrak m_{k+1}=\mathfrak m_{k}+|\mathcal{V}_{k}|$, i.e., the total number of exposed vertices on the boundaries of explored balls, and in the same connected component as $\cV_0$ before the exploration for the $(k+1)$-th generation begins. This will be the quantity which we compare to the population of the branching process of Definition~\ref{def:branching-process}. 
More precisely, on the event $\mathfrak E_t$, by construction of $(Z_k)$, and the choice of $\Tburn$, we are able to show the following stochastic domination. 
\begin{lemma}\label{lem:branching-process-domination}
There exists $C_0(p,q,\Delta)$ in the definition of~\eqref{eq:T-burn} such that the following holds for every $t\ge \Tburn$. For every $\cA_0, \cV_0$ such that $|\cA_0|,|\cV_0|\le n^{\frac 12 - \delta}$ for $\delta>0$, every  $K>0$ fixed, and every $\ell\ge 1$, 
\begin{align*}
\big(|\mathcal{V}_{j}| \mathbf 1\{{\mathfrak E_t^{\mathfrak m_{j}}}\} \mathbf 1{\{\mathfrak m_{j-1} \le n^{1/2 - \delta/ 2} \}}\big)_{j\le \ell} & \preccurlyeq  (Z_{j})_{j\le \ell}\,.
\end{align*}
\end{lemma}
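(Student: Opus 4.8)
\textbf{Proof proposal for Lemma~\ref{lem:branching-process-domination}.}

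The plan is to construct, on a single probability space, a coupling of the revealing procedure of Definition~\ref{def:revealing-FK-on-graph} with the process $(Z_k)$ of Definition~\ref{def:branching-process} that processes the explored vertices $v_1,v_2,\dots$ in the order in which Definition~\ref{def:revealing-FK-on-graph} produces them, identifying $v_m$ (when $v_m\in\mathcal V_k$) with the corresponding individual of generation $k$. It suffices to establish the following per-vertex bound: conditionally on the filtration $\mathcal F_{m-1}$, on $\mathfrak E_t^{m-1}$, and on the event that at most $n^{1/2-\delta/2}$ vertices have been explored so far, the number of vertices that $v_m$ adds to $\mathcal V_{k+1}$ in step~(4) is stochastically dominated by a fresh copy $\chi_{m,k}$ of the progeny law of Definition~\ref{def:branching-process}, drawn conditionally independently of $\mathcal F_{m-1}$. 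Granting this, the lemma follows by the standard fact (Strassen, applied conditionally and inductively) that a nonnegative process whose increments are, conditionally on the past, dominated by i.i.d.\ copies of $\chi$ is itself dominated --- jointly over all coordinates --- by the branching process with i.i.d.\ progeny $\chi$; the two indicators in the statement are precisely the events under which the per-vertex bounds are valid up to and including the generation in question, and when an indicator vanishes the left side is $0\le Z_j$.

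To prove the per-vertex bound, fix $m$ with $v_m\in\mathcal V_k$ and condition as above. Call the step $\mathsf{Bad}$ if (i) $B_r^{out}(v_m)$ meets the previously revealed edge set $\mathcal A_{m-1}$, (ii) $B_r^{out}(v_m)$ is not a tree, (iii) $\kappa_{A_m,t}<\kappa^\star$, where $\kappa^\star$ is a large-enough constant multiple of $\tmix(\mathcal T_r,(1,\circlearrowleft))\log n$, chosen so that running the FK-dynamics on $A_m$ for $\kappa^\star$ steps brings it within total-variation distance $n^{-10}$ of $\pi_{A_m}^1$, or (iv) the coupling realizing that distance fails on this step. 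One checks $\mathbb{P}(\mathsf{Bad}\mid\mathcal F_{m-1})\le n^{-1/2}$ once $C_0$ and $n$ are large enough. For (i)--(ii): exposing $B_r^{out}(v_m)$ matches $O(d^r)=O(1)$ half-edges by Definition~\ref{def:revealing-graph}, and since $|\mathcal A_{m-1}|\le|\mathcal A_0|+O(d^r)\,n^{1/2-\delta/2}=O(n^{1/2-\delta/2})$, each such match lands on a half-edge incident to $V(\mathcal A_{m-1})$ with probability $O(|\mathcal A_{m-1}|/n)=O(n^{-1/2-\delta/2})$, so a union bound gives $O(d^{2r}n^{-1/2-\delta/2})$. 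For (iii): on the complement of (i)--(ii), $A_m=B_r^{out}(v_m)$ is a tree with $\Theta(d^r)$ edges; on $\mathfrak E_t^{m-1}$ the updates consumed by earlier blocks total $o(t)$, so conditionally $\kappa_{A_m,t}$ dominates a $\bin\bigl(t-o(t),\,\Theta(d^r)/|E(\mathcal G)|\bigr)$, whose mean is $\Omega\bigl(C_0\log n\cdot\tmix(\mathcal T_r,(1,\circlearrowleft))\bigr)$ by the choice~\eqref{eq:T-burn} of $\Tburn$, whence a Chernoff bound makes $\mathbb{P}((\text{iii}))\le n^{-c_2 C_0}$, negligible for $C_0$ large. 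And (iv) contributes at most $n^{-10}$. Since the total is at most $n^{-1/2}$, we may couple so that $\mathsf{Bad}\subseteq\{\chi_{m,k}\text{ is }\mathsf{Bad}\}$.

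It remains to check domination on each of the two events. On $\{\chi_{m,k}\text{ is }\mathsf{Bad}\}$ we use only the crude estimate that the number of new vertices $v_m$ contributes is at most the total number of vertices in $\mathcal A_m\setminus\mathcal A_0$, which is at most $m\,|V(\mathcal T_r)|\le|V(\mathcal T_r)|\sum_{\ell\le k}|\mathcal V_\ell|\le|V(\mathcal T_r)|\sum_{\ell\le k}Z_\ell$: here we use that each ball $B_r^{out}(\cdot)$ spans at most $|V(\mathcal T_r)|$ vertices (true in any $\Delta$-regular graph), that $m\le\sum_{\ell\le k}|\mathcal V_\ell|$, and the already-established domination in generations $\le k$; this matches item~(1) of Definition~\ref{def:branching-process}. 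On the complementary event the step is good, and the key structural point is that, since $v_m$ lies on the boundary of $\mathcal A_{m-1}$ and $A_m=B_r^{out}(v_m)$ is a tree disjoint from $\mathcal A_{m-1}$, the boundary $\partial A_m$ in the sense of Definition~\ref{def:censored-Glauber-chains} is exactly $\{v_m\}$ together with the depth-$r$ leaves of $A_m$; thus $\pi_{A_m}^1$ is precisely the measure $\pi^{(1,\circlearrowleft)}$ on the subtree $A_m\subseteq\mathcal T_r$. Moreover, since $A_m$ connects to the rest of the revealed region only through $v_m$, and $v_m$ already lies in the $\mathcal V_0$-component of $\tilde\omega_{m-1}$, the vertices $v_m$ contributes to $\mathcal V_{k+1}$ are exactly the leaves of $A_m$ joined to $v_m$ in $X_{A_m,t}^1(A_m)$. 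On the good event this configuration agrees with a sample from $\pi_{A_m}^1=\pi^{(1,\circlearrowleft)}_{A_m}$, and a monotone coupling (adjoining the missing root-branches of $\mathcal T_r$ only enlarges the leaf set and, by FKG, does not decrease any leaf-to-root connection) bounds the number of such leaves, stochastically, by the number of leaves of $\mathcal T_r$ joined to the root under $\pi^{(1,\circlearrowleft)}_{\mathcal T_r}$ --- the law of $\chi_{m,k}$ in item~(2) of Definition~\ref{def:branching-process}. Combining the two cases yields the per-vertex bound.

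The main obstacle is this per-vertex bound together with keeping the $\chi_{m,k}$ genuinely i.i.d.\ and independent of the past. One must argue that revealing $B_r^{out}(v_m)$ and the triplet $\mathcal H_{A_m,t}$ leaves the conditional law of every not-yet-touched block equal to that of an autonomous localized FK-dynamics chain --- this is exactly where the engineered wiring of Definitions~\ref{def:censored-Glauber-chains}--\ref{def:coupling-Glauber-chains-average-graph} and the ``only dependence is through the $\kappa_{A,t}$'s'' property are used --- that the negative association among the $\kappa_{A_\cdot,t}$ of disjoint blocks is harmless on $\mathfrak E_t$ (which is why the upper bound in~\eqref{eq:mathfrak-E} appears), and that the $\partial A_m$ bookkeeping genuinely turns $\pi_{A_m}^1$ into a $(1,\circlearrowleft)$-tree measure, so that Lemma~\ref{lem:exp-decay-wired-tree-new} becomes applicable and delivers subcriticality of $(Z_k)$.
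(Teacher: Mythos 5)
Your proposal is correct and follows essentially the same route as the paper's proof: an inductive/per-vertex conditional domination, split into a $\mathsf{Bad}$ event (ball intersects previously revealed edges, ball not a tree, too few updates, or chain not mixed), each contributing at most a constant fraction of $n^{-1/2}$, plus a good event on which the localized chain's configuration is compared by monotonicity to $\pi_{\cT_r}^{(1,\circlearrowleft)}$. The decomposition of the $\mathsf{Bad}$ probability and the subtree-to-$\cT_r$ monotonicity step both match the paper; the only cosmetic difference is that you package the mixing error as a separate ``coupling fails'' sub-event with threshold $n^{-10}$, while the paper folds the mixing TV bound directly into the choice of $C_0$ so that the contribution is $\tfrac 13 n^{-1/2}$, and you phrase the induction as an explicit coupling via Strassen rather than the paper's ``work on the coupled probability space'' formulation. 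Neither difference is substantive.
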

In this manner, we will have reduced the analysis of the set of exposed vertices through the revealing process of $(\cG, \tilde \omega_t)$, and thus, the clusters of $X_{\cG,t}^1$, to the analysis of the process $(Z_k)$, which except on some rare $\mathsf{Bad}$ increments, is a simple branching process with subcritical progeny distribution dictated by connectivity probabilities in the wired measure $\pi_{\cT_{r}}^{(1,\circlearrowleft)}$. 
We will establish the following tail estimate for $(Z_k)$.

\begin{lemma}\label{lem:modified-branching-process-tail-bound}
    Suppose $p<p_u(q,\Delta)$ and fix any $\delta>0$, any $M \ge 1$, and any $1\le Z_0\le n^{\frac{1}{2} -\delta}$. There exist $r_0(p,q,\Delta),C(p,q,\Delta,M),K_0(p,q,\Delta,M)$ such that for every $r \ge r_0$ fixed and every $0<\lambda \le n^{\frac 12 - \delta}$,
    \begin{align*}
        \mathbb P\Big(\sum_{k\ge 0} Z_k \ge K_0 Z_0 + \lambda  \Big) \le  C\exp ( - \lambda/C)+ C e^{M}n^{- \delta M}\,.
    \end{align*}
\end{lemma}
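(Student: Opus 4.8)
The plan is to peel $(Z_k)_{k\ge 0}$ into \emph{epochs} delimited by the generations at which a $\mathsf{Bad}$ progeny is drawn, and to treat each epoch as a genuine subcritical Galton--Watson process with offspring law $\chi_{\mathrm{good}}$ -- the number of leaves of $\cT_r$ in the open cluster of the root under $\pi_{\cT_r}^{(1,\circlearrowleft)}$ (item (2) of Definition~\ref{def:branching-process}). Two properties of $\chi_{\mathrm{good}}$ drive the argument. First, summing Lemma~\ref{lem:exp-decay-wired-tree-new} over the $|\partial\cT_r|=\Delta d^{r-1}$ leaves gives $\mathbb{E}[\chi_{\mathrm{good}}]\le \frac{C\Delta}{d}(\hat p d)^{r}$, which is smaller than $\frac12$ once $r\ge r_0(p,q,\Delta)$, because $\hat p d<1$ throughout the uniqueness regime; hence each epoch is strongly subcritical, uniformly in $r\ge r_0$. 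Second, $\chi_{\mathrm{good}}$ has an exponential moment that does not degrade as $r\to\infty$: writing $g_r(x)=\mathbb{E}[(1+x)^{\chi_{\mathrm{good}}}]$ and bounding, level by level on $\cT_r$, the probability that many of the leaves below an internal vertex are joined to it, one obtains a recursion $g_j(x)-1\le\big(1+\hat p(g_{j-1}(x)-1)\big)^{d}-1$ (with the first few levels contributing only bounded corrections), whose fixed-point analysis -- the map $a\mapsto(1+\hat p a)^d-1$ has derivative $\hat p d<1$ at $0$, hence attracts a fixed interval $[0,a^\ast)$ -- shows $g_r(x)\to1$ as $r\to\infty$ for every fixed $x\in(0,a^\ast)$. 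In particular $\mathbb{E}[e^{\theta_0\chi_{\mathrm{good}}}]\le 2$ for some fixed $\theta_0>0$ and all $r\ge r_0$. (For a single fixed $r$ the trivial bound $\chi_{\mathrm{good}}\le|\partial\cT_r|$ already yields an exponential moment, at the cost of $r$-dependent constants.)

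Granted these, the remaining pieces are standard. For a subcritical Galton--Watson process with offspring law $\chi_{\mathrm{good}}$ started from $z$ particles, the total progeny $T_{\mathrm{GW}}=\sum_{k\ge0}Z_k$ has $\mathbb{E}[T_{\mathrm{GW}}]=z/(1-\mathbb{E}[\chi_{\mathrm{good}}])\le 2z$ and satisfies $\mathbb{P}(T_{\mathrm{GW}}\ge az+\lambda)\le Ce^{-\lambda/C}$ for constants $a,C$ depending only on $\mathbb{E}[\chi_{\mathrm{good}}]$ and $\mathbb{E}[e^{\theta_0\chi_{\mathrm{good}}}]$ (hence uniform over $r\ge r_0$); this follows from the Otter--Dwass representation of $T_{\mathrm{GW}}$ as the first passage to $-z$ of a random walk with i.i.d.\ increments $\chi_{\mathrm{good}}-1$ of negative mean, together with a Chernoff bound using the exponential moment. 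I then couple $(Z_k)$ with such a pure Galton--Watson process: attach to each (particle,\,generation) slot an independent pair $(B,\xi)$ with $B\sim\mathrm{Ber}(n^{-1/2})$ and $\xi\sim\chi_{\mathrm{good}}$, let the Galton--Watson process always use $\xi$ and let $(Z_k)$ use $\xi$ unless $B=1$ (a $\mathsf{Bad}$ draw). The two processes agree until the first $\mathsf{Bad}$ draw, so on $\{N_{\mathrm{bad}}=0\}$ (no $\mathsf{Bad}$ ever occurs) one has $\sum_{k\ge0}Z_k=T_{\mathrm{GW}}$, and hence $\mathbb{P}\big(\sum_{k\ge0}Z_k\ge aZ_0+\lambda,\ N_{\mathrm{bad}}=0\big)\le Ce^{-\lambda/C}$.

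It remains to handle the $\mathsf{Bad}$ draws. On $\{N_{\mathrm{bad}}<M\}$ the process consists of at most $M$ epochs: epoch $0$ starts from $Z_0$ particles, and by the definition of a $\mathsf{Bad}$ progeny each later epoch starts from at most $O(M\,|V(\cT_r)|)$ times the running total of all earlier epochs. Iterating the subcritical bound $T_j\le az_j+\lambda_j$ across the (at most $M$) epochs gives, on $\{N_{\mathrm{bad}}<M\}$, $\sum_{k\ge0}Z_k\le K_0Z_0+K_0\Lambda$, where $K_0=K_0(p,q,\Delta,M)$ absorbs the $(1+O(M|V(\cT_r)|))^{M}$-type prefactor and $\Lambda=\sum_{j\le M}\lambda_j$ is a sum of at most $M+1$ independent $O(1)$-subexponential variables, so $\mathbb{P}(K_0\Lambda\ge\lambda)\le Ce^{-\lambda/C}$. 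Finally, to bound $\mathbb{P}(N_{\mathrm{bad}}\ge M)$ I reveal the indicators $B$ one at a time in breadth-first order; the epoch bound controls the number of particles -- hence the number of $\mathrm{Ber}(n^{-1/2})$ coins tossed -- seen before the $M$-th $\mathsf{Bad}$ draw by some $W\le K_0Z_0+K_0\Lambda$, so a repeated-conditioning argument gives $\mathbb{P}(N_{\mathrm{bad}}\ge M)\le\mathbb{E}[(n^{-1/2}W)^{M}]\le\mathbb{E}\big[(n^{-1/2}(K_0Z_0+K_0\Lambda))^{M}\big]$. Since $Z_0\le n^{1/2-\delta}$ and $\mathbb{E}[\Lambda^{M}]\le C^{M}M!$, and since $n^{-1/2}\le n^{-\delta}$, the right-hand side is at most $Ce^{M}n^{-\delta M}$ after absorbing $M$-dependent factors into $C=C(p,q,\Delta,M)$. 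Combining the three displays (and writing $K_0$ for $a$) yields $\mathbb{P}\big(\sum_{k\ge0}Z_k\ge K_0Z_0+\lambda\big)\le C\exp(-\lambda/C)+Ce^{M}n^{-\delta M}$.

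The crux is the second driving property: establishing an exponential moment for $\chi_{\mathrm{good}}$ that does not blow up as $r\to\infty$. This requires pushing the recursive estimate behind Lemma~\ref{lem:exp-decay-wired-tree-new} from ``the root is joined to one prescribed leaf'' to ``the root is joined to every leaf of a prescribed set'' -- equivalently, to all factorial moments of $\chi_{\mathrm{good}}$ -- while keeping enough of the tree's branching structure that the union bound over the $\Theta(d^{r})$ leaves is beaten by the decay coming from $\hat p d<1$; the delicate point is that the $(1,\circlearrowleft)$ boundary condition ties distinct subtrees together, so the level-by-level recursion is not exactly a product and must be controlled by stochastic domination. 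A secondary technical point is the circular dependence between $\sum_{k\ge0}Z_k$ and $N_{\mathrm{bad}}$, which is what forces the breadth-first/stopping-time device in the bound for $\mathbb{P}(N_{\mathrm{bad}}\ge M)$ rather than a one-shot union bound.
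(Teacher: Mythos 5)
Your decomposition into epochs delimited by $\mathsf{Bad}$ draws, with each epoch treated as a subcritical Galton--Watson process, is exactly the strategy of the paper's proof, as is your observation that the circularity between $\sum_k Z_k$ and the number of $\mathsf{Bad}$ draws must be broken by a stopping-time device. However, you have manufactured a genuine gap in what you call the ``crux'': the claimed $r$-uniform exponential moment $\mathbb{E}[e^{\theta_0\chi_{\mathrm{good}}}]\le 2$ is never established, and you yourself flag the obstruction (the $(1,\circlearrowleft)$ boundary condition destroys the product structure across subtrees, so the recursion $g_j(x)-1\le(1+\hat p(g_{j-1}(x)-1))^d-1$ is an assertion, not a proof). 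Crucially, none of this effort is needed: the lemma is only ever invoked for a single $r=r(p,q,\Delta)$ chosen once and for all (see how it feeds into Lemma~\ref{lem:revealing-procedure-tail-bounds}), so $r$-dependent constants are perfectly acceptable. The paper uses precisely the observation you dismiss parenthetically --- $\chi_{\mathrm{good}}\le|\partial\cT_r|\le\Delta d^{r-1}$ almost surely --- and applies Hoeffding to the breadth-first random-walk representation of the total progeny, yielding a tail rate $c'(r,\Delta,M)>0$ that is constant since $r$ is constant.

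The other structural difference is the bookkeeping for $\mathsf{Bad}$ draws. The paper defines the stopping generation $\kappa_\lambda=\inf\{k:\cP_k>K_0Z_0+\lambda\}$, observes that before $\kappa_\lambda$ at most $K_0Z_0+\lambda$ progeny coins are tossed, and directly stochastically dominates the count of $\mathsf{Bad}$ draws before $\kappa_\lambda$ by $\mathrm{Bin}(K_0Z_0+\lambda,\,n^{-1/2})$; the Chernoff estimate~\eqref{eq:Chernoff-Poisson-binomial} then gives $Ce^Mn^{-\delta M}$ immediately, with no need to control moments of a random stopping variable. Your route via $\mathbb{P}(N_{\mathrm{bad}}\ge M)\le\mathbb{E}[(n^{-1/2}W)^M]$, with $W\le K_0Z_0+K_0\Lambda$ and $\Lambda$ a sum of subexponential variables, can be made to work, but it imports moment bounds on $\Lambda$ that you have not supplied and that are downstream of the very tail bound you are trying to prove; the paper's stopping-time cutoff avoids this extra layer. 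The remaining ingredient --- dominating the population on $\Gamma_{M,k}$ by a sum of $M$ independent subcritical branching processes seeded by $|V(\cT_r)|$ times the running total, then a pigeonhole step to locate one process that is large relative to its own seed --- is present in spirit in your ``iterating the subcritical bound across epochs,'' but again you only gesture at the constants. In short: the skeleton matches the paper's, but the hardest step you set yourself is both unproved and unnecessary, and the two places where you sketch rather than argue (the generating-function fixed point and the moments of $\Lambda$) are precisely where the paper's simpler choices make the proof close.
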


\subsubsection{Outline of remainder of section}
Having sketched the key revealing procedures and the way they fit together to provide the desired bounds on the clusters of $X_{\cG,t}^1$, let us prove the various relations and bounds claimed above. In Section~\ref{subsec:properties-couplings-revealings}, we prove various key properties of the configuration model revealing process of Definition~\ref{def:revealing-graph} and the coupling of Definition~\ref{def:coupling-Glauber-chains} that will be central to the analysis of the revealing procedure of Definition~\ref{def:revealing-FK-on-graph}. Then in Section~\ref{subsec:domination-by-the-branching-process}, we show that the size-dependent branching process $(Z_k)$ of Definition~\ref{def:branching-process} stochastically dominates the FK process $(\cV_k)$ of Definition~\ref{def:revealing-FK-on-graph} on a high-probability event, proving Lemma~\ref{lem:branching-process-domination}. In Section~\ref{subsec:analysis-of-branching-process}, we analyze the process $(Z_k)$ by comparing its population to the sum of $O(1)$ many sub-critical branching processes to deduce Lemma~\ref{lem:modified-branching-process-tail-bound}. In Section~\ref{subsec:main-revealing-proof}, we combine these ingredients to conclude Theorem~\ref{thm:pi-exponential-decay} and Proposition~\ref{prop:nontrivial-bdy-component-bound}, and from that Theorem~\ref{thm:k-R-sparse-whp}.

\subsection{Key properties of the revealing procedure for $(\cG, \tilde \omega_t)$}\label{subsec:properties-couplings-revealings}
In this section, we describe some of the key properties of the coupling constructed in Definition~\ref{def:coupling-Glauber-chains}, and the revealing procedure constructed for the clusters of $\cV_0$ in $\tilde \omega_t$ in Definition~\ref{def:revealing-FK-on-graph}. The following preliminary lemmas describe the law of the random graph edges and overlaying FK configurations through the revealing process. 

\subsubsection{Properties of the configuration model revealing procedure} 
We begin with the following lemma on the law of the random graph $\cG$ conditionally on a set $\cA_m$ which we have revealed to be a subset of $E(\cG)$. Recall the configuration model's revealing procedure from Definition~\ref{def:revealing-graph} and say a vertex is \emph{discovered }if at least
 one of its half-edges has been matched, and \emph{exhausted} if all
 of its half-edges have been matched. 

\begin{lemma}
\label{lem:ball-intersection-probability}
Let $\mathcal{A}$ be
any set of edges (pairing of half-edges) revealed to belong to $E(\mathcal{G})$. For every
$r$, 
\begin{align*}
\sup_{v\in\{1,...,n\}}\Pcm\big(B_{r}^{out}(v)\cap\mathcal{A}\ne\emptyset\,\mbox{ or }\,B_{r}^{out}(v) \mbox{ is not a tree}\mid\mathcal{A} \big)\le & \frac{2\Delta d^{r}(|V(\mathcal{A})|+ d^r)}{n-(|V(\mathcal{A})|+d^{r})}\,.
\end{align*}
\end{lemma}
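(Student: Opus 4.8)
The claim is a union bound over the $O(\Delta d^r)$ vertices and edges that could lie in $B_r^{out}(v)$, controlling two bad events at once: that the ball intersects the already-revealed set $\mathcal A$, and that the ball fails to be a tree. The natural tool is the adaptive revealing procedure of Definition~\ref{def:revealing-graph}: I would reveal the half-edges of $B_r^{out}(v)$ in breadth-first order, starting from the (at most $d$, or $\Delta$) half-edges of $v$ not already in $\mathcal N_v(\mathcal A)$, and track the probability of a ``collision'' at each matching step. Conditioned on $\mathcal A$ and on the portion of the ball revealed so far, each new half-edge is matched to a uniformly random unmatched half-edge; the number of unmatched half-edges is at least $n - (|V(\mathcal A)| + d^r)$ crudely (each discovered vertex kills at most $\Delta$ half-edges, but the clean bound $n - (\ldots)$ in terms of vertex counts suffices after absorbing constants), since by the time we finish exploring the ball we have discovered at most $|V(\mathcal A)| + |V(B_r^{out}(v))| \le |V(\mathcal A)| + d^r$ vertices (using $|V(B_r^{out}(v))| \le 1 + \Delta \sum_{i=1}^r d^{i-1} \le 2\Delta d^r$, but the looser $d^r$-type bound is what the statement carries, so I'd just be slightly generous with the leading $2\Delta$).

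**Key steps, in order.** (i) Set up the BFS exploration of $B_r^{out}(v)$ via the function $f$ in Definition~\ref{def:revealing-graph}, conditioned on $\mathcal A$; note there are at most $|E(B_r^{out}(v))| \le 2\Delta d^r$ edges to reveal, equivalently that many matching steps. (ii) At each matching step, identify the ``bad'' outcomes: matching to a half-edge of a vertex already in $V(\mathcal A)$, or matching to a half-edge of an already-discovered vertex in the current ball (which creates a cycle, i.e. makes the ball not-a-tree). The number of such bad target half-edges is at most $\Delta(|V(\mathcal A)| + d^r)$, while the number of available targets is at least $\Delta n - 2\Delta(|V(\mathcal A)| + d^r) \ge \Delta(n - 2(|V(\mathcal A)|+d^r))$. (iii) So each step is bad with conditional probability $\le \frac{|V(\mathcal A)|+d^r}{n - 2(|V(\mathcal A)|+d^r)}$; a union bound over the $\le 2\Delta d^r$ steps gives $\le \frac{2\Delta d^r (|V(\mathcal A)| + d^r)}{n - 2(|V(\mathcal A)|+d^r)}$, which is of the stated form (I'd reconcile the $n - (|V(\mathcal A)|+d^r)$ versus $n-2(\ldots)$ discrepancy by being slightly more careful in counting available half-edges, or note the stated bound already folds extra slack into the $2\Delta$ prefactor). (iv) Take the supremum over $v$; the bound is uniform since nothing in the estimate depended on which vertex we centered at. Crucially the adaptivity of $f$ is exactly what lets us do BFS from $v$ while staying inside the $\Pcm$ framework.

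**Main obstacle.** The genuine subtlety — not a calculation but a modeling point — is that $B_r^{out}(v)$ is defined with respect to the \emph{entire} realized graph $\mathcal G$, not just the edges revealed so far, so the exploration must be ``self-consistent'': when I reveal a half-edge and it closes a cycle inside radius $r$, I must argue this is precisely the event that $B_r^{out}(v)$ is not a tree, and when it hits $\mathcal A$, that this is the event of intersection; and conversely, if none of the revealed matchings are bad through the full BFS to depth $r$, then $B_r^{out}(v)$ genuinely is a tree disjoint from $\mathcal A$. This requires care that the BFS terminates having revealed all of $B_r^{out}(v)$ (it does, since every vertex within distance $r$ gets its half-edges matched during the process) and that ``revealed so far'' never over- or under-counts discovered vertices. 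Once that bookkeeping is pinned down, the probability estimate is the routine union bound sketched above. I expect the proof to be short, with most of the (brief) text devoted to the self-consistency argument and to fixing the exact constants.
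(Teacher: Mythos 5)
Your proposal is correct and follows essentially the same route as the paper's proof: sample $\Pcm(\cdot\mid\cA)$ by first matching the half-edges of $\cA$ and then running an adaptive BFS of $B_r^{out}(v)$, observe that a collision-free BFS forces $B_r^{out}(v)$ to be a tree disjoint from $\cA$, bound the per-step collision probability, and union bound over the at most $|E(\cT_r)|\le 2\Delta d^r$ matching steps. The denominator mismatch you flag is closed in the paper by the small refinement that every already-discovered vertex has at least one matched half-edge and hence at most $d=\Delta-1$ unmatched ones, giving $d(|V(\cA)|+d^r)$ bad target half-edges rather than $\Delta(|V(\cA)|+d^r)$; after dividing numerator and denominator by $\Delta$ this recovers the stated $n-(|V(\cA)|+d^r)$ (and in any case the residual $O(d^r)$ slack in the subtracted term is immaterial in every application of the lemma).
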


\begin{proof}
     Fix any edge-set $\cA$. We can sample from the conditional distribution $\Pcm (\cdot \mid \cA)$ by defining the adaptive scheme $f$ in Definition~\ref{def:revealing-graph} so that it first matches the half-edges belonging to $\cA$, yielding the set $\cA_{|\cA|/2} = \cA$ after $|\cA|/2$ steps, then setting $f$ to do a breadth-first search (BFS) of $B_r^{out}(v)$: this latter part is done by choosing $f$ so that it first exhausts $v$, then exhausts each of the neighbors of $v$, and so on. 
     
     Revealing the entire set $B_r^{out}(v)$ takes at most $|E(\cT_r)|$ many steps beyond $|\cA|/2$. If for every $m\in \{|\mathcal A|/2+1,...,|\cA|/2+d^r\}$ the half-edge from $f(\mathcal A_{m-1})$ is not matched to a half-edge belonging to a vertex in $V(\mathcal A_{m-1})$, then evidently $B_r^{out}(v)\cap \cA = \emptyset$ and $B_r^{out}(v)$ is a tree. 
     
     Since on each of these steps, the half-edge $f(\cA_m)$ is being matched to a u.a.r.\ un-matched half-edge, uniformly over the at most $|E(\cT_r)|$ steps it takes to reveal $B_r^{out}(v)$, the probability that the half-edge it is matched to belongs to $\cA_{m-1}$ is at most
     $$\frac{d (|V(\cA)|+d^r)}{\Delta n - (d |V(\cA)|+d^r)}\le \frac{|V(\cA)|+d^r}{n-(|V(\cA)|+d^r)}\,.$$ 
     (The first inequality here uses the fact that in the BFS of $B_r^{out}(v)$, there are at most $d^r$ vertices of the ball that have been discovered but not exhausted.)
     Union bounding over the at most $|E(\cT_r)|\le 2\Delta d^r$ such attempts yields the desired bound. 
\end{proof}

We can use a similar reasoning as the proof above to deduce a proof of Fact~\ref{fact:random-graph-treelike} as follows. 

\begin{proof}[\textbf{\emph{Proof of Fact~\ref{fact:random-graph-treelike}}}]
Fix any $v$ and choose $f$ so that the revealing scheme performs a BFS revealing of $B_R(v)$. In order for $B_R(v)$ to not be $L$-$\treelike$, it must be the case that for more than $L$ different $m$'s in the first $|E(\cT_R)|$ steps, the half-edge $f(\cA_{m-1})$ is being matched to a half-edge belonging to $\cA_{m-1}$. (If there were at most $L$ such steps, then the removal of the at-most $L$ edges formed by those at-most $L$ matchings in the revealing scheme, evidently leaves a tree, so that $B_R(v)$ would be $L$-$\treelike$.)
Uniformly over $\cA_{m-1}$, the probability of this in the $m$'th step is at most $d^{R+1}/(\Delta (n - m))$. Summing over the at most $|E(\cT_R)|$ many such attempts while revealing $B_R(v)$, we find that for every $\ell\ge 1$, 
\begin{align}\label{eq:B-r-v-not-treelike}
    \Pcm ( B_R(v) \mbox{ is not $\ell$-$\treelike$}) \le \bbP\Big(\bin\big(|E(\cT_R)|, \frac{d^R}{n - |E(\cT_R)|}\big) >\ell \Big)\,.
\end{align}
Recall that the standard Chernoff bound applied to a Poisson binomial distribution with mean $\mu = N p$ says that for every $s \ge \mu$, 
\begin{align}\label{eq:Chernoff-Poisson-binomial}
    \mathbb P\big(\bin(N,p) \ge s\big) \le e^{ s - \mu} \Big(\frac{s}{\mu}\Big)^{-s}\,.
\end{align}
With the choice $R = (\frac{1}{2} - \delta) \log_d n$, so that $d^R = n^{\frac 12 - \delta}$ and $|E(\cT_R)|\le 2\Delta d^R$,~\eqref{eq:Chernoff-Poisson-binomial} implies that the right-hand side of~\eqref{eq:B-r-v-not-treelike} is at most $(C n^{-\delta})^\ell$ for some $C(\Delta)$ and large enough $n$. As a consequence, choosing $L > 2\delta^{-1}$, we would find 
\begin{align}\label{eq:CM-treelike}
    \sup_{v\in \{1,...,n\}} \Pcm (B_R(v) \mbox{ is not $L$-$\treelike$}) \le o(n^{-2})\,.
\end{align}
It remains to translate this to a bound under $\Prrg$. This follows by the following standard comparison argument. Let $\Gamma_{\textsc{rrg}}$ be the event that the graph $\cG\sim\Pcm$ has no self-loops or double edges (i.e., it is a simple graph). 
Taking $\Gamma= \{\cG: \cG \mbox{ is not $(L,R)$-$\treelike$}\}$ in~\eqref{eq:CM-to-RRG} and union bounding~\eqref{eq:CM-treelike} over the $n$ vertices yields the desired bound. 
\end{proof}

\subsubsection{Properties of the coupling of localized Markov chains}

The following lemma is the key fact about the construction of the
grand coupling of FK dynamics, Definition~\ref{def:coupling-Glauber-chains}, whereby after revealing some $X_{A,t}^{1}$, we can control
the influence that revealing has on $X_{B,t}^{1}$ for $A\cap B = \emptyset$. In this manner,
through the revealing procedure of Definition~\ref{def:revealing-FK-on-graph}, which reveals different localized configurations
$X_{A_m,t}^{1}$ iteratively, as long as $t\ge \Tburn$ these
are each close to their respective stationary distributions of $\pi_{A_m}^{1}$,
so that it is approximately a concatenation of localized FK models
on treelike graphs, inducing an exponential decay of connectivities.

\begin{lemma}\label{lem:revealing-process-law}
Recall the coupling of Definitions~\ref{def:coupling-Glauber-chains}--\ref{def:coupling-Glauber-chains-average-graph} of the distributions $(\mathbb{P}_{A,t}^{1})_{A\subset \mathfrak M_n,t\ge1}$. Suppose we have revealed edges in $E({\mathcal G})$ showing $E({\mathcal G})\cap A = A$. 
\begin{enumerate}
\item The configuration $X_{A,t}^{1}(A)$ is measurable with respect to $\kappa_{A,t}$ (the number of edge-updates in $A$), the edges chosen to update $\mathcal O_{A,\kappa_{A,t}}$, and the uniform random variables $(U_{e,s})_{e\in A,s\le t}$ on those edges.
The number $\kappa_{A,t}$ 
is distributed as $\bin(t,2|A|/\Delta n)$; the sequence $\cO_{A,\kappa_{A,t}}$ is distributed as $(\tilde{e}_{j})_{j\le \kappa_{A,t}}$ drawn i.i.d. from $A$. The values $(U_{e,s})_{e\in A,s\le t}$ are distributed as i.i.d.\ $\mbox{Unif}[0,1]$.   
\smallskip
\item Suppose $B$ is such that $E({\mathcal G})\cap B= B$ and $A\cap B=\emptyset$.  Conditionally
on $\kappa_{A,t}$, $\mathcal O_{A, \kappa_{A,t}}$, and $(U_{e,s})_{e\in A,s\le t}$, the
distribution of $X_{B,t}^{1}(B)$ is given as follows. The number of updates $\kappa_{B,t}$  
is drawn from $\bin(t-k_{A,t},2|B|/(\Delta n - 2|A|))$, and the edges chosen are distributed as $(\tilde e_{j})_{j\le \kappa_{B,t}}$ drawn i.i.d.\ amongst $B$. The random variables $(U_{e,s})_{e\in B, s\le t}$ are distributed as i.i.d.\ $\mbox{Unif}[0,1]$.   
\end{enumerate}
\end{lemma}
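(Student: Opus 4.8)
The plan is to read both parts off directly from the construction in Definitions~\ref{def:coupling-Glauber-chains}--\ref{def:coupling-Glauber-chains-average-graph}, using only three elementary ingredients: a domain-Markov observation for the \emph{wired} localized chain, standard facts about i.i.d.\ uniform edge-sampling from $E(\cG)$, and the fact that the grand family $(U_{e,s})$ is indexed by edges, so disjoint edge-sets receive independent sub-families of uniforms.

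For part (1), I would first establish the measurability claim by induction on $t$. At step $s$, the rule of Definition~\ref{def:coupling-Glauber-chains} leaves $X_{A,s}^1(A)$ equal to $X_{A,s-1}^1(A)$ whenever $e_s\notin A$, and otherwise updates the single coordinate $e_s$ to $\mathbf 1\{U_{e_s,s}\le \varrho\}$ with $\varrho=\pi_A^1\big(\omega(e_s)=1\mid \omega(A\setminus\{e_s\})=X_{A,s-1}^1(A\setminus\{e_s\})\big)$. As stressed right after Definition~\ref{def:censored-Glauber-chains}, wiring $\partial A$ makes $\pi_A^1$, and hence $\varrho$, a function of $X_{A,s-1}^1(A)$ alone: it does not depend on $E(\cG)\setminus A$. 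Iterating, $X_{A,t}^1(A)$ is a deterministic function of the sub-sequence of $(e_s)_{s\le t}$ lying in $A$ and of the uniforms $(U_{e,s})_{e\in A,s\le t}$ --- the update \emph{times} play no role in the law, since each of the $\kappa_{A,t}$ updates consumes a distinct, previously unused uniform. The distributional claims are then immediate: since $(e_s)$ are i.i.d.\ uniform on $E(\cG)$ with $|E(\cG)|=\Delta n/2$ and $A\subseteq E(\cG)$, each $e_s$ lies in $A$ with probability $2|A|/(\Delta n)$, independently and regardless of $\cG$, so $\kappa_{A,t}\sim\bin(t,2|A|/(\Delta n))$; conditioning a single $e_s$ on $\{e_s\in A\}$ makes it uniform on $A$, so $\cO_{A,\kappa_{A,t}}$ is i.i.d.\ uniform on $A$; and the $U$'s are i.i.d.\ $\mathrm{Unif}[0,1]$ by construction, jointly independent of $(e_s)$ and of $\cG$.

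For part (2), I would apply the measurability half of part (1) to $B$, which reduces the claim to identifying the conditional law of the triple $\big(\kappa_{B,t},\cO_{B,\kappa_{B,t}},(U_{e,s})_{e\in B,s\le t}\big)$ given the $A$-data $\big(\kappa_{A,t},\cO_{A,\kappa_{A,t}},(U_{e,s})_{e\in A,s\le t}\big)$. The uniforms on $B$ are unaffected, because $A\cap B=\emptyset$ makes $\{U_{e,s}:e\in A\}$ and $\{U_{e,s}:e\in B\}$ disjoint sub-families of an i.i.d.\ collection. Conditioning on the $A$-data pins down which $\kappa_{A,t}$ of the indices $s\le t$ have $e_s\in A$ and their values; conditionally on that, the remaining $t-\kappa_{A,t}$ edges are i.i.d.\ uniform on $E(\cG)\setminus A$, a set of size $\Delta n/2-|A|$ that contains $B$. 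Hence each lies in $B$ with probability $2|B|/(\Delta n-2|A|)$, so $\kappa_{B,t}\sim\bin\big(t-\kappa_{A,t},2|B|/(\Delta n-2|A|)\big)$; conditioning such an edge on lying in $B$ makes it uniform on $B$, so $\cO_{B,\kappa_{B,t}}$ is i.i.d.\ uniform on $B$. Since this law depends only on how many, not which, of the first $t$ indices fell in $A$, conditioning on $\cO_{A,\kappa_{A,t}}$ without the update times indeed suffices. Feeding the triple through the measurable map from part (1) gives the asserted conditional law of $X_{B,t}^1(B)$, and in particular shows that the only coupling between $X_{A,t}^1$ and $X_{B,t}^1$ is the deterministic constraint tying $\kappa_{A,t}$ to $\kappa_{B,t}$ through $t$.

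The load-bearing point --- and the one to be careful about --- is the inductive step: that $\pi_A^1$, hence every resampling probability $\varrho$ in the construction, is insensitive not merely to the random-cluster configuration but even to the \emph{edge-set} of $\cG$ outside $A$. This is exactly what wiring $\partial A$ buys, and it is what makes the localized chains $X_{A_m,t}^1$ genuinely local throughout the revealing scheme of Definition~\ref{def:revealing-FK-on-graph}. The remaining steps --- the binomial thinning of i.i.d.\ uniform edges and the bookkeeping of which uniforms attach to which edge-set --- are routine.
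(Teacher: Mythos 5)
Your proof is correct and follows essentially the same route as the paper's: it computes the law of the triple $\big(\kappa_{A,t},\cO_{A,\kappa_{A,t}},(U_{e,s})_{e\in A,s\le t}\big)$ directly from the fact that $(e_s)$ are i.i.d.\ uniform on $E(\cG)$ with $|E(\cG)|=\Delta n/2$, and then conditions on the $A$-data (enriched, as the paper also does, with the exact update times) to deduce the conditional law of the $B$-triple. The one step you spell out that the paper leaves implicit is the induction on $t$ verifying the measurability claim, via the observation that wiring $\partial A$ makes $\pi_A^1$, hence each resampling probability $\varrho$, insensitive to $E(\cG)\setminus A$; this is a useful clarification but not a different argument.
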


\begin{proof}
Let $\mathcal G$ be any graph having $E({\mathcal G}) \cap A = A$. We claim that uniformly over $\mathcal G$, items (1)--(2) above hold. Observe first that $|E({\mathcal G})|= \Delta n/2$ necessarily, and therefore uniformly over such $\mathcal G$, the number of updates on edges in $A$ by time $t$ in the update sequence $(e_s)_{s\le t}$ is distributed as $\bin(t, 2 |A|/(\Delta n))$. Evidently, the distribution of $\mathcal O_{A, \kappa_{A,t}}$ only depends on $\kappa_{A,t}$ and not on the times these updates were; in particular, given that $e_j \in A$ for some $j$, the law of $e_j$ is clearly uniform at random on $A$. Finally, notice that for every $e$, the sequence $(U_{e,s})_{s\le t}$ is independent of all other sources of randomness, implying the desired item (1). 

Turning to item (2), we fix a $\kappa_{A,t}, \mathcal O_{A,\kappa_{A,t}}$ and family $(U_{e,s})_{e\in A, s\le t}$. We can condition further on the exact times of the updates in $A$, i.e., $(e_s)_{s\le t} \cap A$. Conditionally on that set of updates, the distribution on the remaining updates is evidently $t-\kappa_{A,t}$ i.i.d.\ draws from $E({\mathcal G})\setminus A$. It is then clear that $\kappa_{B,t}$ counts the number of times, amongst these remaining draws, that the update is in $B$. As in item (1), the induced distribution on $\mathcal O_{B,\kappa_{B,t}}$ is then the same as $\kappa_{B,t}$ i.i.d.\ draws from the edges of $B$. Finally, for every $e\in B$, the uniform random variables $(U_{e,s})_{s\le t}$ are independent of all other sources of randomness. 
\end{proof}

\subsection{Domination by the modified branching process $(Z_k)$}\label{subsec:domination-by-the-branching-process}
In this section, we establish the stochastic domination of the sequence $(\cV_k)_{k\ge 0}$ from Definition~\ref{def:revealing-FK-on-graph} by the branching process $(Z_k)$ of Definition~\ref{def:branching-process}. 

\begin{proof}[\textbf{\emph{Proof of Lemma~\ref{lem:branching-process-domination}}}]
We prove the desired stochastic domination by induction over $\ell$. The base case, $Z_0 = |\cV_0|$, is by construction.  Now fix $\ell \ge 1$ and suppose by way of induction that the following stochastic domination holds: $$(|\cV_j|\mathbf 1_{\mathfrak E_t^{\mathfrak m_j}} \mathbf 1_{\{\mathfrak m_{j-1} \le n^{1/2 - \delta/2}\}})_{j\le \ell-1}\preccurlyeq (Z_j)_{j\le \ell-1}\,.$$
Thus there exists a monotone coupling of the sequence on the left-hand side, such that it is below the sequence $(Z_j)_{j\le \ell-1}$ in the natural element-wise ordering on the sequence.  Working on that coupling, it suffices for us to then show that on the intersection $\mathfrak E_t^{\mathfrak m_\ell} \cap \{\mathfrak m_{\ell-1}\le n^{1/2 - \delta/2}\}$, for every  $m\in \{\mathfrak m_{\ell-1}+1,...,\mathfrak m_{\ell}\}$, the distribution of the children of $v_m$ is stochastically below the progeny distribution of Definition~\ref{def:branching-process}. 

Observe, first of all, that for every $m\in \{\mathfrak m_{\ell-1} +1,...,\mathfrak m_{\ell}\}$, on $\mathfrak E_t^{\mathfrak m_\ell} \cap \{\mathfrak m_{\ell-1}\le n^{1/2 - \delta/2}\}$,  deterministically the number of children of $v_m$ is bounded by 
$$|V(\cA_m\setminus \cA_0)|\le |V(\cT_r)| \mathfrak m_{\ell-1} \le |V(\cT_r)| \sum_{j\le \ell-1} |\cV_{j}| \le |V(\cT_r)| \sum_{j\le \ell-1} Z_j\,,$$
where the last inequality is by the inductive hypothesis, and the fact that $\mathfrak E_t^{\mathfrak m_\ell}\cap \{\mathfrak m_{\ell-1}\leq n^{1/2 - \delta/2}\}$ implies $\mathfrak E_t^{\mathfrak m_j}\cap \{\mathfrak m_{j-1}\leq n^{1/2- \delta/2}\}$ for all $j<\ell$. 

Now, for every set of revealed edges $(A_l)_{l \le m-1}$, define the following events on $\cF_{m-1}$ consisting of $(\kappa_{A_l,t})_{l\le m-1}$, edge-values $(\mathcal O_{A_l,\kappa_{A_l,t}})_{l\le m-1}$, uniform random variables $((U_{e,s})_{e\in A_l,s\le t})_{l\le m-1}$: 
\begin{enumerate}
    \item Let $\Gamma_{\textsc{tree},m}$ be the event that $B_r^{out}(v_m)\cap \mathcal A_{m-1} = \emptyset$ and $A_m= \mathcal A_{m}\setminus \mathcal A_{m-1}$ is a tree. 
    \item Let $\Gamma_{\textsc{upd},m}$ be the event that $\kappa_{A_m,t} \ge d^r \Tburn/(2\Delta n)$
\end{enumerate}
We first claim that these two events each happen with probability $1-\frac 13 n^{-1/2}$, uniformly over $(A_l)_{l\le m-1}$ and elements of $\mathcal F_{m-1}$ such that $\mathfrak E_t^{\ell}$ holds and $\mathfrak m_{\ell-1}\le n^{1/2 - \delta/2}$. Given $v_m$, the law of $A_m$ is independent of $\mathcal F_{m-1}$, and only depends on $\mathcal A_{m-1}$. (This can be seen from the explicit construction of the law of $\cF_{m-1}$ in Lemma~\ref{lem:revealing-process-law} as independent of $E(\cG)\setminus \cA_{m-1}$.) Notice that if $\mathfrak m_{\ell-1}\le n^{1/2 - \delta/2}$ and $|\cA_0|\le n^{1/2 - \delta}$ then $|V(\cA_{m-1})|\le (1+|V(\cT_r)|)n^{1/2 - \delta/2}$. 
As such, by Lemma~\ref{lem:ball-intersection-probability}, for every $\cA_0,\cV_0$ such that $|\cA_0|\le n^{1/2 - \delta}$,  
\begin{align*}
    & \sup_{(\mathcal A_{m-1} ,\mathcal F_{m-1})\in \mathfrak E_t^{\mathfrak m_\ell}\cap \{\mathfrak m_{\ell-1}\le n^{\frac 12 - \frac \delta2}\}}  \Pcm (\Gamma_{\textsc{tree},m}^c\mid \mathcal A_{m-1},\mathcal F_{m-1}) \\
    & \!\!\qquad \qquad \le  \sup_{\substack{\mathcal A_{m-1}: \\ V(\mathcal A_{m-1})\le  2|V(\cT_r)| n^{\frac 12 - \frac \delta2}}} \sup_{v_m} \Pcm (B_{r}^{out}(v_m) \cap \mathcal A_{m-1}\ne \emptyset \mbox{ or } B_{r}^{out}(v_m) \mbox{ is not a tree} \mid \mathcal A_{m-1}) \\
    & \!\!\qquad \qquad \le  \sup_{\substack{\mathcal A_{m-1}: \\ V(\mathcal A_{m-1})\le 4\Delta d^r n^{\frac 12 - \frac \delta2}}} \frac{2\Delta d^{r}(|V(\mathcal A_{m-1})|+ d^r)}{n-(|V(\mathcal A_{m-1})|+d^{r})} \le \frac{10\Delta^2 d^{2r} n^{\frac 12 - \frac \delta 2}}{n- 5\Delta d^r n^{\frac 12 - \frac \delta 2}}\,.
\end{align*}
Thus, for $n$ large enough and $r=o(\log n)$, the above is at most $\frac 13 n^{-1/2}$ as desired.

We next turn to the probability of $\Gamma_{\textsc{upd},m}^c \cap \Gamma_{\textsc{tree},m}$. Recall from item (2) of  Lemma~\ref{lem:revealing-process-law} that conditionally on $\mathcal F_{m-1}$, the distribution of $\kappa_{A_m,t}$ is 
$$\kappa_{A_m,t}\sim \bin\Big(t- \sum_{l\le m-1} \kappa_{A_l,t}, \frac{2|A_m|}{\Delta n}\Big)\,.$$
Since we are on the event $\mathfrak E_t^{\mathfrak m_\ell}$ and thus $\mathfrak E_t^{m-1}$, we have that $\sum_{l \le m-1} \kappa_{A_l,t} \le 4 m |E(\cT_r)| t/(\Delta n)$, from which we deduce, using $m\le \mathfrak m_{\ell}\le |V(\cT_r)| \mathfrak m_{\ell-1} \le |V(\cT_r)| n^{1/2 - \delta/2}$, that the number of trials in the binomial is at least 
$$t(1- 16 \Delta d^{2r}   n^{-\frac 12  - \frac \delta 2}) \ge t/2\,,$$
as long as $r = o(\log n)$. 
Since we are on the event $\Gamma_{\textsc{tree},m}$, we have $d^{r} \le |A_m|\le |E(\cT_r)|\le 2\Delta d^{r}$, and we see from lower tail estimates on binomial random variables that 
\begin{align*}
    \sup_{(\mathcal A_{m-1},\mathcal F_{m-1})\in \mathfrak E_t^{m-1}\cap \{\mathfrak m_{\ell-1}\le n^{1/2 - \delta/2}\}} &  \mathbb P \big(\Gamma_{\textsc{upd},m}^c \cap \Gamma_{\textsc{tree},m}\mid \mathcal A_{m-1}, \mathcal F_{m-1}\big)   \\ 
    &  \!\!\!\!\!\!\!\!\!\!\!\!\!\!\!\le \mathbb P \big(\bin(\Tburn/2, 2d^{r}/(\Delta n))\le d^r  \Tburn/(2\Delta n)\big)  
    \le \frac 13 n^{-1/2}\,,
\end{align*} 
as long as $C_0$ in~\eqref{eq:T-burn} is sufficiently large (depending on $r,\Delta$).

By item (2) of Lemma~\ref{lem:revealing-process-law}, conditionally on \emph{any} $(\mathcal A_{l})_{l \le m-1}$ and $\mathcal F_{m-1}$, and any $A_m\in \Gamma_{\textsc{tree},m}$ and $\kappa_{A_m,t}\in \Gamma_{\textsc{upd},m}$, the conditional distribution of $X_{A_m,t}^1 (A_m)$ is equivalent (up to relabeling of edges) to that of $\kappa_{A_m,t}$ updates of a heat-bath chain $(Y_{s}^1)_s$ on a subtree $\hat \cT_r$ of the complete tree $\mathcal T_{r}$ with $(1,\circlearrowleft)$-wired boundary conditions, initialized from $Y_0^1 \equiv 1$. Notice that the equivalent sub-tree $\hat \cT_r$ consists of some $k\le d$ of the children of the root, together with their complete sub-trees. In particular, the random-cluster model on $A_m$ with wired boundary conditions is stochastically below the FK model on the corresponding subset of $\cT_r$ with its $(1,\circlearrowleft)$ boundary conditions. In particular, the number of leaves in the FK cluster of the root under $\pi_{\hat \cT_r}^{(1,\circlearrowleft)}$ is stochastically below the same quantity under $\pi_{\cT_r}^{(1,\circlearrowleft)}$. 
It therefore suffices for us to show that as long as $A_m$ is a tree disjoint from $\mathcal A_{m-1}$ and $\kappa_{A_m,t}\ge d^r \Tburn/(2 \Delta n)$,  we have
\begin{align*}
     \big\|\mathbb P \big(Y_{\kappa_{A_m,t}}^{(1,\circlearrowleft)} \in \cdot \big) - \pi_{\hat \cT_{r}}^{(1,\circlearrowleft)}\big\|_\tv \le \frac 13 n^{-1/2}\,.
\end{align*}
This follows as long as $C_0$ is sufficiently large (depending on $\Delta$), from the fact that
$$\frac{d^r \Tburn}{2\Delta n} \ge \frac{C_0 d^r}{2\Delta |E(\cT_r)|} \log n\cdot \tmix(\mathcal T_{r}, (1,\circlearrowleft)) \ge \frac{C_0 d^r}{2\Delta |E(\cT_r)|} \log n \cdot \tmix(\hat \cT_r, (1,\circlearrowleft))\,,$$
and $|E(\cT_r)| \le 2\Delta d^r$, 
together with the sub-multiplicativity of total-variation distance.
\end{proof}

\subsection{Sub-criticality and tail bounds for the dominating branching process}\label{subsec:analysis-of-branching-process}
We now analyze the process $(Z_k)$ of Definition~\ref{def:branching-process}, and show that it indeed is sub-critical, and satisfies good tails on its total population. For ease of notation, let $\cP_k= \sum_{\ell \le k} Z_\ell$ be the total population after $k$ generations.

\begin{proof}[\textbf{\emph{Proof of Lemma~\ref{lem:modified-branching-process-tail-bound}}}]
    Since $(Z_k)$ is a size-dependent, non-Markov process, we cannot directly use results on branching processes to control its growth. Instead, to control the population of the process $(Z_k)$, we compare it to a sum of branching processes in the following manner. Consider the stopping generation $\kappa_\lambda$ for exceeding population $K_0 Z_0 + \lambda$, i.e.,  
    $$\kappa_\lambda = \inf\{k: \cP_\kappa >K_0 Z_0+ \lambda\}\,.
    $$
    Our aim is to control the probability that $\kappa_\lambda<\infty$. 
    Let $\Gamma_{M,k}$ be the event that no more than $M$ of the progeny counts $((\chi_{i,\ell})_{i\le Z_\ell})_{\ell \le k-1}$ were $\mathsf {Bad}$. By~\eqref{eq:Chernoff-Poisson-binomial}, we get
    \begin{align*}
        \mathbb P (\Gamma_{M,\kappa}^c) \le \bbP\big(\bin(K_0 Z_0 + \lambda , n^{-\frac 12})>M\big) \le \exp\big( M- \mu - M\log \tfrac{M}{\mu}\big)
    \end{align*}
    where $\mu$ is the mean of the Binomial, i.e., $\mu = 
    (K_0 Z_0 + \lambda)n^{ - \frac 12}$. As 
    long as $n$ is sufficiently large and $Z_0, \lambda\le n^{\frac 12 - \delta}$ for $\delta>0$, so that $\mu \le 2 K_0 n^{ - \delta}$, this implies for some $C>0$, 
    \begin{align*}
         \mathbb P (\Gamma_{M,\kappa}^c) \le C e^{M} n^{ - \delta M}\,.
    \end{align*}

    Next consider the event that $\kappa<\infty$ on the event $\Gamma_{M,k}$.  On $\Gamma_{M,k}$, we dominate the population $\cP_k$ by the following sum of sub-critical branching processes with bounded progeny distributions. 

    Define $(\tilde Z_{k}^{(1)})_{k}$ to be the branching process initialized at $\tilde Z_0^{(1)} = Z_0$ with progeny $(\tilde \chi_{i,k}^{(1)})$, distributed i.i.d.\ from the distribution of the
number of leaves connected to the root, in a sample from $\pi_{\mathcal{T}_{r}}^{(1,\circlearrowleft)}$,
    i.e., the distribution of $(\chi_{i,k})$ conditionally on the progeny number not being $\mathsf{Bad}$. Let $\tilde \cP^{(1)}_k = \sum_{\ell \le k} \tilde Z_{k}^{(1)}$. For each $1\le j\le M$, iteratively let $\tilde Z_{k}^{(j)}$ be an independent branching process with the same progeny distribution, initialized from $\tilde Z_0^{(j)}= |V(\cT_r)| \tilde \cP^{(j-1)}_\infty$, where we recall $|V(\cT_r)|\le 2\Delta d^r$. 
    
    The following stochastic domination is clear by construction if we decompose the process $(Z_k)$ revealed in a breadth-first manner, into its excursions between the at most $M$ times (on the event $\Gamma_{M,k}$) when the progeny number $\chi_{i,k}$ was $\mathsf{Bad}$.
    \begin{claim}
    Fix any $k\ge 1$. We have the stochastic domination 
    \begin{align*}
        \cP_k \mathbf 1{\{\Gamma_{M,k}\}} \preccurlyeq \sum_{j\le M} \tilde \cP_\infty^{(j)}\,.
    \end{align*}
    \end{claim}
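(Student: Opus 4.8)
The plan is to reveal the individuals of $(Z_k)$ in breadth-first order and to partition the resulting forest according to where the $\mathsf{Bad}$ progeny counts occur, dominating each piece by one of the independent branching processes $(\tilde Z^{(j)}_k)_k$. Call an individual $\mathsf{Bad}$ if the offspring count drawn at it fell into case~(1) of Definition~\ref{def:branching-process}. Let $\mathcal S$ be the set of individuals with no $\mathsf{Bad}$ strict ancestor and, listing the $\mathsf{Bad}$ individuals in breadth-first order as $w_1,w_2,\dots$ with $w_i$ in generation $g_i$, let $T_i$ be the set of individuals whose deepest $\mathsf{Bad}$ strict ancestor is $w_i$. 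These sets partition all individuals, so $\cP_k\le |\mathcal S|+\sum_i |T_i|$, and on $\Gamma_{M,k}$ only finitely many (at most $M$) of the $T_i$ can meet the generations relevant to $\cP_k$, since the deepest $\mathsf{Bad}$ ancestor of a generation-$\le k$ individual lies in a generation $\le k-1$.

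The first step is to identify $\mathcal S$ and each $T_i$ as Galton--Watson forests. Inside $\mathcal S$ a $\mathsf{Bad}$ individual has no children (each of its children acquires it as a $\mathsf{Bad}$ strict ancestor, hence leaves $\mathcal S$), while every other individual of $\mathcal S$ reproduces exactly according to case~(2); thus $\mathcal S$ is a forest rooted at the $Z_0=|\cV_0|$ initial individuals with offspring law ``case~(2) with probability $1-n^{-1/2}$, and $0$ otherwise'', which is stochastically dominated by the pure case-(2) law driving the $\tilde Z^{(j)}$. The same reasoning applies to each $T_i$, whose roots are instead the $|V(\cT_r)|\,\cP_{g_i}$ children of $w_i$. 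Using the standard fact that stochastically ordered offspring laws and initial populations yield stochastically ordered total populations, I would then couple everything so that $|\mathcal S|\le \tilde\cP^{(1)}_\infty$ and each $|T_i|$ is bounded by the total population of a pure case-(2) forest started from $|V(\cT_r)|\,\cP_{g_i}$ individuals.

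The delicate point — and the step I expect to be the main obstacle — is controlling the random root counts $|V(\cT_r)|\,\cP_{g_i}$ in terms of the $\tilde\cP^{(j)}_\infty$ already built, since a $\mathsf{Bad}$ count scales with the \emph{entire} revealed population rather than with its own lineage, so the pieces compound. Here I would use that strict ancestors lie in strictly earlier generations and that the $w_i$ are numbered breadth-first: any individual counted in $\cP_{g_i}$ lies in $\mathcal S$ or in some $T_{i'}$ with $g_{i'}<g_i$, hence with $i'<i$, so $\cP_{g_i}\le |\mathcal S|+\sum_{i'<i}|T_{i'}|$. An induction on $i$ then closes the loop: the recursion $\tilde Z_0^{(j)}=|V(\cT_r)|\,\tilde\cP^{(j-1)}_\infty$ forces $\tilde\cP^{(j)}_\infty\ge |V(\cT_r)|\,\tilde\cP^{(j-1)}_\infty$, so the partial sums $\sum_{i'\le i}\tilde\cP^{(i')}_\infty$ grow geometrically and stay within an absolute multiplicative constant of $\tilde\cP^{(i)}_\infty$ (because $|V(\cT_r)|\ge\Delta+1$ is large), whence $|V(\cT_r)|\,\cP_{g_i}\le \tilde Z_0^{(i+1)}$ under the coupling — after, if one insists on the exact constant, absorbing that absolute factor into the definition of $\tilde Z_0^{(j)}$ — and therefore $|T_i|\le \tilde\cP^{(i+1)}_\infty$. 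Summing gives $\cP_k\le |\mathcal S|+\sum_i |T_i|\le \sum_j \tilde\cP^{(j)}_\infty$, with the number of summands controlled by the number of $\mathsf{Bad}$ individuals encountered; on $\Gamma_{M,k}$ there are at most $M$ of these, which is the only place the event $\Gamma_{M,k}$ is needed, and this yields the asserted domination.
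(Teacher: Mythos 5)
Your decomposition --- breadth-first revealing, a ``free'' part $\mathcal S$ consisting of individuals with no $\mathsf{Bad}$ strict ancestor, and pieces $T_i$ rooted at the children of the $i$-th $\mathsf{Bad}$ individual $w_i$, each piece a Galton--Watson forest whose offspring law is dominated by the pure case-(2) law --- is exactly what the paper has in mind when it asserts that the claim ``is clear by construction if we decompose the process $(Z_k)$ revealed in a breadth-first manner, into its excursions between the at most $M$ times\ldots when the progeny number $\chi_{i,k}$ was $\mathsf{Bad}$.'' So this is the same route, carried out in detail rather than by fiat, and the two real subtleties you hit are genuine.

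First, as you observed, with the definitions as written the induction does not quite close: from $\cP_{g_i}\le|\mathcal S|+\sum_{i'<i}|T_{i'}|$ and the inductive couplings $|\mathcal S|\le\tilde\cP_\infty^{(1)}$, $|T_{i'}|\le\tilde\cP_\infty^{(i'+1)}$, one gets $\cP_{g_i}\le\sum_{j\le i}\tilde\cP_\infty^{(j)}$, whereas coupling $T_i$ into $\tilde Z^{(i+1)}$ requires $|V(\cT_r)|\cP_{g_i}\le\tilde Z_0^{(i+1)}=|V(\cT_r)|\tilde\cP_\infty^{(i)}$, i.e.\ $\cP_{g_i}\le\tilde\cP_\infty^{(i)}$. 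The geometric growth $\tilde\cP_\infty^{(j)}\ge\tilde Z_0^{(j)}=|V(\cT_r)|\tilde\cP_\infty^{(j-1)}$ only gives $\sum_{j\le i}\tilde\cP_\infty^{(j)}\le\tfrac{|V(\cT_r)|}{|V(\cT_r)|-1}\tilde\cP_\infty^{(i)}$, a constant factor short. Your proposed repair --- redefine $\tilde Z_0^{(j)}$ to be $c\,|V(\cT_r)|\,\tilde\cP_\infty^{(j-1)}$ for a suitable absolute $c>1$ (e.g.\ $c=2$ suffices since $|V(\cT_r)|\ge\Delta+1\ge4$) --- does close the induction, and since the downstream use in the proof of Lemma~\ref{lem:modified-branching-process-tail-bound} only needs $\tilde Z_0^{(j)}$ and $K_0$ up to $(p,q,\Delta,M)$-dependent constants, this changes nothing. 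Second, a smaller slip you carry into your conclusion: on $\Gamma_{M,k}$ there are up to $M$ $\mathsf{Bad}$ individuals, so your decomposition has $\mathcal S$ plus up to $M$ pieces $T_i$, i.e.\ up to $M+1$ summands $\tilde\cP_\infty^{(1)},\dots,\tilde\cP_\infty^{(M+1)}$, whereas the claim as stated (and your last sentence) sums only $j\le M$. Again harmless for the downstream estimate, which is stable under $M\mapsto M+1$, but it is a second place where the literal inequality needs a cosmetic adjustment.
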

    
    With this domination in hand, notice that in order for $\kappa<\infty$ while $\Gamma_{M,\kappa}$ holds, there must exist some $k\le K_0 Z_0 + \lambda$ such that $\Gamma_{M,k}$ holds and $\cP_k\ge K_0 Z_0+ \lambda$. Therefore, by a union bound, 
\begin{align*}
\bbP \Big(\cP_\infty \ge K_0 Z_0 + \lambda , \Gamma_{M,\kappa}\Big) & \le \sum_{k\le K_0 Z_0+ \lambda} \mathbb P \Big(\cP_{k} \ge K_0 Z_0 + \lambda , \Gamma_{M,k} \Big) \\
& \le (K_0 Z_0+\lambda) \mathbb P \Big( \sum_{j\le M}\tilde \cP_\infty^{(j)} \ge K_0 Z_0+ \lambda\Big)\,.
\end{align*}
We claim that if $\sum_{j\le M}\tilde \cP_\infty^{(j)} \ge \! K_0 Z_0 + \lambda$ holds, there must exist $j\le M$ for which 
$\tilde Z_0^{(j)} \le  K_0 Z_0+ \lambda$,
and 
$$\tilde \cP_\infty^{(j)}\ge |V(\cT_r)|^{-1} \Big( \frac{K_0}{M}\Big)^{1/M}  (\tilde Z_0^{(j)} + K_0^{-1} M^{-2} \lambda) =: C_{r,\Delta,M} K_0^{1/M} (\tilde Z_0^{(j)} + K_0^{-1} M^{-2} \lambda)\,.$$
Indeed, if no such $j$ existed, as long as $K_0$ is sufficiently large, we could bound $\sum_{j\le M} \tilde \cP_\infty^{(j)}$ by 
\begin{align*}
\sum_{j\le M}  C_{r,\Delta,M} K_0^{1/M} (\tilde Z_0^{(j)} + \lambda) \le M \Big[\frac{K_0}{M} \tilde Z_0^{(1)} + K_0^{-1} M^{-2} \lambda (1 + \cdots + \frac{K_0}{M})\Big] \le K_0 Z_0 + \lambda\,.
\end{align*}

Now fix any $j\le M$, any $\tilde Z_0^{(j)}$ and consider the branching process $\tilde Z_{k}^{(j)}$. This is a branching process with progeny distribution having mean $\mathbf m = A (\hat p d)^r$ for some $A(p,q)$ per Lemma~\ref{lem:exp-decay-wired-tree}. Since $\hat p <d^{-1}$ when $p<p_u(q,\Delta)$, as long as $r$ is greater than some $r_0 (p,q,\Delta)$, for $n$ sufficiently large we have $\mathbf m <1$, and $\tilde Z_{k}^{(j)}$ is sub-critical.  
Additionally, the progeny distribution of $\tilde Z_k^{(j)}$ is almost surely bounded by $|\partial \cT_r|\le \Delta d^{r-1}$. As such, using the standard breadth-first exploration of the total population of the branching process $\tilde Z_k^{(j)}$ (through which $\tilde \cP_k^{(j)}$ is expressed as the random walk $\tilde Z_0^{(j)} + \sum_{\ell\le k}\sum_{i\le \tilde Z_k^{(j)}} (\tilde \chi_{i,k}^{(j)} - 1)$), we can bound 
\begin{align*}
\mathbb P \Big(\tilde \cP_\infty^{(j)} & \ge N^{(j)}_\lambda \Big) \le \mathbb P \Big( \sum_{i\le N^{(j)}_\lambda}  \tilde \chi_i > N^{(j)}_\lambda - \tilde Z_0^{(j)}\Big)\quad \mbox{ for }\quad N_\lambda^{(j)} : = C_{r,\Delta,M}K_0^{1/M} (\tilde Z_0^{(j)} + K_0^{-1}M^{-2}) \lambda\,,
\end{align*}
where $\tilde \chi_i$ are i.i.d.\ copies of $\tilde \chi_{i,k}^{(j)}$. 
Now observe that if $K_0 (p,q,\Delta,M)$ is sufficiently large, the right-hand in the probability above exceeds the mean $\mathbf m N^{(j)}_\lambda$ by some $c N^{(j)}_\lambda$ for $c  = c(p,q,\Delta,M,K_0)>0$.  
As this is a tail probability for a sum of i.i.d.\ random variables, by Hoeffding's inequality, it is at most 
\begin{align*}
\exp \big( - (cN^{(j)}_\lambda)^2 / (4N^{(j)}_\lambda d^{2r})\big) \le \exp ( - c' N_\lambda^{(j)})\,,
\end{align*} 
for some $c'(r,\Delta,M)>0$. Taking a union bound over the $M$ possible values of $j\le M$, we altogether find  
\begin{align*}
\sum_{k\le K_0 Z_0+ \lambda} \mathbb P \Big(\cP_{k} \ge K_0 Z_0+ \lambda , \Gamma_{M,k} \Big) \le (K_0 Z_0 + \lambda) M \exp \big( - c' N_{\lambda}^{(j)}\big)\,. 
\end{align*}
 It follows from this, and the definition of $N_{\lambda}^{(j)}$, that for some $C(p,q,\Delta,M,K_0)$ large enough,  
 \begin{align*}
 \mathbb P ( \kappa<\infty) \le \mathbb P (\Gamma_{M,\kappa}^c) + \sum_{k\le K_0 Z_0+ \lambda} \mathbb P \Big(\cP_{k} \ge K_0 Z_0+ \lambda, \Gamma_{M,k}\Big) \le C n^{- \delta M} + C\exp \big( - (Z_0 + \lambda)/C\big)\,,
 \end{align*}
 concluding the proof. 
\end{proof}

\subsection{Proof of exponential tail on cluster sizes and shattering}\label{subsec:main-revealing-proof}
We are now in position to conclude the proof of the exponential tail bound on clusters of $X_{\cG,t}^1$, and use that to deduce that $X_{\cG,t}^1$ is $(K,R)$-$\sparse$, except with probability $o(n^{-2})$.  We begin by using Lemmas~\ref{lem:branching-process-domination}--\ref{lem:modified-branching-process-tail-bound} to prove the following tail bound on the sequence $(\cV_k)$, which are the roots of the balls revealed through the revealing process of Definition~\ref{def:revealing-FK-on-graph}. 

\begin{lemma}\label{lem:revealing-procedure-tail-bounds}
Fix $\delta>0$ and consider the revealing procedure for any initial subsets $\cA_0$ and $\cV_0$ having $|\cA_0|,|\cV_0|\le n^{\frac 12 - \delta}$. For every $M\ge 1$, there exist $C (p,q,\Delta,M)$, $K_0(p,q,\Delta,M)$ and $C_0 (p,q,\Delta,\delta,M)$ $r (p,q,\Delta)$ in the definition of~$\Tburn$ in~\eqref{eq:T-burn} such that for all $t\ge \Tburn$ and all $0 \le \lambda \le n^{ \frac 12 -\delta}$,  
\begin{align*}
\mathbb P\Big(\sum_{0 \le k\le k_\emptyset} |\mathcal{V}_{k}|\ge K_0 |\mathcal{V}_{0}| + \lambda \Big) \le C\exp ( - \lambda/C) + C n^{ - \delta M}\,,
\end{align*}
\end{lemma}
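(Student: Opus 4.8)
The plan is to combine the stochastic domination of Lemma~\ref{lem:branching-process-domination} with the branching‑process tail bound of Lemma~\ref{lem:modified-branching-process-tail-bound}; essentially all of the work lies in arranging matters so that, along the portion of the revealing procedure we care about, the two indicator factors $\mathbf 1\{\mathfrak E_t^{\mathfrak m_j}\}$ and $\mathbf 1\{\mathfrak m_{j-1}\le n^{1/2-\delta/2}\}$ in Lemma~\ref{lem:branching-process-domination} are identically $1$. Fix $M\ge1$; take $K_0=K_0(p,q,\Delta,M)$ and $r=r_0(p,q,\Delta)$ as in Lemma~\ref{lem:modified-branching-process-tail-bound}, and start from the $C_0$ furnished by Lemma~\ref{lem:branching-process-domination} (we will enlarge it below). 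Write $T:=K_0|\cV_0|+\lambda$, and recall that $\mathfrak m_{k_\emptyset}=\sum_{0\le k\le k_\emptyset}|\cV_k|$; we may assume $|\cV_0|\ge1$ and $\lambda>0$, the remaining cases being trivial once $C\ge1$. Since $|\cV_0|,\lambda\le n^{1/2-\delta}$, for $n$ large we have $T\le(K_0+1)n^{1/2-\delta}<n^{1/2-\delta/2}$; set $N:=\lceil(K_0+1)n^{1/2-\delta}\rceil$. Because $(\mathfrak m_k)_k$ is nondecreasing from $\mathfrak m_0=0$ with $\mathfrak m_k\uparrow\mathfrak m_{k_\emptyset}$, we have the identity $\{\mathfrak m_{k_\emptyset}\ge T\}=\{\kappa<\infty\}$, where $\kappa:=\inf\{k\ge0:\mathfrak m_{k+1}\ge T\}$. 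This $\kappa$ is the generation‑indexed analogue of the stopping time $\kappa_\lambda$ in the proof of Lemma~\ref{lem:modified-branching-process-tail-bound}, and its role is the same: on $\{\kappa<\infty\}$, minimality of $\kappa$ yields $\mathfrak m_j\le\mathfrak m_\kappa<T\le N$ and $\mathfrak m_{j-1}\le\mathfrak m_\kappa<n^{1/2-\delta/2}$ for every $j\le\kappa$, and since no generation $j<\kappa$ is empty we also get $\kappa\le\mathfrak m_\kappa<N$.

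The next step is to discard the event that some early revealed ball receives too many updates. Working on $\{\kappa<\infty\}$, Lemma~\ref{lem:revealing-process-law}(2) gives that, conditionally on $\mathcal F_{m-1}$ (and on the process still running at step $m$), $\kappa_{A_m,t}$ has at most $t$ trials and success probability $2|A_m|/(\Delta n)\le 2|E(\cT_r)|/(\Delta n)$, hence is stochastically below $\bin\big(t,2|E(\cT_r)|/(\Delta n)\big)$, whose mean $\mu_0=2|E(\cT_r)|t/(\Delta n)$ satisfies $\mu_0\ge 2C_0\,\tmix(\cT_r,(1,\circlearrowleft))\log n/\Delta\ge (2C_0/\Delta)\log n$ by $t\ge\Tburn$ and~\eqref{eq:T-burn}. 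The Chernoff bound~\eqref{eq:Chernoff-Poisson-binomial} with threshold $2\mu_0=\tfrac{4|E(\cT_r)|}{\Delta n}t$ gives $\mathbb P(\kappa_{A_m,t}\ge \tfrac{4|E(\cT_r)|}{\Delta n}t\mid\mathcal F_{m-1})\le(e/4)^{\mu_0}\le n^{-C_0/(2\Delta)}$, so a union bound over the at most $N$ balls yields $\mathbb P\big((\mathfrak E_t^{N})^c\big)\le 2(K_0+1)\,n^{1/2-\delta-C_0/(2\Delta)}$. I would now fix $C_0=C_0(p,q,\Delta,\delta,M)$ to be the maximum of the value from Lemma~\ref{lem:branching-process-domination} and $\Delta(1+2\delta M)$, so that $\mathbb P\big((\mathfrak E_t^{N})^c\big)\le Cn^{-\delta M}$ with $C=C(p,q,\Delta,M)$; enlarging $C_0$ only makes $\Tburn$ larger and so does not affect Lemma~\ref{lem:branching-process-domination}.

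Finally, by Lemma~\ref{lem:branching-process-domination} we may work on a coupling of the revealing procedure with the branching process $(Z_k)_{k\ge0}$, where $Z_0=|\cV_0|$, under which $|\cV_j|\,\mathbf 1\{\mathfrak E_t^{\mathfrak m_j}\}\,\mathbf 1\{\mathfrak m_{j-1}\le n^{1/2-\delta/2}\}\le Z_j$ for all $j$. On the event $\mathfrak E_t^{N}\cap\{\kappa<\infty\}$, the first paragraph shows that for every $1\le j\le\kappa$ both indicators equal $1$ — indeed $\mathfrak m_j\le N$ forces $\mathfrak E_t^{\mathfrak m_j}\supseteq\mathfrak E_t^{N}$, and $\mathfrak m_{j-1}<n^{1/2-\delta/2}$ — while the $j=0$ term is the equality $|\cV_0|=Z_0$. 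Hence $T\le\mathfrak m_{\kappa+1}=\sum_{0\le j\le\kappa}|\cV_j|\le\sum_{0\le j\le\kappa}Z_j\le\sum_{k\ge0}Z_k$, and since $T=K_0Z_0+\lambda$, Lemma~\ref{lem:modified-branching-process-tail-bound} gives
\begin{align*}
\mathbb P\big(\kappa<\infty,\ \mathfrak E_t^{N}\big)\le\mathbb P\Big(\sum_{k\ge0}Z_k\ge K_0Z_0+\lambda\Big)\le C\exp(-\lambda/C)+Ce^{M}n^{-\delta M}.
\end{align*}
Combining with the bound on $\mathbb P\big((\mathfrak E_t^{N})^c\big)$ from the previous paragraph, and absorbing the fixed constant $e^{M}$ into $C$, we obtain
\begin{align*}
\mathbb P\Big(\sum_{0\le k\le k_\emptyset}|\cV_k|\ge K_0|\cV_0|+\lambda\Big)=\mathbb P(\kappa<\infty)\le\mathbb P\big(\kappa<\infty,\ \mathfrak E_t^{N}\big)+\mathbb P\big((\mathfrak E_t^{N})^c\big)\le C\exp(-\lambda/C)+Cn^{-\delta M},
\end{align*}
which is the claim. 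The one genuinely delicate point — and the main obstacle — is the bookkeeping in the first paragraph: picking the right generation‑indexed stopping time $\kappa$ and checking that, stopped at $\kappa$, the cumulative vertex count stays below both $N$ and $n^{1/2-\delta/2}$, so that Lemma~\ref{lem:branching-process-domination} applies verbatim up to time $\kappa$ and any overshoot of the threshold $T$ past time $\kappa$ is irrelevant.
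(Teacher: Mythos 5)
Your proof is correct and follows essentially the same route as the paper's: introduce a stopping generation at which $\mathfrak m_\cdot$ first crosses the threshold $K_0|\cV_0|+\lambda$, verify the indicators in Lemma~\ref{lem:branching-process-domination} vanish identically up to that generation so the domination by $(Z_k)$ applies, invoke Lemma~\ref{lem:modified-branching-process-tail-bound}, and control $\mathbb P(\mathfrak E_t^c)$ via a Chernoff bound on the $\kappa_{A_m,t}$ and a union bound. The only differences from the paper are cosmetic — you union bound $\mathbb P((\mathfrak E_t^N)^c)$ over $N=O(n^{1/2-\delta})$ balls and apply Lemma~\ref{lem:revealing-process-law}(2) conditionally, whereas the paper union bounds over $n$ balls using item (1); also note that item (2) gives success probability $2|A_m|/(\Delta n-2|\cA_{m-1}|)$ rather than $2|A_m|/(\Delta n)$ as you wrote, though on $\{\kappa<\infty\}$ this excess is $(1+o(1))$ and the Chernoff bound goes through unchanged.
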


\begin{proof}
Fix $K_0$ large to be chosen later, and define the following stopping generation  
\[
\varsigma= \inf\{\ell: \mathfrak m_{\ell - 1} > K_0 |\mathcal V_0| + \lambda \}\,.
\]
Recall $\mathfrak E_t$ from~\eqref{eq:mathfrak-E}. Since for every $\ell \le \varsigma$, we have from Lemma~\ref{lem:branching-process-domination}, that $(|\mathcal V_{\ell}| \mathbf 1{\{\mathfrak E_t\}})_{j\le \ell} \preccurlyeq (Z_{j})_{j\le \ell}$, we have that if $C_0$ in~\eqref{eq:T-burn} is sufficiently large, the probability of  $\{\varsigma <\infty\}$ is bounded by the probability of $\cP_\infty = \sum_{k\ge 0} Z_k\ge K_0 Z_0 + \lambda$. By Lemma~\ref{lem:branching-process-domination}, we obtain 
\begin{align*}
    \mathbb P \Big( \sum_{k\le k_\emptyset} |\mathcal V_k | \ge K_0  |\mathcal V_0| + \lambda\Big) \le \mathbb P \Big( \sum_{k\ge 0} Z_k \ge K_0 Z_0 + \lambda  \Big) + \mathbb P (\mathfrak E_t^c)\,.
\end{align*}
Lemma~\ref{lem:modified-branching-process-tail-bound} implies the existence of $r(p,q,\Delta)$ such that the first-term above is at most $C_1\exp(- \lambda/C_1) + C_1n^{-\delta M}$ for some $C_1(p,q,\Delta,M)$.  

Next, consider $\mathbb P (\mathfrak E_t^c)$. By a union bound and item (1) of Lemma~\ref{lem:revealing-process-law}, with the trivial observations that $\mathfrak m_{k_\emptyset}\le n$ and $|A_m|\le |E(\cT_r)| \le 2\Delta d^r$ necessarily, we get for every $t\ge \Tburn$,
\begin{align*}
    \mathbb P ( \mathfrak E_t^c ) \le n  \mathbb P \Big(\bin\big(t, \frac{2|E(\cT_r)|}{\Delta n}\big) > \frac{4|E(\cT_r)|}{\Delta n} t \Big)\,.
\end{align*}
The above entails a deviation of at least $4 t d^r n^{-1}$ from its mean; as such, by standard tail estimates for binomials, for every $t\ge \Tburn$, 
\begin{align}\label{eq:E-t-complement}
    \mathbb P (\mathfrak E_t^c)\le n \exp ( - t d^r n^{-1})\,,
\end{align}
which is at most $n^{ - \delta M}$ for $n$ large, as long as $C_0$ in~\eqref{eq:T-burn} is sufficiently large (depending on $\delta M$). The desired bound then follows up to a change of the constant $C$. 
\end{proof}

Before proceeding to prove Proposition~\ref{prop:nontrivial-bdy-component-bound}, let us translate the tail bound of Lemma~\ref{lem:revealing-procedure-tail-bounds} on $\sum_k |\cV_k|$ to a tail bound on the FK cluster of a single vertex under $X_{\cG,t}^1$ and $\pi_{\cG}$. Notice that towards the proofs of Theorem~\ref{thm:pi-exponential-decay} and Proposition~\ref{prop:nontrivial-bdy-component-bound}, it suffices to show these for $t\ge \Tburn$ for some fixed choices of $C_0, r$ in~\eqref{eq:T-burn} depending on $p,q,\Delta$ (as $\tmix (\cT_r,(1,\circlearrowleft))$ is of course independent of $n$).

\begin{proof}[\textbf{\emph{Proof of Theorem~\ref{thm:pi-exponential-decay}}}]
    Fix any $v\in \{1,...,n\}$, let $\cA_0 = \emptyset$ and let $\cV_0=\{v\}$ in Definition~\ref{def:revealing-FK-on-graph}. By Observation~\ref{obs:key-observation}, for each $\cG\sim  \Pcm$, the cluster of $v$ in the configuration $X_{\cG,t}^1$, denoted  $\cC_{v}(X_{\cG,t}^1)$ is a subset of $\cC_v(\tilde \omega_t)$, which in turn is a subset of $V(\cA_{\mathfrak m_{\emptyset}})$, so that 
    \begin{align*}
    |\cC_{v}(X_{\cG,t}^1)|\le |\cC_v(\tilde \omega_t)| \le |V(\cT_r)| \sum_{k\le k_\emptyset} |\cV_k|\le 2\Delta d^r \sum_{k\le k_\emptyset} |\cV_k|\,.
    \end{align*}
    By Lemma~\ref{lem:revealing-procedure-tail-bounds} and the above, we find that for each $M$, there exists $C(p,q,\Delta,M)$ such that
    \begin{align*}
        \mathbb P \big((\cG,X_{\cG,t}^1): |\cC_v(X_{\cG,t}^1)|\ge 2\Delta d^r (1+ \lambda)\big) \le C\exp ( - \lambda/C) + Cn^{ - \delta M}\,.
    \end{align*}
    Observing that $\mathbb P ( X_{\cG,t}^1\in \cdot ) = \Ecm[P(X_{\cG,t}^1\in \cdot)]$, we can use Markov's inequality to write 
    \begin{align*}
        \Pcm\bigg( \cG: P\Big(X_{\cG,t}^1: |\cC_v(X_{\cG,t}^1)| \ge 2\Delta d^r (1+\lambda)\Big)\ge & \sqrt{C e^{ - \lambda/C}   + Cn^{ - \delta M}}\bigg) \\
        & \qquad \qquad \le \sqrt{C e^{ - \lambda/C} + Cn^{ - \delta M}}\,.
    \end{align*}
    We can obtain the same bound for $\Prrg$ by~\eqref{eq:CM-to-RRG}, up to a multiplicative $c(\Delta)^{-1}$ on the right-hand side. 
    Taking $M$ such that $\delta M> 2K$ and using the fact that $\sqrt{a+b} \le \sqrt a+ \sqrt b$ for all $a,b\ge 0$, we deduce the desired tail bound on $|\cC_v(X_{\cG,t}^1)|$ up to the change of constant $C$ to $2C$. Using the monotonicity  $X_{\cG,t}^1 \succcurlyeq \pi_\cG^1$ implies the analogous bound for $|\cC_v(\omega)|$ where $\omega\sim \pi_\cG$. 
\end{proof}

We now turn to proving that for typical random graphs, the configuration $X_{\cG,t}^1$ is $(K,R)$-$\sparse$ with high probability for all $t \ge \Tburn$. This allows us to localize to treelike balls with sparse boundary conditions. Let us define the following subset of the boundary of a set $H$, which we will apply with the choice $H = B_R(v)$. 

\begin{definition}\label{def:mathfrak-V}
    For a subgraph $H = (V(H),E(H))$ of $\cG$ and a configuration
$\omega$ on $E(\cG)$, let us define $\mathfrak{V}_{H}(\omega)$ as the subset of vertices in $V(H)$ in non-trivial components in the boundary condition induced on $H$
by $\omega(E(\cG)\setminus E(H))$ (a connected component is non-trivial when it has at least two vertices).
\end{definition}


We first prove the following proposition, giving a tail bound on $\mathfrak V_{B_R(v)}(X_{\cG,t}^1)$; after proving this proposition, we straightforwardly use it to conclude $(K,R)$-sparsity of $X_{\cG,t}^1$, i.e., Theorem~\ref{thm:k-R-sparse-whp}. 
 
\begin{proposition}\label{prop:nontrivial-bdy-component-bound}
Let $p,q,\Delta$ be such that
$p<p_{u}(q,\Delta)$. Fix $\delta>0$ and let $R=(\frac{1}{2}-\delta)\log_{d}n$.
There exists $K(p,q,\Delta,\delta)$ such that if $\mathcal{G} \sim \Pcm$, with probability
$1-O(n^{-2})$, $\mathcal{G}$ is such that for all $t\ge \Tburn$
\begin{align*}
\sup_{v\in V(\cG)} P\big(X_{\cG,t}^{1}:|\mathfrak{V}_{B_R(v)}(X_{\cG,t}^{1})|>K\big) & \le O(n^{-2})\,.
\end{align*}
\end{proposition}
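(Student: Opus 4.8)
The plan is to apply the revealing procedure of Definition~\ref{def:revealing-FK-on-graph} with the choice $\cA_0 = E(B_R(v))$ and $\cV_0 = \partial B_R(v)$, and to bound $|\mathfrak V_{B_R(v)}(X_{\cG,t}^1)|$ via Observation~\ref{obs:key-observation} by the total number of vertices $\sum_{k \le k_\emptyset} |\cV_k|$ explored in the process, which is in turn controlled by Lemma~\ref{lem:revealing-procedure-tail-bounds}. The first step is to note that by Fact~\ref{fact:random-graph-treelike} (or rather its proof via~\eqref{eq:CM-treelike}), with $\Pcm$-probability $1 - o(n^{-1})$ the graph $\cG$ is $(L,R)$-$\treelike$ with $L = O(1)$; on this event $|E(B_R(v))| \le |V(\cT_R)| + L = O(n^{1/2 - \delta})$ and $|\partial B_R(v)| \le |\partial \cT_R| + O(L\, d^R) = O(n^{1/2-\delta})$, so the hypotheses $|\cA_0|, |\cV_0| \le n^{1/2 - \delta'}$ of Lemma~\ref{lem:revealing-procedure-tail-bounds} are met for any $\delta' < \delta$ after a trivial adjustment of constants (shrink $\delta$ slightly). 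By Observation~\ref{obs:key-observation}, the vertices in non-trivial components of the boundary condition that $X_{\cG,t}^1(E(\cG)\setminus E(B_R(v)))$ induces on $B_R(v)$ are contained in $V(\cA_{\mathfrak m_{k_\emptyset}} \setminus \cA_0)$, hence $|\mathfrak V_{B_R(v)}(X_{\cG,t}^1)| \le |V(\cT_r)| \sum_{k \le k_\emptyset} |\cV_k| \le 2\Delta d^r \sum_{k \le k_\emptyset}|\cV_k|$.

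Next I would invoke Lemma~\ref{lem:revealing-procedure-tail-bounds} with $\lambda = K_0'$ a suitable large constant (independent of $n$): fixing $M$ so that $\delta M \ge 3$, it gives, for all $t \ge \Tburn$,
\begin{align*}
\mathbb P\Big(\sum_{k \le k_\emptyset} |\cV_k| \ge K_0 |\cV_0| + \lambda\Big) \le C\exp(-\lambda/C) + C n^{-\delta M} \le C n^{-3}\,,
\end{align*}
once $\lambda$ is chosen large enough that $C\exp(-\lambda/C) \le n^{-3}$ — but here the obstruction is that $K_0 |\cV_0|$ is \emph{not} $O(1)$, since $|\cV_0| = |\partial B_R(v)|$ grows like $d^R = n^{1/2 - \delta}$. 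This is the crux: a naive application bounds $|\mathfrak V_{B_R(v)}|$ only by something of order $n^{1/2-\delta}$, which is useless. The resolution is that the branching-process domination must be run not from all of $\partial B_R(v)$ at once, but it must already incorporate the fact that $X_{\cG,t}^1$ has \emph{shattered}: only $O(1)$ of the vertices of $\partial B_R(v)$ are actually connected to anything through the exterior configuration. Concretely, the cleaner route is to first condition on the exterior configuration restricted to a thin annulus and observe that, by the shattering estimate of Theorem~\ref{thm:pi-exponential-decay} applied to the dominating concatenated chains, the number of vertices of $\partial B_R(v)$ lying in components of size $\ge 1$ that reach further out is stochastically dominated by a sum of $|\partial B_R(v)|$ i.i.d. indicators each of probability $O(\hat p^{\,\Theta(1)})$... but in fact the right statement is obtained by re-examining the revealing process: initialize instead $\cV_0 = \partial B_R(v)$ but note that $\cV_1$ already only contains the $O(1)$ boundary vertices that survive one generation, and the expected number of such survivors, per the sub-criticality established in Lemma~\ref{lem:branching-process-domination} and Lemma~\ref{lem:modified-branching-process-tail-bound}, is $|\partial B_R(v)| \cdot O((\hat p d)^r) = o(1)$ once $r$ is a large enough constant, since $\hat p d < 1$. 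More carefully: the number of non-trivial boundary components is bounded by the number of $u \in \partial B_R(v)$ whose component in $\tilde\omega_t$ has size $\ge 2$, and each such event has probability $O((\hat p d)^{r'})$ summably small; a first-moment bound plus the exponential tails of Lemma~\ref{lem:revealing-procedure-tail-bounds} on the \emph{total} explored population (now genuinely $O(1)$ in expectation because $Z_0 = |\cV_0|$ enters multiplied by the subcritical mean) gives $\mathbb P(|\mathfrak V_{B_R(v)}| > K) \le O(n^{-3})$ for $K = K(p,q,\Delta,\delta)$ large.

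Finally, I would transfer from $\Pcm$ to the statement as phrased (which is already under $\Pcm$) and close the union bound: apply Markov's inequality exactly as in the proof of Theorem~\ref{thm:pi-exponential-decay} to move the ``$P(\cdots) \le O(n^{-2})$'' inside the graph probability, i.e. from $\mathbb P_{t}^1\big((\cG, X_{\cG,t}^1): |\mathfrak V_{B_R(v)}| > K\big) \le O(n^{-3})$ deduce $\Pcm\big(\cG: P(X_{\cG,t}^1: |\mathfrak V_{B_R(v)}| > K) \ge n^{-2}\big) \le O(n^{-1})$ for each fixed $v$, then union bound over the $n$ vertices $v \in V(\cG)$; intersecting with the $(L,R)$-$\treelike$ event (which has probability $1 - o(1)$) yields the claimed $\Pcm$-probability $1 - O(n^{-2})$... here one must be slightly careful with the exponents — pushing $M$ up so that the per-vertex bound is $O(n^{-3})$ and the Markov step produces $O(n^{-3/2})$ per vertex, hence $O(n^{-1/2})$ after the union bound, which still needs to be improved; the clean fix is to take $\delta M$ large enough (allowed, $M$ is free) that the per-pair probability is $O(n^{-4})$, giving $O(n^{-2})$ per vertex after Markov and $O(n^{-1})$... iterate once more, taking $\delta M \ge 6$, to land at the stated $O(n^{-2})$. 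The uniformity in $t \ge \Tburn$ is automatic since all the estimates of Lemma~\ref{lem:revealing-procedure-tail-bounds} hold for all such $t$ with the same constants. The main obstacle, as flagged, is handling the fact that $|\cV_0| = |\partial B_R(v)|$ is large: one must genuinely exploit sub-criticality so that a large initial population still yields an $O(1)$ final population in expectation, and be careful that the exponential tail in Lemma~\ref{lem:revealing-procedure-tail-bounds} (which has the constant offset $\lambda$ rather than a multiplicative blow-up of $|\cV_0|$) is strong enough to kill the $n^{1/2}$-sized starting set.
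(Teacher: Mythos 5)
You correctly identify the crux of the difficulty---that $|\cV_0| = |\partial B_R(v)|$ grows like $n^{1/2-\delta}$, so a naive appeal to Lemma~\ref{lem:revealing-procedure-tail-bounds} returns an upper bound on the total explored population of order $n^{1/2-\delta}$, which is useless if you try to dominate $|\mathfrak V_{B_R(v)}|$ by it directly. However, your attempted resolution has a genuine gap. You write that the expected number of boundary vertices that ``survive one generation'' is $|\partial B_R(v)|\cdot O((\hat p d)^r) = o(1)$ for $r$ a large constant, but $(\hat p d)^r$ is a \emph{constant} independent of $n$, so this product is still $\Theta(n^{1/2-\delta})$, not $o(1)$. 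A subcritical branching process started from $n^{1/2-\delta}$ individuals still produces $\Theta(n^{1/2-\delta})$ boundary vertices with nonempty exterior clusters. In fact, each $u\in\partial B_R(v)$ connects outward into the exterior annulus with probability $\Theta(1)$; that is not the rare event. The rare event that controls $\mathfrak V_{B_R(v)}$ is the formation of a \emph{non-trivial} boundary component, which requires two distinct vertices of $\partial B_R(v)$ to be connected to each other via the exterior configuration. Your argument never isolates this pairing event.

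The paper's proof closes the gap with a combinatorial observation about the configuration model that is entirely absent from your proposal: $|\mathfrak V_{B_R(v)}|$ is bounded by the number of half-edge collisions during the revealing of $\cA_{\mathfrak m_{k_\emptyset}}$, i.e., the number of times a half-edge is matched to a half-edge of an already-discovered vertex. The exploration reveals a forest unless such a collision occurs, and only a collision can merge two boundary vertices into a single non-trivial component (or close a cycle back to $B_R(v)$). Given that the explored vertex set has size $O(n^{1/2-\delta})$ with high probability (this is what Lemma~\ref{lem:revealing-procedure-tail-bounds} with $\lambda = \Theta(|\cV_0|)$ does control), the number of collisions is stochastically dominated by a $\bin\big(O(n^{1/2-\delta}),\, O(n^{1/2-\delta})/n\big)$ with mean $O(n^{-2\delta})$; a Chernoff bound then yields $n^{-\delta K \wedge 4}$ for $K$ large. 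This is a birthday-problem argument leveraging the $o(\sqrt n)$ size of the explored set, and it is orthogonal to the subcriticality you invoke. Subcriticality is needed only to guarantee that the exploration does not blow up past size $O(|\cV_0|)$; it does not, by itself, make the number of nontrivial boundary components small. The rest of your outline (the choice of $\cA_0,\cV_0$, the Markov-inequality transfer, the union bound) matches the paper's structure, but without the collision-counting step the argument does not close.
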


\begin{proof}[\textbf{\emph{Proof of Proposition~\ref{prop:nontrivial-bdy-component-bound}}}]
Fix $v\in \{1,...,n\}$ and $\delta>0$, and let $R = (\frac{1}{2} - \delta)\log_d n$. 

We apply the revealing procedure of Definition~\ref{def:revealing-FK-on-graph} with the choices $\cA_0= E(B_R(v))$ and $\cV_0 = \partial B_R(v)$. Recall from Observation~\ref{obs:key-observation} that the FK-clusters of $\cV_0$ induced by  $\tilde \omega_t(E(\cG)\setminus \cA_0)$ ($\tilde \omega_t$ was extended to be all wired off of $\cA_{\mathfrak m_{k_\emptyset}} \setminus \cA_0$) are confined to the set $\cA_{\mathfrak m_{k_\emptyset}}\setminus \cA_0$, and the extended configuration $\tilde \omega_t$ satisfies $\tilde \omega_t \ge X_{\cG,t}^1$. Thus, the sets $\mathfrak V_{B_R(v)}(\tilde \omega_t)$ and in turn $\mathfrak V_{B_R(v)}(X_{\cG,t}^1)$, are below the number of vertices in $\cV_0$ that share a connected component of $\cA_{\mathfrak m_{k_\emptyset}}\setminus\cA_0$ with another vertex of $\cV_0$.

Suppose that through the revealing process of Definition~\ref{def:revealing-FK-on-graph}, for each $m$, the edges of $B_r^{out}(v_m)$ are revealed one at a time per Definition~\ref{def:revealing-graph}. Notice then, that $|\mathfrak  V_{B_R(v)}(\mathcal A_{\mathfrak m_{k_\emptyset}}\setminus \cA_0)|$ is bounded by the number of times through the revealing of $\mathcal A_{\mathfrak m_{k_\emptyset}}$, that a half-edge is matched up to a half-edge belonging to a vertex that has been discovered at that point. Throughout this process, conditionally on an exposed edge-set $\mathcal A$ (and the edge-update sequence, and uniform random variables given by the filtration up to that step of the revealing, but $E(\cG)\setminus \cA$ is independent of these),  the law of the next half-edge to be matched is uniform amongst un-matched half-edges. Thus on any such edge-revealing, uniformly on the history of the revealing, the probability that it matches with a half-edge belonging to a discovered vertex is at most $\frac{|V(\cA_{\mathfrak m_{k_\emptyset}})|}{n- 2|V(\cA_{\mathfrak m_{k_\emptyset}})|}$.

By a union bound, we obtain for $\Lambda$ a sufficiently large constant (depending on $p,q,\Delta,r$), for all $k\ge 1$, 
\begin{align*}
    \mathbb P \Big((\mathcal G, \tilde \omega_t): |\mathfrak V_{v,R}(X_{\cG,t}^1)| > k\Big) & \le \mathbb P \Big( |V(\mathcal A_{\mathfrak m_{k_\emptyset}})|> \Lambda^2 |\mathcal V_0| \Big) + \mathbb P \Big(\bin\Big(\Lambda^2  |\mathcal V_0|, \frac{2 \Lambda^2 |\mathcal V_0|}{n}\Big)>k\Big) \\ 
    & \le \mathbb P \Big( \sum_{k\le k_\emptyset} |\cV_k| \ge \Lambda |\cV_0|\Big) +  \mathbb P \Big(\bin\Big(\Lambda^2  |\mathcal V_0|, \frac{2 \Lambda^2 |\mathcal V_0|}{n}\Big)>k\Big)\,.
\end{align*}
By Lemma~\ref{lem:revealing-procedure-tail-bounds} and the fact that $\Lambda |\cV_0| \le n^{\frac 12 - \frac \delta 2}$ for $n$ large, as long as $\Lambda$ is large enough, the first term is at most $n^{ - 5}$. 
Using the fact that $|\cV_0|\le n^{\frac 12 - \delta}$, we see that the mean of the binomial is at most $n^{-3\delta/2}$, so that by~\eqref{eq:Chernoff-Poisson-binomial}, for every fixed $k\ge 1$, 
\begin{align}\label{eq:non-trivial-bc-tail-X-t}
 \mathbb P \Big((\mathcal G, X_{\cG,t}^1): |\mathfrak V_{v,R}(X_{\cG,t}^1)| > k\Big) \le n^{ - \delta k \wedge 4}\,,
\end{align}
for $n$ large enough. Choosing $k = K$ sufficiently large (depending on $\delta$), we can make the right-hand side at most $n^{-4}$. 
   We deduce the proposition by using Markov's inequality to write 
   $$\Pcm \Big(\cG: P(X_{\cG,t}^1: |\mathfrak V_{B_R(v)} (X_{\cG,t}^1)|\ge K)>n^{ - 2}\Big)\le n^2 \Ecm [P(X_{\cG,t}^1: |\mathfrak V_{B_R(v)} (X_{\cG,t}^1)|\ge K)]\,,$$
   and noticing that the expectation on the right equals the probability bounded in~\eqref{eq:non-trivial-bc-tail-X-t}.
\end{proof}


\begin{proof}[\textbf{\emph{Proof of Theorem~\ref{thm:k-R-sparse-whp}}}]
First of all, a union bound of Proposition~\ref{prop:nontrivial-bdy-component-bound} over $v\in \{1,...,n\}$, with $\Pcm$-probability
$1-O(n^{-1})$, $\cG$ is such that 
\begin{align*}
P\Big(X_{\cG,T}^{1}:\bigcup_{v\in V(\cG)}\big\{|\mathfrak{V}_{B_R(v)}(X_{\cG,t}^{1})|>K\big\}\Big)& \le  n^{-1}\,.
\end{align*}
 We now translate this to a bound under $\Prrg$. Taking $$\Gamma=\Big\{\mathcal{G}:P\Big(X_{\cG,t}^{1}:\bigcup_{v\in V(\cG)}\{|\mathfrak{V}_{B_R(v)}(X_{\cG,t}^{1})|>K\}\Big)> n^{-1}\Big\}\,,$$
in~\eqref{eq:CM-to-RRG}, we deduce that
$\Prrg(\Gamma)\le c^{-1}\Pcm(\Gamma)\le  c^{-1}n^{-1}$ 
for some $c(\Delta)>0$, as needed. 
\end{proof}

\section{Sharp rates of correlation decay in trees and treelike graphs}\label{sec:correlation-decay-treelike}
In this section we establish the precise exponential decay rate of influence from an $O(1)$-$\sparse$ boundary condition on the root of an $O(1)$-$\treelike$ ball. We recall from Section~\ref{sec:proof-strategy}, that getting the right decay rate, (as opposed to e.g., using the decay rate of connectivity from the root to the boundary) is central to pushing our argument through for all $p<p_u$. In particular, the decay rate of influence will be inherited from \emph{twice} the exponential decay rate of the wired tree. 

Recall that the uniqueness point $p_u(q,\Delta)$ is defined by a transition on the wired $\Delta$-regular tree, where the measure $\pi^1_{\T_h}$ transitions between exponentially small (in $h$) probability of a root-to-leaf connection, to giving this event uniformly positive probability. A recursion for this connectivity probability was calculated in~\cite[Lemma 33]{BGGSVY}. A careful examination of this recursion will yield the following identification of the rate of the exponential decay with $\hat p$ of~\eqref{eq:hat-p-inequality}. 

\begin{lemma}\label{lem:exp-decay-wired-tree}
        Let $p<p_u(q,\Delta)$. There exists $C(p,q,\Delta)$ such that for every $h$ and every leaf $u\in \partial \cT_h$, 
        \begin{align*}
            \pi_{\cT_h}^{(1,\circlearrowleft)} (\omega: u\in \cC_\rho(\omega) ) \le C \hat p^h\,.
        \end{align*}
        In particular, the probability of the root being connected to $\partial \cT_h$ in $\omega$ is  at most $C(\hat p d)^h$.  
\end{lemma}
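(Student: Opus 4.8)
The plan is to analyze the recursion for the root-to-leaf connectivity probability on the wired $\Delta$-regular tree under the boundary condition $(1,\circlearrowleft)$, and to show that the contraction factor of this recursion is exactly $\hat p$ when $p < p_u(q,\Delta)$. Write $\rho$ for the root of $\cT_h$ and let $a_h := \pi_{\cT_h}^{(1,\circlearrowleft)}(\rho \leftrightarrow \partial\cT_h)$ be the probability that $\rho$ is connected to the wired leaves. The first step is to set up a one-step recursion expressing $a_h$ in terms of $a_{h-1}$, using the tree recursion from \cite[Lemma 33]{BGGSVY}: one conditions on the states of the $\Delta$ (resp.\ $d$) edges from $\rho$ to its children and on whether each child subtree connects its child to the leaves, and one tracks the effect of the wiring. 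Here the $(1,\circlearrowleft)$ boundary condition is convenient because the root is itself wired to the boundary, which makes the cut-edge structure of the edges incident to $\rho$ cleaner — an edge from $\rho$ to a child $w$ is a cut-edge precisely when $w$'s subtree does not otherwise connect $w$ to the wired set.

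Next I would linearize: since $p < p_u(q,\Delta)$, we have $a_h \to 0$, so for large $h$ the recursion is governed by its derivative at $0$. The key claim is that $\frac{d}{dx}\big|_{x=0}$ of the recursion map equals $d\hat p$, where $\hat p = \frac{p}{q(1-p)+p}$; this is exactly the statement that the marginal probability that a single pendant edge from $\rho$ is open, given that it is a cut-edge (which it is, to leading order, when the child subtree contributes no connection), is $\hat p$, and there are $d$ such child edges contributing additively in the linearization. Combined with the fact (from \cite{Haggstrom}, recalled in \eqref{eq:hat-p-inequality}) that $\hat p d < 1$ throughout the uniqueness regime, the recursion is a contraction near $0$ with rate $d\hat p < 1$. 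A standard argument — bounding $a_h \le C_1 (d\hat p)^{h}$ for a suitable constant by choosing $h$ large enough that $a_h$ has entered the linear regime, and absorbing the finitely many initial generations into the constant — gives the bound on $a_h$. To pass from the root-to-\emph{some}-leaf bound $a_h \le C(d\hat p)^h$ to the per-leaf bound $\pi_{\cT_h}^{(1,\circlearrowleft)}(u \in \cC_\rho) \le C\hat p^h$, I would use symmetry among the $|\partial\cT_h| = \Delta d^{h-1}$ leaves together with a union bound in reverse: $a_h \le \sum_{u\in\partial\cT_h} \pi^{(1,\circlearrowleft)}_{\cT_h}(u\in\cC_\rho) = \Delta d^{h-1}\,\pi^{(1,\circlearrowleft)}_{\cT_h}(u\in\cC_\rho)$ gives the per-leaf bound only if we instead argue the other direction; the cleaner route is to prove the per-leaf statement directly by the same recursion, tracking the connection to one distinguished leaf through one distinguished child at each level, which contracts by $\hat p$ per level (no factor of $d$), and then the root-to-boundary bound follows by multiplying by $|\partial\cT_h|$.

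The main obstacle I anticipate is making the linearization of the recursion from \cite[Lemma 33]{BGGSVY} rigorous and identifying the derivative at $0$ cleanly as $\hat p$ (resp.\ $d\hat p$) rather than merely as "some number $< 1/d$" — i.e.\ getting the \emph{exact} rate, not just exponential decay with an unspecified rate. This requires carefully unwinding how the cut-edge reweighting by $\hat p$ versus $p$ enters the tree recursion and checking that the sub-leading (quadratic and higher) terms in $a_{h-1}$ genuinely do not affect the exponential rate, only the multiplicative constant $C$. A secondary technical point is handling the discrepancy between the root (degree $\Delta$) and internal vertices (degree $\Delta$, with $d = \Delta-1$ children): the $(1,\circlearrowleft)$ wiring of the root is precisely what lets us treat the root on the same footing as an internal vertex for the purpose of the recursion, so I would emphasize that this is why the boundary condition is chosen this way and not simply "$1$". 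Once the exact contraction rate $\hat p$ is pinned down, the rest is a routine induction with an explicit constant.
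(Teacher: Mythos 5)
Your proposal lands, after an initial detour, on essentially the same per-leaf recursion as the paper's proof, but it glosses over --- and in fact mislocates --- the one place where the argument is genuinely delicate.

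The paper's argument is two-stage. A preliminary lemma (Lemma~\ref{lem:exp-decay}, via L'H\^opital on the recursion of \cite[Lemma~33]{BGGSVY}) establishes that the root-to-boundary probability $\varphi_h := \pi^1_{\cT_h}(\rho\leftrightarrow\partial\cT_h)$ satisfies $\varphi_h\le(\hat p d)^{h+o(h)}$. The proof of the lemma then writes a recursion for the per-leaf probability $\vartheta_h$ under $\pi^1_{\cT_h}$ (passing to $(1,\circlearrowleft)$ only at the end, at a $q^2$ cost via Lemma~\ref{lemma:simple-rc-bound}), of the form
\[
  \vartheta_h \le \vartheta_{h-1}\bigl[\hat p + p q^2\,\pi^1_{\cT_h}(I\ge1)\bigr],
\]
where $I$ counts the \emph{other} children through which $\rho$ reaches the boundary. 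The crux is that $\pi^1_{\cT_h}(I\ge1)$ is bounded --- using the preliminary $\varphi_h\le(\hat pd)^{h+o(h)}$ --- by $C(\hat pd)^{(1-\varepsilon)h}$, which is \emph{summable in $h$}, so the product $\prod_{i}\bigl(1+\hat p^{-1}pq^2(\hat pd)^{(1-\varepsilon)i}\bigr)$ converges and gives $\vartheta_h\le A\hat p^h$ with an actual constant. The ``in particular'' root-to-boundary bound is then a corollary by a union bound over the $\Delta d^{h-1}$ leaves.

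You correctly identify that the per-leaf bound cannot be deduced from the root-to-boundary bound by symmetry, and you correctly pivot to tracking a distinguished leaf through a distinguished child. But you describe the remaining obstacle as ``checking that the sub-leading (quadratic and higher) terms in $a_{h-1}$ genuinely do not affect the exponential rate.'' That mislocates the difficulty. In the per-leaf recursion the correction term is not quadratic in $\vartheta_{h-1}$: it is $\vartheta_{h-1}\cdot\Pr(I\ge1)$, a \emph{mixed} term in which $\Pr(I\ge1)$ is a root-to-boundary quantity. To get $\vartheta_h\le C\hat p^h$ with a genuine constant $C$ --- and not a $\hat p^{h-o(h)}$ loss from a product of $(1+o(1))$ factors that are merely small but not summable --- you must first show that $\Pr(I\ge1)$ decays exponentially in $h$, which is exactly the preliminary root-to-boundary analysis you started and then set aside. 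That bound is input to, not a consequence of, the per-leaf recursion, and your proposal inverts this dependency. A secondary omission: the recursion of \cite[Lemma~33]{BGGSVY} you cite is for the plain wired boundary $1$, not $(1,\circlearrowleft)$; the paper works under $\pi^1$ throughout and moves to $\pi^{(1,\circlearrowleft)}$ only at the end via the bounded-distortion comparison of Lemma~\ref{lemma:simple-rc-bound}, whereas your proposal writes the recursion directly for $\pi^{(1,\circlearrowleft)}$ without either re-deriving it or invoking such a comparison.
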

%

In Section~\ref{subsec:wired-tree-exp-decay}, we establish Lemma~\ref{lem:exp-decay-wired-tree}. 
In Section~\ref{subsec:spatial-mixing-treelike}, we show that influence in the random-cluster model travels through the existence of \emph{two} distinct connections; thus on $\treelike$ graphs, influence has twice the exponential decay rate of root-to-leaf connectivities on the wired tree. This will yield Proposition~\ref{prop:influence-probability-new}.

\subsection{Exponential decay rate in the wired {$\Delta$}-regular tree}\label{subsec:wired-tree-exp-decay}
Because of its recursive structure, connectivity properties of the random-cluster measure on the wired tree can be analyzed sharply. In this section, we pursue this and show that in the uniqueness regime of $p<p_u$, the probability of a connection from the root to a leaf at depth $h$ is $O(\hat p^h)$, as one would have for the free tree (corresponding to i.i.d.\ $\ber(\hat p)$ percolation on $\cT_h$). 
We first show that the probability of a root-to-boundary connection decays exponentially in $h$. 

Let $\T_h$ be the complete $\Delta$-regular tree of height $h$ rooted at $\rho$.  The wired ``$1$" boundary conditions on $\cT_h$ are those that wire all leaves of $\cT_h$ (all vertices in $\partial \cT_h$). Define the probability 
$$
\varphi_h := \pi_{\cT_h}^1 (\omega: \cC_\rho(\omega) \cap \partial \cT_h \neq \emptyset)\,,
$$
that the root is connected to a leaf of $\cT_h$. Using the recursive structure of the tree, it was shown in~\cite[Lemma 33]{BGGSVY} that if we define $\mu : = \frac pq + 1-p$, for every $h$, we have
\begin{align}\label{eq:connectivity-recursion}
    \varphi_{h+1} = f(\varphi_h)\,, \qquad \mbox{where}\qquad 	f(x) = \frac{\big(\mu+p(1-\tfrac{1}{q})x\big)^{d}-\big(\mu-\tfrac{p}{q}x\big)^{d}}{\big(\mu+p(1-\tfrac{1}{q})x\big)^{d}+(q-1)\big(\mu-\tfrac{p}{q}x\big)^{d}}\,,
\end{align}
and for every $p<p_u(q,\Delta)$, this satisfies $\lim_{h\to\infty} \varphi_h = 0$. The following lemma establishes that this convergence is exponentially fast.

\begin{lemma}
	\label{lem:exp-decay}
	Let $p<p_u(q,\Delta)$. We have $\lim_{h\to\infty} \frac{\varphi_{h+1}}{\varphi_{h}} = \hat p d$.
	Moreover, $\varphi_{h} \le  (\hat p d)^{h+o(h)}$.
\end{lemma}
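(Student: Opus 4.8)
The plan is to exploit that, for $p<p_u(q,\Delta)$, we are handed $\varphi_h\to 0$, so the recursion $\varphi_{h+1}=f(\varphi_h)$ from~\eqref{eq:connectivity-recursion} is eventually driven entirely by the behaviour of $f$ near $0$. First I would record that $f(0)=0$: at $x=0$ both $\big(\mu+p(1-\tfrac1q)x\big)^{d}$ and $\big(\mu-\tfrac pq x\big)^{d}$ equal $\mu^{d}$, so the numerator of $f$ vanishes while the denominator equals $q\mu^{d}$. Since $\mu=\tfrac pq+1-p\in(0,1)$, the denominator of $f$ stays bounded away from $0$ near $x=0$, so $f$ is smooth there and $0$ is a fixed point.

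The heart of the matter is to compute $f'(0)$ and verify it equals $\hat p d$. Writing $a(x)=\mu+p(1-\tfrac1q)x$ and $b(x)=\mu-\tfrac pq x$ (so $a(0)=b(0)=\mu$, $a'(0)=\tfrac{p(q-1)}{q}$, $b'(0)=-\tfrac pq$), the numerator $N=a^{d}-b^{d}$ has $N(0)=0$ and $N'(0)=d\mu^{d-1}\big(a'(0)-b'(0)\big)=d\mu^{d-1}p$, while the denominator $D=a^{d}+(q-1)b^{d}$ has $D(0)=q\mu^{d}$. Hence
\begin{align*}
f'(0)=\frac{N'(0)}{D(0)}=\frac{dp}{q\mu}=\frac{dp}{q(1-p)+p}=\hat p d\,,
\end{align*}
using $q\mu=q(1-p)+p$ and the definition of $\hat p$ in~\eqref{eq:hat-p-inequality}; and since $p<p_u(q,\Delta)$, \eqref{eq:hat-p-inequality} also gives $\hat p d<1$, so $0$ is an attracting fixed point of $f$.

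With this in hand the conclusion is soft. I would note $\varphi_h>0$ for every $h\ge 1$ (the all-open configuration already connects the root to every leaf with positive probability, and $f$ preserves positivity), so the ratios $\varphi_{h+1}/\varphi_h$ are well defined, and $f(x)/x\to f'(0)=\hat p d$ as $x\downarrow 0$ because $f(0)=0$ and $f$ is differentiable at $0$. Combining this with $\varphi_h\to 0$ yields $\varphi_{h+1}/\varphi_h=f(\varphi_h)/\varphi_h\to\hat p d$, which is the first assertion. For the ``moreover'' bound, fix $\epsilon\in(0,\hat p d)$ and choose $H$ so large that $\varphi_H$ is small enough that $(\hat p d-\epsilon)x\le f(x)\le(\hat p d+\epsilon)x$ holds for all $x\in(0,\varphi_H]$; then $\varphi_h\le \varphi_H(\hat p d+\epsilon)^{h-H}$ for all $h\ge H$, whence $\limsup_h\frac1h\log\varphi_h\le\log(\hat p d+\epsilon)$, and letting $\epsilon\downarrow 0$ (together with the matching lower inequality from the lower bound on $f$) gives $\frac1h\log\varphi_h\to\log(\hat p d)$, i.e.\ $\varphi_h=(\hat p d)^{h+o(h)}$.

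I do not expect a genuine obstacle: the only real content is the algebraic identity $f'(0)=\hat p d$, and it is worth double-checking that this is \emph{exactly} $\hat p d$ (not merely some constant strictly below $1$), since it is precisely this rate that is later amplified to the $\hat p^{2R}$-decay of Proposition~\ref{prop:influence-probability-new} on which the union bound over the $n$ balls relies. The only care needed elsewhere is to record $\varphi_h>0$ so that the ratio in the first assertion makes sense, and to deduce the ``moreover'' bound directly from the two-sided linear control $(\hat p d-\epsilon)x\le f(x)\le(\hat p d+\epsilon)x$ valid near $0$, rather than circularly from the ratio limit.
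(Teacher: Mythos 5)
Your proposal is correct and takes essentially the same route as the paper: both compute $f'(0)=\hat p d$ by differentiating the numerator and denominator of $f$ at $0$ (the paper invokes L'H\^opital, which is the same computation), deduce $\varphi_{h+1}/\varphi_h\to\hat p d$ from $\varphi_h\to 0$, and convert that ratio limit into the $(\hat p d)^{h+o(h)}$ bound. The only cosmetic difference is how the last step is packaged: the paper writes $\varphi_h=\varphi_1\prod_{i=2}^h(\hat p d+\varepsilon_i)$ and bounds via $\ln(1+x)\le x$, whereas you pick an $\epsilon$-slack threshold $H$ and use the geometric bound $\varphi_h\le\varphi_H(\hat p d+\epsilon)^{h-H}$ — substantively the same argument.
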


\begin{proof}
    Consider the recursion of~\eqref{eq:connectivity-recursion} for $\varphi_h$. Since $\lim_{h\to\infty} \varphi_h =0$, if $\lim_{x\to 0} \frac{f(x)}{x}$ exists, we would have 
	\begin{align}
	\lim_{h\to\infty}\frac{\varphi_{h+1}}{\varphi_{h}}= & \lim_{x\rightarrow0}\frac{f(x)}{x} = \lim_{x\rightarrow0} \frac{\big(\mu+p(1-\tfrac{1}{q})x\big)^{d}-\big(\mu-\tfrac{p}{q}x\big)^{d}}{x\big(\mu+p(1-\tfrac{1}{q})x\big)^{d}+x(q-1)\big(\mu-\tfrac{p}{q}x\big)^{d}} \label{eq:lim}\,.
	\end{align}
	Since both the numerator and denominator of \eqref{eq:lim} are differentiable and have limit $0$ as $x\to0$, using L'H\^opital's rule we get
	\begin{align*}
	\lim_{x\rightarrow0}\frac{f(x)}{x}&=\lim_{x\rightarrow0}\frac{\partial_{x}\Big[\big(\mu+p(1-\tfrac{1}{q})x\big)^{d}-\big(\mu-\tfrac{p}{q}x\big)^{d}\Big]}{\partial_{x}\Big[x\big(\mu+p(1-\tfrac{1}{q})x\big)^{d}+x(q-1)\big(\mu-\tfrac{p}{q}x\big)^{d}\Big]} \\ &=\frac{dp(1-\tfrac{1}{q})\mu^{d-1}+d\tfrac{p}{q}\mu^{d-1}}{\mu^{d}+(q-1)\mu^{d}}
	=\frac{dp}{q\mu} = \frac{dp}{p+q(1-p)} = d\ps\,.
	\end{align*}
	Recall that for every $0<p<p_u$,  we have $0<d\hat p<1$.	
	Thus, there exists a sequence $\{\epsilon_{h}\}$ such that $\lim_{h\to\infty}\varepsilon_{h}=0$ and for every $h$, 
	\begin{align*}
	\varphi_{h}= \varphi_1\cdot \frac{\varphi_{h}}{\varphi_{h-1}} \dots \frac{\varphi_{2}}{\varphi_{1}} = \varphi_{1} \cdot \prod_{i=2}^h (\hat p d+\varepsilon_{i})\,.
	\end{align*}
	Expanding this out, we deduce the desired 
\[
	\varphi_{h} = \varphi_{1} (\hat p d)^h \exp\left( \sum_{i=2}^h \ln \left(1+\frac{\varepsilon_i}{\hat p d}\right)\right) \le \varphi_{1} (\hat p d)^h \exp\left((\hat p d)^{-1} \sum_{i=2}^h \varepsilon_i\right) =  (\hat p d)^{h+o(h)}\,. \qedhere
\]
\end{proof}

Our aim is to now prove Lemma~\ref{lem:exp-decay-wired-tree}, bounding connectivities of the root to a single leaf. 
\begin{proof}[\textbf{\emph{Proof of Lemma~\ref{lem:exp-decay-wired-tree}}}]
To prove Lemma~\ref{lem:exp-decay-wired-tree}, we write a recursion for the root-to-leaf connection probability.
Let $\vartheta_{h}$ be the probability under $\pi_{\cT_h}^1$ that the root is connected to the left-most leaf of depth $h$.
Let $\vartheta_{h}^\circlearrowleft$ be the probability of the same event, under $\pi_{\cT_h}^{(1,\circlearrowleft)}$ where we recall that the $(1,\circlearrowleft)$ boundary conditions additionally wire the leaves of $\cT_h$ to the root. By monotonicity we have $\vartheta_h \le \vartheta_h^{\circlearrowleft}$ and by Lemma~\ref{lemma:simple-rc-bound}, we have $\vartheta_{h}^\circlearrowleft \le q^2 \vartheta_{h}$. 

	Let $(I_i)_{i\le \Delta}$ be the indicator function of the event that there is a root-to-boundary path going through the $i$-th
	child of the root; set $I = \sum_{i=2}^\Delta I_i$.
	Then, we can write
	\begin{align*}
	\vartheta_{h} \leq p  \cdot \pi_{\cT_{h}}^1(I \geq 1) \cdot \vartheta_{h-1}^\circlearrowleft+\ps \vartheta_{h-1} \le \vartheta_{h-1} \left[pq^2 \cdot \pi_{\cT_{h}}^1(I \geq1)+ \ps\right]\,,
	\end{align*}
	where in the first inequality we used the fact that in order for the root to be connected to the left-most leaf,
	it is required that the root 
	is connected to its left-most child $w_1$,
	and that  $w_1$ is connected to the left-most leaf of its sub-tree. 
	The former event occurs with probability $p$ or $\ps$,
	depending on whether or not the root is connected to $\partial \cT_h$ through any child besides $w_1$.
	
    By monotonicity, for every $i=2,...,\Delta$, the law of $I_i$ under $\pi_{\cT_h}^{1}$ is below its law under $\pi_{\cT_h}^{(1,\circlearrowleft)}$ and the same holds for  $I$. Since, by Lemma~\ref{lemma:simple-rc-bound} a single external wiring may distort the distribution by at most a $q^2$ factor, we get 
	$
	\pi_{\cT_h}^{(1,\circlearrowleft)}(I_i = 1) \le p q^2  \varphi_{h}
	$ for all $i$. 
	Hence, under $\pi_{\cT_h}^{(1,\circlearrowleft)}$, $I$  is stochastically below $Q$, where $Q\sim \bin(d, p q^2\varphi_h)$. 
	A union bound and Lemma~\ref{lem:exp-decay} then imply
	$$
	\pi_{\cT_h}^{(1,\circlearrowleft)} (I \ge 1) \le \mathbb P (Q \ge 1) \le d p q^2  (\hat p d)^{h-o(h)} \le C (\hat p d)^{(1-\varepsilon)h}\,,
	$$
	for all $h$; note that $\varepsilon$ can be chosen as small as needed provided the constant $C(p,q,\Delta,\varepsilon)$ is large enough. 
	Thus, setting $a = Cpq^2 \ps^{-1}$
	we
	obtain 
	\begin{align*}
	\vartheta_{h}& \leq \ps \vartheta_{h-1} \left[1 + a(\ps d)^{(1-\varepsilon)h}\right] \le \ps^{h} \prod_{i=1}^{h} \left[1 + a(\ps d)^{(1-\varepsilon)i}\right]\,,
	\end{align*}
	by continuing the recursion.
	Now, observe that since $\ps d<1$ when $p<p_u$,
	\begin{align*}
	\prod_{i=1}^{h} \left[1 + a (\hat pd)^{(1-\varepsilon)i}\right]
	&=  \exp\left[ \sum_{i=1}^h \log \left(1 + a (\hat p d)^{(1-\varepsilon)i} \right)\right] \le  \exp\left[ a \sum_{i=1}^h (\hat pd)^{(1-\varepsilon)i}\right] \le  \exp\left[\frac{a}{(\ps d)^{1-\varepsilon}}\right].\end{align*}
	Combining the above two bounds, there exists an absolute constant $A = A(p,q,\Delta)$ such that for every
	$h$ we have $\vartheta_h \leq A \ps^{h}$ and thus $\vartheta_h^{\circlearrowleft} \le A q^2 \ps^h$. The first inequality in the lemma follows by noticing that all the leaves in $\cT_h$ are equivalent, and the second follows from a union bound over the $\Delta d^{h-1}$. 
\end{proof}

\subsection{Exponential decay rate in $(L,R)$-$\treelike$ graphs}\label{subsec:spatial-mixing-treelike}
Let $G=(V,E)$ be an $(L,R)$-$\treelike$ graph.
For $v \in V$, let $B := B_R(v)$ denote the ball of radius $R$ around the vertex $v$.
Recall that we use $\mathcal N_v \subseteq E$ for the set of edges incident to $v$. 
For each $1\le \ell \le R$, let $Q_\ell = \{u \in B: d(u,v) \ge \ell\}$.

For a boundary condition $\xi$ on $\partial B$, recall the set $\mathfrak V_{B,\xi}$ of vertices in non-trivial boundary components of $\xi$ from Definition~\ref{def:mathfrak-V}. For any $u \in B$ such that $d(u,v) = \ell$,
let $u \stackrel{Q_\ell}\longleftrightarrow \mathfrak V_{B,\xi}$ denote the event that $u$
 is connected to $\mathfrak V_{B,\xi}$ by a path of open edges fully contained in $Q_\ell$: i.e., 
 \[
 \{u\stackrel{Q_\ell}\longleftrightarrow \mathfrak V_{B,\xi}\} := \{\omega: \cC_u (\omega(Q_\ell)) \cap \mathfrak V_{B,\xi} \neq \emptyset\}\,.
 \]
Define the event 
\[
\Upsilon_{B,\xi} : = \{\omega \in \{0,1\}^{E(B)}: |\{u \in B:d(u,v)= \ell\,,\, u \stackrel{Q_\ell}\longleftrightarrow \mathfrak V_{B,\xi}\}| \ge 2 \mbox{ for all $1 \le \ell \le R$}\}\,.
\]
Notice that $\Upsilon_{B,\xi}$ is an increasing event. We claim that $\Upsilon_{B,\xi}$ controls the propagation of influence from $\partial B$. 

\begin{lemma}\label{lem:influence-event}
	Fix a graph $G = (V,E)$, a vertex $v\in V$ and consider the ball $B_R(v)$; let $\xi \ge \tau$ denote two boundary conditions on $\partial B_R(v) = \{w\in B_R(v): d(v,w) = R\}$. Then, 
	\begin{align*}
	\|\pi_{B_R(v)}^\xi (\omega(\mathcal N_v)\in \cdot)- \pi_{B_R(v)}^\tau(\omega(\mathcal N_v)\in \cdot ) \|_\tv \le \pi_{B_R(v)}^\xi(\Upsilon_{B_R(v),\xi})\,.
	\end{align*}
\end{lemma}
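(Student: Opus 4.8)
The plan is to construct an explicit monotone coupling of $\pi_{B_R(v)}^\xi$ and $\pi_{B_R(v)}^\tau$ under which the two configurations agree on $\mathcal N_v$ whenever the larger configuration $\omega^\xi$ fails to lie in $\Upsilon_{B_R(v),\xi}$; the total variation bound then follows from the standard coupling inequality. Write $B = B_R(v)$ throughout. Since $\xi \ge \tau$, by monotonicity there is a coupling $(\omega^\tau,\omega^\xi)$ with $\omega^\tau \le \omega^\xi$ almost surely — e.g., the coupling obtained by running two monotone-coupled FK-dynamics chains to stationarity, or a direct grand coupling. On this coupling, $\|\pi_B^\xi(\omega(\mathcal N_v)\in\cdot) - \pi_B^\tau(\omega(\mathcal N_v)\in\cdot)\|_\tv \le \mathbb P(\omega^\xi(\mathcal N_v) \ne \omega^\tau(\mathcal N_v))$, so it suffices to show that $\{\omega^\xi(\mathcal N_v)\ne\omega^\tau(\mathcal N_v)\} \subseteq \{\omega^\xi \in \Upsilon_{B,\xi}\}$, i.e., that on $\Upsilon_{B,\xi}^c$ the two configurations necessarily agree on the edges incident to $v$.

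The heart of the argument is therefore a deterministic (geometric) statement: one should \emph{not} couple via a global stationary coupling but rather reveal edges of $B$ from the boundary inward, maintaining agreement. Concretely, I would expose $\omega^\xi$ and $\omega^\tau$ simultaneously, annulus by annulus, from $\partial B$ toward $v$: first reveal both configurations on $Q_R = \partial B$, then on $Q_{R-1}$, and so on, each time resampling the next shell conditionally on what has been revealed so far in the \emph{outer} region, using the identity (optimal monotone) coupling shell by shell. The key observation is that the conditional law of $\omega$ restricted to the shell at distance $\ell-1$, given the configuration on $Q_\ell$ and the boundary condition, is an FK measure on that shell whose effective boundary condition records only the connectivity pattern that $Q_\ell \cup \mathfrak V_{B,\xi}$ (together with $\xi$, resp.\ $\tau$) induces on the vertices at distance exactly $\ell$. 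If at distance $\ell$ the two revealed configurations induce the \emph{same} partition on the distance-$\ell$ sphere, then the shell at distance $\ell-1$ can be coupled to agree exactly, and the agreement propagates inward all the way to $\mathcal N_v$. The only way for the induced partitions at level $\ell$ to differ under $\omega^\xi$ versus $\omega^\tau$ is for two distinct vertices at distance $\ell$ to be joined to $\mathfrak V_{B,\xi}$ through the outer region in $\omega^\xi$ but not in $\omega^\tau$ (a single such connection merely wires one vertex to the same boundary blob in both, creating no discrepancy on the sphere); that is precisely the event that at least two vertices at distance $\ell$ are connected to $\mathfrak V_{B,\xi}$ inside $Q_\ell$ in $\omega^\xi$. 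Hence if $\omega^\xi \notin \Upsilon_{B,\xi}$ — i.e., for \emph{some} $\ell$ there are fewer than two such vertices — the induced partitions agree at that level, the inner configurations couple, and $\omega^\xi(\mathcal N_v) = \omega^\tau(\mathcal N_v)$.

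The main obstacle is making the ``at most one connection to $\mathfrak V_{B,\xi}$ causes no discrepancy'' step fully rigorous: one must argue carefully that the FK measure on the inner region, conditioned on the outer configuration, depends on the outer data only through the partition of the current sphere induced by the $\mathfrak V_{B,\xi}$-wirings and the open paths in $Q_\ell$, and that when exactly zero or one vertex of the sphere is wired to $\mathfrak V_{B,\xi}$ this partition is \emph{identical} for $\xi$ and $\tau$ (here one uses that $\xi$ and $\tau$ agree off their non-trivial components, so the only difference between them is carried by $\mathfrak V_{B,\xi}$, and a lone connection to it simply singles out one sphere-vertex, identically in both cases). Once this domain-Markov-type reduction is in place, the inductive shell-by-shell coupling and the union over $1 \le \ell \le R$ are routine, and the bound $\pi_B^\xi(\Upsilon_{B,\xi})$ drops out. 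A minor point to handle is the case $\mathfrak V_{B,\xi} = \emptyset$, where $\xi = \tau$ restricted to where it matters and both sides are trivially zero.
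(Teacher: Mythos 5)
Your plan is the right one in spirit---construct a monotone, edge-by-edge revealing coupling from $\partial B$ inward and use the pinch of the $\mathfrak V_{B,\xi}$-cluster at some depth $\ell$ to conclude the inner configurations agree---and this is indeed what the paper does. But the specific variant you propose (reveal \emph{all} of $Q_\ell$ annulus by annulus, then compare induced partitions on the distance-$\ell$ sphere) leaves a real gap, and it is precisely the step you flag as ``the main obstacle.'' For the annulus-by-annulus scheme one must prove that when at most one sphere vertex is joined to $\mathfrak V_{B,\xi}$ in $Q_\ell$, the partitions induced by $(\omega^\xi(E(Q_\ell)),\xi)$ and $(\omega^\tau(E(Q_\ell)),\tau)$ on $\{u:d(u,v)=\ell\}$ \emph{coincide}. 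This is not automatic: the two revealed configurations may disagree on many edges of $Q_\ell$, and disagreement of edges can merge sphere vertices for $\omega^\xi$ that remain separated for $\omega^\tau$. To close the gap you need a disagreement-percolation argument: under the edge-by-edge revealing coupling (not a stationary FK-dynamics coupling, where disagreements can outlive their connection to the cluster), one shows by induction that every disagreement edge lies in the $\omega^\xi$-open cluster of $\mathfrak V_{B,\xi}$, because a differing cut-edge status forces a connecting path that uses a $\xi$-wiring or an earlier disagreement edge. Only then does ``at most one sphere vertex reaches $\mathfrak V$'' imply agreement of the two sphere partitions.

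The paper's proof is structured so as to never confront this issue. Its revealing scheme (Definition~\ref{def:correlation-decay-revealing}) exposes only the cluster of $\mathfrak V_{B,\xi}$ together with its closed edge boundary, proceeding level by level, and never touches the rest of $Q_\ell$. Once the explored cluster meets the current level in at most one vertex, the \emph{unrevealed} region $\bar\cC_{\mathfrak V,0}^c$ (which contains $\cN_v$) receives a free boundary condition from the revealed data under $\xi$ and hence, by monotonicity, under $\tau$ as well; by the domain Markov property the two conditional laws on $\bar\cC_{\mathfrak V,0}^c$ then agree and can be coupled identically. No claim about the non-cluster edges of $Q_\ell$ is needed, so the disagreement-percolation step disappears. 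Your approach can be made to work, but the extra argument is genuinely nontrivial and is not spelled out in the proposal; the cluster-only exploration buys a substantially cleaner proof.
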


\begin{proof}[{\textbf{\emph{Proof of Lemma~\ref{lem:influence-event}}}}]
	For ease of notation let $B := B_R(v)$, $\Upsilon_\xi = \Upsilon_{B,\xi}$ and $\mathfrak V_\xi = \mathfrak V_{B,\xi}$. 
	We construct a monotone coupling $\P$ of $\omega^\xi\sim\pi_{B}^\xi$ and $\omega^\tau\sim\pi_B^\tau$.
	The coupling $\P$ reveals the configurations $\omega^\xi \sim \pi_{B}^\xi$ and $\omega^\tau \sim \pi_B^\tau$
	on $B$ one edge at a time using i.i.d.\ uniform random variables $U_e \in [0,1]$
	for each  $e \in E(B)$. The same $U_e$ is used to reveal the values $\omega^\xi(e)$ and $\omega^\tau(e)$ from the corresponding conditional measures. 
	The order in which the uniform variables are revealed is irrelevant and can be adaptive; this will allow us to reveal the boundary components.
	(For more details on the process of revealing random-cluster components under the monotone coupling, see below, as well as e.g.,~\cite{BS,BGVfull}.) 
	
	We construct an adaptive revealing scheme that ensures that on the event $\Upsilon_\xi^c$ for the top sample $\omega^\xi$, the samples $\omega^\xi$ and $\omega^\tau$ agree on $\mathcal N_v$. This implies the desired result as one would then have by the definition of total-variation distance, 
	$$
		\|\pi_{B}^\xi (\omega(\mathcal N_v)\in \cdot)- \pi_{B}^\tau(\omega(\mathcal N_v)\in \cdot ) \|_\tv 
		\le \P(\omega^\xi(\mathcal N_v) \neq \omega^\tau(\mathcal N_v)) \le \pi_{B}^\xi(\Upsilon_{B,\xi})\,.
	$$
	We construct $\P$ with the following iterative scheme which proceeds level-by-level
	from the leaves of $B$.
	Recall that for each $\ell \ge 1$, we let $Q_\ell = \{u \in B: d(u,v) \ge \ell\}$ and  $E(Q_\ell)$ is the set of edges with both endpoints in $Q_\ell$.
	At any time in the revealment process, we say that a vertex $u \in Q_\ell$ is unsaturated in $Q_\ell$
	if there exists $w\in Q_\ell$ such that the edge-values $(\omega^{\xi}(uw),\omega^{\tau}(uw))$ have not been revealed. 
	Let $(U_e)_{e\in E(B)}$ be a family of i.i.d.\ uniform random variables on $[0,1]$ and reveal the configuration $\omega^{\xi}$ as follows:	
\begin{center}
	\fbox{
		\parbox{0.88\textwidth}{
		    \begin{definition}\label{def:correlation-decay-revealing} 
			\textbf{Initialize $\cV_\xi = \mathfrak V_\xi$ and $\cE_\xi = \emptyset$};
			
			\smallskip
			\textbf{for $i=1,2,...,R$ do}
			
			\smallskip
			\quad \textbf{while} $\exists u \in \cV_\xi $ such that $u$ is unsaturated in $Q_{R-i}$
			
			\smallskip
			\quad\quad \textbf{for each} vertex $w \in Q_{R-i}:~uw \in E(Q_{R-i})$

			\begin{enumerate}[\quad\quad\quad~1.]\setlength{\itemsep}{3mm}
			\item {Reveal $\omega^{\xi}(uw)$ from $\pi_{B}^{\xi}(\cdot \mid \omega(\cE_{\xi}))$ using $U_{uw}$, i.e., set
			\[
			  \omega^{\xi}(uw) = \begin{cases}
			  1 \qquad \mbox{if } \pi_{B}^{\xi}(\omega(uw)=1 \mid \omega(\mathcal E_{\xi}))\ge U_{uw} \\ 0 \qquad \mbox{else}
			  \end{cases}\,;
			\]}
			\item Add the edge $uw$ to the set $\cE_{\xi}$;
			\item If  $\omega^{\xi}(uw)=1$, add the vertex $w$ to $\cV_\xi$;  
			\end{enumerate}
\end{definition}
	}}
\end{center}
 Note that we can use the same family $(U_e)_{e\in E(B)}$ in this process to generate coupled samples of $\omega^\xi$ and $\omega^\tau$. Notice that this coupling is monotone, so that because $\xi\ge \tau$, $\omega^\xi \ge \omega^\tau$ almost surely. 
Let $\mathcal C_{\mathfrak V}^i(\omega^\xi)$ denote the set of open edges
 revealed up to the $i$-th iteration of the procedure; we observe that $\mathcal C_{\mathfrak V}^i(\omega^\xi)$ is not necessarily equal to the intersection of $\mathcal C_{\mathfrak V}(\omega^\xi)$ with $E(Q_{R-i})$, but it is a subset of $\mathcal C_{\mathfrak V}(\omega^\xi) \cap E(Q_{R-i})$. Refer to Figure~\ref{fig:correlation-decay-revealing} for a depiction of the above revealing procedure.
	
	\begin{figure}
    \centering
    \begin{tikzpicture}
    \node at (-4,2.75) {
    \includegraphics[width = .45\textwidth] {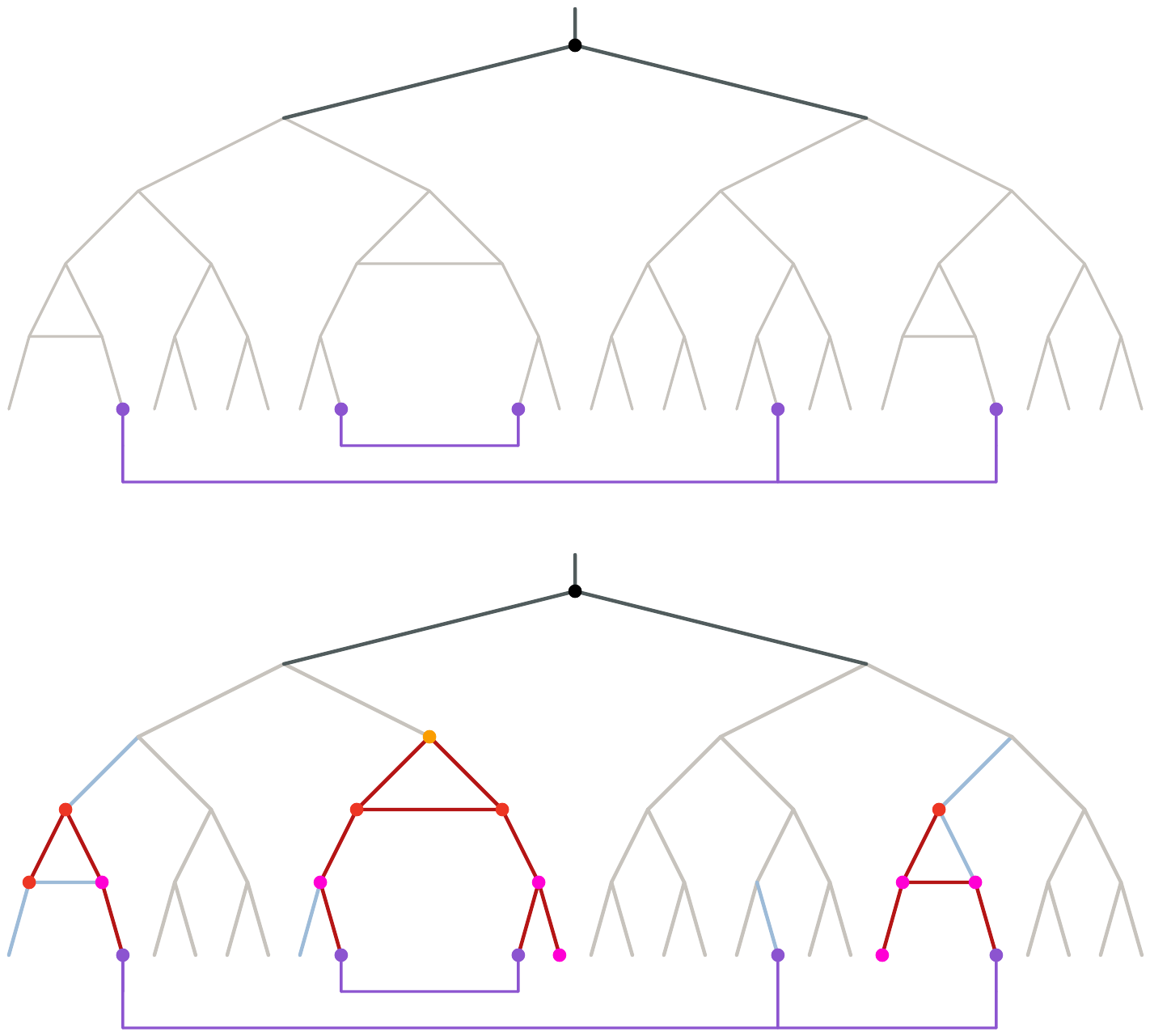}};
    \node at (4,2.99) { \includegraphics[width = .45\textwidth] {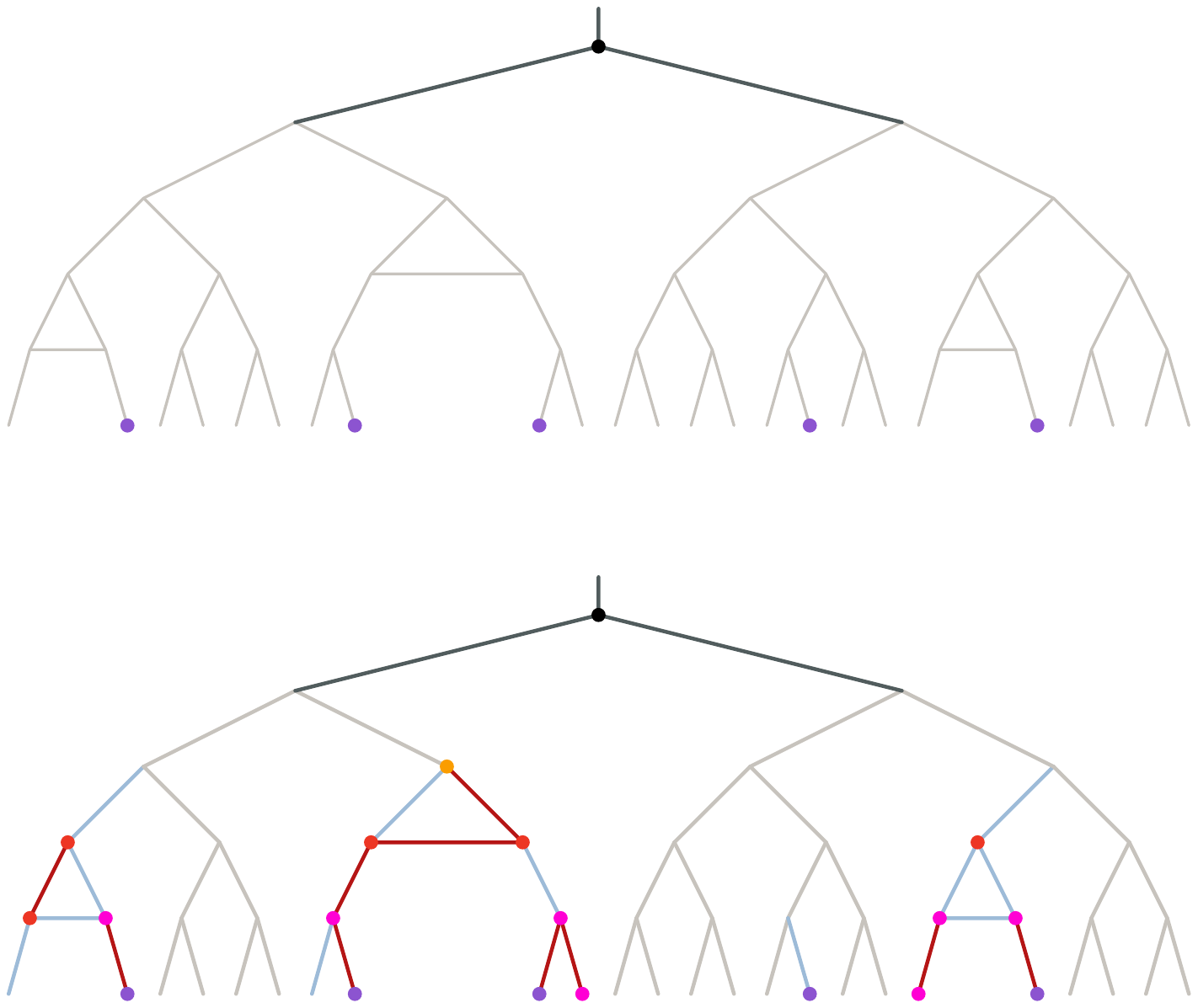}};

    \node[font = \tiny] at (-4.02,4.65) {$\vdots$};
    \node[font = \tiny] at (-4+.125,3.85) {$v$};
        \node[font = \tiny] at (3.98,4.65) {$\vdots$};
    \node[font = \tiny] at (4+.125,3.85) {$v$};

    \node at (-4,-1.75) {
    \includegraphics[width = .45\textwidth] {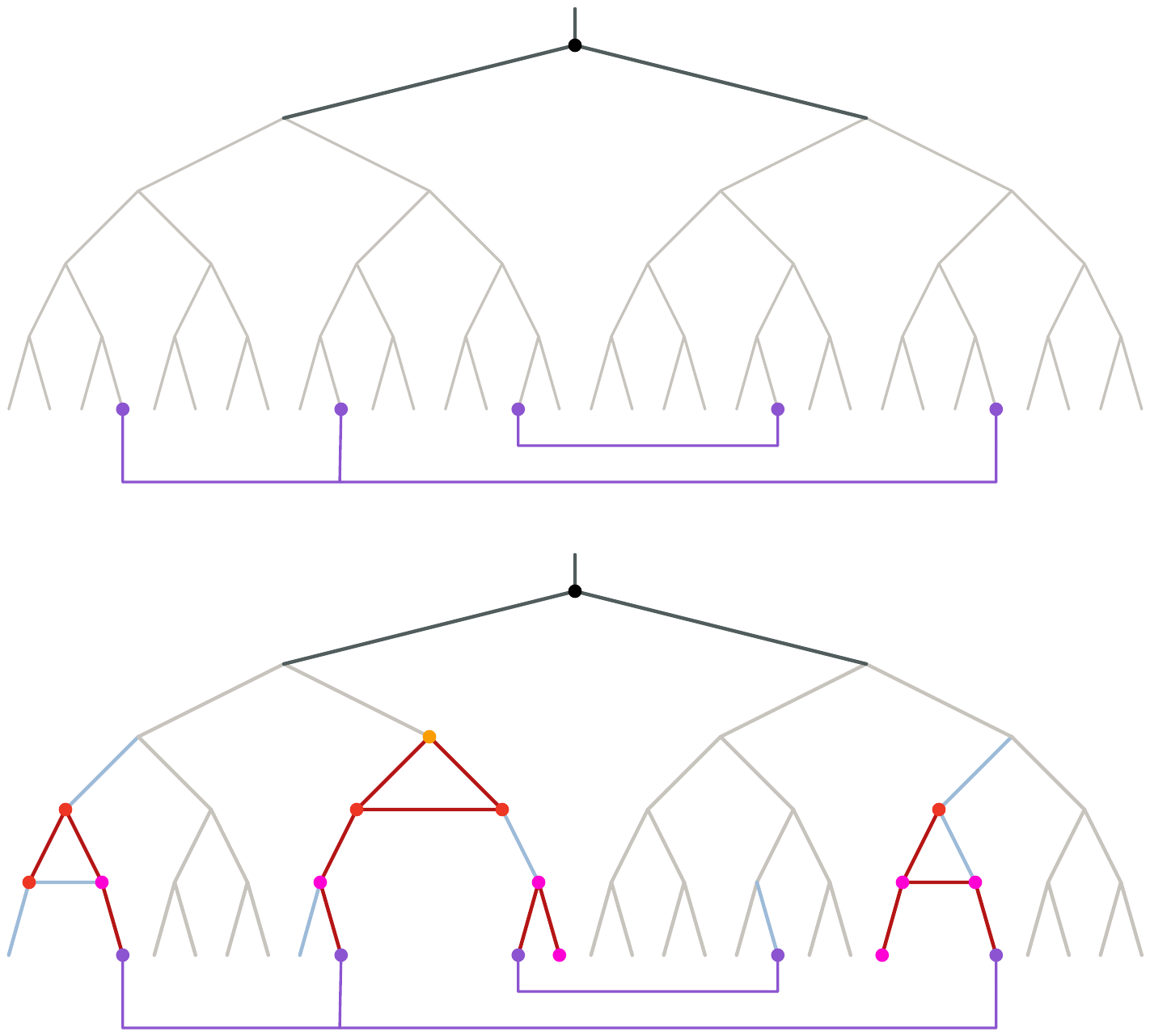}};
        \node at (4,-1.55) {
    \includegraphics[width = .45\textwidth] {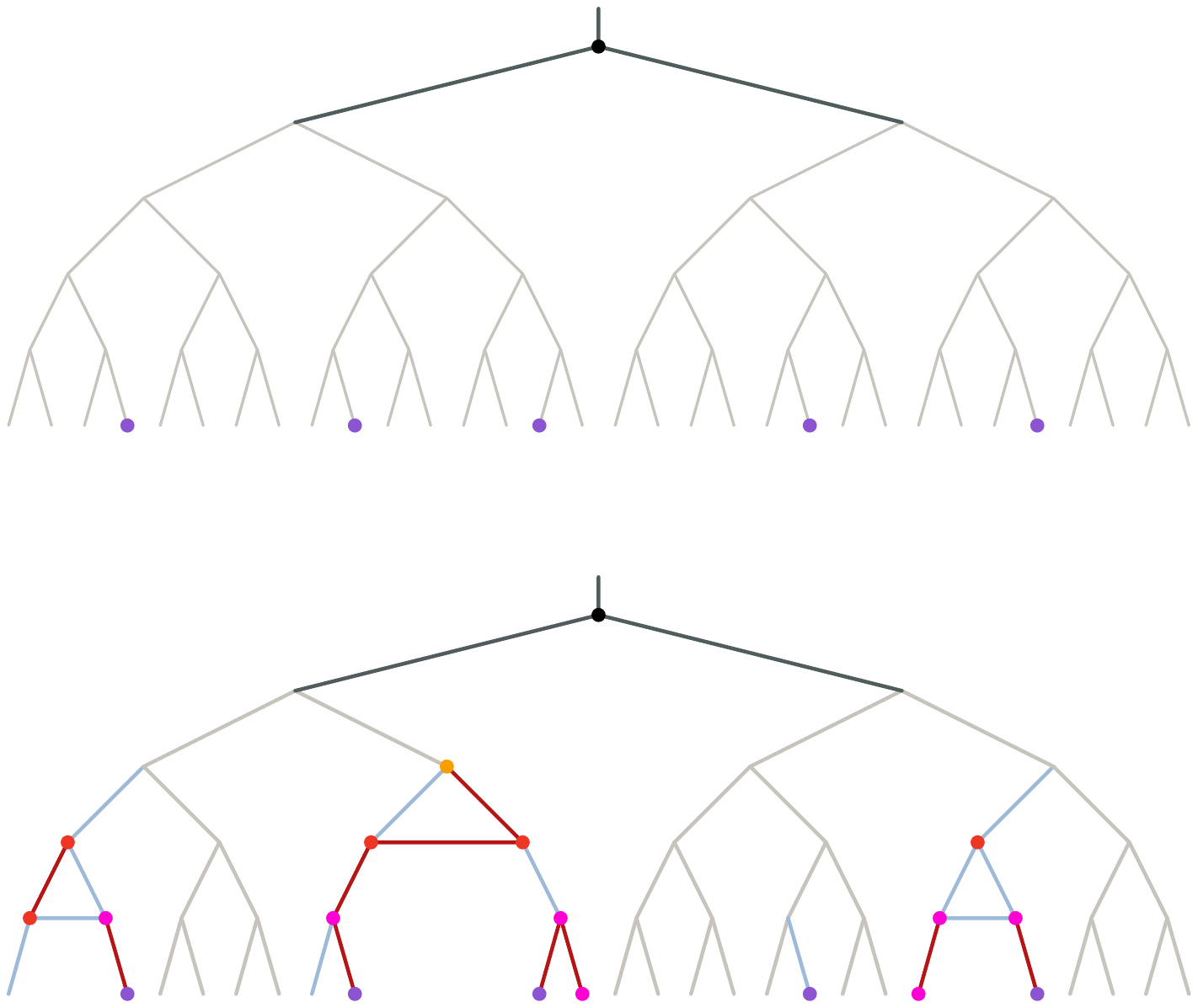}};

    \node[font = \tiny] at (-4.02,-4.5 + 4.65) {$\vdots$};
    \node[font = \tiny] at (-4+.125,-4.5+ 3.85) {$v$};
        \node[font = \tiny] at (3.98,-4.5+4.65) {$\vdots$};
    \node[font = \tiny] at (4+.125,-4.5+3.85) {$v$};
    
    \draw[dotted, color = gray] (-8, -1.4)--(8,-1.4);
    \end{tikzpicture}
    \vspace{-.1cm}
    \caption{Top: The ball $B_R(v)$ for $R = 5$, with a $K$-sparse boundary condition $\tau$ for $K = 4$ (left), and the free boundary condition $\xi = 0$ (right). Bottom: The configurations revealed by the procedure of Definition~\ref{def:correlation-decay-revealing}, showing $\cC_{\mathfrak V}^{i_0}(\omega^{\tau})$ (red, left) along with its outer edge boundary in $Q_{i_0}$ (light blue), revealing the dotted line (depth $i_0$) to be the largest $i$ for which the set $\cV_{i}$ is a singleton. The vertices that would have been exposed for larger values of $i$ are colored in different colors. The coupled edge configuration $\omega^0(\cC_{\mathfrak V}^{i_0}(\omega^{\tau}))$ is depicted on the right (open edges in red, closed edges in blue). The exposed configurations on $\cC_{\mathfrak V}^{i_0}(\omega^{\tau})$ induce free boundary conditions on $E(B)\setminus \bar \cC_{\mathfrak V}^{i_0}(\omega^{\tau})$. } 
    \label{fig:correlation-decay-revealing}
\end{figure}

	Through this revealing process, we see that $\omega^{\xi}$ is open on the edges in the random set $\mathcal C_{\mathfrak V}^i(\omega^\xi)$ and free on the edges in its outer (edge) boundary in $Q_{R-i}$. Let $\bar {\cC}_{\mathfrak V}^i(\omega^\xi)$ be the union 
	$\mathcal C_{\mathfrak V}^i(\omega^\xi)$ with its outer (edge) boundary in $Q_{R-i}$, and note that this corresponds to the state of $\cE_\xi$ after the $i$'th iteration. 
	The random set $\mathcal C_{\mathfrak V}^i(\omega^\xi)$ is measurable with respect to the uniform random variables assigned to edges of $\bar {\cC}_{\mathfrak V}^i(\omega^\xi)$. 
	
	For each $\mathcal C_{\mathfrak V}^i(\omega^\xi)$, let $\cV^i(\omega^\xi)$ be the  vertices in $\mathcal C_{\mathfrak V}^i(\omega^\xi)$ at distance exactly $R-i$ from $v$. Then,
	\[
	\mathcal V^i(\omega^\xi) \subseteq \mathcal C_{\mathfrak V}(\omega^\xi)\cap \{w:d(w,v) = R-i\}\,.
	\] 
	On  $\Upsilon_B^c$, there must be some $i$ for which $|\mathcal C_{\mathfrak V}(\omega^\xi)\cap \{w:d(w,v) = R-i\}|\le 1$, and therefore $|\mathcal V^i(\omega^\xi)|\le 1$. Let $i_0$ be the first  $i$ for which $|\mathcal V^{i_0}(\omega^\xi)|\le 1$, and for ease of notation set $\mathcal V_0 = \mathcal V^{i_0}(\omega^\xi)$, $\bar \cC_{\mathfrak V,0} = \bar {\cC}_{\mathfrak V}^{i_0}(\omega^\xi)$ and set $\bar \cC_{\mathfrak V,0}^c = E(B) \setminus \bar \cC_{\mathfrak V,0}$. Notice the inclusion 
	\begin{align*}
	    \bar{\cC}_{\mathfrak V}^{i}(\omega^\xi) \subset \bar{\cC}_{\mathfrak V}^{i+1}(\omega^\xi)\,,
	\end{align*}
	and from that deduce that $i_0$ is measurable with respect to the uniform random variables assigned to edges of $\bar{\cC}_{\mathfrak V}^{i_0}(\omega^\xi)$. By the domain Markov property (see e.g.,~\cite{Grimmett}), conditionally on $\omega^\xi(\bar \cC_{\mathfrak V,0})$, the configuration $\omega^\xi(\bar \cC_{\mathfrak V,0}^c)$ (respectively,  $\omega^\tau(\bar \cC_{\mathfrak V,0}^c)$) is distributed according to the random-cluster distribution on $\bar \cC_{\mathfrak V,0}^c$ with boundary conditions induced by $\xi$ and $\omega^\xi(\bar \cC_{\mathfrak V,0})$, respectively $\tau$ and $\omega^\tau(\bar \cC_{\mathfrak V,0})$. 
	
	To conclude the proof, it suffices to see that because $|\mathcal V_0|\le 1$, both $\omega^\xi(\bar \cC_{\mathfrak V,0})$ and $\omega^\tau(\bar \cC_{\mathfrak V,0})$ induce the free boundary conditions on $\bar \cC_{\mathfrak V,0}^c$. In that case $\omega^\xi$ and $\omega^\tau$ would agree on $\bar \cC_{\mathfrak V,0}^c$ and in particular on $\mathcal N_v$. 
	By monotonicity, it suffices for us to show that the boundary conditions induced by $\xi$ and $\omega^\xi(\bar \cC_{\mathfrak V,0})$ on $\bar \cC_{\mathfrak V,0}^c$ are free. 
	Since the wirings of $\xi$ are only on vertices of $\mathfrak V_\xi \subset \bar \cC_{\mathfrak V,0}$, the only way for the boundary conditions on $\bar \cC_{\mathfrak V,0}^c$ to be not free is if multiple vertices on its boundary are incident to open edges of $\omega^\xi(\bar \cC_{\mathfrak V,0})$. By construction, the only vertices in $\bar \cC_{\mathfrak V,0}$ which can be incident to an open edge of $\omega^\xi(\bar \cC_{\mathfrak V,0})$ must be at distance exactly $R-i_0$ from $v$. By the assumption that $|\mathcal V_0|\le 1$, there can be at most one such vertex, and therefore there are no non-trivial (i.e., non-singleton) boundary components induced on $\bar \cC_{\mathfrak V,0}^c$ by the boundary condition $(\xi,\omega^\xi(\bar \cC_{\mathfrak V,0}))$, implying the desired conclusion. 
\end{proof}

\begin{proof}[{\textbf{\emph{Proof of Proposition~\ref{prop:influence-probability-new}}}}]
With Lemma~\ref{lem:influence-event} in hand, it suffices for us to prove the following: 
	there exists $C(p,q,K,L)>0$ such that if $G=(V,E)$ is an $(L,R)$-$\treelike$ graph and $\xi$ is a $K$-$\sparse$ boundary condition for the $L$-$\treelike$ ball
	$B := B_R(v)$ about some $v\in V$, we have 
	\begin{align}\label{eq:wts-spatial-mixing}
	\pi_{B}^\xi(\Upsilon_{B,\xi}) \le C  \ps^{2R}\,. 
	\end{align}

	Let $H\subset E(B)$ be a set of at most $L$ edges such that the subgraph $(V, E(B)\setminus H)$ is a tree; the existence of such a set is guaranteed by the fact that $B_R(v)$ is $L$-$\treelike$.  
	Let $\mathcal Z= \{d_1,...,d_k\}$ be the subset of distances (from $v$) which $H$ intersects, i.e., $\mathcal Z= \{1\le \ell < R: \exists w\in V(H): d(w,v)=\ell\}$. See Figure~\ref{fig:treelike-balls} for a depiction. Observe that each edge of $H$ intersects either one or two consecutive depths in $\mathcal Z$. Since $B$ Is $L$-$\treelike$, we clearly have $|\mathcal Z|\le 2L$. 
	Letting $d_0 =0$ and $d_{k+1}=R$,
	for $i=0,\dots,k$ we define:
	$$
	\mathcal F_i := \{u \in B:  d_i < d(u,v) < d_{i+1}\}\,.
	$$ 
	For each $0\le i \le k$, the graph 
	$\cF_i = (\mathcal F_i,E(\mathcal F_i))$ is a forest.
	 For each $i$, let $\mathcal T_{ij} = (\cT_{ij}, E(\cT_{ij}))$ for $j = 0,1,\dots$ denote the distinct connected components (subtrees) of $\mathcal F_i$ so that $\mathcal F_i = \bigcup_{j \ge 0} \mathcal T_{ij}$. (For some $i$, this may be empty, and for other $i$, this may be a single vertex.)
		
	Now, in order for $\Upsilon_{B,\xi}$ to hold, it must be the case that in each $\cF_i$, every depth $\ell$ is intersected by at least two sites in the FK cluster of $\mathfrak V_{B,\xi}$ in $Q_\ell$.
	Specifically, for each $i$, at distance $d_i+1$ from $v$ there must be at least two distinct vertices connected to 
	$\mathfrak V_{B,\xi}$ with paths in $Q_{d_i+1}$. Thus,
    for each $i$ 
	there must exist 
	two open monotone paths (each intersecting each height in $\cF_i$ at exactly one vertex), $\gamma_i \subset E(\mathcal T_{ij})$ and $\gamma_i' \subset E(\mathcal T_{ij'})$ with $j \neq j'$ such that $\gamma_i$ (resp., $\gamma_i'$) connects the root of $\mathcal T_{ij}$ (resp., $\mathcal T_{ij'}$) 
	to one of its leaves.
	If there are multiple such paths, choose according to some predetermined ordering, and call the sequences of paths $\Gamma = \gamma_{0},\ldots, \gamma_{k}$ and $\Gamma' = \gamma_{0}',\ldots, \gamma_{k}'$. See Figure~\ref{fig:treelike-balls-influence} for a depiction.

	We enumerate over the choices of such sequences of paths and then show that for any two fixed sequences of paths, the probability that they are both open is bounded by $C \ps^{2R}$ for some $C(p,q,\Delta,K,L)$. (We say that a sequence of paths is open if all of its paths are.)
	
	In order to enumerate over the choices of sequences of paths, for each monotone path $\gamma_i$, let $x_i$ be its bottom endpoint, and define $x_i'$ for $\gamma_i'$ similarly. Since $\xi$ is $K$-$\sparse$, there are evidently at most $K$ many choices of $x_0$, and $K$ choices of $x_0'$. 
	Now observe that since $\gamma_i$ is a monotone path on a tree, for each $i$, the bottom endpoint $x_i$ determines the entire path $\gamma_i$. Since these paths form parts of the connections to $\mathfrak V_{B,\xi}$ the sequence of paths can be required to have endpoints at depths $d_{i+1}-1$ that are either an ancestor of $x_0$, or an ancestor of $V(H)$ . Here, at each height $h\notin \mathcal Z$ an \emph{ancestor} of a vertex $u$ at height $h$ is a vertex along the geodesic from $v$ to $u$. We make the following observation. 
	\begin{claim}
	If $B_R(v)$ is $L$-$\treelike$, if $u$ is such that $d(u,v) = h$, for every $h'<h$, $u$ has at most  $2^L$ many ancestors at height $h'$. 
	\end{claim}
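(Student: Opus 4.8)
The plan is to establish, by induction on $L$, the following symmetric strengthening of the Claim: \emph{if a connected graph $G$ is $L$-$\treelike$, then for any two vertices $x,y$ of $G$ and any integer $\ell$ with $0\le \ell\le \dist_G(x,y)$, the number of vertices $w$ lying at position $\ell$ on a geodesic from $x$ to $y$ — i.e.\ with $\dist_G(x,w)=\ell$ and $\dist_G(w,y)=\dist_G(x,y)-\ell$ — is at most $2^{L}$.} The Claim itself is the case $G=B_R(v)$, $x=v$, $y=u$, $\ell=h'$: since every geodesic from $v$ to $u$ has length $h\le R$, it stays inside $B_R(v)$, so the relevant distances are those computed inside $B_R(v)$, which is $L$-$\treelike$ by hypothesis. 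The base case $L=0$ is immediate, as a tree has a unique geodesic between any two vertices, so there is a unique such $w$ and $2^{0}=1$.

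For the inductive step, fix a set $H=\{f_1,\dots,f_L\}$ of $L$ edges of $G$ with $T:=(V(G),E(G)\setminus H)$ a spanning tree, write $f_L=\{a,b\}$, and let $G^-:=(V(G),E(G)\setminus\{f_L\})$, which is connected and $(L-1)$-$\treelike$ via $T$. I will use two elementary facts: (i) a prefix of a geodesic is a geodesic, so along any $G$-geodesic emanating from $x$ the distance to $x$ increases by exactly one at each step; and (ii) because $G^-\subseteq G$ we have $\dist_{G^-}\ge \dist_G$ pointwise, so any walk that lies in $G^-$ and has length equal to the corresponding $G$-distance is automatically a $G^-$-geodesic between its endpoints. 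Now fix $x,y,\ell$ (the cases $\ell\in\{0,\dist_G(x,y)\}$ being trivial) and let $w$ be a position-$\ell$ vertex between $x$ and $y$; split into two cases according to whether some geodesic through $w$ avoids $f_L$ or every geodesic through $w$ uses $f_L$. In the first case that geodesic lies in $G^-$ and, by (i)--(ii), $w$ is a position-$\ell$ vertex between $x$ and $y$ in $G^-$, so the inductive hypothesis bounds the number of such $w$ by $2^{L-1}$. In the second case, by (i) the edge $f_L$ can only be traversed from the endpoint nearer $x$ to the one farther from $x$; if $\dist_G(x,a)=\dist_G(x,b)$ it cannot be traversed at all and we are back in the first case, so we may assume $\dist_G(x,b)=\dist_G(x,a)+1$, write $\ell_a:=\dist_G(x,a)$, and note that every geodesic here factors as $\gamma_1\,f_L\,\gamma_2$ with $\gamma_1$ a geodesic from $x$ to $a$ and $\gamma_2$ a geodesic from $b$ to $y$, both contained in $G^-$. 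According to whether $\ell<\ell_a$, $\ell=\ell_a$, $\ell=\ell_a+1$, or $\ell>\ell_a+1$ — a dichotomy fixed once $f_L$ is chosen — the vertex $w$ is respectively a position-$\ell$ vertex between $x$ and $a$ in $G^-$, the vertex $a$, the vertex $b$, or a position-$(\ell-\ell_a-1)$ vertex between $b$ and $y$ in $G^-$; in each instance the inductive hypothesis (or triviality) gives at most $2^{L-1}$ possibilities. Summing the two cases yields at most $2^{L-1}+2^{L-1}=2^{L}$.

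The points requiring care — and the ones I expect to be the crux of the write-up — are: first, confirming via fact (i) that in the "$f_L$ always used" case the orientation of $f_L$ along geodesics out of $x$ is genuinely forced, so that the four sub-cases are mutually exclusive and cost a single $2^{L-1}$ rather than one each; and second, checking that the relevant prefixes and suffixes of the factored geodesic $\gamma_1\,f_L\,\gamma_2$ actually realize $\dist_{G^-}$ between their endpoints so the inductive hypothesis applies with the correct parameters, which again follows from (i)--(ii) together with the strict inequality $\ell<\dist_G(x,y)$ (needed, e.g., to guarantee $\ell-\ell_a-1<\dist_{G^-}(b,y)$). Parallel edges or self-loops among the $f_i$'s cause no difficulty, since such an edge either cannot lie on a simple geodesic or is parallel to a tree edge, and hence never forces ``Case 2''.
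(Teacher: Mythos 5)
Your proof is correct, but it takes a genuinely different route from the paper's. The paper argues directly (and briefly): along a geodesic emanating from $v$ the distance to $v$ strictly increases, so each $H$-edge on the geodesic is traversed in a forced orientation and forced order, and the tree segments between consecutive $H$-edges are unique; hence a geodesic from $v$ to $u$ is determined by the subset of $H$ it traverses, giving at most $2^L$ geodesics and thus at most $2^L$ ancestors at any fixed height. You instead induct on $L$ by deleting one $H$-edge $f_L$ at a time, splitting the position-$\ell$ vertices into those through which a geodesic avoids $f_L$ (handled in $G^-$, $\le 2^{L-1}$) and those for which every geodesic uses $f_L$ (reduced via the forced orientation of $f_L$ to a single position question in $G^-$, $\le 2^{L-1}$), for a total of $2^L$. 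The paper's argument is shorter and more global but leaves the ``uniqueness of the geodesic given its $H$-subset'' step to the reader; your inductive argument spells everything out carefully — in particular the monotonicity of $\dist_{G^-}\ge\dist_G$ in fact (ii), which guarantees that your sub-paths really are $G^-$-geodesics and that position-$\ell$ vertices in $G^-$ are a subset of those in $G$ — at the cost of a somewhat longer case analysis. Both are correct and give the same bound; the paper's is arguably the cleaner conceptual picture, while yours is closer to a fully rigorous textbook proof.
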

	Indeed, except along the edges in $H$, every vertex has a unique \emph{parent} which is an ancestor of that vertex at one smaller depth. Thus, the geodesics of $B$ are uniquely determined by their endpoints together, possibly, with a subset of edges of $H$ traversed along the geodesic, yielding the at most $2^L$ available choices.  
	
	Returning to the enumeration over $\Gamma, \Gamma'$, the heights of the endpoints $x_i,x_i'$ are predetermined by $i$, and therefore, having chosen $x_0, x_0'$ for each $i$, there are at most $2L$ many choices of bottom end-point $x_i$, and likewise of $x_i'$, and therefore at most $2L\cdot 2^L$ many choices of $\gamma_i$ and $\gamma_i'$. 
	
	\begin{figure}
    \centering
    \begin{tikzpicture}
    \node at (-4.2,0) {
    \includegraphics[width = .47\textwidth] {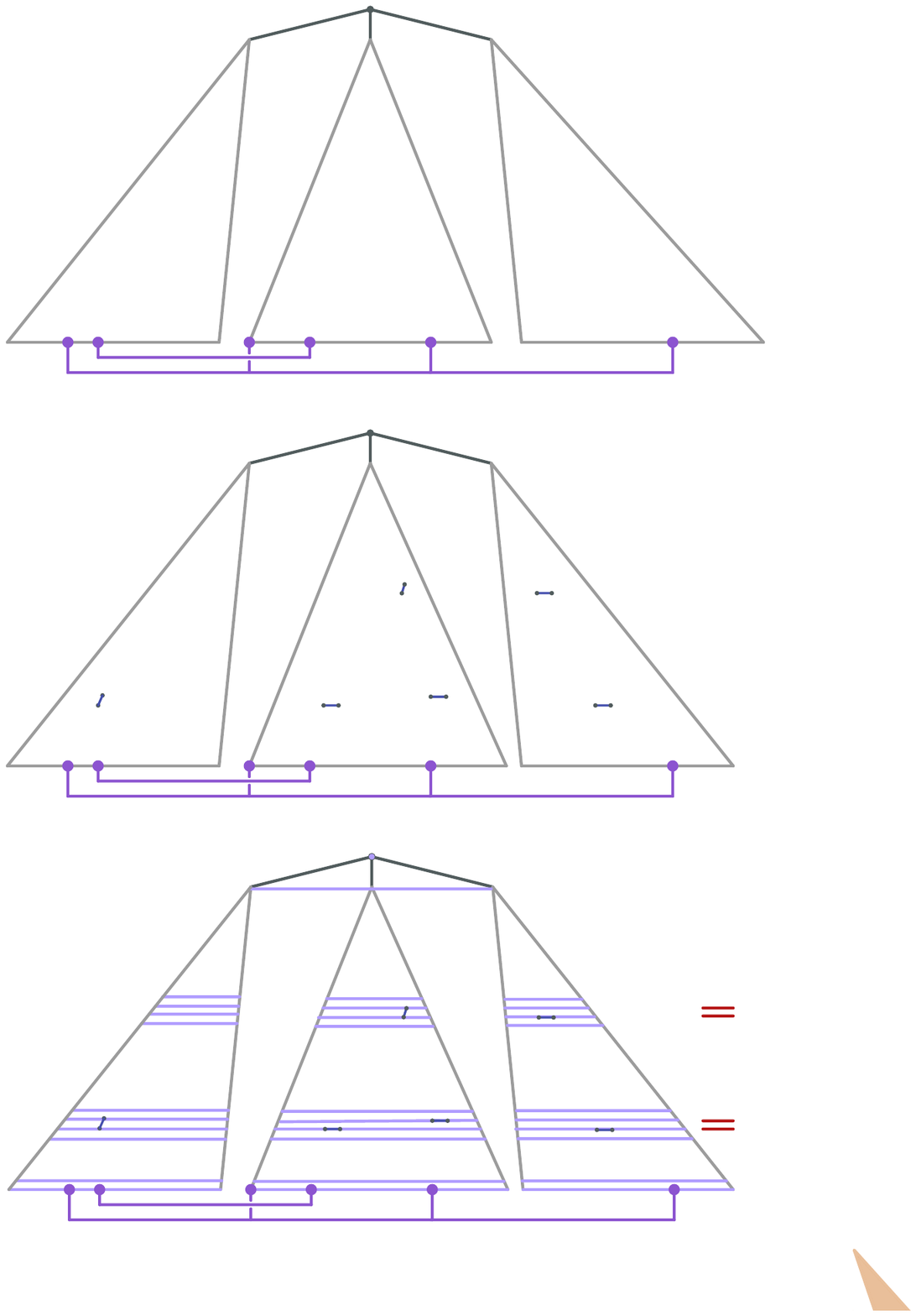}};

    \node at (4.2,.03) {
    \includegraphics[width = .47\textwidth] {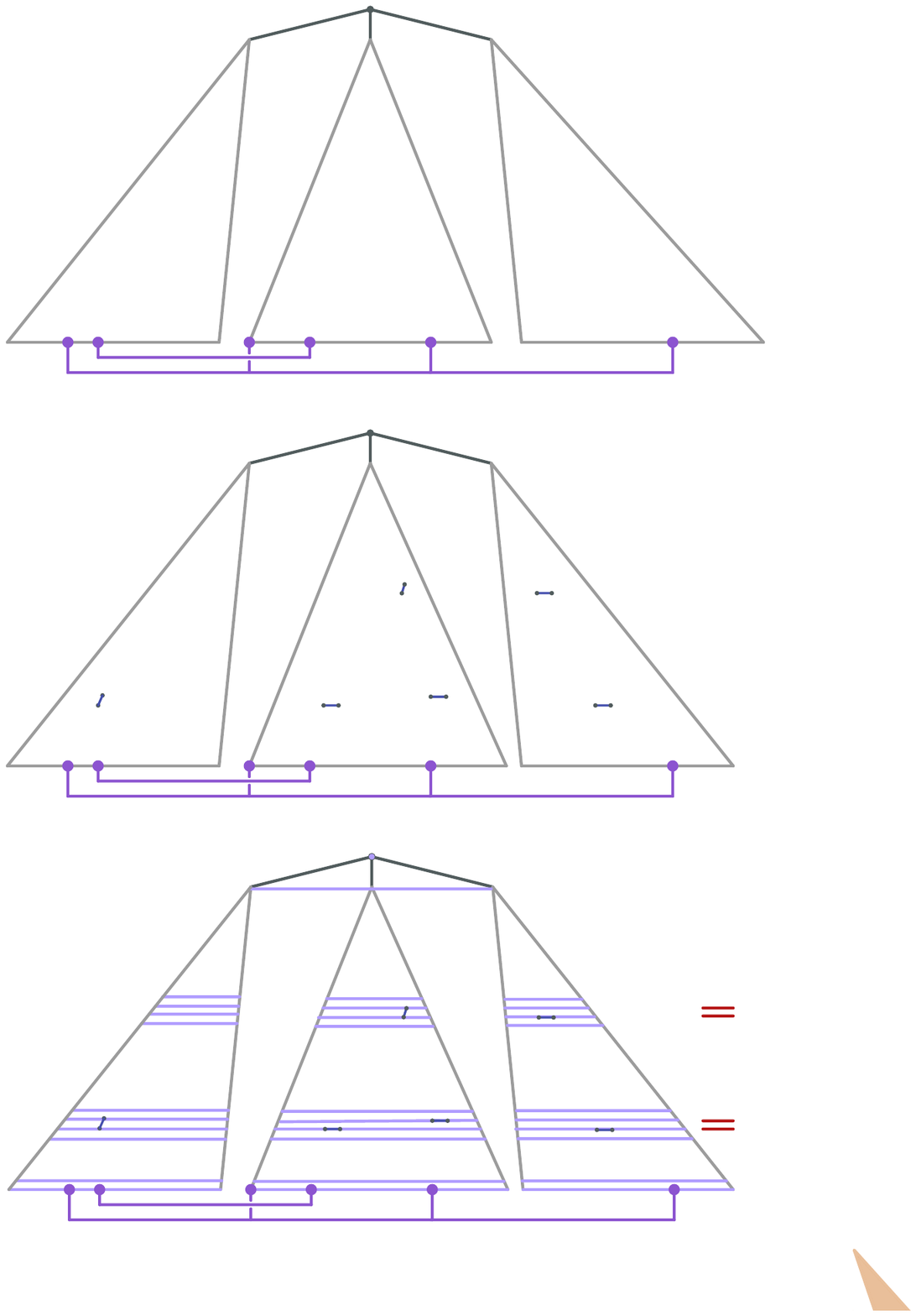}};
    \end{tikzpicture}
    \vspace{-.1cm}
    \caption{Left: An $L$-$\treelike$ ball with $K$-$\sparse$ boundary conditions is depicted for $L = K = 6$: the $L$ edges that need to be removed to leave a tree are indicated in blue. Right: We modify the boundary conditions to be all wired (the wired component is depicted in purple) at or one away from heights in $\mathcal Z$ (marked by red dashes).}
    \label{fig:treelike-balls}
\end{figure}

	Hence, a union bound implies
	\begin{align}
	\label{eq:event-bound}
	\pi_{B}^\xi (\Upsilon_{B,\xi}) \le  K^2(2L)^{2L} (2^{L})^{2L}  \sup_{\Gamma,\Gamma'}\, \pi_{B}^\xi(\omega(\Gamma \cup \Gamma')=1)\,. 
	\end{align}
	
	Now fix any two such sequences of paths $\Gamma, \Gamma'$, and consider the probability that $\omega(\Gamma \cup \Gamma') =1$. 
	Observe that $\Gamma$ and $\Gamma'$ are vertex-disjoint by construction.
	Our aim is to make the events that $\Gamma$ and $\Gamma'$ are open in $\omega$ independent. 
	For this, let $\rho_i$ be the set of roots of the trees in $\mathcal F_i$. We introduce auxiliary wirings (as shown in Figures~\ref{fig:treelike-balls}--\ref{fig:treelike-balls-influence}) for all vertices at depths $\{d: \min_{i=0 ,...,k+1} |d-d_i|\le 1\}$.
	Call the resulting distribution $\tilde \pi_{B}$; by monotonicity,  
	\begin{align}\label{eq:monotonicity-pi-tilde-pi}
	\pi_{B}^\xi(\omega(\Gamma \cup \Gamma')=1) \le \tilde \pi_{B}(\omega(\Gamma \cup \Gamma')=1)\,.
	\end{align}
	The distribution $\tilde \pi_{B}$ is a product measure over the $\mathcal T_{ij}$'s
	with boundary condition $(1,\circlearrowleft)$ in each  $\mathcal T_{ij}$ (recall that this boundary condition wires all leaves $\partial \cT_{ij}$ together with the root of $\mathcal T_{ij}$). 
	Hence, since $\Gamma$ and $\Gamma'$ are such that, for each $i \ge 0$,
	$\gamma_i$ and $\gamma_i'$ belong to distinct subtrees $\mathcal T_{ij_i}$, $\mathcal T_{ij_i'}$ of the forest $\mathcal F_i$, and	
	we have
	$$
		\tilde \pi_{B}(\omega(\Gamma \cup \Gamma')=1) = 
		\prod_{i=0}^{k} \pi_{\cT_{ij_i}}^{(1,\circlearrowleft)}(\gamma_i) 
		\prod_{i=0}^{k} \pi_{\cT_{ij'_i}}^{(1,\circlearrowleft)}(\gamma_i')\,.
	$$
	
		\begin{figure}
    \centering
    \begin{tikzpicture}
    \node at (-4.2,0) {
    \includegraphics[width = .47\textwidth] {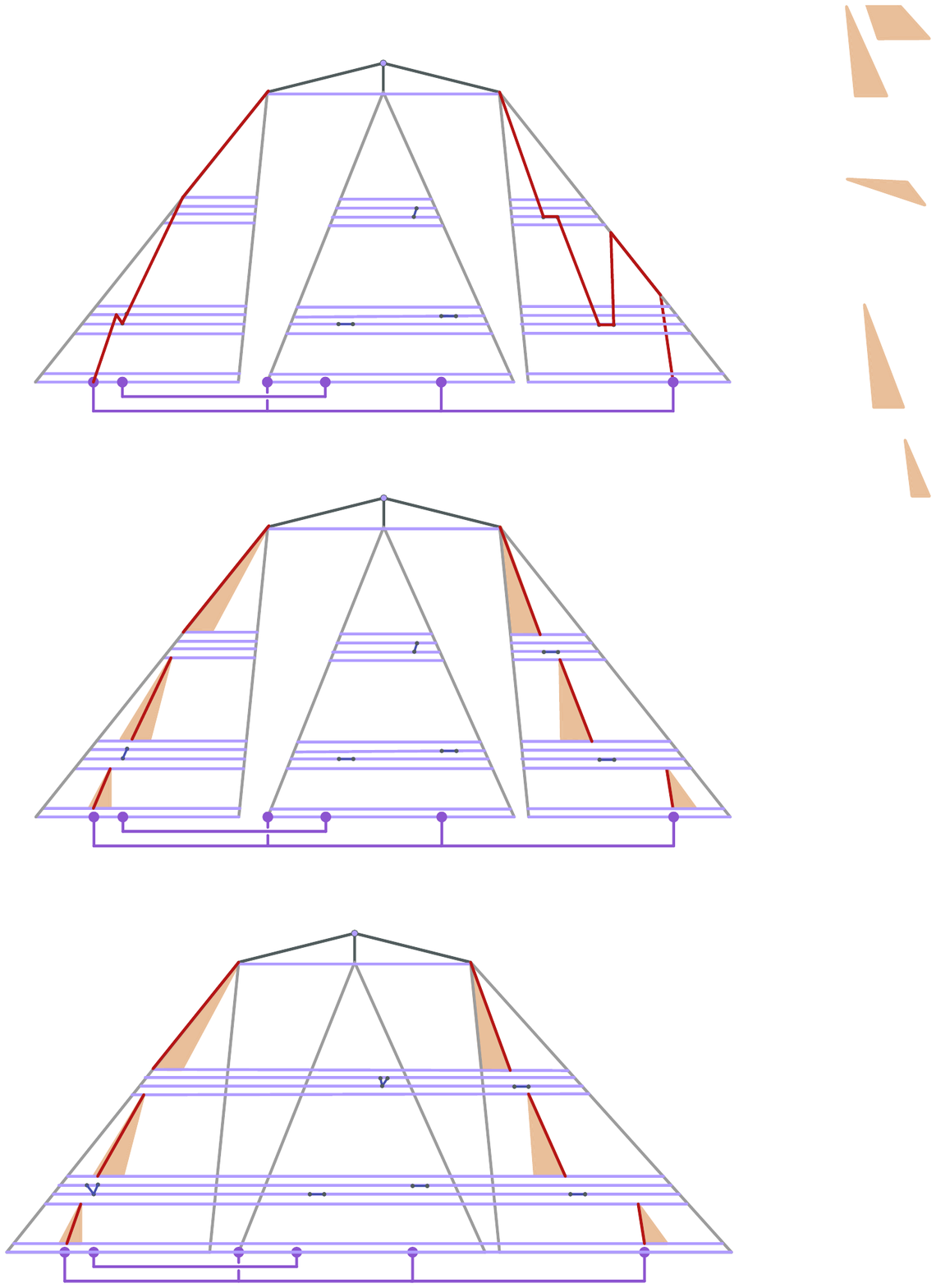}};

    \node at (4.2,0) {
    \includegraphics[width = .47\textwidth] {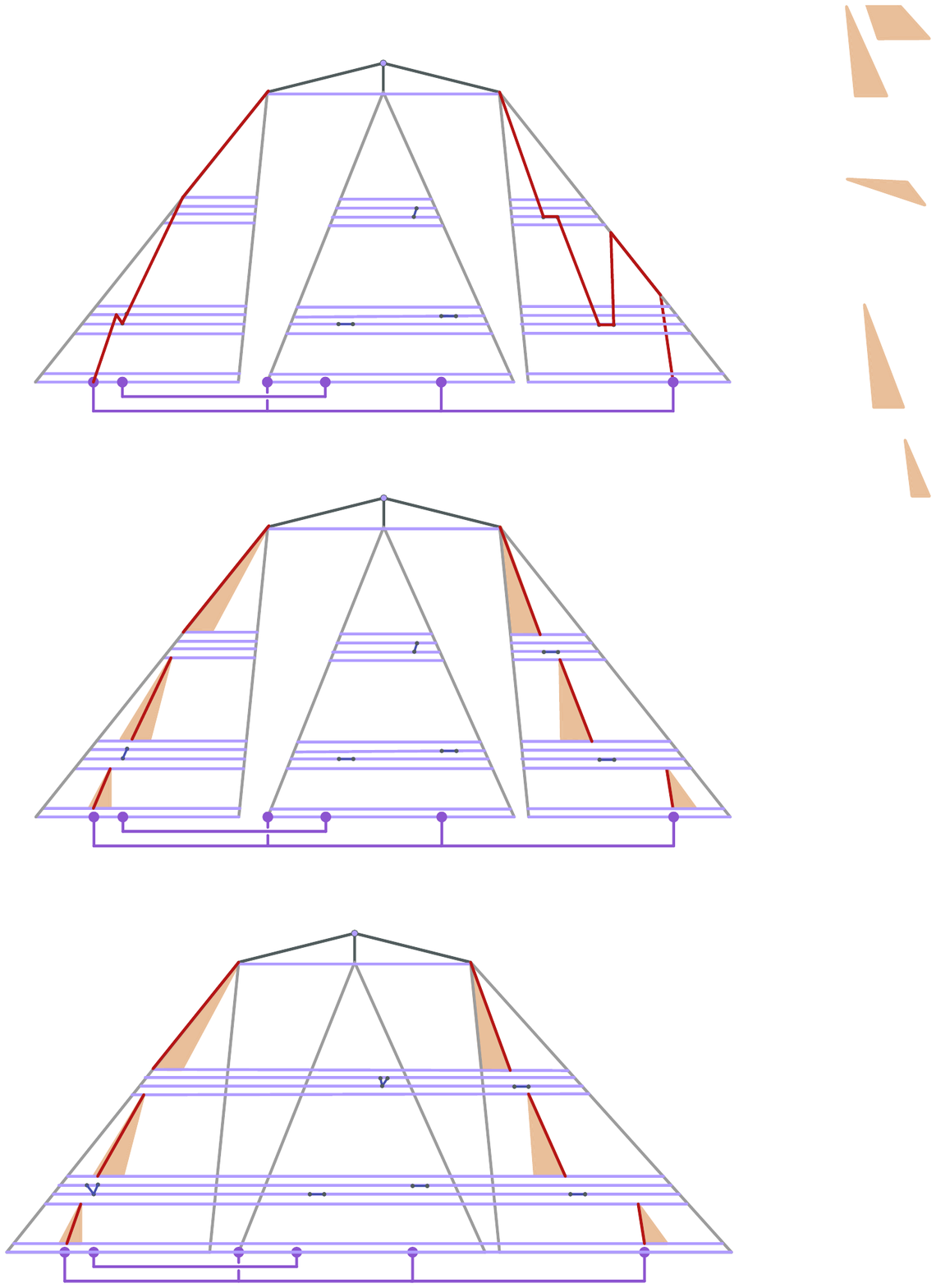}};
    \end{tikzpicture}
    \vspace{-.1cm}
    \caption{Left: Two disjoint components (red) of the vertices in $\mathfrak V_{B,\xi}$, together intersect every depth in the ball and satisfy the event $\Upsilon_{B,\xi}$. Right: The two components contain corresponding sequences of open leaf-to-root paths (red) in independent wired subtrees (shaded, orange) whose endpoints are amongst the ancestors of vertices of $H$ or $\mathfrak V_{B,\xi}$.}
    \label{fig:treelike-balls-influence}
\end{figure}

	Let $h_i= d_{i+1} - d_i$ be the height of the trees in $\mathcal F_i$.
	We deduce from Lemma~\ref{lem:exp-decay-wired-tree} that there exists a constant  $A (p,q,\Delta) > 0$ such that uniformly over $\Gamma, \Gamma'$, 
	\begin{align*}
	\tilde \pi_{B}(\omega(\Gamma \cup \Gamma')=1)  \le A^{2L} \prod_{i=0}^{k} \ps^{2h_i} \le A^{2L} \ps^{2(R-4L)}\,.
	\end{align*}
	Plugging this bound into~\eqref{eq:event-bound}--\eqref{eq:monotonicity-pi-tilde-pi}, we obtain
	$$
		\pi_{B}^\xi (\Upsilon_{B,\xi}) \le K^2((2L)(2^L) \ps^{-4}  A)^{2L} \ps^{2R}\,,
	$$
	from which the required~\eqref{eq:wts-spatial-mixing} follows.
\end{proof}

\begin{remark}
\label{rmk:dr:lb}
A matching lower bound of $\Omega(\hat p^{2R})$ 
for the decay rate in Proposition~\ref{prop:influence-probability-new}
is easy to construct by e.g., taking the $K$-$\sparse$ boundary conditions $\xi$ that wires two leaves $w_1,w_2$ on distinct sub-trees of $v$, and the free boundary conditions $\xi'=0$ on $\cT_R$. The event that
the root is connected to $w_1$ and its corresponding child is connected to $w_2$ has probability at least $C \hat p^{2R}$ by Lemma~\ref{lem:exp-decay-wired-tree} and the FKG inequality (see e.g.,~\cite{Grimmett}). On this event, the probability that the edge incident $v$ down towards $w_2$ is open is $p$ under the boundary condition $\xi$ and $\hat p$ under $\xi'=0$.
\end{remark}

\section{Proof of fast mixing}\label{sec:proof-main-theorem}
In this section, we combine the results of Sections~\ref{sec:shattering}--\ref{sec:correlation-decay-treelike} to conclude the proof of Theorem~\ref{thm:main-fk}. As indicated in Section~\ref{sec:proof-strategy}, the analysis of Sections~\ref{sec:shattering}--\ref{sec:correlation-decay-treelike} reduce the mixing time of the FK-dynamics on a random graph to understanding the convergence to equilibrium on $O(1)$-$\treelike$ balls of volume $O(n^{\frac 12 - \delta})$ with $O(1)$-$\sparse$ boundary conditions. In Section~\ref{subsec:mixing-time-prelim}, we recall the log-Sobolev inequality and comparison bounds for the log-Sobolev constant under different boundary conditions. In Section~\ref{subsec:local-mixing}, we bound this log-Sobolev constant via straightforward comparison to a product chain. Then in Section~\ref{subsec:mainthm-proof}, we proceed to combine all of the above ingredients to deduce the proof of Theorem~\ref{thm:main-fk} using the censoring inequalities of~\cite{PWcensoring}.  

\subsection{Mixing time preliminaries}\label{subsec:mixing-time-prelim}
Let us recall some standard tools to help us bound the rate of convergence to equilibrium of the FK-dynamics on treelike balls with sparse boundary conditions.

\subsubsection*{Log-Sobolev inequalities}
Recall, for a Markov chain with transition matrix $P$, the Dirichlet form 
\begin{align}\label{eq:Dirichlet-form}
    \mathcal E(f,f) := \frac{1}{2}\sum_{\omega,\omega'\in \{0,1\}^E} \pi(\omega) P (\omega, \omega') (f(\omega) - f(\omega'))^2\,,
\end{align}
for $f:\Omega \to \mathbb R$. Then the \emph{log-Sobolev constant} is given by 
\begin{align}\label{eq:lsi-constant}
    \alpha(P) := \min_{f: \mbox{Ent}_\pi[f^2]\ne 0} \frac{\mathcal E(f,f)}{\mbox{Ent}_\pi[f^2]}\,, \qquad \mbox{where} \qquad \mbox{Ent}_{\pi} [f^2] = \mathbb E_{\pi}\Big[f^2 \log \frac{f^2}{\mathbb E_{\pi}[f^2]}\Big]\,.
\end{align}

As such, a \emph{log-Sobolev inequality} takes the form $\mathcal E(f,f) \ge \gamma \cdot \mbox{Ent}_\pi[f^2]$ for all $f$. A log-Sobolev inequality is stronger than a mixing time bound, in the sense that it implies exponential convergence with rate $\gamma$ in total-variation distance from the stationary distribution. This is captured by the following standard fact (e.g., a proof in the discrete time setting we consider follows immediately from Lemma 2.8 and Eq.~(2.10) of~\cite{BCPSV}). 

\begin{fact}\label{fact:log-sobolev-tv-distance}
    Consider an ergodic finite state Markov chain $(X_t)_{t \ge 0}$ with transition matrix  $P$ reversible with respect to stationary measure $\pi$. If the chain has a log-Sobolev constant $\alpha = \alpha(P)$, then for every $\gamma <\alpha$, 
    \begin{align*}
        \max_{x_0\in \Omega} \|P (X_t^{x_0} \in \cdot) - \pi\|_\tv\le e^{-\gamma t/2} \Big(\log \frac{1}{\min_{x\in \Omega} \pi(x)}\Big)^{1/2}\,.
    \end{align*}
\end{fact}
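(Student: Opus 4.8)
The plan is to run the standard three-step argument that turns a log-Sobolev inequality into a quantitative total-variation bound, while tracking the dependence on $\min_{x}\pi(x)$. Fix an initial state $x_0$, let $\mu_t$ denote the law of $X_t^{x_0}$, and let $h_t:=\mu_t/\pi$ be its density with respect to $\pi$; thus $h_0=\mathbf 1_{\{x_0\}}/\pi(x_0)$, and by reversibility $h_{t+1}=P h_t$ and $\mathbb E_\pi[h_t]=1$ for all $t$. Note that the relative entropy of $\mu_t$ with respect to $\pi$ equals $\mathrm{Ent}_\pi(h_t)=\mathbb E_\pi[h_t\log h_t]$ (using $\mathbb E_\pi[h_t]=1$), and that $\mathrm{Ent}_\pi(h_t)=\mathrm{Ent}_\pi\big((\sqrt{h_t})^2\big)$. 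The three steps are: (i) upgrade the static log-Sobolev inequality to a geometric decay of $\mathrm{Ent}_\pi(h_t)$ along the chain; (ii) bound the initial entropy $\mathrm{Ent}_\pi(h_0)$; and (iii) pass from relative entropy to total-variation distance via Pinsker's inequality.

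For step (i), I would apply the log-Sobolev inequality $\mathcal E(f,f)\ge\alpha\,\mathrm{Ent}_\pi(f^2)$ with $f=\sqrt{h_t}$. Combining this with the elementary pointwise inequality $(a-b)(\log a-\log b)\ge 4(\sqrt a-\sqrt b)^2$ (which bounds the ``Dirichlet form of $\log h_t$'' below by $4\,\mathcal E(\sqrt{h_t},\sqrt{h_t})$) and with the fact that $P$ is an averaging (Markov) operator, one obtains a one-step entropy-contraction estimate; iterating it gives $\mathrm{Ent}_\pi(h_t)\le e^{-\gamma t}\,\mathrm{Ent}_\pi(h_0)$ for every $\gamma<\alpha$. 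In continuous time this is the classical bound $\mathrm{Ent}_\pi(h_t)\le e^{-4\alpha t}\,\mathrm{Ent}_\pi(h_0)$; the version for the discrete-time chain we consider, with the mild loss from $\alpha$ to any $\gamma<\alpha$, is exactly Lemma~2.8 and Eq.~(2.10) of~\cite{BCPSV}, which I would simply invoke.

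For step (ii), since $h_0$ is supported at $x_0$ with value $1/\pi(x_0)$,
\[
\mathrm{Ent}_\pi(h_0)=\pi(x_0)\cdot\frac{1}{\pi(x_0)}\log\frac{1}{\pi(x_0)}=\log\frac{1}{\pi(x_0)}\le\log\frac{1}{\min_{x}\pi(x)}.
\]
For step (iii), Pinsker's inequality (in the form $\|\mu-\pi\|_\tv\le(\tfrac12 D)^{1/2}$, where $D$ is the relative entropy of $\mu$ with respect to $\pi$) gives $\|\mu_t-\pi\|_\tv\le\big(\tfrac12\,\mathrm{Ent}_\pi(h_t)\big)^{1/2}$. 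Chaining (i)--(iii),
\[
\|\mu_t-\pi\|_\tv\le\Big(\tfrac12\,e^{-\gamma t}\log\tfrac{1}{\min_{x}\pi(x)}\Big)^{1/2}\le e^{-\gamma t/2}\Big(\log\tfrac{1}{\min_{x}\pi(x)}\Big)^{1/2},
\]
and maximizing over $x_0$ yields the claimed bound.

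The only non-routine point is step (i): a single step of a discrete-time chain does not decrease the relative entropy by the full log-Sobolev amount, so one cannot iterate the static inequality directly, and it is the comparison argument (via $(a-b)(\log a-\log b)\ge 4(\sqrt a-\sqrt b)^2$ together with the contraction of the Markov operator, equivalently via interpolation with the associated continuous-time semigroup) that justifies the geometric rate $\gamma$. Since this mechanism is precisely what~\cite[Lemma~2.8 and Eq.~(2.10)]{BCPSV} supplies, I expect the argument to be immediate once that input is granted, and the remaining steps to be entirely standard.
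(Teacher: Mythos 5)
Your proposal is correct and matches the paper's approach: the paper itself provides no proof beyond the citation to \cite{BCPSV} (Lemma~2.8 and Eq.~(2.10)), and your key step~(i), which is the only non-routine part, invokes exactly those results, while your steps~(ii) and~(iii) (initial entropy bound and Pinsker) are the standard conclusion of that chain. The arithmetic checks out, including the discarded factor of $\tfrac12$ from Pinsker.
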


\subsubsection*{Boundary condition comparisons for the FK-dynamics}
The following formalizes the notion that sparse boundary conditions are ``close to free", and allows us to compare the induced mixing time on balls with sparse boundary to those with free boundary. 

\begin{definition}[Definition 2.1  of~\cite{BGVfull}]
	 For two boundary conditions (partitions) $\phi \leq \phi'$, define $D(\phi,\phi') := c(\phi) - c(\phi')$ where $c(\phi)$ is the number of components in $\phi$. For two partitions $\phi, \phi'$ that are not comparable, let $\phi''$ be the smallest partition such that $\phi'' \geq \phi$ and $\phi'' \geq \phi'$ and set $D(\phi,\phi') = c(\phi) - c(\phi'')+ c(\phi')- c(\phi'')$. 
\end{definition}

\begin{lemma}[Lemma 2.2 of~\cite{BGVfull}]
	\label{lemma:simple-rc-bound}
	Let $G=(V,E)$ be an arbitrary graph, $p \in (0,1)$ and $q > 0$. Let $\phi$ and $\phi'$ be two partitions of $V$ encoding two distinct external wirings on the vertices of $G$.
	Let $\pi_{G}^\phi$, $\pi_{G}^{\phi'}$ be the resulting random-cluster measures. Then, for all FK configurations $\omega\in \{0,1\}^E$, we have
	$$
	q^{-2D(\phi,\phi')} {\pi_{G}^{\phi'}(\omega)} \le \pi_{G}^{\phi}(\omega) \le q ^{2D(\phi,\phi')} \pi_{G}^{\phi'}(\omega)\,.
	$$  
\end{lemma}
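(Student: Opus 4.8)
The plan is to express everything in terms of the ratio of densities and to reduce the whole statement to the single pointwise inequality
\begin{equation}\label{eq:pw-clusters}
\big|c(\omega;\phi)-c(\omega;\phi')\big|\le D(\phi,\phi')\qquad\text{for every }\omega\in\{0,1\}^E.
\end{equation}
Writing $\pi_{G}^{\phi}(\omega)=a_\omega q^{c(\omega;\phi)}/Z^{\phi}$ with $a_\omega:=p^{|\omega|}(1-p)^{|E|-|\omega|}>0$ (independent of the boundary condition) and $Z^{\phi}:=\sum_{\eta\in\{0,1\}^E}a_\eta q^{c(\eta;\phi)}$, once~\eqref{eq:pw-clusters} is available, both the factor $q^{c(\omega;\phi)-c(\omega;\phi')}$ and the partition-function ratio $Z^{\phi'}/Z^{\phi}$ will be seen to lie in $[Q^{-D},Q^{D}]$ with $Q:=\max(q,q^{-1})$, and multiplying them yields the bound with exponent $2D$ (which is $q^{2D}$ in the regime $q\ge 1$ used in this paper; for $q<1$ the statement should be read with $q$ replaced by $1/q$).

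First I would prove~\eqref{eq:pw-clusters} for comparable boundary conditions $\phi\le\phi'$. Realize $\phi'$ as the outcome of $D(\phi,\phi')=c(\phi)-c(\phi')$ successive \emph{elementary merges} applied to $\phi$, where an elementary merge replaces two distinct blocks of the partition by their union, lowering the block count by exactly one. Carried out on top of a \emph{fixed} configuration $\omega$, a single elementary merge either leaves $c(\omega;\cdot)$ unchanged---if the two merged blocks already lay in a common connected component of $(V,\omega)$---or lowers it by exactly one; summing over the $D(\phi,\phi')$ merges gives $0\le c(\omega;\phi)-c(\omega;\phi')\le D(\phi,\phi')$. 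For incomparable $\phi,\phi'$, take $\phi''$ to be the join (the finest common coarsening), so that $\phi\le\phi''$, $\phi'\le\phi''$, and $D(\phi,\phi')=\big(c(\phi)-c(\phi'')\big)+\big(c(\phi')-c(\phi'')\big)$; applying the comparable case to $\phi\le\phi''$ and to $\phi'\le\phi''$ gives $0\le c(\omega;\phi)-c(\omega;\phi'')\le c(\phi)-c(\phi'')$ and likewise for $\phi'$, and subtracting these two bounds and using the triangle inequality yields~\eqref{eq:pw-clusters}.

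Finally I would pass from~\eqref{eq:pw-clusters} to the measure estimate. From the densities,
\begin{equation}\label{eq:density-ratio}
\frac{\pi_{G}^{\phi}(\omega)}{\pi_{G}^{\phi'}(\omega)}=q^{\,c(\omega;\phi)-c(\omega;\phi')}\cdot\frac{Z^{\phi'}}{Z^{\phi}},
\end{equation}
the first factor lies in $[Q^{-D},Q^{D}]$ by~\eqref{eq:pw-clusters}. For the second, write $Z^{\phi}/Z^{\phi'}=\sum_{\eta}w_\eta\,q^{\,c(\eta;\phi)-c(\eta;\phi')}$ with $w_\eta:=a_\eta q^{c(\eta;\phi')}/Z^{\phi'}\ge 0$ and $\sum_\eta w_\eta=1$; this is a convex combination of numbers that all lie in $[Q^{-D},Q^{D}]$ by~\eqref{eq:pw-clusters}, hence $Z^{\phi}/Z^{\phi'}\in[Q^{-D},Q^{D}]$ and therefore $Z^{\phi'}/Z^{\phi}\in[Q^{-D},Q^{D}]$ as well. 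Substituting into~\eqref{eq:density-ratio} gives $\pi_{G}^{\phi}(\omega)/\pi_{G}^{\phi'}(\omega)\in[Q^{-2D},Q^{2D}]$ uniformly in $\omega$, $p$, and $G$, which is the claim. The one genuinely combinatorial ingredient is~\eqref{eq:pw-clusters}, and I expect the incomparable case---using that the join $\phi''$ dominates both $\phi$ and $\phi'$ simultaneously so that the comparable bound can be applied on each side---to be the only place requiring a little care; the remainder is bookkeeping with the density together with the elementary observation that a ratio of positive-weighted sums lies between the extreme term-by-term ratios. (The same argument in fact yields the sharper exponent $D$ in place of $2D$, but $2D$ is all that is needed here.)
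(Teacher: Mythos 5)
Your proof is correct. Note that the paper does not actually reprove this lemma; it imports it verbatim as Lemma~2.2 of~\cite{BGVfull}, so there is no in-paper argument to compare against. The route you take---reduce to the pointwise inequality $|c(\omega;\phi)-c(\omega;\phi')|\le D(\phi,\phi')$ (handled via elementary merges for comparable partitions and the join for incomparable ones), then control the density ratio $q^{c(\omega;\phi)-c(\omega;\phi')}\cdot Z^{\phi'}/Z^{\phi}$ by observing that the partition-function ratio is a convex combination of the term-by-term ratios---is the standard argument and is essentially what appears in~\cite{BGVfull}. The one thing worth being careful about: your parenthetical claim of the sharper exponent $D$ in place of $2D$ is in fact true, but for a slightly more delicate reason than ``both factors lie in $[Q^{-D},Q^{D}]$ so the product lies in $[Q^{-2D},Q^{2D}]$'': one needs to use that $q^{c(\omega;\phi)-c(\omega;\phi')}$ and the partition-function ratio lie in the \emph{same} interval $[Q^{-d_2},Q^{d_1}]$ (with $d_1,d_2\ge 0$, $d_1+d_2=D$), so that the quotient of two numbers in that interval already lies in $[Q^{-D},Q^{D}]$. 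As written, your convex-combination step literally proves both factors lie in $[Q^{-D},Q^D]$, which naively only gives $2D$; since $2D$ is what the lemma asserts and what the paper uses, your proof of the stated inequality is complete and correct.
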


From Lemma~\ref{lemma:simple-rc-bound}, and the definition of the Dirichlet form,~\eqref{eq:Dirichlet-form}, we deduce the following.

\begin{cor}\label{cor:comparison-df}
	Let $G=(V,E)$ be an arbitrary graph, $p \in (0,1)$ and $q > 0$.
	Consider the FK-dynamics on $G$ with boundary conditions $\phi$ and $\phi'$,
	and let $\df_{G^\phi}$, $\df_{G^{\phi'}}$ denote their Dirichlet forms, respectively.
	Then 
	$$
	q^{-4 D(\phi,\phi'))} \df_{G^{\phi'}}(f,f) \le \df_{G^\phi}(f,f)\leq   q^{ 4 D(\phi,\phi'))} \df_{G^{\phi'}}(f,f)\,,\qquad \mbox{for all }f:\{0,1\}^E \to \mathbb R\,.
	$$
\end{cor}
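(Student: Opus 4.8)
The plan is to reduce the statement to an edge-by-edge comparison of the two Dirichlet sums. First I would record the (standard) observation that the FK-dynamics on $G$ with boundary condition $\phi$ is exactly the single-bond heat-bath chain for $\pi_{G}^{\phi}$: upon selecting a uniform edge $e$, the value $\omega(e)$ is refreshed from the conditional law $\pi_{G}^{\phi}(\,\cdot\mid \omega(E\setminus\{e\}))$. Indeed, conditioned on $\omega(E\setminus\{e\})$ there are only two admissible configurations, and a one-line computation from the definition of $c(\cdot\,;\phi)$ shows that the conditional probability that $e$ is open equals $\hat p=\tfrac{p}{q(1-p)+p}$ when $e$ is a cut-edge (closing it then creates one extra component, contributing a factor $q$) and equals $p$ otherwise — precisely the two cases in the definition of the FK-dynamics. (Note the cut-edge status of $e$, and hence this conditional law, may itself depend on whether the boundary condition is $\phi$ or $\phi'$, so the two chains genuinely differ.)

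Next, for any pair $\omega,\omega^{e}$ of configurations differing only on the single edge $e$, I would write the Dirichlet edge-weight $Q_{G^{\phi}}(\omega,\omega^{e}):=\pi_{G}^{\phi}(\omega)\,P_{G^{\phi}}(\omega,\omega^{e})$ in symmetric form,
\[
Q_{G^{\phi}}(\omega,\omega^{e})=\frac{1}{|E|}\cdot\frac{\pi_{G}^{\phi}(\omega)\,\pi_{G}^{\phi}(\omega^{e})}{\pi_{G}^{\phi}(\omega)+\pi_{G}^{\phi}(\omega^{e})}\,,
\]
using that from state $\omega$ the only way to reach $\omega^{e}$ is to pick $e$ and resample, and that $\pi_{G}^{\phi}(\omega)+\pi_{G}^{\phi}(\omega^{e})=\pi_{G}^{\phi}\big(\omega(E\setminus\{e\})=\omega(E\setminus\{e\})\big)$. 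Since $P_{G^{\phi}}(\omega,\omega')=0$ for distinct non-neighboring $\omega,\omega'$ and diagonal terms drop out, reversibility gives $\df_{G^{\phi}}(f,f)=\sum_{e\in E}\sum_{\{\omega,\omega^{e}\}}Q_{G^{\phi}}(\omega,\omega^{e})\,\big(f(\omega)-f(\omega^{e})\big)^{2}$, so it suffices to compare $Q_{G^{\phi}}$ with $Q_{G^{\phi'}}$ one neighboring pair at a time.

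For the comparison I would invoke Lemma~\ref{lemma:simple-rc-bound}, which gives $q^{-2D(\phi,\phi')}\le \pi_{G}^{\phi}(\eta)/\pi_{G}^{\phi'}(\eta)\le q^{2D(\phi,\phi')}$ for every configuration $\eta$, applied to $\eta\in\{\omega,\omega^{e}\}$. Writing $1/Q_{G^{\phi}}(\omega,\omega^{e})=|E|\big(1/\pi_{G}^{\phi}(\omega)+1/\pi_{G}^{\phi}(\omega^{e})\big)$ and bounding each reciprocal separately yields $q^{-2D(\phi,\phi')}Q_{G^{\phi'}}(\omega,\omega^{e})\le Q_{G^{\phi}}(\omega,\omega^{e})\le q^{2D(\phi,\phi')}Q_{G^{\phi'}}(\omega,\omega^{e})$ — even sharper than the stated bound, so $q^{\pm 4D(\phi,\phi')}$ certainly holds. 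Summing termwise against the weights $(f(\omega)-f(\omega^{e}))^{2}\ge 0$ then gives $q^{-4D(\phi,\phi')}\df_{G^{\phi'}}(f,f)\le \df_{G^{\phi}}(f,f)\le q^{4D(\phi,\phi')}\df_{G^{\phi'}}(f,f)$ for all $f$, which is the corollary.

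There is no serious obstacle: this is a short deduction from Lemma~\ref{lemma:simple-rc-bound}. The only points that need a moment's care are (i) identifying the FK-dynamics with the heat-bath chain so that the edge-weight takes the clean harmonic-mean form above, and (ii) keeping the exponents under control — it is cleanest to bound $Q_{G^{\phi}}$ directly via its reciprocal-sum expression rather than bounding $\pi_{G}^{\phi}(\omega)$ and $P_{G^{\phi}}(\omega,\omega^{e})$ separately, since the latter route would produce a larger (though still harmless) power of $q$.
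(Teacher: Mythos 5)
Your proof is correct and follows exactly the route the paper indicates (the paper only says the corollary follows ``from Lemma~\ref{lemma:simple-rc-bound} and the definition of the Dirichlet form'' without spelling it out). The one substantive point worth flagging: by writing $Q_{G^{\phi}}(\omega,\omega^{e})$ as a harmonic mean and bounding its reciprocal termwise, you actually obtain the sharper constant $q^{\pm 2D(\phi,\phi')}$ rather than the $q^{\pm 4D(\phi,\phi')}$ claimed in the corollary. The paper's looser exponent is what one gets by bounding $\pi_{G}^{\phi}(\omega)$ and the transition probability $P_{G^{\phi}}(\omega,\omega^{e})$ separately (each contributes a $q^{2D}$ factor), which you correctly anticipate in your closing remark. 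Since the paper only needs some constant multiple of the Dirichlet form and both exponents are harmless, the discrepancy is immaterial, but your version is tighter and your derivation is cleaner. One cosmetic slip: the clause ``$\pi_{G}^{\phi}(\omega)+\pi_{G}^{\phi}(\omega^{e})=\pi_{G}^{\phi}\big(\omega(E\setminus\{e\})=\omega(E\setminus\{e\})\big)$'' is garbled notation; you mean that the sum equals the marginal probability of the event $\{\eta:\eta(E\setminus\{e\})=\omega(E\setminus\{e\})\}$, which is clear from context but should be written out properly.
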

Together with Corollary~\ref{cor:comparison-df} and Lemma~\ref{lemma:simple-rc-bound} again, this controls the change in both \emph{log-Sobolev constant}~\eqref{eq:lsi-constant}, and mixing time, under two boundary conditions with distance $D(\phi,\phi')$.

\subsection{Local mixing: fast mixing on treelike graphs with sparse boundary conditions}\label{subsec:local-mixing}
In this section we establish a bound for the speed of convergence of the FK-dynamics on $L$-$\treelike$ balls 
with $K$-$\sparse$ boundary conditions (see Definitions~\ref{def:k-treelike} and~\ref{def:k-sparse-bc}). 
Our goal is to prove the following lemma.

\begin{lemma}
	\label{lemma:main-tree-mixing}
	Suppose $B_R(v)$ is $L$-$\treelike$. Let $\xi$ be a $K$-$\sparse$ boundary condition on $\partial B_R(v)$. For every $p\in (0,1)$ and $q>0$, the log-Sobolev constant of the FK-dynamics on $B_R(v)$ with boundary condition $\xi$ is $\Omega(|E(B_R(v))|^{-1}) = \Omega(d^{-R})$. 
\end{lemma}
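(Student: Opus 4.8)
The plan is to prove Lemma~\ref{lemma:main-tree-mixing} by first establishing the bound for the \emph{free} boundary condition and then transferring it to an arbitrary $K$-$\sparse$ boundary condition via the comparison estimates of Corollary~\ref{cor:comparison-df} and Lemma~\ref{lemma:simple-rc-bound}. Since a $K$-$\sparse$ boundary condition $\xi$ differs from the free one by at most $D(\xi, 0) \le K$ merges, those comparison bounds change the Dirichlet form (and hence the log-Sobolev constant) by at most a multiplicative factor $q^{\pm 4K}$, which is $O(1)$; so it suffices to prove $\alpha \ge c\,|E(B_R(v))|^{-1}$ for the free boundary. The same argument absorbs the $L$-$\treelike$ discrepancy: writing $B := B_R(v)$ and letting $H \subseteq E(B)$ with $|H| \le L$ be such that $T := (V(B), E(B)\setminus H)$ is a tree, one can further compare the FK-dynamics on $B$ with one on $T$ with suitable boundary wirings encoding the endpoints of edges of $H$ (a bounded perturbation), so the whole problem reduces to bounding the log-Sobolev constant of FK-dynamics on a \emph{tree} with free (or $O(1)$-sparse) boundary conditions.

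On a tree, the key structural fact is that the random-cluster measure is ``almost'' a product measure: with free boundary conditions $\pi^0_{T}$ is exactly the product measure assigning each edge probability $\hat p$ independently (every edge of a tree is a cut-edge), and with $O(1)$-sparse boundary wirings it is within a bounded Radon--Nikodym factor of that product measure by Lemma~\ref{lemma:simple-rc-bound} again. So the plan is: (i) recall the tensorization property of the log-Sobolev constant — for a product measure $\mu = \bigotimes_{e} \mu_e$ on $\{0,1\}^E$ with the single-site (Glauber-type) dynamics that picks a uniformly random coordinate and resamples it, the log-Sobolev constant is $\Omega(|E|^{-1})$, with the $|E|^{-1}$ coming from the $1/|E|$ chance of selecting a given edge and the $\Omega(1)$ coming from the fact that each $\mu_e$ is a Bernoulli bounded away from $0$ and $1$ (here $\hat p \in (0,1)$ is a fixed constant depending only on $p,q,\Delta$); (ii) use Corollary~\ref{cor:comparison-df} together with the standard fact that comparing Dirichlet forms \emph{and} densities by bounded factors changes the log-Sobolev constant by a bounded factor, to pass from the product chain to the actual FK-dynamics on $T$; (iii) reverse the reductions above to return to $B$ with boundary condition $\xi$. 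Finally use $|E(B_R(v))| \le 2\Delta d^{R}$ to rewrite the bound as $\Omega(d^{-R})$.

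The one technical point worth flagging: when resampling a single edge $e$, the FK-dynamics is exactly a heat-bath (Gibbs) update for $\pi^{\xi}_B$, so the chain we are comparing to the product chain has the \emph{same stationary measure} as the intended FK-dynamics on $B$ — there is no mismatch of stationary distributions to worry about. The boundary-wiring comparisons only change which measure we call ``the'' stationary measure, and since all of them (free on $T$, free on $B$, $\xi$ on $B$) are mutually absolutely continuous with bounded densities by Lemma~\ref{lemma:simple-rc-bound}, the chain of comparisons is legitimate.

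I expect the main obstacle to be bookkeeping rather than conceptual: one must make sure the comparison constants are genuinely $O(1)$, i.e.\ that $D(\xi,0) \le K$, that passing from $B$ to the spanning tree $T$ costs at most $O(L)$ in the comparison exponent (this requires checking that deleting the $\le L$ edges of $H$ and re-encoding their endpoints as boundary wirings is a legitimate, bounded-distortion operation on the random-cluster measure — which follows from Lemma~\ref{lemma:simple-rc-bound} applied to the pair of graphs, or alternatively from monotonicity sandwiching $B$ between $T$-with-wired-$H$-endpoints and $T$-with-free), and that the product-measure log-Sobolev constant really is $\Omega(1)$ uniformly, which needs the single-edge marginals to be Bernoulli$(\hat p)$ with $\hat p$ bounded strictly inside $(0,1)$ — true since $\hat p = p/(q(1-p)+p)$ is a fixed constant in $(0,1)$. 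None of these steps is deep, but the cleanest exposition is: reduce to a product chain on a tree, invoke tensorization of log-Sobolev, and unwind the $O(1)$-factor comparisons.
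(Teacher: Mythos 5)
Your plan follows the same overall route as the paper: reduce to the spanning tree $T:=(V(B),E(B)\setminus H)$, use that the free-boundary random-cluster measure on a tree is the $\mathrm{Ber}(\hat p)$ product measure so that tensorization of log-Sobolev gives the $\Omega(|E|^{-1})$ bound, and unwind the $K$-$\sparse$/$L$-$\treelike$ discrepancies at a cost of only $q^{O(K+L)}$ via Lemma~\ref{lemma:simple-rc-bound} and Corollary~\ref{cor:comparison-df}. That much is right, and your sanity checks (every edge of a tree is a cut-edge; $\hat p$ is bounded away from $0,1$; $D(\xi,0)\le K$) are correct.

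The step you leave imprecise is how to pass between the FK-dynamics on $B$ and the FK-dynamics on $T$. These chains live on different state spaces, $\{0,1\}^{E(B)}$ versus $\{0,1\}^{E(T)}$, so Lemma~\ref{lemma:simple-rc-bound} --- which compares two \emph{boundary conditions} on the \emph{same} graph --- cannot be ``applied to the pair of graphs,'' a Radon--Nikodym bound between measures on different spaces does not parse, and ``monotonicity sandwiching'' the two chains does not by itself control a log-Sobolev constant. The paper's Fact~\ref{fact:generic:prod} supplies the precise mechanism, and it is not a bounded-perturbation statement at all but an exact factorization: take the boundary condition $\phi$ that keeps the wirings of $\xi$ and additionally wires the two endpoints of each $e\in H$. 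Then no $e\in H$ is ever a cut-edge, hence $\pi_B^\phi = \pi_T^\phi \otimes \bigotimes_{e\in H}\mathrm{Ber}(p)$ exactly, and the single-edge FK-dynamics on $B$ under $\phi$ \emph{is} the product chain of the FK-dynamics on $T$ under $\phi$ with independent $\mathrm{Ber}(p)$ resamplings on $H$. Tensorization of the log-Sobolev inequality then transfers the bound from $T$ to $B$ at the cost of only the relative-update-frequency factor $\min\{|E(T)|,|H|\}/|E(B)|$, and since $D(\phi,\xi)\le L$ a final application of Corollary~\ref{cor:comparison-df} removes the extra $H$-endpoint wirings. In short: you need this tensorization observation in place of the ``bounded-distortion operation on the random-cluster measure'' you gesture at, and with it your argument closes.
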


Lemma~\ref{lemma:main-tree-mixing} follows by comparing log-Sobolev on an $L$-$\treelike$ ball with $K$-$\sparse$ boundary to a tree with $K$-$\sparse$ boundary conditions, whose log-Sobolev constant is bounded by comparison to a product chain. We first note the following bound on the log-Sobolev constant on trees with sparse boundaries.
\begin{corollary}
	\label{cor:tree:sht:mixing}
	There exists $c(p,q)>0$ such that the following holds. For every rooted (not necessarily complete) tree $\hat \cT_h = (V(\hat \cT_h),E(\hat \cT_h))$ of depth $h$ and degree at most $\Delta$, and every $K$-$\sparse$ boundary condition $\phi$ on $\hat \cT_h$,  
	the log-Sobolev constant of the FK-dynamics on $\hat{\T}_h$ with boundary conditions $\phi$ is at least $c q^{6K}{|E(\hat{\T}_h)|}^{-1}$.
\end{corollary}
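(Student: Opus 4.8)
The plan is to reduce the log-Sobolev bound on a tree with sparse boundary conditions to the case of free boundary conditions, and then to handle the free case by a comparison with a product chain (i.e.\ the FK-dynamics on the same tree with the edges treated as independent).

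\textbf{Step 1: Reduction to free boundary conditions.} Given a $K$-$\sparse$ boundary condition $\phi$ on $\hat\cT_h$, the distance $D(\phi, 0)$ to the free boundary condition $0$ is at most $K$ (each non-trivial block of size $m$ contributes $m-1 \le$ the total count of boundary vertices in non-trivial blocks, which is $\le K$). By Corollary~\ref{cor:comparison-df} applied with $\phi' = 0$, the Dirichlet forms satisfy $\df_{\hat\cT_h^\phi}(f,f) \ge q^{-4K}\df_{\hat\cT_h^0}(f,f)$ for all $f$. Moreover, by Lemma~\ref{lemma:simple-rc-bound}, the stationary measures $\pi^\phi$ and $\pi^0$ are within a multiplicative $q^{2K}$ of each other pointwise; a standard argument shows that the entropy functionals $\mathrm{Ent}_{\pi^\phi}[f^2]$ and $\mathrm{Ent}_{\pi^0}[f^2]$ are then comparable up to a factor depending only on $q$ and $K$ (for instance using the variational formula $\mathrm{Ent}_\pi[f^2] = \min_c \mathbb E_\pi[f^2\log f^2 - f^2 \log c - f^2 + c]$ and bounding the ratio of the densities). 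Combining, $\alpha(\hat\cT_h^\phi) \ge c' q^{-6K}\,\alpha(\hat\cT_h^0)$ for an absolute constant $c'$, so it suffices to prove $\alpha(\hat\cT_h^0) = \Omega(|E(\hat\cT_h)|^{-1})$ with the implied constant depending only on $p,q$.

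\textbf{Step 2: The free tree via comparison with a product chain.} Under free boundary conditions, the FK-dynamics is still not a product chain because updates at cut-edges use $\hat p$ rather than $p$; however, for \emph{any} edge $e$ and any configuration on $E\setminus\{e\}$, the conditional probability $\pi^0_{\hat\cT_h}(\omega(e)=1\mid \cdot)$ lies in $\{\hat p, p\}$, hence is bounded away from $0$ and $1$ by constants depending only on $p,q$. This is exactly the situation where the single-site (heat-bath) Glauber dynamics can be compared to the product measure $\nu = \mathrm{Ber}(p)^{\otimes E(\hat\cT_h)}$ (or more precisely to the Glauber dynamics for $\nu$, i.e.\ a product chain). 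The product chain on $|E(\hat\cT_h)|$ coordinates, each updated at rate $1/|E(\hat\cT_h)|$, has log-Sobolev constant $\Theta(|E(\hat\cT_h)|^{-1})$ by tensorization of the log-Sobolev inequality for a single $\{0,1\}$-valued coordinate. One then transfers this to $\pi^0_{\hat\cT_h}$ using a Holley--Stroock-type perturbation bound: since $d\pi^0/d\nu$ and its inverse are bounded by constants $c_1(p,q), c_2(p,q)$, the log-Sobolev constant of $\pi^0$ (for the same dynamics, or for its heat-bath version) is at least $\alpha(\nu)/(c_1 c_2)$. A small technical point is matching the Dirichlet forms of the FK-dynamics and the product chain, but both are single-edge-update chains and the conditional update probabilities differ by at most a bounded factor, so their Dirichlet forms are comparable up to constants depending only on $p,q$; this is again Corollary~\ref{cor:comparison-df}-style bookkeeping (or a direct one-line estimate on each summand of~\eqref{eq:Dirichlet-form}).

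\textbf{Main obstacle.} The genuinely delicate point is the perturbation/tensorization argument in Step 2: one must be careful that the comparison to the product measure is uniform in $h$ and in the shape of the (possibly incomplete) tree $\hat\cT_h$, and that the constants truly depend only on $p$ and $q$, not on $\Delta$ or the geometry. The key enabling fact is that the only non-product feature of $\pi^0_{\hat\cT_h}$ is the cut-edge weighting, which is a \emph{bounded} reweighting at each edge, so Holley--Stroock applies cleanly; the degree bound $\Delta$ enters only through $|E(\hat\cT_h)|$, which is the quantity appearing in the conclusion, and through bounding the number of edges touched in a single update (which is one, for the heat-bath single-edge dynamics). Everything else — the reduction of Step 1 and the arithmetic of the distances $D(\phi,0)\le K$ — is routine given Lemma~\ref{lemma:simple-rc-bound} and Corollary~\ref{cor:comparison-df}.
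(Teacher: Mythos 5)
Your Step 1 (reducing the $K$-sparse case to the free case via Lemma~\ref{lemma:simple-rc-bound} and Corollary~\ref{cor:comparison-df}, with $D(\phi,0)\le K$) matches the paper and is fine. Step 2 has a genuine gap.

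The key observation you are missing is that on a \emph{tree} with \emph{free} boundary conditions, \emph{every} edge is \emph{always} a cut-edge in $(V,\omega)$, since the only path between the endpoints of an edge $e$ is $e$ itself. Consequently the conditional probability $\pi^0_{\hat\cT_h}(\omega(e)=1\mid\cdot)$ is identically $\hat p$ (never $p$), and in fact
\[
\pi^0_{\hat\cT_h}(\omega) \;\propto\; p^{|\omega|}(1-p)^{|E|-|\omega|}q^{c(\omega)} \;=\; p^{|\omega|}(1-p)^{|E|-|\omega|}q^{|V|-|\omega|} \;\propto\; \prod_{e}\Big(\tfrac{p}{q}\Big)^{\omega(e)}(1-p)^{1-\omega(e)},
\]
so $\pi^0_{\hat\cT_h}$ is \emph{exactly} the product measure $\mathrm{Ber}(\hat p)^{\otimes E(\hat\cT_h)}$. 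No perturbation argument is needed: the log-Sobolev constant is $\Theta(|E(\hat\cT_h)|^{-1})$ by direct tensorization, which is precisely what the paper cites [Diaconis96] for. Your claim that the conditional probability ``lies in $\{\hat p, p\}$'' is false for the free tree, and the downstream Holley--Stroock comparison to $\nu=\mathrm{Ber}(p)^{\otimes E}$ does not work: the density $d\pi^0/d\nu \propto q^{c(\omega)} = q^{|V|-|\omega|}$ has oscillation $q^{|V|-1}$, which is exponentially large in the size of the tree, so the resulting bound would degrade with $h$ rather than being uniform. (Had you instead compared to $\mathrm{Ber}(\hat p)^{\otimes E}$ you would find the measures identical, but then there is nothing to perturb.) The fix is simply to replace Step 2 by the product-measure identification above.
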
	

\begin{proof}
	Consider the FK-dynamics on $\hat{\T}_h$ under the free boundary conditions.
	In this case, the random-cluster measure is a $\ber(\ps)$ product measure
	and thus the log-Sobolev constant of the FK-dynamics is $c {|E(\hat \cT_h)|}^{-1}$ for some $c(p,q)>0$; see, e.g.,~\cite{Diaconis96}.
	The result then follows from Lemma~\ref{lemma:simple-rc-bound} and Corollary~\ref{cor:comparison-df}.
\end{proof}

To move from mixing on an $L$-$\treelike$ ball to mixing on a tree, the following fact will be useful.

\begin{fact}
	\label{fact:generic:prod}
	Let $G$ be a subgraph of $G'$ such that $V(G) = V(G')$ and $E(G) \subset E(G')$; let $H = E(G') \setminus E(G)$. Suppose $\phi$ is a boundary condition on $G,G'$ such that for every $e\in H$, the endpoints of $e$ are wired in $\phi$. 
	For every $p\in (0,1)$ and $q>0$, let $P_{G}$ and $P_{G'}$ be the transition matrices of the FK-dynamics on $G$ and $G'$, respectively, with boundary conditions $\phi$, and let $\alpha (P_G)$ and $\alpha(P_{G'})$ be their log-Sobolev constants. 
 There exists a constant $c(p) > 0$ such that
	\begin{align*}
	\alpha(P_{G'}) &\ge \min\left\{\frac{|E(G)| }{|E(G)|+|H|} \cdot  \alpha(P_{G}), \frac{c|H|}{|E(G)|+|H|}\right\}\,.
	\end{align*}
\end{fact}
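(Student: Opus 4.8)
\textbf{Proof plan for Fact~\ref{fact:generic:prod}.}
The plan is to decompose the FK-dynamics on $G'$ as a composition of two blocks of updates---the updates on $E(G)$ and the updates on the extra edges $H$---and to apply a standard block-factorization / product-chain argument for the log-Sobolev constant. First I would introduce the two ``block'' dynamics: $P_{E(G)}$, which picks an edge of $E(G)$ uniformly at random and performs an FK-update with respect to $\pi_{G'}^\phi$ restricted to that edge (equivalently, since all endpoints of edges in $H$ are wired in $\phi$, with respect to $\pi_G^\phi$ after contracting $H$), and $P_H$, which picks an edge of $H$ uniformly at random and performs the analogous update. The key structural observation is the hypothesis that every $e\in H$ has both endpoints already wired in $\phi$: this forces every such $e$ to be a non-cut-edge under \emph{every} configuration on $E(G')$, so its update probability is the constant $p$, independent of the rest of the configuration. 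Hence $\pi_{G'}^\phi$ is a product of a $\ber(p)$ measure on $H$ and the measure $\pi_G^\phi$ on $E(G)$ (where $\phi$ is viewed on $G$), and the $P_H$ block is literally a product of independent single-site $\ber(p)$ resamplings.

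The second step is to relate $\alpha(P_{G'})$ to $\alpha(P_{E(G)})$ and $\alpha(P_H)$. Writing $P_{G'} = \frac{|E(G)|}{|E(G)|+|H|} P_{E(G)} + \frac{|H|}{|E(G)|+|H|} P_H$ as a convex combination of the two block chains (each of which is reversible w.r.t.\ $\pi_{G'}^\phi$, since conditional resampling preserves the measure), the Dirichlet form of $P_{G'}$ is the corresponding convex combination of the two block Dirichlet forms, so
\[
\alpha(P_{G'}) \;\ge\; \min\Big\{\tfrac{|E(G)|}{|E(G)|+|H|}\,\alpha(P_{E(G)}),\ \tfrac{|H|}{|E(G)|+|H|}\,\alpha(P_H)\Big\}.
\]
For the $P_H$ term, $\pi_{G'}^\phi$ factorizes over $H$ as a product of $\ber(p)$ bits, and single-site Glauber dynamics for a product of Bernoulli$(p)$ measures has log-Sobolev constant bounded below by an absolute $c(p)>0$ (this is the standard two-point / product-measure log-Sobolev bound, as in~\cite{Diaconis96}); tensorization over the $|H|$ coordinates, together with the $1/|H|$ normalization from choosing a uniformly random edge among $H$, gives $\alpha(P_H)\ge c(p)$. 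For the $P_{E(G)}$ term, since the $H$-coordinates are independent of the $E(G)$-coordinates under $\pi_{G'}^\phi$, the chain $P_{E(G)}$ acts trivially on the $H$-coordinates and, projected to the $E(G)$-coordinates, it is exactly the FK-dynamics on $G$ with boundary condition $\phi$ (the wiring of $H$-endpoints in $\phi$ is what makes the conditional law on $E(G)$ agree with $\pi_G^\phi$); by the tensorization of log-Sobolev across the product $\pi_{G'}^\phi = \pi_G^\phi \otimes \ber(p)^{\otimes H}$ we get $\alpha(P_{E(G)}) = \alpha(P_G)$. Substituting these two bounds yields the claimed inequality.

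I expect the main obstacle to be bookkeeping the normalizations correctly rather than any genuine difficulty: one must be careful that $P_{E(G)}$ as a move of the chain on $G'$ already carries the $1/|E(G')|$ factor for choosing an edge, so that it equals $\frac{|E(G)|}{|E(G')|}$ times ``the FK-dynamics on $G$ normalized by $1/|E(G)|$,'' and similarly for $P_H$; this is exactly where the $\frac{|E(G)|}{|E(G)|+|H|}$ and $\frac{|H|}{|E(G)|+|H|}$ prefactors come from, and the convex-combination identity $\mathcal E_{P_{G'}} = \frac{|E(G)|}{|E(G')|}\mathcal E_{P_{E(G)}} + \frac{|H|}{|E(G')|}\mathcal E_{P_H}$ must be stated at the level of Dirichlet forms before dividing by $\mathrm{Ent}_{\pi_{G'}^\phi}[f^2]$. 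A secondary subtlety is justifying that each block chain is a valid (reversible, conditional-expectation-type) move and that the product structure of $\pi_{G'}^\phi$ over $H\cup E(G)$ really does hold; both follow immediately once one notes that every $e\in H$ is a non-cut-edge under all configurations because its endpoints are $\phi$-wired, so $c(\omega;\phi)$ does not depend on $\omega(e)$ and the weight~\eqref{eq:rcmeasure} factors as $p^{\omega(e)}(1-p)^{1-\omega(e)}$ times a term not involving $\omega(e)$.
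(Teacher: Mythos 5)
Your approach is essentially the paper's: you observe that because every $e\in H$ has $\phi$-wired endpoints it is never a cut-edge, so $\pi_{G'}^\phi = \pi_G^\phi\otimes\ber(p)^{\otimes H}$ and the FK-dynamics on $G'$ is a product Markov chain; you then apply tensorization of the log-Sobolev inequality. The paper's one-line proof invokes exactly this (citing Saloff-Coste's product-chain lemma), and your ``convex combination of block Dirichlet forms'' framing is the same argument with the entropy factorization made explicit. The structural observation about non-cut-edges being the mechanism for the product decomposition is correctly identified.

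There is, however, an arithmetic slip in your treatment of the $H$-block. You assert $\alpha(P_H)\ge c(p)$, but the chain $P_H$ on $\{0,1\}^H$ picks one coordinate uniformly at random (probability $1/|H|$) and resamples it from $\ber(p)$, so tensorization of the two-point bound $c(p)$ together with the $1/|H|$ move weight gives $\alpha(P_H)\ge c(p)/|H|$, not $c(p)$. Concretely, for a function depending on a single coordinate $e\in H$ one has $\mathcal E_{P_H}(f,f) = \tfrac1{|H|}\mathbb E[\mathrm{Var}_e(f)]$ while $\mathrm{Ent}(f^2)$ is of order $\mathrm{Var}_e(f)$, so $\alpha(P_H)=\Theta(1/|H|)$ is tight. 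Feeding the corrected $\alpha(P_H)\ge c(p)/|H|$ into your convex-combination bound yields
\[
\alpha(P_{G'}) \;\ge\; \min\Big\{\tfrac{|E(G)|}{|E(G)|+|H|}\,\alpha(P_G),\ \tfrac{c(p)}{|E(G)|+|H|}\Big\},
\]
i.e.\ the second term is $c(p)/|E(G')|$ rather than $c(p)|H|/|E(G')|$. This is what a straightforward application of the product-chain tensorization actually yields (treating $E(G)$ as one block and each $e\in H$ as a singleton block with weight $1/|E(G')|$ and two-point LSI constant $c(p)$); the extra factor of $|H|$ in the stated Fact does not follow from that argument. None of this matters downstream: in Lemma~\ref{lemma:main-tree-mixing} the only thing extracted from Fact~\ref{fact:generic:prod} is that $\alpha(P_{G'})=\Omega(1/|E(G')|)$, which holds with or without the $|H|$ factor since $|H|\ge 1$. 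But you should be aware that the step ``$\alpha(P_H)\ge c(p)$'' as you justified it does not hold, and the bound you can actually prove has $c(p)$ in place of $c(p)|H|$ in the second slot of the minimum.
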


\begin{proof}
	The FK-dynamics on $G'$ is a product Markov chain on $\{0,1\}^{E(G)} \times \{0,1\}^H$
	with stationary distribution $\pi_{G'}^\phi = \pi_{G}^\phi \otimes \prod_{i=1}^{|H|} \nu_i$, where $(\nu_i)_{1\le i\le |H|}$ are independent $\ber(p)$ distributions over edges in $H$. 
	The result then follows from the tensorization of the log-Sobolev inequality (e.g.,~\cite[Lemma 2.2.11]{SClecture-notes}).	
\end{proof}

We can now combine the above ingredients to deduce the bound of Lemma~\ref{lemma:main-tree-mixing}.

\begin{proof}[\textbf{\emph{Proof of Lemma~\ref{lemma:main-tree-mixing}}}]
	Let $B = B_R(v)$ and let $H\subset E(B)$ be a set of at most $L$ edges such that $(B, E(B)\setminus H)$ is a tree. 
	Consider the tree $\hat \cT_R = (V(B),E(B) \setminus H)$ 
	and let $\phi$ be the boundary condition 
	that includes all the connections from $\xi$ and adds wirings between 
	$w$ and $w'$ for every edge $ww' \in H$. 

	Corollary~\ref{cor:tree:sht:mixing} implies that the log-Sobolev constant for the FK-dynamics on $\hat \cT_R$ with boundary condition $\phi$ is at least ${c q^{6(K+L)}}{|E(\hat \cT_R)|}^{-1}$ for some $c(p,q)>0$. 
	We then get from Fact~\ref{fact:generic:prod} that the log-Sobolev constant for the FK-dynamics on $B$ with boundary condition $\phi$ is at least ${cq^{6(k+L)}}{|E(B)|}^{-1}$.
	Lemma~\ref{lemma:simple-rc-bound} and Corollary~\ref{cor:comparison-df}
	then imply that the log-Sobolev constant on $B$ with boundary conditions $\xi$ is at least  ${c q^{6 K+12 L}}{|E(B)|}^{-1}$. 
\end{proof}

\subsection{Proof of Theorem~\ref{thm:main-fk}: upper bound}\label{subsec:mainthm-proof}
    Fix $p<p_u(q,\Delta)$, let $\epsilon= 1-\hat p d$ (positive when $\hat p <p_u$) and fix $\delta>0$ small enough (depending on $\epsilon, \Delta$) such that
    \begin{align*}
        2\delta + (1-2\delta) \log_d (1-\epsilon) <0\,,
    \end{align*}
    in which case the following is polynomially decaying in $n$:
    \begin{align}\label{eq:choice-of-delta}
     n \hat p^{(1 - 2 \delta)\log_d n}  = n d^{ (-1+  2\delta)\log_d n} (1-\epsilon)^{(1-2\delta)\log_d n} = n^{2\delta} (1-\epsilon)^{(1-2\delta)\log_d n}\,.
    \end{align}
    Let $R = (\frac 12 - \delta)\log_d n$ and let $K$ be a constant sufficiently large (depending on $p,q,\Delta$) that both  Fact~\ref{fact:random-graph-treelike} and Theorem~\ref{thm:k-R-sparse-whp} hold for $(K,R)$. For each $t$, let $\Gamma_t$ be the set of $\Delta$-regular graphs on $n$ vertices having  
    \[                          
    \Gamma_{t} = \{\cG: \cG \mbox{ is }(K,R)\mbox{-}\treelike\}\cap \{ \cG : P(X_{\cG,t}^1 \mbox{ is }(K,R)\mbox{-}\sparse) \ge 1-n^{-2} \}\,.
    \]
    By Fact~\ref{fact:random-graph-treelike} and Theorem~\ref{thm:k-R-sparse-whp}, there exists $C_0(p,q,\Delta)$ such that if $T= C_0 n \log n$, then $\Prrg (\Gamma_T^c ) \le o(1)$. It suffices for us to prove that the mixing time of the FK-dynamics on any $\cG\in \Gamma_T$ is $O(n \log n)$.
    
    Fix any $\cG \in \Gamma_T$ and for every configuration $\omega$ on $E(\cG)$, let $X_t^\omega = X_{\cG,t}^\omega$ be the FK-dynamics chain on $\cG$ initialized from $X_0^\omega = \omega$. Couple the family of chains $((X_t^\omega)_{t\ge 0})_{\omega \in \{0,1\}^{E(\cG)}}$ using the grand coupling as in Definition~\ref{def:coupling-Glauber-chains}: recall that this is the coupling that in each step picks the same random $e\in E(\cG)$ to update, and the same uniform random variable $U_{e,t}$ on $[0,1]$ to decide the next state on the edge $e$. As mentioned earlier, this coupling is monotone when $q>1$ so that for every $t\ge 0$, if $X_t^{\omega} \le X_t^{\omega'}$, then $X_{t+1}^{\omega}\le X_{t+1}^{\omega'}$. 
    
    It follows from the definition of $\tmix$ and monotonicity of the grand coupling (see e.g.,~\cite{LP}), that it suffices for us to show that there exists $\hat C(p,q,\Delta)$ such that if $\hat T = T + \hat C n \log n$, 
    \[
    \mathbb P \big(X_{\hat T}^1 \ne X_{\hat T}^0\big) \le\frac 14\,.
    \]
    By a union bound over the $n$ edge-neighborhoods $\cN_v$ (edges of $\cG$ incident $v$), this reduces to showing 
    \begin{align}\label{eq:wts-main-theorem}
         \sup_{v\in V(\cG)}\mathbb P \big(X_{\hat T}^1 (\cN_v) \ne X_{\hat T}^0 (\cN_v)\big) \le \frac 1{4n}\,.
    \end{align}
    Now fix any such $v$ and consider the probability above. For ease of notation, let $B_v = B_R(v)$ and $B_v^c = E(\cG)\setminus B_v$. 
    Introduce two new Markov chains $Y_t^1$ and $Y_t^0$ that are coupled via the grand coupling to $X_t^1, X_t^0$ except that they censor (ignore) all updates on edges of $B_v^c$ after time $T =  C_0 n\log n$. The censoring inequality~\cite[Lemma 2.3]{PWcensoring} implies the stochastic relations $Y_t^1 \succcurlyeq X_t^1$ and $Y_t^0 \preccurlyeq X_t^0$ for all $t\ge 0$ and thus 
\[
    \mathbb P \big (X_{t}^1(\cN_v)\ne X_{t}^0(\cN_v) \big) \le \Delta\sup_{e\in \cN_v} \mathbb P \big(X_{t}^1(e) \ne X_{t}^0 (e)\big) \le \Delta \sup_{e\in \cN_v}[\mathbb P \big(Y_{t}^1(e) = 1\big) - \mathbb P \big(Y_{t}^0(e) = 1\big)]\,.
\]
Fix any $e\in \cN_v$ and consider the difference in probabilities on the right-hand side. Let $\cE_{T}$ be the event (measurable with respect to the first $T$ steps of the Markov chain) that the boundary conditions induced by $X_{T}^1(B_v^c)$ are $K$-$\sparse$. Observe that $K$-sparsity of a boundary condition is a decreasing event, so that on $\cE_{T}$, the boundary conditions induced by $X_T^0(B_v^c)$ are also $K$-$\sparse$.
As such, for all $t\ge T$, 
\begin{align}\label{eq:Y-wts}
    \mathbb P (Y_t^1 & (e) = 1)-  \mathbb P ( Y_t^0 (e) = 1) \\
    & \le \mathbb P (\cE_{T}^c)+ \sup_{\substack{\phi^0,\phi^1\in \{0,1\}^{B_v^c} \\ \phi^1\textrm{ is }K-\sparse\,;\, \phi^0 \le \phi^1}} \mathbb P ( Y_t^1 (e) =1 \mid Y_{T}^1(B_v^c)= \phi^1) - \mathbb P (Y_t^0(e) = 1 \mid Y_T^0 (B_v^c) = \phi^0)\,. \nonumber
\end{align}
Since $\cG\in \Gamma_T$, and $Y_T^1 = X_T^1$, the first term is at most $n^{-2}$. Turning to the second term, fix any two configurations $\phi^1, \phi^0$ on $B_v^c$ such that $\phi^0 \le \phi^1$ and $\phi^1$ (and therefore also $\phi^0$) induce $K$-$\sparse$ boundary conditions on $B_v$, and consider the difference 
\begin{align}
    \mathbb P (Y_{T+s}^1(e) = 1  \mid Y_T^1 (B_v^c) & = \phi^1)  - \mathbb P (Y_{T+s}^0(e) =1 \mid Y_T^0(B_v^c) = \phi^0 ) \nonumber \\ 
    & \le |\mathbb P (Y_{T+s}^1(e) = 1 \mid Y_T^1 (B_v^c) = \phi^1) - \pi_{\cG}(\omega(e) =1 \mid \omega(B_v^c) =\phi^1)| \label{mixing:plus-bound} \\
    & \quad + |\pi_{\cG}(\omega(e) =1 \mid \omega(B_v^c) =\phi^1) - \pi_{\cG}(\omega(e) =1 \mid \omega(B_v^c) =\phi^0)| \label{mixing:smp-bound} \\ 
    & \quad + |\mathbb P (Y_{T+s}^0(e) = 1 \mid Y_T^0 (B_v^c) = \phi^0) - \pi_{\cG}(\omega(e) =1 \mid \omega(B_v^c) =\phi^0)|\,. \label{mixing:minus-bound}
\end{align}
Observe that $Y_{T+s}^1(B_v)$ is distributed as a \emph{lazy} FK-dynamics chain $Z_s^1$ on $B_v$ with boundary conditions induced by $\phi^1$, initialized from the random configuration $Z_0^1(B_v) = Y_{T}^1(B_v)$: the laziness is in the choice that at each step, $Z_s^1$ makes an FK-dynamics update on $B_v$ with probability $|E(B_v)|/|E(\cG)|$ and makes no update otherwise. The analogous statement holds for $Y_{T+s}^0(B_v)$ with respect to some lazy chain $Z_s^0$. The invariant measure of $Z_s^1$ is easily seen to be 
$$\pi_{\cG}(\omega(B_v)\in \cdot \mid \omega(B_v^c) = \phi^1) = \pi_{B_v}^{\phi^1}\,,$$
and the analogous statement holds for $Z_s^0$. 

Now let $\hat T = T+\hat S$ where $\hat S =  \hat C n\log n$ for a constant $\hat C$ to be chosen sufficiently large depending on $p,q,\Delta$. The expected number of updates in $B_v$ between time $T$ and $T+\hat S$ is 
\begin{align*}
    \hat C n \log n \cdot \frac{|E(B_v)|}{|E(\cG)|}  \ge  2\Delta^{-1} \hat C |E(B_v)| \log n\,.
 \end{align*}
 Let $C_1(p,q,\Delta,K)$ be a constant such that the $\Omega(d^{-R})$ bound on the log-Sobolev constant guaranteed by Lemma~\ref{lemma:main-tree-mixing} (with the choice of $L = K$) is at least $C_1^{-1} d^{-R}$. For any $C_2$, if $\hat C$ is sufficiently large, a Chernoff bound (namely~\eqref{eq:Chernoff-Poisson-binomial} applied to $\bin(\hat S, |E(B_v)|/|E(\cG)|)$) implies that with probability $1-o(n^{-2})$, at least $C_1 C_2 |E(B_v)| \log n$ updates are made in $E(B_v)$ between times $T$ and $\hat T$. By $K$-sparsity of $\phi^1$, Lemma~\ref{lemma:main-tree-mixing}, and Fact~\ref{fact:log-sobolev-tv-distance}, the term in~\eqref{mixing:plus-bound} is bounded by 
 \begin{align*}
     \|\mathbb P (Z_{\hat S}^1(B_v)\in \cdot )  - \pi_{B_v}^{\phi^1}\|_\tv  &\le \frac{1}{\sqrt{\log \min_\omega \pi(\omega)}} \exp \Big( - \frac{C_1 C_2 |E(B_v)|\log n}{2C_3 |E(B_v)|}\Big) +o(n^{-2}) \\
     & \le O(\sqrt n)\cdot e^{- C_1 C_2 \log n/(2C_3)} + o(n^{-2})\,,
 \end{align*}
for some $C_3(p,q,\Delta)$; we thus have for $C_2$ sufficiently large (and therefore $\hat C$ sufficiently large), that this is at most $o(n^{-2})$.
 By the same reasoning, by $K$-sparsity of $\phi^0$, the same bound applies to~\eqref{mixing:minus-bound}.

	Finally, 
	since both $\phi^1$ and $\phi^0$ induce $K$-$\sparse$ boundary conditions on $B_v$,
	by Proposition~\ref{prop:influence-probability-new} there exists a constant $C(p,q,\Delta,K) > 0$ such that~\eqref{mixing:smp-bound} is at most
	$$
	\|\pi_{B_v}^{\phi^1} (\omega(\cN_v)\in \cdot) - \pi_{B_v}^{\phi^0}(\omega(\cN_v)\in \cdot )\|_\tv \le C\ps^{2R}\,,
	$$ 
    which is $o(n^{-1})$ by our choice of $\delta$ and~\eqref{eq:choice-of-delta}.
	 Putting these three bounds together we see that as long as $\hat C$ is sufficiently large (depending on $p,q,\Delta$) the difference in~\eqref{eq:Y-wts} is $o(n^{-1})$, from which the bound of~\eqref{eq:wts-main-theorem} follows for $n$ sufficiently large, concluding the proof.  \qed
	 
\section{Matching lower bound on the mixing time}\label{sec:proof-lower-bound}

In this section, we show a matching $\Omega(n \log n)$ lower bound on the mixing time of the FK-dynamics on a random $\Delta$-regular graph and thus complete the proof of Theorem~\ref{thm:main-fk} from the introduction. A general lower bound for the mixing time of the Glauber dynamics on spin systems
was show in~\cite{HayesSinclair}. However, the non-locality of the FK-dynamics complicates extending the ideas from~\cite{HayesSinclair} to the random-cluster setting. In~\cite{BS}, the argument from \cite{HayesSinclair} was adapted to the random-cluster model on $\mathbb Z^2$ when $p\ne p_c(q)$, but the amenability of $\mathbb Z^2$ together with the exponential decay of connectivities at $p<p_c$ was key to this extension. 

In our setting, the non-amenability of the random $\Delta$-regular graph prevents us from bounding the speed of disagreement percolation under couplings of the FK-dynamics and implementing the argument of~\cite{HayesSinclair} directly. Instead, we use the locally treelike structure of the random $\Delta$-regular graph to directly couple a projection of the model on a certain set of $n^\epsilon$ edges to a product measure on $n^\epsilon$ edges, for which the coupon collector problem gives an immediate lower bound.   

\begin{claim}\label{clm:rg-many-trees}
With $\Prrg$-probability $1-o(1)$, $\cG$ is such that there exist $n^{1/5}$ vertices whose balls of radius $\frac 15 \log_d n$ are disjoint, and are trees. 
\end{claim}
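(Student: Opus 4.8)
The plan is a greedy first-moment construction. We expose $\cG\sim\Pcm$ one ball at a time, using the adaptive configuration-model revealing scheme of Definition~\ref{def:revealing-graph} together with the ball-intersection estimate of Lemma~\ref{lem:ball-intersection-probability}, and then transfer from $\Pcm$ to $\Prrg$ via~\eqref{eq:CM-to-RRG}. Put $r:=\lfloor\tfrac15\log_d n\rfloor$, so that $d^{r}\le n^{1/5}$ and $d^{r+1}=O(n^{1/5})$, and set $m:=n^{1/5}$. We build vertices $v_1,\dots,v_m$ in $m$ rounds, maintaining a revealed edge set $\cA_{i-1}$ with $\cA_0=\emptyset$. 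In round $i$ we first pick any vertex $v_i\notin V(\cA_{i-1})$ (such a vertex exists, since every round reveals at most $|E(\cT_{r+1})|\le 2\Delta d^{r+1}$ edges, hence $|V(\cA_{i-1})|\le(i-1)\cdot 2\Delta d^{r+1}=O(n^{2/5})<n/2$), and then we run the breadth-first exploration of Definition~\ref{def:revealing-graph} that exhausts every vertex at distance $\le r$ from $v_i$. We call round $i$ \emph{successful} if during this exploration no half-edge is ever matched to a half-edge of an already-discovered vertex.

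The reason for exploring one level past the target radius is this: on a successful round the revealed edges around $v_i$ form a tree all of whose vertices are fresh, i.e.\ disjoint from $V(\cA_{i-1})$, and moreover this tree contains \emph{all} of $E(B_r(v_i))$, because every edge of $E(B_r(v_i))$ has both endpoints at distance $\le r$ from $v_i$ and was therefore revealed when such an endpoint was exhausted. Hence on a successful round $B_r(v_i)$ is genuinely a tree in $\cG$ (exhausting the depth-$r$ vertices is precisely what rules out a ``horizontal'' edge between two of them, which a shallower exploration would miss), and $V(B_r(v_i))\cap V(\cA_{i-1})=\emptyset$. Consequently, if all $m$ rounds are successful, the balls $B_r(v_1),\dots,B_r(v_m)$ are pairwise vertex-disjoint trees, since $V(\cA_{i-1})\supseteq\bigcup_{j<i}V(B_r(v_j))$.

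It remains to control the probability of an unsuccessful round. Since $v_i\notin V(\cA_{i-1})$ we have $\cN_{v_i}(\cA_{i-1})=\emptyset$, so the explored region is exactly $B_{r+1}^{out}(v_i;\cA_{i-1})$ in the notation of Definition~\ref{def:B-r-out}, and a round is unsuccessful precisely when some half-edge is matched to an already-discovered vertex during the exploration. The union-bound argument in the proof of Lemma~\ref{lem:ball-intersection-probability}, applied with radius $r+1$, bounds the conditional probability of this event, given $\cA_{i-1}$ and the choice of $v_i$, by $\frac{2\Delta d^{r+1}(|V(\cA_{i-1})|+d^{r+1})}{n-|V(\cA_{i-1})|-d^{r+1}}$, which is $O(n^{-2/5})$ uniformly over all $\cA_{i-1}$ with $|V(\cA_{i-1})|=O(n^{2/5})$ and all choices of $v_i$. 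Summing over the $m=n^{1/5}$ rounds gives $\Pcm(\text{some round fails})=O(n^{-1/5})=o(1)$, so with $\Pcm$-probability $1-o(1)$ there exist $n^{1/5}$ vertices with pairwise vertex-disjoint tree balls of radius $r$. Writing $\Gamma$ for the complementary event, \eqref{eq:CM-to-RRG} gives $\Prrg(\Gamma)\le c(\Delta)^{-1}\Pcm(\Gamma)=o(1)$, which is the claim.

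There is no genuine obstacle here: this is a one-moment greedy argument with ample slack, since $m\cdot|V(\cT_{r+1})|=O(n^{2/5})\ll n$ and each round fails with probability only $O(n^{-2/5})$. The only two points that need care are (i) exploring to radius $r+1$ rather than $r$, so that the revealed tree genuinely coincides with the ball $B_r(v_i)$ of $\cG$ and not merely with a spanning subtree of it, and (ii) observing that the absence of collisions during the exploration forces the balls to be vertex-disjoint, not just edge-disjoint, which is the correct reading of ``disjoint balls''.
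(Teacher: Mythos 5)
Your proof is correct and follows essentially the same route as the paper's: a greedy round-by-round revealing of balls from the configuration model, with Lemma~\ref{lem:ball-intersection-probability} bounding the per-round collision probability, a union bound over the $n^{1/5}$ rounds, and the $\Pcm$-to-$\Prrg$ transfer via~\eqref{eq:CM-to-RRG}. The one genuine refinement you add is exploring to radius $r+1$ instead of $r$. This is a legitimate point of care: the BFS underlying Lemma~\ref{lem:ball-intersection-probability} exhausts vertices at depths $\le r-1$ (that is what ``at most $|E(\cT_r)|$ steps'' refers to), and such an exploration would not see a ``horizontal'' edge between two depth-$r$ vertices, which is nonetheless an edge of the induced subgraph $\big(B_r(v),E(B_r(v))\big)$ and would make $B_r(v)$ fail to be a tree. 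The paper's proof of the claim applies Lemma~\ref{lem:ball-intersection-probability} at radius $R=\delta\log_d n$ directly, so it silently glosses over this; your variant -- revealing $B^{out}_{r+1}(v_i)$ so that every edge of $E(B_r(v_i))$ is actually exposed -- removes the gap at no cost, since the slack between $d^r$ and $d^{r+1}$ is absorbed by the same $O(n^{-2/5})$ per-round bound. (An equally valid alternative, closer to the paper's parameterization, is to take $\delta$ slightly larger than $1/5$ but below $1/4$ and discard the outermost layer.) Your second observation, that the no-collision event yields vertex-disjointness and not merely edge-disjointness, is also present in the paper (via the choice $v_i\notin V(\cA_{i-1})$), so the two proofs agree there.
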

\begin{proof}
Per~\eqref{eq:CM-to-RRG}, it suffices to prove the above under $\Pcm$. We prove the claim by repeated application of Lemma~\ref{lem:ball-intersection-probability}. Namely, consider the procedure where we repeatedly take an arbitrary vertex $v$ that has not been discovered yet, and reveal its ball of radius $R$. Let $v_i$ be the $i$'th vertex to be selected in this procedure, and let $\cA_i$ be $\bigcup_{j\le i} E(B_R(v_j))$. Then, for integer $m \le n$ the probability that $(B_R(v_1),...,B_R(v_m))$ are disjoint trees, is at least
\begin{align*}
    1 - \sum_{i=1}^m \Pcm \big(B_R(v_i)\cap \cA_{i-1} = \emptyset \mbox{ or } B_R(v_i) \mbox{ is not a tree} \mid \cA_{i-1}\big)\,.
\end{align*}
By Lemma~\ref{lem:ball-intersection-probability} (using the fact that each $v_i \notin V(\cA_{i-1})$ so that $B_R^{out}(v_i) = B_R(v_i)$) each of the summands is at most $O(m d^{2R}/(n-O(md^R)))$. Taking $R= \delta \log_d n$ and $m = n^\delta$, we see that the sum above is at most $O(n^{4\delta}/(n- O(n^{2\delta}))$ which is $o(1)$ as long as  $\delta<\frac 14$. 
\end{proof}

Fix $\epsilon \in (0,1/5)$ to be taken sufficiently small later. For every $\cG$ having $n^{1/5}$ many vertices whose balls of radius $\frac 15 \log_d n$ are disjoint trees, choose arbitrarily some $n^\epsilon$ vertices amongst the $n^{1/5}$ of Claim~\ref{clm:rg-many-trees}, and for each vertex collect a representative edge incident to it to form the set $\cC = \cC_\epsilon(\cG)$. 
Our proof will rely on a coupling of the restrictions of $X_{t,\cG}$ and $\pi_{\cG}$ to $\cC$ to $\ber (\hat p )$ product chains. For this, let: 
\begin{enumerate}
\item $X_t= X_{t,\cG}$ be a realization of the FK-dynamics;
\item $Y_t= Y_{t,\cG}$ be a realization of the FK-dynamics that censors all updates in $E(\cG)\setminus \cC$;
\item $\nu$ as the product measure over $|\cC|$ many $\ber(\hat p)$ random variables. 
\end{enumerate}
As before, let $Y_t^0$ be the chain $Y_t$ initialized from the all-$0$ configuration. 
  
\begin{lemma}\label{lem:couplings-to-product-chain}
Let $\cG$ be a graph with at least $n^{1/5}$ vertices whose balls of radius $\frac 15 \log_d n$ are disjoint trees.
For every $q > 1$, integer $\Delta \ge 3$, and $p<p_u(q,\Delta)$, there exists $\epsilon>0$ sufficiently small such that we have the following for $\cC = \cC_{\epsilon}(\cG)$:
\begin{enumerate}
\item For all $t\le T= O(n \log n)$,
\begin{align*}
	\|P(X_{t}^0(\cC) \in \cdot ) - P(Y_t^0(\cC)\in \cdot)\|_\tv \le  o(1)\,.
\end{align*}
\item $\|\pi_{\cG}(\omega(\cC) \in \cdot ) - \nu\|_\tv \le  o(1)\,.$
\end{enumerate}
\end{lemma}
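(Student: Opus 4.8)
The plan is to exploit the fact that $\cC$ consists of $n^\epsilon$ edges, each lying at the center of a disjoint \emph{treelike} ball of radius $\frac15\log_d n$, so that the marginal law of $\omega(\cC)$ is governed by $n^\epsilon$ essentially-independent local chains whose equilibrium is close to a $\ber(\hat p)$ product. I would begin with part (2), since it is the ``static'' statement and its proof supplies the estimates needed for part (1). Fix $e = uw \in \cC$ with $u$ one of the chosen $n^\epsilon$ vertices, and let $r = \frac15\log_d n$; the ball $B_r(u)$ is a tree disjoint from the other chosen balls. Under $\pi_\cG$, the domain Markov property lets me write the conditional law of $\omega(e)$ given the configuration outside $B_r(u)$ as a random-cluster marginal on the tree $B_r(u)$ with \emph{some} boundary condition $\xi$ on $\partial B_r(u)$ — crucially, whatever $\xi$ is, Lemma~\ref{lem:influence-event} (applied with target edge set $\cN_u \supseteq \{e\}$, comparing $\xi$ to the free boundary $\tau = 0$) bounds the total-variation influence of $\xi$ on $\omega(\cN_u)$ by $\pi_{B_r(u)}^\xi(\Upsilon_{B_r(u),\xi})$, which is at most $\hat p^{2r} = n^{-2\epsilon'}$-ish for $r$ of this order (the bound from the proof of Proposition~\ref{prop:influence-probability-new} works for \emph{arbitrary} boundary conditions via a single root-to-boundary connection, giving at worst $O(\hat p^{r})$, and $\hat p^r = d^{-r(1-o(1))}$ is a negative power of $n$). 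Under the \emph{free} boundary condition on $B_r(u)$, the random-cluster measure is exactly $\ber(\hat p)$ product, so $\pi_{B_r(u)}^0(\omega(e)=1) = \hat p$. Hence each marginal $\pi_\cG(\omega(e)\in\cdot)$ is within $n^{-\Omega(1)}$ of $\ber(\hat p)$; I then need to upgrade this to the joint law on all of $\cC$. For that I reveal the edges of $\cC$ one at a time: conditioning on $\omega$ restricted to a subset $\cC' \subsetneq \cC$ still leaves, for the next edge $e'\in\cC\setminus\cC'$, a random-cluster marginal on $B_r(u')$ with \emph{some} (now data-dependent) boundary condition, and the same single-connection bound applies. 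A telescoping/chain-rule argument over the $n^\epsilon$ edges, with each conditional step incurring error $n^{-c}$, gives total error $n^\epsilon \cdot n^{-c} = o(1)$ provided $\epsilon$ is chosen smaller than $c$; this proves (2).

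For part (1), the additional issue is that $Y_t^0$ censors all updates outside $\cC$, so its law on $\cC$ is that of a product of $|\cC|$ independent single-site chains (each edge $e\in\cC$ is resampled, when selected, according to a random-cluster conditional law that again depends only on the frozen configuration outside $B_r(u)$), whereas $X_t^0$ runs the genuine dynamics. By the censoring inequality of~\cite{PWcensoring} (as used in Section~\ref{subsec:mainthm-proof}), $Y_t^0 \preccurlyeq X_t^0$, and both are monotone in the grand coupling; the TV distance between $X_t^0(\cC)$ and $Y_t^0(\cC)$ can be controlled by the probability that some disagreement generated outside $\bigcup_{u}B_r(u)$ has propagated into one of the central edges by time $t$. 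I would bound this by a union bound over the $n^\epsilon$ balls: for a single ball $B_r(u)$, the probability that the dynamics inside it ``feels'' its boundary at the central edge — i.e., that there is an open path from $\partial B_r(u)$ to $\cN_u$ in the current configuration — is, after any number of steps, dominated by the equilibrium probability (by monotonicity, initializing from all-wired is the worst case and $X_t^1 \succcurlyeq \pi$), which is at most $O((\hat p d)^r) = n^{-\Omega(1)}$ by Lemma~\ref{lem:exp-decay-wired-tree}. Summing over the $n^\epsilon$ balls and over the $t = O(n\log n)$ steps, with $\epsilon$ small enough, keeps the total $o(1)$; more carefully one tracks the event that a disagreement path ever forms, which is a monotone event whose probability at time $t$ is bounded by its equilibrium probability plus a mixing error, and the equilibrium bound is the decisive one. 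This yields (1).

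The main obstacle I anticipate is making the ``disjoint treelike balls $\Rightarrow$ independent $\ber(\hat p)$ coordinates'' reduction genuinely quantitative under the \emph{dynamics} rather than at equilibrium: the censored chain $Y_t^0$ is not literally a product chain on $\{0,1\}^{\cC}$ because, when edge $e\in\cC$ is updated, the resampling law is the random-cluster conditional, which in principle could couple different coordinates through shared graph structure — but the disjointness of the balls $B_r(u)$ (and the fact that the representative edges are interior to them) is exactly what decouples the conditional laws. Pinning down that the relevant boundary conditions never create long-range links among the $\cC$-edges, and that the accumulated conditioning errors in the chain-rule argument for (2) really are summable (each step's error genuinely being $n^{-c}$ for a \emph{fixed} $c>\epsilon$, uniformly over the conditioning), is the delicate bookkeeping. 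Once that is in place, the coupon-collector lower bound on the product chain $\nu$ transfers to $X_t^0$ and gives the $\Omega(n\log n)$ mixing lower bound.
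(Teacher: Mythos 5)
Your part (2) argument is essentially in the spirit of the paper, though organized differently (you reveal the edges of $\cC$ one at a time via a chain rule, whereas the paper conditions on the single event that some tree $\cT_{e,i}$ has an open root-to-leaf path and shows that on its complement $\omega(\cC)$ is exactly $\nu$). Both reduce to Lemma~\ref{lem:exp-decay-wired-tree}. One correction: you repeatedly write the per-ball influence bound as $O(\hat p^{\,r})$, but the single-connection bound for arbitrary (e.g., wired) boundary conditions is $O((\hat p d)^{r})$ -- the union over the $\Theta(d^r)$ leaves of $B_r(u)$ costs an extra factor of $d^r$. This is still $n^{-c}$ for a positive constant $c = c(p,q,\Delta)$, so part (2) goes through once $\epsilon < c$, but $c$ can be arbitrarily small as $p\uparrow p_u$, and you should not write $\hat p^{\,r}=d^{-r(1-o(1))}$ as if it were an equality.

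Part (1) has a genuine gap. You propose to union-bound the probability of a disagreement forming over ``the $n^\epsilon$ balls and over the $t=O(n\log n)$ steps,'' which gives a bound of order $n^{1+\epsilon}\log n \cdot (\hat p d)^{r}$. Since $(\hat p d)^{r}=n^{-c}$ where $c=\tfrac{1}{5}\lvert\log_d(\hat p d)\rvert\to 0$ as $p\uparrow p_u$, this bound does not go to zero for $p$ close to $p_u$, no matter how small you take $\epsilon$. The subsequent remark that ``a disagreement path ever forms'' is ``a monotone event whose probability at time $t$ is bounded by its equilibrium probability'' does not repair this, because that event is a function of the whole trajectory, not of a single configuration $X_t^0$; a disagreement on $\cC$, once created, persists until the offending edge is next resampled. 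The missing observation, which the paper uses, is that a disagreement between $X^0$ and the censored $Y^0$ on $\cC$ can only be \emph{created} at a time when an edge of $\cC$ is itself selected for update, and the number of such times up to $T=O(n\log n)$ is $s(T)\sim \mathrm{Bin}(T,|\cC|/|E(\cG)|)$, which is at most $n^{2\epsilon}$ with high probability. Union-bounding the connection probability $O((\hat p d)^{R})$ only over these $s(T)\le n^{2\epsilon}$ update times (the paper actually uses a slightly smaller radius $R=\tfrac16\log_d n$ and an auxiliary frozen chain $Z^0_t$ whose stationary law is the explicit product $\bigotimes\pi_{\cT_{e,i}}^{(1,\circlearrowleft)}$ so that the tree comparison is clean) gives $n^{2\epsilon}\cdot n^{-3\epsilon}=n^{-\epsilon}=o(1)$ once $\epsilon$ is chosen small relative to $c$. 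Without restricting to the $\cC$-update times, the argument as written fails for $p$ near $p_u$.
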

\begin{proof}
We start with part (1).
Our aim is to show that under the grand coupling of $X_t^0$ and $Y_t^0$, for every $t\le T = O(n \log n)$, we have $\mathbb P (X_t^0 \ne Y_t^0) \le o(1)$. Under the grand coupling, let $\mathscr T_T = (t_1,t_2,...,t_{s(T)})$ denote the sequence of times on which the updated edge is in $\cC$, so that $s(T)$ counts the number of updates in $\cC$ by time $T$.  We can then bound
\begin{align*}
    \mathbb P (X_t^0 \ne Y_t^0) \le   \mathbb P (s(T) > n^{2\epsilon}) + \mathbb P (X_t^0 \ne Y_t^0, s(T)\le n^{2\epsilon})\,.
\end{align*}
The first term on the right-hand side is at most the probability that $\mbox{Binom}(T, |\cC|/|E(\cG)|)\ge n^{2\epsilon}$ which is $o(1)$ by the Chernoff bound~\eqref{eq:Chernoff-Poisson-binomial}. It thus suffices to work on the event $s(T)\le n^{2\epsilon}$. 

 Let $R : = \frac{1}{6} \log_d n$ and let $Z_t$ be the FK-dynamics chain (coupled to $X_t, Y_t$ through the grand coupling) that freezes the configuration on $\cC  \cup (E(\cG)\setminus \bigcup_{e\in \cC} E(B_R(e)))$ to be all-$1$. Let $Z_t^0$ be the chain $Z_t$ initialized from the configuration that is all-$0$ on  $\bigcup_{e\in \cC} E(B_R(e))$ (but all-$1$ on the frozen edges). 
Observe, trivially, that $X_t^0 \le Z_t^0$ for all $t \ge 0$. Also, observe that the updates of $Z_t^0$ are stochastically dominated by Glauber updates on the union of $2|\cC|$ many $d$-ary trees $(\cT_{e,1},\cT_{e,2})_{e\in \cC}$ of depth $R$,  rooted at the endpoints of the edges of $\cC$, and each having $(1,\circlearrowleft)$ boundary conditions. By the monotonicity of the FK-dynamics, for all $t\ge 0$, we have that
\begin{align}\label{eq:Z-t-stochastic-domination}
    \mathbb P \bigg(Z_t^0 \Big(\bigcup_{e\in \cC} \big\{E(B_R(e))\setminus \{e\}\big\}\Big) \in \cdot\bigg) \preceq \bigotimes_{e\in \cC} \bigotimes_{i\in \{1,2\}} \pi_{\cT_{e,i}}^{(1,\circlearrowleft)}\,.
\end{align}

For each time $t_i \in \sT_T$, when an edge  $e_{t_i}\in \cC$ is updated, $Y_{t_i}^0(e_{t_i})$ is drawn from $\ber(\hat p)$. At the same time, $X_{t_i}^0(e_{t_i})$ is drawn from $\ber(\hat p)$ if the endpoints of $e_{t_i}$ are not connected in $X_{t_i}^0$, which in turn must occur if none of $(\cT_{e,1},\cT_{e,2})_{e\in \cC}$ have an open root-to-leaf path in $Z_t^0$, as $X_t^0 \le Z_t^0$. 

By the stochastic domination of~\eqref{eq:Z-t-stochastic-domination} on $Z_t^0$, and Lemma~\ref{lem:exp-decay-wired-tree}, the probability that the endpoints of $e_{t_i}$ are connected in $Z_{t_i}^0$ is at most $2  C (\hat p d)^{R}$; 
for $\epsilon$ sufficiently small (depending on $p,q,d$), the above is $O(n^{ - 3\epsilon})$. 
 On the event that $\{s(T) \le n^{2\epsilon}\}$, we can union bound the above probability over the $s(T)$ times in $\sT_T$, to find that $\mathbb P(X_t^0 \ne Y_t^0, s(T)\le n^{2\epsilon})$ is at most $O(n^{-\epsilon})= o(1)$ as desired.

For part (2), consider the $2|\cC|$ many $d$-ary trees $(\cT_{e,1}, \cT_{e,2})_{e\in \cC}$ emanating from the endpoints of the edges of $\cC$. 
Notice that if none of $(\cT_{e,1}, \cT_{e,2})_{e\in \cC}$ have an open root-to-leaf path, then the values $\omega(\cC)$ are conditionally distributed as a product of $\ber(\hat p)$ random variables, i.e., $\omega(\cC)$ would conditionally be distributed as $\nu(A)$. 

As such, the total-variation distance $\|\pi_{\cG}(\omega(\cC)\in \cdot) - \nu\|_\tv$ is bounded by the $\pi_{\cG}$-probability that one of $(\cT_{e,1}, \cT_{e,2})_{e\in \cC}$ has an open root-to-leaf path. By the stochastic domination 
\begin{align*}
    \pi_{\cG}(\omega(\bigcup_{e\in \cC} \cT_{e,1}\cup \cT_{e,2})\in \cdot) \preceq \bigotimes_{e\in \cC} \bigotimes_{i\in \{1,2\}} \pi_{\cT_{e,i}}^{(1,\circlearrowleft)}\,.
\end{align*}
By a union bound, the $\pi_{\cG}$-probability that one of $(\cT_{e,1}, \cT_{e,2})_{e\in \cC}$ has an open root-to-leaf path is at most 
\begin{align*}
    \sum_{e\in \cC} \sum_{i\in \{1,2\}} \pi_{\cT_{e,i}}^{(1,\circlearrowleft)}(e\leftrightarrow \partial \cT_{e,i})\,,
\end{align*}
which by Lemma~\ref{lem:exp-decay-wired-tree} is at most $2n^{\epsilon} \cdot C(\hat p d)^R$. For $\epsilon$ sufficiently small (depending on $p,q,d$) this is $o(1)$. 
\end{proof}

\begin{proof}[\textbf{\emph{Proof of Theorem~\ref{thm:main-fk}: lower bound.}}]
Take any $n$-vertex graph $\cG$ with $n^{1/5}$ many vertices whose balls of radius $\frac 15 \log_d n$ are disjoint trees.
Note that by Claim~\ref{clm:rg-many-trees}, such graphs have $\Prrg$-probability $1-o(1)$. Take $\epsilon$ sufficiently small per Lemma~\ref{lem:couplings-to-product-chain}. Consider the event $A^+ \subset \{0,1\}^{\cC}$ that at least $\hat p n^{\epsilon} - n^{2\epsilon/3}$ of the edges in $\cC$ are open. 
 Let $(\overline Y_s)$ be the standard product chain over $|\cC| = n^{\epsilon}$ many i.i.d.\ $\ber(\hat p)$ random variables, coupled to $Y_t(\cC)$ via $\overline Y_{s(t)} = Y_t(\cC)$ for all $t$,
 where $s(t)$ counts the number of updates in $\cC$ by time $t$. 
 By item (1) of Lemma~\ref{lem:couplings-to-product-chain}, for every $T = O(n\log n)$,
 \begin{align*}
	\mathbb P(X_T^0(\cC) \in A^+) & \le \mathbb P(s(T) >cn^{\epsilon} \log n^{\epsilon}) + \mathbb P(Y_T^0 \in A^+, s(T)\le cn^{\epsilon} \log n^{\epsilon})  + o(1) \\
	& \le \mathbb P (s(T) > c n^{\epsilon} \log n) + \sup_{s \le c n^{\epsilon} \log n} \mathbb P (\overline Y_s^0 \in A^+) + o(1)\,.
\end{align*}
Taking $T := c^2 n \log n^\epsilon = \Theta(n\log n)$ for $c>0$ sufficiently small, the probability that $s(T)$ is more than $c n^{\epsilon} \log n^\epsilon$ is $o(1)$ by the Chernoff bound~\eqref{eq:Chernoff-Poisson-binomial}. Turning to the middle term above, by the standard coupon collector bound, for every $c>0$ sufficiently small, 
\begin{align*}
	\sup_{s\le  c  n^{\epsilon} \log n^\epsilon} \mathbb P(\overline Y_s^0 \in A^+) \le o(1)\,.
\end{align*}
Combining the above, we obtain 
\[
\mathbb P(X_T^0(\cC) \in A^+) = o(1)\,.
\]
At the same time, by a Chernoff bound, $$\nu(A^+) = \mathbb P(\bin(n^\epsilon,\hat p)\ge \hat p n^\epsilon - n^{2\epsilon/3}) = 1-o(1)\,,$$ so that by item (2) of Lemma~\ref{lem:couplings-to-product-chain}, we have $\pi_{\cG}(A^+) = 1-o(1)$. This implies the mixing time is at least $T = \Omega(n\log n)$ as claimed.
\end{proof}

\subsection*{Acknowledgements} The authors thank the anonymous referee for detailed comments on the manuscript. 
The research of A.B.\ was supported in part by NSF grant CCF-1850443. R.G.\ thanks the Miller Institute for Basic Research in Science for its support.

\bibliographystyle{abbrv}
\bibliography{references}

\end{document}